\let\equation=\gather
\let\endequation=\endgather
\numberwithin{equation}{section}
\renewcommand*{\@fnsymbol}[1]{\ensuremath{\ifcase#1\or 1\or 2\or
   3\else\@ctrerr\fi}}
\newtheorem{theorem}{Theorem}[section]
\newtheorem{corollary}[theorem]{Corollary}
\newtheorem{lemma}[theorem]{Lemma}
\newtheorem{proposition}[theorem]{Proposition}
\theoremstyle{definition}
\newtheorem{definition}[theorem]{Definition}
\newtheorem{example}[theorem]{Example}
\theoremstyle{remark}
\newtheorem{remark}[theorem]{Remark}
\newcounter{assum}
\newenvironment{assum}[1][]{\ifx\newenvironment#1\newenvironment\refstepcounter{assum}\fi\equation\tag{\ensuremath{\mathrm{A}\theassum#1}}}{\endequation}
\DeclareMathOperator*{\esssup}{esssup}
\DeclareMathOperator*{\essinf}{essinf}
\newcommand{\R}{\mathbb{R}}
\newcommand{\X}{{\mathbb{R}^d}}
\renewcommand{\S}{{S^{d-1}}}
\newcommand{\N}{\mathbb{N}}
\newcommand{\Z}{\mathbb{Z}}
\newcommand{\eps}{\varepsilon}
\newcommand{\la}{\lambda}
\newcommand{\La}{\Lambda}
\newcommand{\ka}{\varkappa}
\newcommand{\Tau}{\Upsilon}
\newcommand{\x}{\mathcal{X}}
\newcommand{\D}{\mathscr{D}}
\newcommand{\Mn}{{\mathcal{M}}}
\newcommand{\K}{\mathscr{K}}
\renewcommand{\L}{\mathfrak{L}}
\newcommand{\Tauout}{\mathscr{O}}
\newcommand{\Tauin}{\mathscr{C}}
\newcommand{\1}{1\!\!1}
\newcommand{\m}{{\mathfrak{m}}}
\newcommand{\A}{{\mathfrak{a}}}
\newcommand{\h}{{\mathfrak{h}}}
\newcommand{\T}{\mathfrak{t}}
\newcommand{\An}{\mathcal{A}}
\renewcommand{\Re}{\mathrm{Re}\,}
\renewcommand{\Im}{\mathrm{Im}\,}
\newcommand{\dist}{\mathrm{dist}\,}
\newcommand{\supp}{\mathrm{supp}\,}
\newcommand{\inter}{{\mathrm{int}}}
\newcommand{\Buc}{C_{ub}(\X)}
\newcommand{\BUC}{C_{ub}(\X\times\R_+)}
\newcommand{\M}{{\mathcal{M}_\theta}(\R)}
\newcommand{\Utheta}{{U_\theta}}
\newcommand{\Ltheta}{{L_\theta}}
\newcommand{\Xinf}{\mathcal{X}_{\infty}}
\newcommand{\tXinf}{\tilde{\mathcal{X}}_{\infty}}
\newcommand{\xt}{\mathcal{X}_T}
\newcommand{\Vxi}{\mathcal{V}_\xi}
\newcommand{\Wxi}{\mathcal{W}_\xi}
\newcommand{\Uxi}{\mathcal{U}_\xi}
\newcommand{\locun}{\xRightarrow{\,\mathrm{loc}\ }}
\title{Traveling waves and long-time behavior in a doubly nonlocal Fisher--KPP equation}
\author{Dmitri Finkelshtein\thanks{Department of Mathematics,
Swansea University, Singleton Park, Swansea SA2 8PP, U.K. ({\tt d.l.finkelshtein@swansea.ac.uk}).} \and Yuri Kondratiev\thanks{Fakult\"{a}t
f\"{u}r Mathematik, Universit\"{a}t Bielefeld, Postfach 110 131, 33501 Bielefeld,
Germany ({\tt kondrat@math.uni-bielefeld.de}).} \and Pasha Tkachov\thanks{Fakult\"{a}t
f\"{u}r Mathematik, Universit\"{a}t Bielefeld, Postfach 110 131, 33501 Bielefeld,
Germany ({\tt ptkachov@math.uni-bielefeld.de}).}}
\begin{document}

\maketitle

\begin{abstract}
We consider a Fisher--KPP-type equation, where both diffusion and nonlinear parts are nonlocal, with anisotropic probability kernels. Under minimal conditions on the coefficients, we prove existence, uniqueness, and uniform space-time boundedness of a positive solution. We investigate existence, uniqueness, and asymptotic behavior of monotone traveling waves for the equation. We also describe the existence and main properties of the front of propagation.

\textbf{Keywords:} nonlocal diffusion, Fisher--KPP equation, traveling waves, nonlocal nonlinearity, long-time behavior, front propagation, anisotropic kernels, integral equation

\textbf{2010 Mathematics Subject Classification:} 35C07, 35B40, 35K57, 45J05, 45E10

\end{abstract}

\tableofcontents

\section{Introduction}
We will deal with the following nonlinear nonlocal evolution equation
\begin{equation}\label{log}
\dfrac{\partial u}{\partial t}(x,t)=\ka^{+}\bigl(a^{+}*u\bigr)(x,t)-mu(x,t)-\ka^{-}u(x,t)\bigl(a^{-}*u\bigr)(x,t)
\end{equation}
with a bounded initial condition $u(x,0)=u_0(x)$, $x\in\X$, $d\geq1$. Constants $m, \ka^\pm$
are assumed to be positive, and $(a^\pm*u)(x,t)$ mean the convolutions (in~$x$)
between $u$ and nonnegative integrable probability kernels $a^\pm=a^\pm(x)\geq0$ on~$\X$; namely,
\[
(a^\pm*u)(x,t)=\int_\X a^\pm(x-y)u(y,t)dy, \qquad \int_\X a^\pm(x)\,dx=1.
\]

The equation \eqref{log} first appeared, probably, in \cite{Dur1988} (a `crabgrass model'), for~$\ka^+a^+=\ka^-a^-$, and later in \cite{BP1997} (a model of spatial ecology), for~different kernels. The meaning of $u(x,t)$ is the (approximate) value of the local density of a system in a point $x\in\X$ at a moment of time $t\geq0$. A short review for the history of derivation of \eqref{log} see in Subsection~\ref{subsec:history} below.
This equation was considered as a spatial (nonhomogeneous) version of the classical logistic (Verhulst) equation
\begin{equation}\label{Ver}
\frac{du}{dt}=(\ka^+-m)u(t)-\ka^-(u(t))^2,
\end{equation}
corresponding to $u(x,t)=u(t)$, $x\in\X$. Of course, in the original logistic model one needs to assume that $\ka^+>m$; then \eqref{Ver} has two stationary nonnegative solutions: unstable $u=0$ and stable $u=\frac{\ka^+-m}{\ka^-}$. For $\ka^+\leq m$, \eqref{Ver} has the unique stationary
stable solution $u=0$.

The equation \eqref{log} can be rewritten as follows:
\begin{equation}\label{2nl}
  \dfrac{\partial u}{\partial t}(x,t)=(L_{a^+}u)(x,t)+F(u,a^-*u)(x,t),
\end{equation}
where, for a bounded function $v$ on $\X$, the operator
 \begin{equation}\label{jump}
( L_{a^+}v)(x)=\ka^{+}\int_\X a^+(x-y)[v(y)-v(x)]\,dy
 \end{equation}
describes the so-called nonlocal diffusion, see e.g. \cite{AMRT2010} and references below, and $F$ is a mapping on bounded functions, given by
\begin{equation}\label{nonlinearoper}
  F(v_1,v_2)(x)=\ka^{-}v_1(x)\bigl(\theta-v_2(x)\bigr), \qquad \theta=\frac{\ka^+-m}{\ka^-}.
\end{equation}
For the known results about \eqref{2nl}, one can refer to \cite{FM2004,FKK2009,FKKozK2014}, in the general case; to \cite{YY2013,PS2005,WZ2006}, in the case $\theta>0$, i.e. $\ka^+>m$, see also details below; and to \cite{TW2011,TW2011a}, for $\ka^+=m$.

The equation \eqref{2nl} may be addressed to a doubly nonlocal Fisher--KPP equation.
Recall that the classical Fisher--KPP (Kolmogorov--Petrovski--Piskunov) equation in $\X$
goes back to \cite{Fis1937,KPP1937} and has the form
\begin{equation}\label{kpp}
\dfrac{\partial u}{\partial t}(x,t)=\Delta u(x,t)+f\bigl(v(x,t)\bigr),
\end{equation}
see the seminal paper \cite{AW1978}. Here $\Delta$ is the Laplace operator on $\X$, and $f$ is a nonlinear monostable function on $\R$: namely, let $\theta>0$, cf. \eqref{nonlinearoper}, then we assume that $f(0)=f(\theta)=0$, $f'(0)>0$, $f'(\theta)<0$; for example,
\begin{equation}\label{nonlinfunc}
f(s)= \ka^- s(\theta-s), \quad s\geq0.
\end{equation}

The Fisher--KPP equation may be informally obtained from \eqref{2nl} under two scaling procedures.
Namely, let
\[
\int_\X a^+(y) (|y_i|+|y_i|^2)\,dy<\infty, \qquad
\int_\X a^+(y) y_i\,dy=0, \quad 1\leq i\leq d,
\]
and consider the following scaling (for small $\eps>0$)
\begin{equation}\label{scaling}
a^+(x)\longmapsto a_\eps^+(x):=\eps^{-d} a^+(\eps^{-1}x), \ x\in\X, \qquad \ka^+\longmapsto \eps^{-2}.
\end{equation}
Then, we obtain
\begin{align*}
(L_{a_\eps^+}v)(x)&=\eps^{-2}\int_\X a^+(y)[v(\eps y+x)-v(x)]\,dy\notag\\
&=\frac{1}{2}\int_\X a^+(y)\bigl( v''(x) y, y\bigr)_\X\,dy+ o(\eps)=\alpha\Delta v(x)+ o(\eps),
\end{align*}
where $(\cdot,\cdot)_\X$ is the scalar product on $\X$, and
$\alpha=\frac{1}{2}\int_\X a^+(y) y_1^2\,dy$.

As a result, one gets the following nonlocal Fisher--KPP equation
\begin{equation}\label{l+nl}
\dfrac{\partial u}{\partial t}  =\alpha \Delta u+F(u,a^-*u).
\end{equation}
For the theory of such equations with different mappings $F$ `similar' to \eqref{nonlinearoper}, see e.g. \cite{Gou2000,BNPR2009,FZ2011,GVA2006,ABVV2010,AK2015,NPT2011,AC2012,HR2014,FH2015,dCPV+2009} and some details below.

On the other hand, the scaling
\begin{equation}\label{scaling2}
a^-\longmapsto a^-_\eps(x)=\eps^{-d}a(\eps^{-1}x)
\end{equation}
yields $F(v,a^-_\eps*v)\to f(v)$, $\eps\to0$,
where $f$ is given by \eqref{nonlinfunc}. Hence one gets from \eqref{2nl} another nonlocal Fisher--KPP equation
\begin{equation}\label{nl+l}
\dfrac{\partial u}{\partial t} =L_{a^+}u+f(u).
\end{equation}
For a general monostable $f$ as above, this equation was considered in e.g. \cite{Sch1980,Yag2009,BCGR2014,Cov2006,Gar2011,CD2005,CDM2008,AGT2012,CDM2008a,Cov2007,CD2007,Kao2012,LSW2010,SLW2011}, see also some details below.

Combination of the both considered scalings produces hence the classical Fisher--KPP equation \eqref{kpp}, with $f$  given by \eqref{nonlinfunc}, from the equation \eqref{2nl}. It should be stressed, however, that we do not state that solutions to \eqref{2nl} converge to solutions  to \eqref{l+nl}, \eqref{nl+l}, or \eqref{kpp}. Up to our knowledge, a convergence of solutions to \eqref{l+nl} or \eqref{nl+l} to solutions to \eqref{kpp} was not considered rigorously in the literature as well.

Of course, there are a~lot of generalisations for the equations \eqref{2nl}, \eqref{l+nl}, \eqref{nl+l}: the monostable-type function $f$ may depend on time and space variables (e.g. nonlocal reaction-diffusion equation in a periodic media), the mapping $F$ may include a convolution in time or just a time-delay, and many others. For some recent generalisations, see e.g. \cite{LSW2010,Sun2014,HMW2012,CDM2013,ABVV2010,Kao2012,
ZM2014,SZ2012,Pan2014,DW2015,SS2015,SLW2011,TZ2003,LZ2007,RS2012,XX2015}.

Let us formulate the main problems traditionally addressed to the equations above.
\begin{enumerate}[label=\textnormal{(P\arabic*})]
\item \label{P-exist} Existence and uniqueness of solutions in Banach spaces of functions on $\X$, e.g. in $L^\infty(\X)$, $\Buc$ (the space of uniformly continuous functions with $\sup$-norm), or $L^1(\X)$.
\item \label{P-uniform} Uniform in time bounds for the norms of the solutions in the Banach spaces.
\item \label{P-stable} Existence and stability of stationary solutions.
\item \label{P-waves} Existence, uniqueness and properties of the traveling waves: solutions of the special form $u(x,t)=\psi(x\cdot\xi-ct)$, where $\psi$ is a function on $\R$ called the profile of a wave, $\xi$ belongs to the unit sphere $\S$ in $\X$, $x\cdot \xi=(x,\xi)_\X$ is the scalar product on $\X$, and $c\in\R$ describes the speed of the wave. Depending on the class of functions $\psi$ the question may be referred to decaying waves, bounded waves {\em etc.}
\item \label{P-front} Existence and time-behavior of the front of propagation, i.e. a set $\Gamma_t=\X\setminus(\Tauin_t\cup\Tauout_t)$, such that for any $x_t\in\Tauin_t$, the values of $u(x_t,t)$ will converge (as $t\to\infty$) to the upper stationary solution ($\theta$ in the notations above), whereas, for any $y_t\in\Tauout_t$, the values of $u(y_t,t)$ will converge to the low stationary solution (i.e.~to~$0$).
\end{enumerate}

We present now an overview of our results concerning the problems \ref{P-exist}--\ref{P-front} for the equation \eqref{log}/\eqref{2nl}, and compare them with the existing results in the literature, including some information about `partially local' equations \eqref{l+nl} and \eqref{nl+l}.

\smallskip

\textbf{Problem \ref{P-exist}} \quad We will study \eqref{log} in the spaces $\Buc$ and $L^\infty(\X)$. To get an answer on the problem \ref{P-exist}, one does not need any further assumptions on parameters $m,\ka^\pm>0$ and probability kernels $0\leq a^\pm\in L^1(\X)$ (see Theorem~\ref{thm:exist_uniq_BUC} and Remark~\ref{rem:exist_uniq_Linf}). We use standard fixed point arguments, which take into account, however, the negative sign before $a^-$ in \eqref{log}. The solution hence may be constructed on a time-interval $[\tau,\tau+\Delta\tau]$, whereas the $\Delta\tau$ depends on the supremum of the solution at $\tau$. Since the values at the moment $\tau+\Delta\tau$ might be bigger, the next time-interval appears, in general, shorter. The mentioned usage of the negative sign allows us to show that, however, the series of the time-intervals diverges, and thus one can construct solution on an arbitrary big time-interval.

\smallskip

\textbf{Problem \ref{P-uniform}} \quad In spite of the possible growth of solution's space-supremum in time, we show (Theorem~\ref{thm:unibdd}) that the solution in $\Buc$ remains uniformly bounded in time on $[0,\infty)$ under very weak assumptions: one needs only that $a^-$ would be separated from zero in a neighbourhood of the origin and that $a^+$ would have a regular behavior at infinity, e.g. $a^+(x)\leq p(|x|)$, where $|\cdot|$ denotes the Euclidean norm in $\X$ and $p\in L^1(\R)$ monotonically decays at $\pm\infty$. This result is an analog of \cite[Theorem 1.2]{HR2014} for the equation \eqref{l+nl} (the latter used, however, the advantages of the powerful PDE technique for the linear part, that is absent in our case).

\smallskip

The rest of our results requires additional hypotheses. For the shortness, some of them are presented here in a more strict form than we really need (compare them with the real assumptions \eqref{as:chiplus_gr_m}--\eqref{expradialmoment} within the paper); and surely, a particular result requires a part of the assumptions only.

\begin{enumerate}[label=\textnormal{(H\arabic*})]
\item $0\leq a^\pm\in L^1(\X)\cap L^\infty(\X)$, and $\ka^+>m$, i.e. $\theta=\frac{\ka^+-m}{\ka^-}>0$.
\item \label{H-compare} the function
\[
J_\theta:=\ka^+ a^+-\theta \ka^-a^-=\ka^+ a^+-(\ka^+-m)a^-
\]
is almost everywhere (a.e., in the sequel) non-negative and it is separated from $0$ a.e. in a neighbourhood of the origin.
\item \label{H-expdecay} There exists $\la>0$, such that $\displaystyle\int_\X a^+(x) e^{\la |x|}\,dx<\infty$.
\end{enumerate}

Let us compare these hypotheses with existing in the literature. First, we are working in the multi-dimensional settings, cf. \cite{PS2005,FKK2009,FKKozK2014}. We show (Proposition~\ref{prop:monot_sol}) how the problem \ref{P-waves} may be reduced to a one-dimensional equation, whose kernels, however, will depend on a direction $\xi\in\S$. Regarding to this, it should be emphasised, that we do not assume that $a^+$ is symmetric; we deal with the so-called anisotropic settings, cf.~\cite{CDM2008,SLW2011,AGT2012} for the equation \eqref{nl+l}.

The hypothesis \ref{H-expdecay} is called the Mollison condition, see~\cite{Mol1972,Mol1972a}. In particular, it holds if $a^+$ exponentially decays as $|x|\to\infty$. The equation \eqref{nl+l}, under the Mollison condition \ref{H-expdecay} or its weaker form \ref{H-expdecayxi} (see below), was considered in \cite{AGT2012,CDM2008,CD2007,BCGR2014}. The corresponding results in \cite{YY2013,WZ2006} about our equation \eqref{log} required, however, symmetric and quickly decaying $a^+$; the latter meant that \ref{H-expdecay} must hold for all $\la>0$. Note that \cite{WZ2006} dealt with a system of equations for a multi-type epidemic model, which is reduced in the one-type case to \eqref{log} with $\ka^+a^+\equiv\ka^-a^-$. It is worth noting also that we do not need a continuity of $a^+$ as well.

The most restrictive, in some sense, hypothesis is \ref{H-compare}. It implies the comparison principle for the equation \eqref{log}, cf.~Theorem~\ref{thm:compar_pr}, Proposition~\ref{prop:comp_pr_BUC}. In~particular, the latter states that the solution will be inside the strip $0\leq u(x,t)\leq \theta$, for all $t>0$, provided that the initial condition $u(x,0)$ was inside this strip. On the other hand, we show that \ref{H-compare} is, in some sense, a necessary condition to have a comparison principle at all (Remark~\ref{rem:necineq}).

\smallskip

\textbf{Problem \ref{P-stable}} \quad In Subsection~\ref{subsec:comparison_pr}, we show also that $u\equiv\theta$ is a uniformly and asymptotically stable solution, whereas $u\equiv0$ is an unstable one. The absence of non-constant stationary solutions is shown in Proposition~\ref{uniqstationarysolutions}, see also Problem~\ref{P-front} below.

The maximum principle is considered in Subsection~\ref{subsec:max_principle}, cf.~Theorem~\ref{thm:strongmaxprinciple}. In~particular, we prove that the solution to \eqref{log} is strictly positive, even for a compactly supported initial condition $u_0(x):=u(x,0)$, and lies strictly less than $\theta$, for any $u_0\not\equiv\theta$ (Proposition~\ref{prop:u_gr_0}, Corollary~\ref{cor:lesstheta}).

It is worth noting that the luck of the comparison principle for the equation \eqref{l+nl} leads to a non-trivial behavior: if $\ka^-$ is big enough, then the upper stationary solution $u\equiv\theta$ may not be stable, moreover, a stationary inhomogeneous solution may exist, see \cite{GVA2006,ABVV2010}, and also \cite{BNPR2009,NPT2011}; for the further results about \eqref{l+nl} without the comparison principle technique, see e.g. \cite{AC2012,HR2014}. Note also that the hypothesis \ref{H-compare} is not preserved in course of the scaling \eqref{scaling}, which, recall, produces \eqref{l+nl} from \eqref{2nl}: indeed, the inequality $\eps^{-2-d}a^+(\eps^{-1}x)\geq \theta \ka^- a^-(x)$ can not hold for a.a.~$x\in\X$ and any $\eps>0$ simultaneously, as this would mean that $a^-$ is `localised' arbitrary close to the origin. This is a possible reason why the comparison principle which we show for \eqref{2nl} loses for \eqref{l+nl}. However, if, additionally, $a^-$ depends on $\eps$ as in the second scaling \eqref{scaling2}, the inequality in \ref{H-compare} becomes possible, provided that $\eps^{-2}>\theta\ka^-$, thus the comparison principle for the classical Fisher--KPP equation \eqref{kpp} may be informally obtained from our results.

\smallskip

\textbf{Problem \ref{P-waves}} \quad We study monotonically non-increasing traveling waves only (i.e. the profile $\psi$ is a non-increasing function on $\R$). To ensure the existence of a traveling wave solution to \eqref{log} in a direction $\xi\in\S$ the Mollison condition \ref{H-expdecay} can be relaxed as follows:
\begin{enumerate}[start=3,label=\textnormal{(H{\arabic*}${}_\xi$)}]
\item \label{H-expdecayxi} There exists $\la>0$, such that $\A_\xi(\la):=\displaystyle\int_\X a^+(x) e^{\la \, x\cdot\xi}\,dx<\infty$.
\end{enumerate}

Namely, we prove that there exists a minimal traveling wave speed $c_*(\xi)\in\R$, such that, for any $c\geq c_*(\xi)$, there exists a traveling wave in the direction $\xi$ with the speed $c$; and, for any $c<c_*(\xi)$, such a traveling wave does not exist (Theorem~\ref{thm:trwexists}). We use here an abstract result from \cite{Yag2009} and apply it to \eqref{log} similarly to how it was done in \cite{Yag2009} for \eqref{nl+l}. This allow us to prove the existence of such finite $c_*(\xi)$ without an assumption about a quick decaying of $a^+$ in the direction $\xi$; i.e. that we do not need that \ref{H-expdecayxi} holds, for all $\la>0$, in contrast to \cite{YY2013,WZ2006}. It is worth noting that the hypothesis \ref{H-compare} evidently holds under the assumptions from \cite{WZ2006}, where $\ka^+=\ka^-$, $a^+=a^-$, as well as it holds under the assumptions from \cite{YY2013}, where one of the considered cases may be rewritten in the form $\frac{\partial}{\partial t}u=J_\theta*u-mu+\ka^-(\theta-u)(a^-*u)$, which is equivalent to \eqref{log}.

A specific feature of the equation \eqref{log} is that any monotonic traveling wave with a non-zero speed $c\geq c_*(\xi)$ has a smooth profile $\psi_c\in C^\infty(\R)$, whereas, for the traveling wave with the zero speed (which does exist, if only $c_*(\xi)\leq0$), one~can only prove that its profile $\psi_0\in C(\R)$ (Proposition~\ref{prop:reg_trw}, Corollary~\ref{cor:infsmoothprofile}), in~contrast to the equation \eqref{nl+l}, cf.~\cite{CDM2008}, where a weaker smoothness was shown. This allow us to consider the equation for traveling waves point-wise:
\begin{equation}\label{trweqn}
  c\psi'(s)+\ka^{+}(\check{a}^{+}*\psi)(s)-m\psi(s)-\ka^{-}\psi(s) (\check{a}^{-}*\psi)(s)=0, \quad s\in\R,
\end{equation}
where the kernels $\check{a}^\pm$ are obtained by the integration of $a^\pm$ over the orthogonal complement $\{\xi\}^\bot$, see \eqref{apm1dim} below.  Moreover, in Proposition~\ref{prop:psidecaysstrictly}, we show that $\psi$ is a strictly decaying function.

We study properties of the solutions to \eqref{trweqn} using a bilateral-type Laplace transform: $(\L\psi)(z)=\int_\R \psi(s) e^{z s}\,ds$, $\Re z>0$. To~do this, we prove that any solution \eqref{trweqn} has a positive abscissa $\la_0(\psi)$ of this Laplace transform, i.e. that $(\L\psi)(\la)<\infty$, for some $\la>0$ (Proposition~\ref{prop:trw_exp_est}). Moreover, in Theorem~\ref{thm:speedandprofile}, we prove, in particular, that $\la_0(\psi)$ is finite and bounded by $\la_0(\check{a}^+)$; note that the latter abscissa will be infinite in the case of quickly decaying kernel $a^+$, i.e. whe \ref{H-expdecayxi} holds, for any $\la>0$. We also find in Theorem~\ref{thm:speedandprofile} the explicit formula for $c_*(\xi)$:
\begin{equation*}
  c_*(\xi)=\inf_{\la>0}\frac{\ka^+\A_\xi(\la)-m}{\la},
\end{equation*}
where $\A_\xi$ is defined in \ref{H-expdecayxi}; and we show that the dependence of the abscissa $\la_0(\psi_c)$ for a traveling wave profile $\psi_c$ corresponding to a speed $c$ is strictly decreasing in $c\geq c_*(\xi)$. Note that this expression for the minimal traveling wave speed coincides with the known one for the equation~\eqref{nl+l}, see e.g. \cite{CDM2008}.

Thus, for `exponentially decaying' $a^+$ (i.e. if there exists a finite supremum of $\la$'s for which \ref{H-expdecayxi} does hold), it is possible the situation in which the abscissa $\la_*=\la_0(\psi_{c_*(\xi)})$ of the traveling wave with the minimal possible speed coincides with $\la_0(\check{a}^+)$. This case is traditionally more difficult for an analysis of profiles' properties, cf. e.g. \cite[Theorem 3, Remark 8]{AGT2012}. We consider this special case in details and describe it in terms of the function $a^+$ and the parameters $m,\ka^\pm$, cf.~Definition~\ref{def:VWxi}, Theorem~\ref{thm:speedandprofile}.

The variety of possible situations demonstrates the following natural example, cf.~Example~\ref{ex:spefunc}. Let
\begin{equation}\label{ex-intro}
  a^+(x)=\frac{\alpha e^{-\mu|x|}}{1+|x|^q}, \quad q\geq0,\ \mu>0,
\end{equation}
where $\alpha>0$ is a normalising constant. Then, for any $\xi\in\S$, the abscissa $\la_0(\check{a}^+)=\mu$ is finite. We show that the strict inequality $\la_*<\mu$ always hold, for $q\in[0,2]$. Next, there exist critical values $\mu_*>0$ and $m_*\in (0,\ka^+)$, such that, for $q>2$, one has $\la_*<\mu$ if $\mu>\mu_*$ or if $\mu\in(0,\mu_*]$ and $m\in(m_*,\ka^+)$. Respectively, for $q>2$, $\mu\in(0,\mu_*]$, and $m\in(0,m_*]$, we show the equality $\la_*=\mu$, see~Theorem~\ref{thm:speedandprofile}.

To study the uniqueness of traveling waves, we find also the exact asymptotic at $\infty$ of the profiles of traveling waves with non-zero speeds. Namely, we show in Proposition~\ref{prop:charfuncpropert}, that,
for a profile $\psi$ corresponding to the speed $c\neq0$,
\begin{equation}\label{eq:trw_asympt-intro}
\psi(t)\sim D e^{-\la_0(\psi) t},\quad c >c_*(\xi), \qquad
\psi(t)\sim D t\,e^{-\la_0(\psi) t}, \quad c=c_*(\xi),
\end{equation}
as $t\to\infty$. Here $D>0$ is a constant which may be chosen equal to $1$ by a shift of $\psi$ (see Remark~\ref{rem:shifting}). To get \eqref{eq:trw_asympt-intro}, one needs an additional assumption in the critical case for the speed $c=c_*(\xi)$; for example, in terms of the function \eqref{ex-intro}, this assumption does not hold for the case $q\in(2,3]$, $\mu\in(0,\mu_*]$, $m=m_*$ only (Remark~\ref{very-critical-case}).

The asymptotic \eqref{eq:trw_asympt-intro} was known for the equation \eqref{nl+l}, cf. e.g. \cite{AGT2012,CC2004,CDM2008}. In the two latter references, there was used a version of the Ikehara theorem which belongs to Delange \cite{Del1954}. However, we have met here with the following problem.

Both the classical Ikehara theorem (see e.g. \cite{Wid1941}) and the Ikeraha--Delange theorem \cite{Del1954} (see also \cite{EE1985}) dealt with functions growing at infinity to $\infty$. In~\cite{CC2004,CDM2008}, the corresponding results were postulated for functions (decreasing or increasing) which tend to $0$ (on  $\infty$ or $-\infty$, respectively). We~did not find any arguments why we could apply or how one could modify the proofs of Ikehara-type theorems for such functions without proper additional assumptions. The~natural assumption under which it can be realized is that the decreasing function $\psi(s)$ (a~traveling wave in our context) must become an increasing one, being multiplied on an exponent $e^{\nu s}$, for a big enough $\nu>0$.

Under such an assumption the Ikehara-type theorems might hold true, however, one needs more to cover the aforementioned case $\la_*=\mu$. In this case, the Laplace transform of $\check{a}^+$ is not analytic at its abscissa, that was a requirement for the mentioned theorems. Therefore, we used an another modification of the Ikehara theorem, the so-called Ikehara--Ingham theorem \cite{Ten1995}. Under the assumption that a constant $\nu$ as above exists, we prove in Proposition~\ref{prop:tauber} a version of the Ikehara--Ingham theorem for such decreasing functions. Next, using the ideas from \cite{ZLW2012}, we show that, for any solution to \eqref{trweqn} with $c\neq0$, such a $\nu$ does exist.

Note also that the technique from \cite{AGT2012} did not require the usage of Ikehara-type theorem, however, even for the local nonlinearity like in \eqref{nl+l} it did not work in the critical case above.

The asymptotic \eqref{eq:trw_asympt-intro} allows us to prove the uniqueness of the profiles for a traveling wave with a non-zero speed (Theorem~\ref{thm:tr_w_uniq}). We follow there the technique proposed in \cite{CC2004}.

\smallskip

\textbf{Problem \ref{P-front}} \quad The question of the front of propagation for the equations \eqref{l+nl}, \eqref{nl+l} was studied less intensively. For our equation \eqref{2nl}, one can refer to \cite{PS2005} ($\ka^+a^+=\ka^- a^-$, see also below) and \cite{WZ2006} ($\ka^+a^+=\ka^- a^-$, $d=1$, quickly decaying kernels). Note that the generalization in \cite{ZM2014} does not cover the equation \eqref{2nl}. One of the traditional way for the study of the front of propagation for integro-differential equations is the usage of abstract Weinberger's results from \cite{Wei1982} (which are going back to \cite{AW1978}, for the Fisher--KPP equation \eqref{kpp}). The information we obtained for the traveling waves allow us to describe in more details the behavior of $u(tx,t)$ `out of the front'; here $u$ is the solution to \eqref{2nl}. Namely, in Theorem~\ref{thm:outoffront}, we prove that, for a proper compact convex set $\Tau_1$, the function $u(tx,t)$ decays exponentially in time, uniformly in $x\in\X\setminus\Tauout$, for any open $\Tauout\supset\Tau_1$, provided that the initial condition decays in space quicker than any exponent (in particular, we do not require a compactly supported initial condition).

To describe the behavior of $u(tx,t)$, for $x\in\Tau_1$, we start with an adaption of the results from \cite{Wei1982} to our case. However, that abstract technique required that the initial condition should be separated from $0$ on a set which can not be described explicitly (the existence of such a set was shown only, cf.~Lemma~\ref{lem:conv_to_theta-1} and Proposition~\ref{prop:conv_to_theta_cont_time} below). To avoid this restriction, we find, in Proposition~\ref{prop:subsolution}, an explicit sub-solution to \eqref{2nl}, and, moreover, we prove, in Proposition~\ref{useBrandle}, that this sub-solution indeed becomes a minorant for the solution, after a finite time. This arguments allow us to show that $u(tx,t)$ converges to $\theta$ uniformly in $x\in\Tauin$, for any compact $\Tauin\subset\Tau_1$ (Theorem~\ref{thm:convtotheta}, Corollary~\ref{cor_essinf}). In notations~of Problem~\ref{P-front}, it means informally that $\Gamma_t\approx t\,\partial \Tau_1$.

As a consequence, we prove that, under additional technical assumptions, there are not other non-negative time-stationary solutions to \eqref{2nl} except constant solutions $0$ and $\theta$ (Proposition~\ref{uniqstationarysolutions}).

The Mollison condition \ref{H-expdecayxi} is crucial: we show in Theorem~\ref{thm:infinitespeed} and Corollary~\ref{cor:nowaves} that the absence of a $\la$ and a $\xi\in\S$ which ensure \ref{H-expdecayxi} leads to an infinite speed of propagation (i.e. the compact set $\Tau_1$ above may be chosen arbitrary big) and hence to the absence of traveling waves at all. The corresponding result for \eqref{nl+l} was received in \cite{Gar2011} and it is goes back to \cite{Mol1972,Mol1972a} mentioned above. The results of \cite{PS2005} cover Theorems~\ref{thm:outoffront}, \ref{thm:convtotheta}, and \ref{thm:infinitespeed}, for the equation \eqref{2nl} with $\ka^+a^+=\ka^- a^-$; however, a lot of details of the proofs (which used completely another technique) were omitted.

To summarize, the structure of the paper is the following. In~Section~\ref{sec:basic}, we study Problems~\ref{P-exist} and \ref{P-uniform}; Section~\ref{sec:comparison} is devoted to comparison and maximum principles, and, partially, to Problem~\ref{P-stable}. Traveling waves, Problem~\ref{P-waves}, are considered in Section~\ref{sec:tr-waves}. The long-time behavior, i.e. Problem~\ref{P-front}, and the rest of Problem~\ref{P-stable} are the topics of Section~\ref{sec:long-time}. In Subsection~\ref{subsec:history}, we present some historical comments about the derivation of the equation~\eqref{2nl}; and, finally, in Subsection~\ref{subsec:remarks}, we discuss some remarks and open problems.

\section{Existence, uniqueness, and boundedness} \label{sec:basic}
Let $u=u(x,t)$ describe the local density of a system at the point $x\in\X$, $d\geq 1$, at the moment of time $t\in I$, where $I$ is either a finite interval $[0,T]$, for some $T>0$, or the whole $\R_{+}:=[0,\infty)$. The time evolution
of $u$ is given by the following initial value problem
\begin{equation}
\begin{cases}
\begin{aligned}
\dfrac{\partial u}{\partial t}(x,t)=\ &\ka^{+}(a^{+}*u)(x,t)-mu(x,t)\\&-\ka^{-}u(x,t)(a^{-}*u)(x,t),
\qquad x\in\X,\ t\in I\setminus \{0\},
\end{aligned}\\[3mm]
u(x,0)=u_0(x),\qquad \qquad \qquad \qquad \qquad \quad\,\; x\in\X,
\end{cases}\label{eq:basic}
\end{equation}
which we will study in a class of bounded in $x$ nonnegative functions.

Here $m>0$, $\ka^\pm>0$ are constants, and functions  $0\leq a^\pm \in L^{1}(\X)$ are probability densities:
\begin{equation}\label{normed}
  \int_{\X }a^+(y)dy=\int_{\X }a^-(y)dy=1.
\end{equation}
Here and below, for a function $u=u(y,t)$, which is (essentially) bounded in $y\in\X$, and
a function (a kernel) $a\in L^1(\X)$, we denote
\begin{equation}\label{def:conv}
(a*u)(x,t):=\int_{\X }a(x-y)u(y,t)dy.
\end{equation}

We assume that $u_0$ is a bounded function on $\X$. For technical
reasons, we will consider two Banach spaces of bounded real-valued functions
on $\X$: the space $\Buc$ of  bounded uniformly continuous functions
on $\X$ with $\sup$-norm and the space $L^{\infty}(\X)$
of  essentially bounded (with respect to the Lebesgue measure) functions on $\X$ with $\esssup$-norm. Let also $C_b(\X)$ and $C_0(\X)$ denote the spaces of continuous functions on $\X$ which are bounded and have compact supports, correspondingly.

Let $E$ be either $\Buc$ or $L^{\infty}(\X)$. Consider the equation \eqref{eq:basic}
in $E$; in particular, $u$ must be continuously differentiable in $t$, for $t>0$, in the sense of
the norm in $E$. Moreover, we consider $u$ as an element from the space
$C_{b}(I\rightarrow E)$ of continuous bounded functions on $I$ (including
$0$)
with values in $E$ and with the following norm
\begin{equation*}
\Vert u\Vert_{C_{b}(I\rightarrow E)}=\sup\limits _{t\in I}\Vert u(\cdot,t)\Vert_{E}.
\end{equation*}
Such a solution is said to be a classical solution to \eqref{eq:basic}; in particular, $u$ will continuously (in the sense of the norm in $E$) depend on the initial condition $u_0$.

We will also use the space $C_{b}(I\rightarrow E)$ with $I=[T_1,T_2]$, $T_1>0$. For simplicity of notations, we denote
\[
\x_{T_1,T_2}:=C_{b}\bigl([T_1,T_2]\rightarrow \Buc\bigr), \qquad T_2>T_1\geq0,
\]
and the corresponding norm will be denoted by $\|\cdot\|_{T_1,T_2}$. We set also $\x_T:=\x_{0,T}$, $\|\cdot\|_T:=\|\cdot\|_{0,T}$, and
\[
\Xinf:=C_{b}\bigl(\R_{+}\rightarrow \Buc\bigr)
\]
with the corresponding norm $\lVert\cdot\rVert_{\infty}$. The upper index `+' will denote the cone of nonnegative functions in the corresponding space, namely,
\[
\x_\sharp^+:=\{u\in\x_\sharp\mid u\geq0\},
\]
where $\sharp$ is one of the sub-indexes above.
Finally, the corresponding sets of functions with values in $L^\infty(\X)$ will be denoted by the tilde above, e.g.
\begin{align*}
\tilde{\x}_T &:=C_b([0,T]\to L^\infty(\X)), \\
 \tilde{\x}_T^+&:=\bigl\{u\in\tilde{\x}_T \mid u(\cdot,t)\geq0, \ t\in[0,T], \ \mathrm{a.a.} x\in\X\bigr\}.
\end{align*}

We will also omit the sub-index for the norm $\|\cdot\|_E$ in $E$, if it is  clear whether we are working with $\sup$- or $\esssup$-norm.

We start with a simple lemma.
\begin{lemma}\label{le:simple}
  Let $a\in L^1(\X)$, $f\in L^\infty(\X)$. Then $a*f\in\Buc$. Moreover, if $v\in C_b(I\to E)$, $I\subset\R_+$, then $a*v\in C_b(I\to \Buc)$.
\end{lemma}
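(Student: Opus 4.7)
The plan is to handle the two assertions separately, with the first (uniform continuity of $a*f$) being the substantive part and the second following by linearity and a simple continuity argument.

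For the first claim, boundedness is immediate from Young's inequality: $|(a*f)(x)|\leq \|f\|_{L^\infty}\|a\|_{L^1}$ for every $x\in\X$. For uniform continuity, I would write
\begin{equation*}
|(a*f)(x)-(a*f)(x')|\leq \|f\|_{L^\infty}\int_\X |a(x-y)-a(x'-y)|\,dy=\|f\|_{L^\infty}\|\tau_{x-x'}a-a\|_{L^1},
\end{equation*}
where $\tau_h a(\cdot)=a(\cdot-h)$, and then invoke the standard continuity of translation in $L^1(\X)$, namely $\|\tau_h a-a\|_{L^1}\to 0$ as $h\to 0$. Since the right-hand side depends only on $x-x'$ and not on $x$ itself, this gives uniform continuity of $a*f$ on $\X$.

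For the second claim, fix $v\in C_b(I\to E)$. For each $t\in I$, the first part applied to $f=v(\cdot,t)$ already yields $a*v(\cdot,t)\in\Buc$, together with the uniform bound
\begin{equation*}
\|(a*v)(\cdot,t)\|_{\Buc}\leq \|a\|_{L^1}\|v(\cdot,t)\|_{E}\leq \|a\|_{L^1}\|v\|_{C_b(I\to E)}.
\end{equation*}
Continuity in $t$ follows from linearity of the convolution in the second argument:
\begin{equation*}
\|(a*v)(\cdot,t)-(a*v)(\cdot,s)\|_{\Buc}=\|a*(v(\cdot,t)-v(\cdot,s))\|_{\Buc}\leq \|a\|_{L^1}\|v(\cdot,t)-v(\cdot,s)\|_{E},
\end{equation*}
and the right-hand side tends to $0$ as $s\to t$ by the hypothesis $v\in C_b(I\to E)$. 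Combining these two observations gives $a*v\in C_b(I\to\Buc)$.

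The only step that is not entirely mechanical is the use of $L^1$-continuity of translations to get uniform continuity of $a*f$; everything else reduces to Young's inequality and the triangle inequality. Note that the argument works identically for both choices $E=\Buc$ and $E=L^\infty(\X)$, since only the norm estimate $\|v(\cdot,t)\|_E$ is used for the second part.
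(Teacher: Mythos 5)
Your proof is correct. The key step differs from the paper's: you establish uniform continuity of $a*f$ directly by writing the oscillation as $\|f\|_{L^\infty}\|\tau_{x-x'}a-a\|_{L^1}$ and invoking $L^1$-continuity of translations, whereas the paper approximates $a$ in $L^1$ by functions $a_n\in C_0(\X)$, observes that each $a_n*f$ is uniformly continuous, and then passes to the uniform limit using the bound $\|a*f-a_n*f\|_\infty\leq\|f\|_{L^\infty}\|a-a_n\|_{L^1}$. Of course these two arguments are two sides of the same coin --- the standard proof of $L^1$-translation continuity is itself by $C_0$-approximation --- but your arrangement is more economical: it outsources the density argument to a single quotable fact and gets uniformity in $x$ for free from the translation-invariant structure, rather than re-deriving it via uniform convergence. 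The paper's route is marginally more self-contained in that it does not presuppose the translation-continuity lemma. The second part of your proof (linearity plus the operator bound $\|a*\cdot\|_{E\to\Buc}\leq\|a\|_{L^1}$) matches the paper's, which simply says ``the second statement follows from the first one and the inequality \eqref{convbdd}.''
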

\begin{proof}
  The convolution is a bounded function, as
  \begin{equation}\label{convbdd}
    |(a*f)(x)|\leq \|f\|_E \,\|a\|_{L^1(\X)}, \qquad a\in L^1(\X), f\in E.
  \end{equation}
  Next, let $a_n\in C_0(\X)$, $n\in\N$, be such that $\|a-a_n\|_{L^1(\X)}\to0$, $n\to\infty$. For any $n\geq1$, the proof of that $a_n*f\in\Buc$ is straightforward. Next, by \eqref{convbdd}, $\|a*f-a_n*f\|\to0$, $n\to\infty$. Hence $a*u$ is a uniform limit of uniformly continuous functions that fulfilles the proof of the first statement. The second statement is followed from the first one and the inequality \eqref{convbdd}.
\end{proof}

The following theorem yields existence and uniqueness of a solution
to \eqref{eq:basic} on a finite time-intervals $[0,T]$.

\begin{theorem}\label{thm:exist_uniq_BUC}
Let $u_0\in \Buc$ and $u_0(x)\ge0$, $x\in\X $.
Then, for any $T>0$, there exists a unique nonnegative solution $u$ to the equation
\eqref{eq:basic} in $\Buc$, such that $u\in\mathcal{{X}}_{T}$.
\end{theorem}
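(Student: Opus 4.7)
The plan is to convert the initial-value problem \eqref{eq:basic} into an integral equation in which the unfavourable quadratic term with coefficient $-\ka^-$ is absorbed into a positive exponential weight, apply a contraction-mapping argument locally in time, and then iterate to reach any prescribed $T>0$. Setting $B(v)(x,t):=m+\ka^-(a^-*v)(x,t)$, a formal integrating factor rewrites \eqref{eq:basic} as the fixed-point equation $u=\Phi u$, where
\[
(\Phi u)(x,t) := u_0(x)\exp\Bigl(-\!\int_0^t\! B(u)(x,\tau)\,d\tau\Bigr) + \ka^+\!\int_0^t\! (a^+*u)(x,s)\exp\Bigl(-\!\int_s^t\! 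B(u)(x,\tau)\,d\tau\Bigr)\,ds.
\]
Lemma~\ref{le:simple} together with elementary estimates shows that $\Phi$ maps $\x_T$ into $\x_T$, and a $C^1$-in-$t$ fixed point of $\Phi$ is exactly a classical solution of \eqref{eq:basic}. The decisive feature is that, because $u_0\geq 0$, $a^\pm\geq 0$ and $m,\ka^\pm>0$, the operator $\Phi$ sends the nonnegative cone $\x_T^+$ into itself regardless of the sign of its input.

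For local existence I would work on the closed ball $B_R:=\{u\in\x_T^+:\|u\|_T\leq R\}$ with $R:=2\|u_0\|+1$. The estimate $\|a^\pm*u\|\leq\|u\|$ (coming from $\|a^\pm\|_{L^1(\X)}=1$ via \eqref{convbdd}) together with the elementary inequality $|e^{-x}-e^{-y}|\leq|x-y|$ for $x,y\geq 0$ yields both a self-mapping bound $\|\Phi u\|_T\leq\|u_0\|+\ka^+TR$ and a Lipschitz bound $\|\Phi u-\Phi v\|_T\leq C(R)\,T\,\|u-v\|_T$ with $C(R)$ polynomial in $R$. Choosing $T=T_0(R)>0$ so small that $\ka^+TR\leq R-\|u_0\|$ and $C(R)T<1$, Banach's fixed-point theorem produces a unique $u\in B_R$ with $\Phi u=u$; differentiation under the integral sign (again by Lemma~\ref{le:simple}) shows it solves \eqref{eq:basic} classically on $[0,T_0]$, and uniqueness in the whole cone $\x_{T_0}^+$ follows from a standard Gronwall argument on $\|\Phi u-\Phi v\|_t$ with a constant depending on the sup-norms of the two candidate solutions.

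To extend to an arbitrary $T>0$, I would first observe that discarding the nonnegative exponential weights and the term involving $a^-$ in the identity $u=\Phi u$ gives, for every nonnegative solution,
\[
0\leq u(x,t)\leq \|u_0\|+\ka^+\!\int_0^t \|u(\cdot,s)\|\,ds,
\]
so Gronwall's inequality yields the a priori bound $\|u(\cdot,t)\|\leq\|u_0\|\,e^{\ka^+ t}$. Let $T_{\max}\in(0,\infty]$ be the supremum of times up to which the local step can be iterated. If $T_{\max}<\infty$, the a priori bound forces $\sup_{t<T_{\max}}\|u(\cdot,t)\|\leq M:=\|u_0\|e^{\ka^+T_{\max}}<\infty$; starting the local step from any $\tau<T_{\max}$ then produces a solution on $[\tau,\tau+T_0(2M{+}1)]$, and for $\tau$ sufficiently close to $T_{\max}$ this interval extends past $T_{\max}$, contradicting the definition of $T_{\max}$. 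Hence $T_{\max}=\infty$.

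The main obstacle that this strategy has to overcome is exactly the sign of $-\ka^- u(a^-*u)$: a naive Picard iteration on \eqref{eq:basic} has a local existence time that shrinks in the sup-norm of the data much faster than one can control, and it does not visibly preserve the physically relevant nonnegativity. The integrating-factor reformulation above converts the dangerous quadratic term into a bounded multiplicative weight, which simultaneously encodes preservation of the cone $\x_T^+$ and produces the essentially linear a priori bound needed to iterate the local step up to any prescribed time $T$.
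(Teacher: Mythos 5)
Your local existence step (integrating-factor reformulation, fixed-point on a ball in the nonnegative cone) is essentially the same as the paper's; the genuine difference lies in how you pass from local to global existence on $[0,T]$. The paper does not invoke Gronwall inside the proof: it tracks the contraction parameters explicitly, choosing a single $\alpha$ once (via \eqref{need111}) and showing that the admissible bound $\mu_n$ grows only \emph{linearly} in the iteration count, so the interval lengths $\Delta\Tau_n\sim\alpha/(C\mu_1+n\alpha\ka^+)$ form a harmonic-type divergent series and hence cover all of $[0,\infty)$. You instead derive an a priori exponential bound $\|u(\cdot,t)\|\le\|u_0\|e^{\ka^+t}$ directly from the fixed-point identity (dropping the nonnegative exponential weight and the $a^-$ term, then applying Gronwall) and run a standard maximal-interval-of-existence argument: if $T_{\max}<\infty$ the a priori bound caps the norm, so the local step has a uniform minimum length near $T_{\max}$, a contradiction. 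Both arguments are correct and both exploit the same structural fact — that the self-mapping estimate $\|\Phi u\|\le\|u_\tau\|+\ka^+(\Tau-\tau)\|u\|$ is \emph{linear} rather than quadratic in the norm, thanks to the sign of the $a^-$ term entering through a contractive exponential weight. Your route is the more standard and arguably more transparent one; the paper's route is more self-contained and has the small bonus of producing an explicit partition $0=\Tau_0<\Tau_1<\cdots$ reaching any $T$, which it then reuses in the uniqueness argument (your uniqueness via Gronwall on $\|\Phi u-\Phi v\|$ with a norm-dependent constant is equally valid). One minor point to flag if you write this out in full: the gluing of the solution on $[0,\tau]$ with the restarted solution on $[\tau,\tau+T_0]$ needs a line confirming $C^1$-matching at $t=\tau$, which follows because both one-sided derivatives there equal the right-hand side of \eqref{eq:basic} evaluated at $u(\cdot,\tau)$.
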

\begin{proof}
Let $T>0$ be arbitrary. Take any $0\leq v\in\xt$. For any $\tau\in[0,T)$, consider
the following linear equation in the space $\Buc$ on the interval $[\tau,T]$:
\begin{equation}
\begin{cases}
\begin{aligned}
\dfrac{\partial u}{\partial t}(x,t)=-mu(x,t)&-\ka^{-}u(x,t)(a^{-}*v)(x,t)\\&+\ka^{+}(a^{+}*v)(x,t), \qquad t\in(\tau,T],
\end{aligned}\\[2mm]
u(x,\tau)=u_{\tau}(x),
\end{cases}
\label{eq:exist_uniq_BUC:basic_lin}
\end{equation}
where $0\leq u_s\in \Buc$, $s>0$, are some functions, and $u_0$ is the same as in \eqref{eq:basic}.
By Lemma~\ref{le:simple}, in the right hand side (r.h.s. in the sequel) of \eqref{eq:exist_uniq_BUC:basic_lin}, there is a time-dependent linear bounded operator (acting in $u$) in the space $\Buc$ whose coefficients are continuous on $[\tau,T]$.
Therefore, there exists a unique solution
to \eqref{eq:exist_uniq_BUC:basic_lin} in $\Buc$ on $[\tau,T]$, given by $u=\Phi_\tau v$ with
\begin{equation}
(\Phi_\tau v)(x,t):=(Bv)(x,\tau,t)u_\tau(x)+\int_\tau^t(Bv)(x,s,t)\ka^{+}(a^{+}*v)(x,s)\,ds,
\label{eq:exist_uniq_BUC:Phi_v}
\end{equation}
for $x\in\X$, $t\in[\tau,T]$, where we set
\begin{equation}
(Bv)(x,s,t):=\exp\biggl(-\int _{s}^t\bigl(m+\ka^{-}(a^{-}*v)(x,p)\bigr)\,dp\biggr),\label{eq:exist_uniq_BUC:B}
\end{equation}
for $x\in\X$, $t,s\in[\tau,T]$. Note that, in particular, $(\Phi_\tau v)(\cdot,t), (Bv)(\cdot,s,t)\in\Buc$.
Clearly, $(\Phi_\tau v)(x,t)\geq0$ and, for any $\Tau\in(\tau,T]$,
\begin{equation}\label{eq:est}
  \|\Phi_\tau v(\cdot,t)\|\leq \|u_\tau\|+\ka^+ (\Tau-\tau) \|v\|_{\tau,\Tau}, \qquad t\in[\tau,\Tau],
\end{equation}
where we used \eqref{convbdd}. Therefore, $\Phi_\tau$ maps $\x_{\tau,\Tau}^+$ into itself, $\Tau\in(\tau,T]$.

Let now $0\leq\tau < \Tau\leq T$, and take any $v,w\in\x_{\tau,\Tau}^+$.
By \eqref{eq:exist_uniq_BUC:Phi_v}, one has, for any $x\in\X$, $t\in[\tau,\Tau]$,
\begin{equation}\label{estbyJ}
\bigl|(\Phi_\tau v)(x,t)-(\Phi_\tau w)(x,t)\bigr|\leq J_1+J_2,
\end{equation}
where
\begin{align*}
J_1&:=\bigl|(Bv)(x,\tau,t)-(Bw)(x,\tau,t)\bigr|u_\tau(x),\\
J_2&:=\ka^{+}\int_\tau^t\bigl|(Bv)(x,s,t)(a^{+}*v)(x,s)
-(Bw)(x,s,t)(a^{+}*w)(x,s)\bigr|\,ds.
\end{align*}
Since $|e^{-a}-e^{-b}|\leq |a-b|$, for any constants $a,b\geq0$, one has, by \eqref{eq:exist_uniq_BUC:B}, \eqref{convbdd},
\begin{equation}
J_1\leq \ka^- (\Tau-\tau)\lVert u_\tau\rVert \lVert v-w\rVert_{\tau,\Tau}.\label{eq:exist_uniq_BUC:Phi_est_i}
\end{equation}
Next, for any constants $a,b,p,q\geq0$,
\begin{equation*}
  \bigl|pe^{-a}-qe^{-b}\bigr|\leq e^{-a}|p-q|+q\max\bigl\{e^{-a},e^{-b}\bigr\}|a-b|,
\end{equation*}
therefore, by \eqref{eq:exist_uniq_BUC:B}, \eqref{convbdd},
\begin{align}
J_2&\leq  \ka^{+}\int_\tau^t(Bv)(x,s,t)\bigl(a^{+}*|v-w|\bigr)(x,s)\,ds\notag\\
&\quad+\ka^{+}\int_\tau^t\max\bigl\{(Bv)(x,s,t),(Bw)(x,s,t)\bigr\}(a^{+}*w)(x,s)\notag\\
&\qquad \times\ka^{-}\int_s^t\bigl(a^-*|v-w|\bigr)(x,r)\,dr\,ds\notag\\
&\leq \ka^{+}(\Tau-\tau) \lVert v-w\rVert_{\tau,\Tau}+\ka^{+}\ka^{-}\lVert w\rVert_{\tau,\Tau}\lVert v-w\rVert_{\tau,\Tau}\int_\tau^t e^{-m(t-s)}(t-s)\,ds\notag\\
&\leq \ka^+ \Bigl(1+\frac{\ka^-}{me}\lVert w\rVert_{\tau,\Tau}\Bigr) (\Tau-\tau) \lVert v-w\rVert_{\tau,\Tau},\label{est2in1}
\end{align}
as $re^{-r}\leq e^{-1}$, $r\geq0$.

For any $T_2>T_1\geq0$, we define
\begin{equation*}
  \x_{T_1,T_2}^+(r):=\bigl\{ v\in \x_{T_1,T_2}^+ \bigm| \|v\|_{T_1,T_2} \leq r\bigr\}, \quad r>0.
\end{equation*}

Take any $\mu\geq\lVert u_\tau\rVert$. By \eqref{eq:est}--\eqref{est2in1}, one has, for any $v,w\in \x_{\tau,\Tau}^+(r)$, $r>0$,
\begin{align*}
\bigl|(\Phi_\tau v)(x,t)-(\Phi_\tau w)(x,t)\bigr| &\leq
\Bigl(\ka^-\mu +\ka^+ +\frac{\ka^+ \ka^-}{me}r\Bigr) (\Tau -\tau)\lVert v-w\rVert_{\tau,\Tau},\\
\bigl|(\Phi_\tau v)(x,t)\bigr| &\leq \mu+\ka^+ r (\Tau -\tau).
\end{align*}
Therefore, $\Phi_\tau$ will be a contraction mapping on the set $\x_{\tau,\Tau}^+(r)$ if only
\begin{equation}\label{need1}
 \Bigl(\ka^-\mu +\ka^+ +\frac{\ka^+ \ka^-}{me}r\Bigr) (\Tau -\tau) <1 \quad \text{and} \quad \mu+\ka^+ r (\Tau -\tau)\leq r.
\end{equation}
Take any $\alpha\in(0,1)$ and set
\begin{equation}\label{settings}
\begin{gathered}
C:=\ka^{-}+\dfrac{\ka^{+}\ka^{-}}{me},  \qquad r:=\mu+\frac{\alpha\ka^+}{C}, \\ \Tau :=\tau+\frac{\alpha}{Cr}=\tau+\frac{\alpha}{C\mu+\alpha\ka^+}.
\end{gathered}
\end{equation}
Then, the second inequality in \eqref{need1} evidently holds (and it is just an equality), and the first one may be rewritten as follow
\begin{equation*}
 \Bigl(C\mu +\ka^+ +\frac{\ka^+ \ka^-}{me}\frac{\alpha\ka^+}{C}\Bigr)\frac{\alpha}{Cr} <1,
\end{equation*}
or, equivalently,
\begin{equation}\label{need11}
 \alpha C\mu  +\alpha^2\frac{\ka^+ \ka^-}{me}\frac{\ka^+}{C}<C\mu.
\end{equation}
To fulfill \eqref{need11}, one should choose $\alpha\in(0,1)$ such that
\begin{equation}\label{need111}
  \frac{\alpha^2}{1-\alpha}<\frac{C^2\mu m e}{(\ka^+)^2\ka^-}.
\end{equation}
Since function $f(\alpha)=\frac{\alpha^2}{1-\alpha}$ is strictly increasing on $[0,1)$ and $f(0)=0$, one can always choose $\alpha\in(0,1)$ that satisfies \eqref{need111}.

As a result, choosing $\mu=\mu(\tau)> \lVert u_\tau\rVert$ (to include the case $u_\tau\equiv0$) and $\alpha$ that satisfies \eqref{need111}, one gets that $\Phi_\tau$ will be a contraction on the set $\x_{\tau,\Tau}^+(r)$ with $\Tau$ and $r$ given by \eqref{settings}; the latter set naturally formes a complete metric space. Therefore, there exists a unique $u\in \x_{\tau,\Tau}^+(r)$ such that $\Phi_\tau u=u$. This $u$ will be a solution to \eqref{eq:basic} on $[\tau,\Tau]$.

To fulfill the proof of the statement, one can do the following. Set $\tau:=0$, choose any $\mu_1>\|u_0\|$ and fix an $\alpha$ that satisfies \eqref{need111} with $\mu=\mu_1$. One gets a solution $u$ to \eqref{eq:basic} on $[0,\Tau_1]$ with
$\Tau_1=\frac{\alpha}{C\mu_1+\alpha \ka^+}$, $\lVert u\rVert_{\Tau_1}\leq \mu_1+\frac{\alpha\ka^+}{C}$.

Iterating this scheme, take sequentially, for each $n\in\N$, $\tau:=\Tau_n$, $u_{\Tau_n}(x):=u(x,\Tau_n)$, $x\in\X$,
\[
\mu_{n+1}:=\mu_n+\frac{\alpha\ka^+}{C}\geq \|u_{\Tau_n}\|.
\]
Since $\mu_{n+1}>\mu_n$, the same $\alpha$ as before will satisfy \eqref{need111} with $\mu=\mu_{n+1}$ as well. Then, one gets a solution $u$ to
\eqref{eq:basic} on $[\Tau_n,\Tau_{n+1}]$ with initial condition $u_{\Tau_n}$, where
\[
\Tau_{n+1}:=\Tau_n+\frac{\alpha}{C\mu_{n+1}+\alpha \ka^+},
\]
and
\[
\|u\|_{\Tau_n,\Tau_{n+1}}\leq\mu_{n+1}+\frac{\alpha\ka^+}{C}=\mu_{n+2}.
\]
As a result, we will have a solution $u$ to \eqref{eq:basic} on intervals $[0,\Tau_1]$, $[\Tau_1,\Tau_2]$, \ldots, $[\Tau_n,\Tau_{n+1}]$, $n\in\N$, where $\mu_{n+1}=\mu_1+n\frac{\alpha\ka^+}{C}$, and, thus,
\begin{equation}\label{Tn}
  \Tau_{n+1}:=\Tau_n+\frac{\alpha}{C\mu_1+(n+1)\alpha \ka^+}.
\end{equation}
By Lemma~\ref{le:simple}, the r.h.s. of \eqref{eq:basic}, will be continuous on each of constructed time-intervals, therefore, one has that $u$ is continuously differentiable on $(0,\Tau_{n+1}]$ and solves \eqref{eq:basic} there. By \eqref{Tn},
\[
\Tau_{n+1}:=\sum_{j=1}^{n+1}\frac{\alpha}{C\mu_1+j\alpha \ka^+}\to\infty , \quad n\to\infty,
\]
therefore, one has a solution to \eqref{eq:basic} on any $[0,T]$, $T>0$.

To prove uniqueness, suppose that $v\in\xt$ is a solution to \eqref{eq:basic} on $[0,T]$, with $v(x,0)\equiv u_0(x)$, $x\in\X$. Choose $\mu_1>\|v\|_T\geq\|u_0\|$. Since $\{\mu_n\}_{n\in\N}$ above is an increasing sequence, $v$ will belong to each of sets  $\x_{\Tau_n,\Tau_{n+1}}^+(\mu_{n+1})$, $n\geq0$, $\Tau_0:=0$, considered above. Then, being solution to \eqref{eq:basic} on each $[\Tau_n,\Tau_{n+1}]$, $v$~will be a fixed point for $\Phi_{\Tau_n}$. By the uniqueness of such a point, $v$ coincides with $u$ on each $[\Tau_n,\Tau_{n+1}]$ and, thus, on the whole $[0,T]$.
\end{proof}

\begin{remark}
\label{rem:exist_uniq_Linf}
The statement of Theorem \ref{thm:exist_uniq_BUC} holds true for solutions in $L^\infty(\X)$ with $u\in\tilde{X}_T$: the proof will be mainly identical. See also \cite[Theorem 4.1]{FKKozK2014}.
\end{remark}

Consider the following quantity
\begin{equation}\label{theta_def}
  \theta:=\frac{\ka^+-m}{\ka^-}\in\R.
\end{equation}

Theorem \ref{thm:exist_uniq_BUC} has a simple corollary:
\begin{corollary}\label{cor:startwithconst}
  Let $t_0\geq0$ be such that the solution $u$ to \eqref{eq:basic} is a constant in space at the moment of time $t_0$, namely, $u(x,t_0)\equiv u(t_0)\geq0$, $x\in\X$. Then
  this solution will be a constant in space for all further moments of time, more precisely,
\begin{equation}\label{homogensol}
  u(x,t)= u(t)=\frac{u(t_0)}{u(t_0) g_\theta(t)+\exp(-\theta\ka^- t)}\geq0, \qquad x\in\X, \ t\geq t_0,
\end{equation}
where
\[
g_\theta(t)=\begin{cases}
  \dfrac{1-\exp(-\theta\ka^-t)}{\theta}, &\theta\neq0,\\
  \ka^-t, & \theta=0,
\end{cases} \qquad
t\geq t_0.
\]
In particular, $u(t)\to\max\{0,\theta\}$, $t\to\infty$.
\end{corollary}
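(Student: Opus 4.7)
The plan is to reduce the problem to the classical (space-independent) logistic ODE and then invoke uniqueness from Theorem~\ref{thm:exist_uniq_BUC}. The key observation is that since $a^\pm$ are probability densities, any constant (in $x$) function $c\geq0$ satisfies $(a^\pm * c)(x)=c$; consequently, if one looks for a purely time-dependent solution $u(x,t)=v(t)$, the equation \eqref{eq:basic} collapses into
\begin{equation*}
\frac{dv}{dt}(t)=\kappa^+ v(t)-mv(t)-\kappa^- v(t)^2=\kappa^-\theta\, v(t)-\kappa^-v(t)^2,
\end{equation*}
i.e.\ precisely the logistic (Verhulst) equation~\eqref{Ver}.

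First, I would solve this logistic ODE with initial datum $v(t_0)=u(t_0)\geq0$. The standard separation-of-variables argument yields, for $\theta\ne 0$, $v(t)=u(t_0)/\bigl(u(t_0)g_\theta(t-t_0)+\exp(-\theta\kappa^-(t-t_0))\bigr)$, and, for $\theta=0$, the same expression with $g_0(s)=\kappa^-s$; nonnegativity of $u(t_0)$ ensures the denominator never vanishes on $[t_0,\infty)$. This is a routine computation that I will only sketch. The function $\tilde u(x,t):=v(t)$, viewed as an element of $\x_{t_0,T}^+$ for any $T>t_0$, lies in $\Buc$ for each $t$ (it is a constant in $x$), is continuously differentiable in $t$ in the $\Buc$-norm, and, by the observation above, satisfies \eqref{eq:basic} on $[t_0,T]$ with initial value $u(t_0)$ at time $t_0$.

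Next, I would invoke uniqueness. By the time-translation invariance of the equation, Theorem~\ref{thm:exist_uniq_BUC} applied on the interval $[t_0,T]$ with initial datum $u(\cdot,t_0)\equiv u(t_0)\in\Buc$ gives a unique nonnegative solution there. Since both $u$ (the given solution) and $\tilde u$ (the constructed one) are such solutions, they must coincide on $[t_0,T]$, and, as $T>t_0$ is arbitrary, on all of $[t_0,\infty)$. This establishes~\eqref{homogensol}.

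The asymptotic $u(t)\to\max\{0,\theta\}$ as $t\to\infty$ then follows by direct inspection of the closed-form expression: when $\theta>0$, $\exp(-\theta\kappa^-t)\to 0$ and $g_\theta(t)\to 1/\theta$, so the denominator tends to $u(t_0)/\theta$ and $u(t)\to\theta$; when $\theta\le 0$, the denominator grows without bound (linearly if $\theta=0$, exponentially if $\theta<0$), forcing $u(t)\to 0$. No genuine obstacle arises; the only mild point to be careful about is translating Theorem~\ref{thm:exist_uniq_BUC} from $[0,T]$ to $[t_0,T]$, which is immediate since the coefficients of \eqref{eq:basic} are time-independent.
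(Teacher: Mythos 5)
Your proof is correct and takes essentially the same approach as the paper: observe that a spatially constant ansatz reduces \eqref{eq:basic} to the logistic ODE \eqref{eq:homogen}, verify the closed-form solution directly, and conclude by the uniqueness established in Theorem~\ref{thm:exist_uniq_BUC}. One small point in your favor: your explicit formula with $g_\theta(t-t_0)$ and $\exp(-\theta\kappa^-(t-t_0))$ is the one consistent with the initial datum $u(t_0)$ being prescribed at time $t_0$; the paper's display \eqref{homogensol}, which uses $g_\theta(t)$ and $\exp(-\theta\kappa^- t)$ without the shift, only evaluates to $u(t_0)$ at $t=t_0$ when $t_0=0$ (or $u(t_0)=\theta$), so your version is the precise one.
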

\begin{proof}
First of all, we note that in the proof of Theorem~\ref{thm:exist_uniq_BUC} we proved that the problem \eqref{eq:basic} has a unique solution. Next, straightforward calculations show that \eqref{homogensol} solves \eqref{eq:basic} for $\tau=t_0$, that implies the first statement. The last statement is also straightforward then.
\end{proof}

\begin{remark}
  Note that \eqref{homogensol} solves the classical logistic equation, cf.~\eqref{Ver}:
  \begin{equation}\label{eq:homogen}
  \frac{d}{dt} u(t)=\ka^- u(t) (\theta - u(t)), \quad t>t_0,\quad u(t_0)\geq0.
  \end{equation}
\end{remark}

By Lemma~\ref{le:simple}, the mapping $A^+ v=\ka^+ a^+*v$ defines a linear operator on $\Buc$, which is evidently bounded: by~\eqref{convbdd} and $A^+ 1=\ka^+$, one has $\|A^+\|=\ka^+$. Then a solution $u$ to \eqref{eq:basic}  satisfies the following equation
\[
u(x,t)=e^{-tm}e^{tA^+}u_0(x)-\int_0^t e^{-(t-s)m}e^{(t-s)A^+}\ka^-u(x,s)(a^-*u)(x,s)\,ds.
\]
Therefore, $u(x,t)\geq0$ implies $u(x,t)\leq e^{-tm}e^{tA^+}u_0(x)$, $x\in\X$, $t\geq0$; and hence, by Theorem~\ref{thm:exist_uniq_BUC}, $0\leq u_0\in \Buc$ yields
\begin{equation}\label{expbdd}
    \|u(\cdot,t)\|\leq e^{(\ka^+-m)t}\|u_0\|, \qquad t\geq0.
  \end{equation}

In particular, for $m>\ka^+$, the solution $u(x,t)$ to \eqref{eq:basic} exponentially quickly in $t$ tends to $0$, uniformly in $x\in\X$.

We proceed now to show that, in fact, the solution to \eqref{eq:basic} is uniformly bounded in time on the whole $\R_+$, provided that the kernel $a^-$ does not degenerate in a neighborhood of the origin and $a^+$ has an integrable decay at~$\infty$.

\begin{definition}\label{def:loc_conv}
Let $\1_A$ denote the indicator function of a measurable set $A\subset\X$. Recall that a sequence $f_{n}\in L_{\mathrm{loc}}^{\infty}(\X)$ is said to be locally uniformly convergent to an $f\in L_{\mathrm{loc}}^{\infty}(\X)$, if
$\1_\La f_{n} \to \1_\La f$ in $L^{\infty}(\X)$, $n\to\infty$, for any compact $\La\subset\X$. We denote this convergence by $f_{n}\locun f$. We will use the same notation to say that, for some $T>0$ and $v_n, v\in L_{\mathrm{loc}}^{\infty}(\X\times [0,T])$, one has $\1_\La v_{n} \to \1_\La v$ in $L^{\infty}(\X\times [0,T])$, for any compact $\La\subset\X$.
\end{definition}

We start with a simple statement useful for the sequel.
\begin{lemma}\label{convluc}
Let $a\in L^1(\X)$, $\{f_n,f\}\subset L^\infty(\X)$, $\|f_n\|\leq C$, for some $C>0$, and $f_n\locun f$. Then $a*f_n \locun a*f$.
\end{lemma}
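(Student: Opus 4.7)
The plan is to approximate $a$ in $L^1(\X)$ by a compactly supported kernel and then exploit that convolution with such a kernel samples $f_n-f$, at each $x\in\La$, only on an enlarged but still compact set, where the hypothesis $f_n\locun f$ applies directly. This mirrors the strategy used in Lemma~\ref{le:simple}.

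First I would note that $\|f\|\le C$: on any compact $\La$ the $L^\infty$-bound $\|\1_\La f_n\|\le C$ passes to the $L^\infty$-limit $\1_\La f$, and $\X$ is exhausted by compact sets. Next, fix a compact $\La\subset\X$ and $\eps>0$; by density of $C_0(\X)$ in $L^1(\X)$, choose $a_k\in C_0(\X)$ with $\|a-a_k\|_{L^1(\X)}<\eps$, and let $R>0$ be such that $a_k$ is supported in the closed ball $B:=\{y\in\X:|y|\le R\}$. Using the decomposition
\[
a*f_n-a*f = (a-a_k)*f_n + a_k*(f_n-f) + (a_k-a)*f,
\]
the inequality \eqref{convbdd} together with $\|f_n\|,\|f\|\le C$ bounds the first and third summands in $L^\infty(\X)$ by $C\eps$ each.

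For the middle summand the compact support of $a_k$ gives, for every $x\in\La$,
\[
|(a_k*(f_n-f))(x)|\le\|a_k\|_{L^1(\X)}\,\|\1_{\La-B}(f_n-f)\|_{L^\infty(\X)},
\]
where $\La-B:=\{x-y:x\in\La,\,y\in B\}$ is compact. By $f_n\locun f$ applied to this compact set, the right-hand side tends to $0$ as $n\to\infty$. Collecting the three estimates yields $\limsup_n\|\1_\La(a*f_n-a*f)\|_{L^\infty(\X)}\le 2C\eps$, and letting $\eps\to 0$ closes the argument. I do not foresee any substantial obstacle; the only point requiring care is that one must enlarge $\La$ to $\La-B$ before invoking the local uniform convergence, which is precisely why the preliminary compactly-supported approximation of $a$ is needed.
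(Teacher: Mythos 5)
Your proof is correct and follows essentially the same route as the paper's: approximate $a$ in $L^1$ by a compactly supported $a_k$, split $a*(f_n-f)$ into the compactly supported convolution plus a tail, control the tail by $\|a-a_k\|_{L^1}$ times the uniform bound on $f_n,f$, and control the main term via local uniform convergence on the enlarged compact $\La - \supp a_k$. Your three-term decomposition is just the paper's two-term one with the tail $(a-a_k)*(f_n-f)$ further split, so there is no substantive difference.
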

\begin{proof}
  Let $\{a_m\}\subset C_0(\X)$ be such that $\|a_m-a\|_{L^1(\X)}\to0$, $m\to\infty$, and denote $A_m:=\mathrm{supp}\, a_m$. Note that, there exists $D>0$, such that $\|a_m\|_{L^1(\X)}\leq D$, $m\in\N$. Next, for any compact $\La\subset\X$,
  \begin{align*}
  |\1_\La (x) (a_m*(f_n-f))(x)|&\leq \int_\X \1_{A_m}(y) \1_\La (x) |a_m(y)| |f_n(x-y)-f(x-y)|\,dy\\
  &\leq \|a_m\|_{L^1(\X)} \|\1_{\La_m}(f_n-f)\|\to0, n\to\infty,
  \end{align*}
  for some compact $\La_m\subset\X$. Next,
  \begin{align*}
  \|\1_\La (a*(f_n-f))\|&\leq\|\1_\La (a_m*(f_n-f))\|+\|\1_\La ((a-a_m)*(f_n-f))\|
  \\&\leq D\|\1_{\La_m}(f_n-f)\|+(C+\|f\|)\|a-a_m\|_{L^1(\X)},
  \end{align*}
  and the second term may be arbitrary small by a choice of $m$.
\end{proof}

The next theorem is an adaptation of \cite[Theorem 1.2]{HR2014}.

Below, $|\cdot|=|\cdot|_\X$ denotes the Euclidean norm in $\X$, $B_r(x)$ is a closed ball in $\X$ with the center at $x\in\X$ and the radius $r>0$; and $b_r$ is a volume of this ball. Consider also, for any $z\in\Z^d$, $q>0$, a hypercube in $\X$ with the center at $2qz\in\X$ and the side $2q$:
\[
H_q(z):=\{y\in\X\mid 2z_iq-q\leq y_i\leq 2z_iq+q, i=1,\ldots,d\}.
\]
\begin{theorem}\label{thm:unibdd}
Suppose that there exists $r_0>0$ such that
\begin{equation}\label{infpos}
  \alpha:=\inf\limits_{|x|\leq r_0} a^-(x)>0.
\end{equation}
Suppose also that, for some $q\in\bigl(0,\frac{r_0}{2\sqrt{d}}\bigr]$,
\begin{equation}\label{seriesconv}
  a^+_q:=\sum_{z\in\Z^d}\sup\limits_{x\in H_q(z)} a^+(x) <\infty
\end{equation}
(e.g. let, for some $\eps>0$, $A>0$, one have $a^+(x)\leq \frac{A}{1+|x|^{d+\eps}}$, for a.a. $x\in\X$).
Then, the solution $u\geq0$ to \eqref{eq:basic}, with $0\leq u_0\in \Buc$, belongs to $\Xinf$.
\end{theorem}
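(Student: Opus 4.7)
The plan is to adapt the approach of \cite{HR2014} (used there for \eqref{l+nl}) to our purely nonlocal setting. The quadratic nonlinear term $-\ka^{-}u(a^{-}*u)$ should beat the linear dispersion once both are measured by the same averaged quantity: the local mass
$I_z(t):=\int_{H_q(z)}u(x,t)\,dx$ on the hypercubes of the lattice, together with its supremum $S(t):=\sup_{z\in\Z^d}I_z(t)$.

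\textbf{Step 1: differential inequality.} Integrating \eqref{eq:basic} over $H_q(z)$ I obtain a scalar equation for $I_z$. Since $2q\sqrt{d}\leq r_0$ forces $|x-y|\leq r_0$ for any $x,y\in H_q(z)$, assumption \eqref{infpos} gives the quadratic lower bound
\[
\int_{H_q(z)}u(x,t)(a^{-}*u)(x,t)\,dx\geq\alpha\,I_z(t)^2.
\]
A routine cube-covering argument based on \eqref{seriesconv} produces the complementary \emph{linear} upper bound $\int_{H_q(z)}(a^{+}*u)(x,t)\,dx\leq C_0\,S(t)$ for a constant $C_0=C_0(d,q,a^+_q)$. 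Combining both estimates I arrive at
\[
\frac{dI_z}{dt}(t)\leq\ka^{+}C_0\,S(t)-m\,I_z(t)-\ka^{-}\alpha\,I_z(t)^2.
\]

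\textbf{Step 2: closing the loop.} For every $T>0$, Theorem~\ref{thm:exist_uniq_BUC} and \eqref{expbdd} ensure that $B_T:=\sup_{t\in[0,T]}S(t)<\infty$. Comparison with the autonomous logistic ODE $\phi'=\ka^{+}C_0B_T-m\phi-\ka^{-}\alpha\phi^2$, $\phi(0)=I_z(0)$, gives $I_z(t)\leq\max\{I_z(0),\eta(B_T)\}$, where $\eta(B)$ denotes the positive equilibrium satisfying $\ka^{-}\alpha\eta^2+m\eta=\ka^{+}C_0B$. Taking $\sup_z$ and using $I_z(0)\leq(2q)^d\|u_0\|$ gives the self-consistent inequality $B_T\leq\max\bigl\{(2q)^d\|u_0\|,\eta(B_T)\bigr\}$; squaring the case $B_T\leq\eta(B_T)$ reduces it to $\ka^{-}\alpha B_T+m\leq\ka^{+}C_0$, so $B_T$ is bounded by a constant $K$ independent of $T$ and hence $S(t)\leq K$ on $\R_+$. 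A second covering argument converts this into the pointwise estimate $(a^{+}*u)(x,t)\leq 2^d a^+_q K$; dropping the non-positive cubic term in \eqref{eq:basic} and applying Duhamel's representation with integrating factor $e^{mt}$ then yields
\[
u(x,t)\leq e^{-mt}\|u_0\|+\frac{\ka^{+}\cdot 2^d a^+_q K}{m}\bigl(1-e^{-mt}\bigr),
\]
which is the desired $t$-uniform sup-bound, placing $u$ in $\Xinf$.

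\textbf{Main obstacle.} The delicate point is the compatibility between Steps~1 and~2: the upper estimate on $\int_{H_q(z)}(a^{+}*u)\,dx$ must be linear in $S(t)$ \emph{itself}, not in some convolution-type sum over $w\in\Z^d$, so that the scalar logistic comparison in Step~2 self-closes to a genuine algebraic inequality for $B_T$. This is exactly what forces the covering argument to convert the sum-of-suprema hypothesis \eqref{seriesconv} into a uniform bound on $\sum_{w}\sup_{y\in H_q(w)}a^{+}(x-y)$, and it is at this place that the decay assumption on $a^{+}$ is genuinely used.
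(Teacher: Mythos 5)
Your proof is correct and takes a genuinely different route from the paper. The paper also integrates $u$ locally (over balls $B_r(x)$, $r=q\sqrt{d}$, obtaining $v(x,t)=(\1_{B_r(0)}*u)(x,t)$), but then treats $v$ as a new unknown satisfying a nonlocal evolution equation \eqref{eq:strange} and runs a contradiction argument at a ``first-touching'' point. Because the supremum of $v(\cdot,t_0)$ over the unbounded domain $\X$ need not be attained, the paper must pass through an Arzel\`a--Ascoli compactness step: it extracts translates $u(\cdot+x_{n_k},\cdot)$ converging locally uniformly to a limit $u_\infty$ where the sup \emph{is} attained, verifies the limiting $v_\infty$ still solves \eqref{eq:strange}, and only then reads off a contradiction at the touching point. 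This detour is the genuinely technical part of the paper's proof (and the reason the paper emphasizes afterward that uniform continuity of $u$ was essential). Your version sidesteps the compactness argument entirely: by working with the discrete family $I_z(t)=\int_{H_q(z)}u\,dx$, each $I_z$ obeys a pointwise ODE inequality that can be compared with an autonomous logistic ODE \emph{for each $z$ separately}, and the supremum over $z\in\Z^d$ is taken only afterwards as a purely algebraic step --- it does not matter whether it is attained. The key lower bound $(a^-*u)(x,t)\geq\alpha I_z(t)$ for $x\in H_q(z)$ (from $\mathrm{diam}\,H_q(z)=2q\sqrt{d}\leq r_0$) plays exactly the role of the paper's step $(a^-*u_\infty)(y_0,t_0)\geq\alpha v_\infty(0,t_0)$, and the covering factor $2^d$ you pick up is the only overhead of the discrete indexing (the paper's continuously indexed $v$ does not need it). The bootstrapping from the local-mass bound to the pointwise sup-bound via $(a^+*u)\leq 2^d a^+_q K$ and Duhamel with integrating factor $e^{mt}$ coincides with the paper's final step \eqref{convest}--\eqref{globalbound}. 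Two small remarks: (i) the phrase ``squaring the case $B_T\leq\eta(B_T)$'' is a slightly misleading name for the actual manipulation, which is to substitute $\eta\geq B_T\geq0$ into the defining identity $\ka^-\alpha\eta^2+m\eta=\ka^+C_0B_T$ and divide by $B_T$ (when $B_T>0$); (ii) strictly speaking the inequality you derive is $\frac{dI_z}{dt}\leq \ka^+C_0 S(t) -mI_z - \ka^-\alpha I_z^2$, which is not a closed equation for $I_z$ alone, and one should say explicitly that $S(t)$ is first majorized by the constant $B_T$ on $[0,T]$ before invoking autonomous-ODE comparison. Neither affects the validity of the argument. Overall your approach is more elementary and arguably cleaner than the paper's, at the modest cost of a harmless $2^d$ in the constants.
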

\begin{proof}
If $m\geq\ka^+$ then the statement is trivially followed from \eqref{expbdd}. Suppose that $m<\ka^+$ and rewrite \eqref{eq:basic} in the form
\begin{equation}\label{usefulform}
\frac{\partial}{\partial t} u(x,t)=(L_{a^+} u)(x,t)+\ka^- u(x,t)\bigl(\theta - (a^-*u)(x,t)\bigr),
\end{equation}
where $\theta=\dfrac{\ka^+-m}{\ka^-}>0$ and the operator $L_{a^+}$ acts in $x$ and is given by~\eqref{jump}.

It is easily seen that $H_q(z)\subset B_{q\sqrt{d}}(2qz)$, $z\in\Z^d$, $q>0$. Take any $q\leq\frac{r_0}{2\sqrt{d}}$ such that \eqref{seriesconv} holds, and set
$r=q\sqrt{d}\leq\frac{r_0}{2}$.
Define
\begin{equation}\label{defofv}
  v( x,t) :=(\1_{B_{r}(0)}*u)(x,t)=\int_{B_{r}(x)}u(y,t)\,dy.
\end{equation}
By Lemma~\ref{le:simple}, Theorem~\ref{thm:exist_uniq_BUC}, \eqref{expbdd}, $0\leq v\in\xt$, $T>0$, and
\begin{equation*}
  \|v(\cdot,t)\| \leq b_{r} e^{(\ka^+-m)t}\|u_0\|, \quad t\geq0.
\end{equation*}
Note that, by \eqref{jump},
\[
L_{a^+}v=\ka^+a^+*\1_{B_{r}(0)}*u-\ka^+\1_{B_{r}(0)}*u=\1_{B_{r}(0)}*(L_{a^+}u).
\]
Therefore,
\begin{align}
\frac{\partial }{\partial t}v(x,t)-(L_{a^{+}}v)(x,t)
=&\Bigl(\1_{B_{r}(0)}*\frac{\partial }{\partial t} u\Bigr)(x,t)-\bigl(\1_{B_{r}(0)}*(L_{a^+}u)\bigr)(x,t)\notag\\
=&\,\ka ^{-}\int_{B_{r}(x)} u\left( y,t\right) \left( \theta -( a^{-}\ast u) \left(
y,t\right) \right) dy.\label{eq:strange}
\end{align}
By \eqref{defofv}, one has $\|v(\cdot ,0)\| \leq b_{r}\|u_0\|$. Set
\begin{equation}\label{Mchoice}
M>\max \Bigl\{ b_{r}\left\Vert u_0\right\Vert ,\frac{\theta}{\alpha}\Bigr\}.
\end{equation}

First, we will prove that
\begin{equation}\label{estforv}
  \|v(\cdot,t)\|\leq M, \qquad t\geq0.
\end{equation}
On the contrary, suppose that there exists $t'>0$ such that $\|v(\cdot,t')\| >M$. By~\eqref{defofv} and Lemma~\ref{le:simple}, $\|v(\cdot,t)\|$ is continuous in $t$. Next, since $\|v(\cdot,0)\|<M$, there exists $t_0>0$ such that $\|v(\cdot,t_0)\| =M$ and $\|v(\cdot,t)\|<M$, for all $t\in [0,t_0)$.

Consider the sequence $\{x_n\}\subset\X$ such that $v(x_n,t_0)\to M$, $n\to\infty$.
Define the following functions:
\[
u_n(x,t):=u(x+x_n,t), \quad v_n(x,t):=v(x+x_n,t)=(\1_{B_{r}(0)}*u_n)(x,t),
\]
for $x\in\X, t\geq0$. Take any $T>0$. Evidently, $u\in C_{ub}(\X\times[0,T])$, then, for any $\eps>0$, there exists $\delta>0$ such that, for any $x,y\in\X$, $t,s\in[0,T]$, with $|x-y|_{\X}+|t-s|<\delta$, one has $|u_n(x,t)-u_n(y,s)|=|u(x+x_n,t)-u(y+x_n,s)|<\eps$. And, by \eqref{expbdd},
\begin{equation} \label{uniest}
\|u_n(\cdot,t)\| \leq \|u(\cdot,t)\| \leq e^{(\ka^+-m)T}\|u_0\|, \quad n\in\N, t\in [0,T].
\end{equation}
Hence $\{u_n\}$ is a uniformly bounded and uniformly equicontinuous sequence of functions on $\X\times[0,T]$. Thus, by a version of the Arzel\`{a}--Ascoli Theorem, see e.g. \cite[Appendix C.8]{Eva2010}, there exists a subsequence $\{u_{n_k}\}$ and a continuous function $u_\infty$ on $\X\times[0,T]$ such that $u_{n_k}\locun u_\infty$. Moreover, one can easily show that $u_\infty\in C_{ub}(\X\times[0,T])$.
By~\eqref{uniest} and Lemma~\ref{convluc}, $v_{n_k}\locun v_\infty =\1_{B_{r}(0)}*u_\infty$, moreover, $v_\infty\in C_{ub}(\X\times[0,T])$.

It is easily seen that both parts of \eqref{eq:strange} belong to $\xt$. Hence one can integrate \eqref{eq:strange} on $[0,t]\subset[0,T]$, namely,
\begin{align}
v(x,t)=v(x,0)&+\int_0^t (L_{a^{+}}v)(x,s)\,ds\notag\\&
+\ka ^{-}\int_0^t \int_{B_{r}(x)} u\left( y,s\right) \left( \theta -( a^{-}\ast u) \left(
y,s\right) \right) dy\,ds.\label{eq:strange2}
\end{align}
Substitute $x+x_{n_k}$ instead of $x$ into \eqref{eq:strange2} and use twice the  integration by substitution in the second integral, then one gets the same equality \eqref{eq:strange2}, but for $v_{n_k}$, $u_{n_k}$ instead of $v,u$, respectively. Next, by~Lemma~\ref{convluc} and the dominated convergence arguments, one can pass to the limit in $k$ in the obtained equality. As a result, one get \eqref{eq:strange2} for $v_\infty$, $u_\infty$ instead of $v$ and $u$, respectively.
Next, since $C_{ub}(\X\times[0,T])\subset \xt$, the integrands with respect to $s$ in the left hand side (l.h.s. in the sequel) of the modified equation \eqref{eq:strange2} (with $u_\infty, v_\infty\in \xt$) will belong to $\xt$ as well. As~a~result, $v_\infty$ will be differentiable in $t$ in the sense of the norm in $\Buc$. Finally, after differentiation, one get \eqref{eq:strange} back, but for $v_\infty, u_\infty$, namely,
\begin{multline}
\frac{\partial }{\partial t}v_\infty(x,t)-(L_{a^{+}}v_\infty)(x,t)
\\=\ka ^{-}\int_{B_{r}(x)} u_\infty\left( y,t\right) \left( \theta -( a^{-}\ast u_\infty) \left(
y,t\right) \right) dy.\label{eq:strange3}
\end{multline}

Going back to the definition of $x_n$, one can see that
\begin{equation}\label{touchM}
v_\infty(0,t_0)=\lim_{k\to\infty}v_{n_k}(0,t_0)=\lim_{k\to\infty}v(x_{n_k},t_0)=M,
\end{equation}
whereas, for any $x\in\X$, $t\in[0,t_0)$, $v_\infty(x,t)=\lim\limits_{k\to\infty}v(x+x_{n_k},t)\leq M$. Therefore, $\frac{\partial }{\partial t}v_\infty(0,t_0)\geq0$ and, by~\eqref{jump}, $(L_{a^+} v_\infty)(0,t_0)\leq0$. Then, by \eqref{eq:strange3},
\begin{equation}\label{sasd}
  \int_{B_{r}(0)} u_\infty\left( y,t_0\right) \left( \theta -( a^{-}\ast u_\infty) \left(
y,t_0\right) \right) dy\geq0.
\end{equation}
Next, the function $u_\infty(\cdot,t_0)$, by the construction above, is nonnegative. It can not be identically equal to $0$ on $B_{r}(0)$, since otherwise, by \eqref{defofv}, $v_\infty(0,t_0)=0$ that contradicts \eqref{touchM}. Hence by~\eqref{sasd}, the function $\theta - (a^-*u_\infty)(\cdot,t_0)$ cannot be strictly negative on $B_{r}(0)$. Thus, there exists $y_0\in B_{r}(0)$ such that $\theta \geq (a^-*u_\infty)(y_0,t_0)$. Since $2r\leq r_0$, one has that $\inf\limits_{x\in B_{2r}(0)}a^-(x)\geq \alpha$, cf.~\eqref{infpos}. Therefore, one can continue:
\begin{align*}
\theta &\geq (a^-*u_\infty)(y_0,t_0)\geq \int_{B_{2r}(0)} a^-(y) u_\infty (y_0-y,t_0)\,dy\\&\geq \alpha \int_{B_{2r}(0)}u_\infty(y_0-y,t_0)\,dy
= \alpha \int_{B_{2r}(y_0)}u_\infty(y,t_0)\,dy\\
&\geq\alpha \int_{B_{r}(0)}u_\infty(y,t_0)\,dy=\alpha v_\infty(0,t_0)=\alpha M,
\end{align*}
that contradicts \eqref{Mchoice}. Therefore, our assumption was wrong, and \eqref{estforv} holds.

We proceed now to show that $\|u(\cdot,t)\|$ is uniformly bounded in time. By \eqref{defofv}, \eqref{estforv}, \eqref{seriesconv}, one has, for $r=q\sqrt{d}$,
\begin{align}
  (a^+*u)(x,t)&=\sum_{z\in\Z^d}\int_{H_q(z)} a^+(y)u(x-y,t)\,dy\notag\\
  &\leq \sum_{z\in\Z^d}\sup\limits_{y\in H_q(z)}a^+(y)\int_{B_r(2qz)} u(x-y,t)\,dy\notag\\
  &=\sum_{z\in\Z^d}\sup\limits_{y\in H_q(z)}a^+(y)\int_{B_r(x-2qz)} u(y,t)\,dy
  \leq M a^+_q.\label{convest}
\end{align}
Therefore, by \eqref{eq:basic}, \eqref{convest}, using the same arguments as for the proof of \eqref{expbdd} one gets that
\begin{align}
  0\leq u(x,t) &\leq e^{-mt}u_0(x)+\int_0^t e^{-(t-s)m}\ka^+ Ma^+_q\,ds\notag\\ &=
e^{-mt}u_0(x)+\dfrac{\ka^+ Ma^+_q}{m}(1-e^{-mt})\notag \\&\leq \max\Bigl\{\frac{\ka^+ Ma^+_q}{m},\|u_0\|\Bigr\}, \qquad x\in\X, \ t\geq0,
  \label{globalbound}
\end{align}
that fulfills the proof.
\end{proof}
\begin{remark}
  It should be stressed that we essentially used the uniform continuity of the solution to prove Theorem~\ref{thm:unibdd}.
\end{remark}

Under conditions of Theorem~\ref{thm:unibdd}, the solution $u$ will be uniformly continuous on $\X\times\R_+$, namely, the following simple proposition holds true.
\begin{proposition}\label{prop:u_in_BUC}
Let $u$ be a solution to \eqref{eq:basic} with $u_0\in \Buc$, and suppose that there exists $C>0$, such that
\[
\left|u(x,t)\right|\le C,\quad x\in\X ,\ t\ge0.
\]
Then $u\in \BUC$. Moreover, $\|u(\cdot,t)\|\in C_{ub}(\R_+)$.
\end{proposition}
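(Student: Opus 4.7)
My plan is to establish uniform continuity in $x$ (uniformly in $t\ge 0$) and uniform Lipschitz continuity in $t$ (uniformly in $x\in\X$) separately; together with the assumed boundedness this yields $u\in\BUC$, and the Lipschitz bound in $t$ also gives $\|u(\cdot,t)\|\in C_{ub}(\R_+)$.

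Lipschitz continuity in $t$ is the easy half. Since $\|a^\pm\|_{L^1(\X)}=1$ and $|u|\le C$, one has $|(a^\pm*u)(x,t)|\le C$, so \eqref{eq:basic} gives
\[
\left|\frac{\partial u}{\partial t}(x,t)\right|\le \ka^+ C+mC+\ka^- C^2=:K
\]
uniformly on $\X\times\R_+$. Integrating in $t$ yields $|u(x,t)-u(x,s)|\le K|t-s|$ for all $x\in\X$, from which $\bigl|\,\|u(\cdot,t)\|-\|u(\cdot,s)\|\,\bigr|\le K|t-s|$, proving the `moreover' statement.

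For the spatial part I would use the Duhamel representation at the fixed point $u=\Phi_0 u$ of \eqref{eq:exist_uniq_BUC:Phi_v},
\[
u(x,t)=(Bu)(x,0,t)u_0(x)+\ka^+\int_0^t (Bu)(x,s,t)(a^+*u)(x,s)\,ds,
\]
exploiting that $0\le(Bu)(x,s,t)\le e^{-m(t-s)}$, since $(a^-*u)\ge 0$. Let $\omega_0$ be the modulus of continuity of $u_0\in\Buc$ and set
\[
\eta^\pm(r):=\sup_{|h|\le r}\|a^\pm(\cdot)-a^\pm(\cdot-h)\|_{L^1(\X)},
\]
so $\eta^\pm(r)\to 0$ as $r\to 0^+$ by $L^1$-continuity of translation, whence $|(a^\pm*u)(x,s)-(a^\pm*u)(y,s)|\le C\eta^\pm(|x-y|)$ uniformly in $s$. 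The inequality $|e^{-a}-e^{-b}|\le|a-b|\max\{e^{-a},e^{-b}\}$ then yields
\[
|(Bu)(x,s,t)-(Bu)(y,s,t)|\le \ka^-(t-s)\,\eta^-(|x-y|)\,e^{-m(t-s)}.
\]
Splitting $u(x,t)-u(y,t)$ into four terms accordingly and using $te^{-mt}\le (me)^{-1}$, $\int_0^t(t-s)e^{-m(t-s)}\,ds\le m^{-2}$, and $\int_0^t e^{-m(t-s)}\,ds\le m^{-1}$, the bound
\[
|u(x,t)-u(y,t)|\le \omega_0(r)+\frac{C\ka^-}{me}\eta^-(r)+\frac{C^2\ka^+\ka^-}{m^2}\eta^-(r)+\frac{C\ka^+}{m}\eta^+(r),\quad r:=|x-y|,
\]
will be independent of $t\ge 0$ and tends to $0$ as $r\to 0$.

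The main obstacle I expect is exactly the control of $(Bu)(x,s,t)-(Bu)(y,s,t)$: the factor $(t-s)$ arising from the difference of exponents would cause divergence in $t$, but it is absorbed by the decay $e^{-m(t-s)}$ since $m>0$. This absorption is what makes the spatial modulus of continuity uniform in $t\ge 0$; a naive Gronwall argument applied directly to $u(x,t)-u(y,t)$ would only produce a modulus growing exponentially in~$t$.
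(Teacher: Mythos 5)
Your proof is correct, and it is more complete than the paper's in one important respect: it explicitly controls the spatial modulus of continuity \emph{uniformly in $t\ge 0$}, which is required for $u\in\BUC$ but is not delivered by the paper's single displayed estimate. The paper works directly from the integrated form of \eqref{eq:basic}; bounding the integrand by $\ka^+C+\ka^-C^2+mC$ gives precisely your temporal Lipschitz bound $|u(x,t)-u(x,\tau)|\le K|t-\tau|$ (uniform in $x$) and hence the ``moreover'' claim via the reverse triangle inequality --- identical to your first step. But that estimate, as printed with $y$ in place of $x$, cannot be read literally (letting $\tau\to t$ with $x\ne y$ would force $u(\cdot,t)$ to be constant), and in any case it does not produce a $t$-uniform spatial modulus. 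Your Duhamel decomposition $u=\Phi_0u$ supplies the missing half: the observation $0\le(Bu)(x,s,t)\le e^{-m(t-s)}$ (valid because the solutions under consideration are nonnegative, so $a^-*u\ge0$) absorbs the linearly growing factor $(t-s)$ and yields a modulus built from $\omega_0$ and the $L^1$-translation moduli $\eta^\pm$ that is independent of $t$. Your closing remark about why naive Gronwall fails is exactly the right diagnosis. Two minor corrections: the coefficient of $\eta^-$ arising from the term $[(Bu)(x,0,t)-(Bu)(y,0,t)]u_0(y)$ should carry a factor $C^2$ (one $C$ from $|u_0(y)|\le C$ and one from the $L^1$-translation bound inside the exponent of $B$), i.e.\ $\frac{C^2\ka^-}{me}\eta^-(r)$ rather than $\frac{C\ka^-}{me}\eta^-(r)$; and the nonnegativity of $u$, though not written in the statement, should be cited explicitly (it is guaranteed by Theorem~\ref{thm:exist_uniq_BUC} for $u_0\ge0$), since without it the bound $Bu\le e^{-m(t-s)}$ degenerates to $e^{(\ka^-C-m)(t-s)}$.
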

\begin{proof}
Being solution to \eqref{eq:basic}, $u$ satisfies the integral equation
\[
u(x,t)=u_0(x)+\int _0^t\bigl(\ka^{+}(a^{+}*u)(x,s)-\ka^{-}u(x,s)(a^{-}*u)(x,s)-mu(x,s)\bigr)\,ds.
\]
Hence for any $x,y\in\X$, $0\leq\tau< t$, one has
\begin{align*}
\left|u(x,t)-u(y,\tau)\right|&\leq\int_\tau^t(2\ka^{+}C+2\ka^{-}C^{2}+2mC)ds\\
&=2(\ka^{+}+\ka^{-}C+m)C(t-\tau),
\end{align*}
that fulfills the proof of the first statement. Then, the second one follows from the inequality $\bigl\lvert\|u(\cdot,t)\|-\|u(\cdot,\tau)\|\bigr\rvert\leq \|u(\cdot,t)-u(\cdot,\tau)\|$.
\end{proof}

\section{Around the comparison principle}
\label{sec:comparison}

The comparison principle is one of the basic tools for the study of elliptic and parabolic PDE. It is widely use for the nonlocal diffusion equation \eqref{nl+l} (see e.g. \cite{CDM2008}), however, it does not hold, in general, for \eqref{l+nl} (see e.g. \cite{AC2012,HR2014} and the references therein). We will find the sufficient conditions  (see \eqref{as:chiplus_gr_m} and \eqref{as:aplus_gr_aminus} below), under which the comparison principle for the equation \eqref{eq:basic} does hold and which will be the basic conditions for all our further settings. Moreover, one can show a necessity of these conditions (Remark~\ref{rem:necineq}). Subsection~\ref{subsec:max_principle} is devoted to the maximum principle, which is a counterpart of the comparison one for parabolic ODE. In particular, Theorem~\ref{thm:strongmaxprinciple} states that graphs of two different solutions to \eqref{eq:basic} never touch. The last Subsection gives further technical tools which will be explored through the paper.

\subsection{Comparison principle}\label{subsec:comparison_pr}

Let $T>0$ be fixed. Define the sets $\xt^1$ and $\tilde{\mathcal{X}}_T^1$ of functions from $\xt$, respectively, $\tilde{\mathcal{X}}_T$, which are continuously differentiable on $(0,T]$ in the sense of the norm in $\Buc$, respectively, in $L^\infty(\X)$. Here and below we consider the left derivative at $t=T$ only. For any $u$ from $\xt^1$ one can define the following function
\begin{multline}\label{Foper}
  (\mathcal{F}u)(x,t):=\dfrac{\partial u}{\partial t}(x,t)-\ka^{+}(a^{+}*u)(x,t)\\+mu(x,t)+\ka^{-}u(x,t)(a^{-}*u)(x,t)
\end{multline}
for all $t\in(0,T]$ and all $x\in\X$.
Moreover, for any $u\in\tilde{\mathcal{X}}_T^1$, one can consider the function $\dfrac{\partial u}{\partial t}(\cdot,t)\in L^\infty(\X)$, for all $t\in(0,T]$. Then, one can also define \eqref{Foper}, which will considered a.e. in $x\in\X$ now.

\begin{theorem}\label{thm:compar_pr}
Let there exist $c>0$, such that
\begin{equation}\label{compare}
  \ka^+ a^+(x)\geq c\ka^- a^-(x), \quad \text{a.a. } x\in\X.
\end{equation}
Let $T\in(0,\infty)$ be fixed and functions $u_{1},u_{2}\in\xt^1$ be such that, for any $(x,t)\in\X \times(0,T]$,
\begin{gather}
(\mathcal{F}u_{1})(x,t)\leq(\mathcal{F}u_{2})(x,t),\label{eq:max_pr_BUC:ineq}\\
u_{1}(x,t)\geq0, \qquad 0\leq u_{2}(x,t)\leq c, \qquad u_{1}(x,0)\leq u_{2}(x,0).\label{eq:init_for_compar}
\end{gather}
Then $u_{1}(x,t)\leq u_{2}(x,t)$, for all $(x,t)\in\X \times[0,T]$. In particular, $u_{1}\leq c$.
\end{theorem}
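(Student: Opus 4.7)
I would set $w := u_{2}-u_{1}$. Subtracting the pointwise inequality $(\mathcal{F}u_{1})(x,t) \leq (\mathcal{F}u_{2})(x,t)$ and using the algebraic identity
\begin{equation*}
u_{2}(a^{-}*u_{2}) - u_{1}(a^{-}*u_{1}) = u_{2}\,(a^{-}*w) + (a^{-}*u_{1})\,w,
\end{equation*}
the assumption \eqref{eq:max_pr_BUC:ineq} rewrites as the linear differential inequality
\begin{equation*}
\frac{\partial w}{\partial t}(x,t) + M(x,t)\,w(x,t) \geq \int_{\X} K(x,y,t)\,w(y,t)\,dy,
\end{equation*}
where $M(x,t) := m + \ka^{-}(a^{-}*u_{1})(x,t) \geq 0$ is bounded on $[0,T]$ since $u_{1}\in\xt$, and $K(x,y,t) := \ka^{+}a^{+}(x-y) - \ka^{-}u_{2}(x,t)\,a^{-}(x-y)$. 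The algebraic heart of the argument is that the bound $u_{2}\leq c$ from \eqref{eq:init_for_compar} together with the assumption \eqref{compare} forces
\begin{equation*}
K(x,y,t) \geq \ka^{+}a^{+}(x-y) - c\,\ka^{-}a^{-}(x-y) \geq 0 \quad\text{a.e.,}
\end{equation*}
while one has the trivial total-mass bound $\int_{\X}K(x,y,t)\,dy = \ka^{+}-\ka^{-}u_{2}(x,t)\leq\ka^{+}$.

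Next I would apply the integrating factor $\exp\bigl(\int_{0}^{t}M(x,s)\,ds\bigr)$, integrate the inequality on $[\tau,t]\subset(0,T]$, and let $\tau\downarrow 0$ (using continuity of $w$ up to $t=0$ inherited from $\xt$ and the initial condition $w(\cdot,0)\geq 0$ from \eqref{eq:init_for_compar}), to obtain the Duhamel-type bound
\begin{equation*}
w(x,t) \geq \int_{0}^{t} e^{-\int_{s}^{t}M(x,r)\,dr}\int_{\X}K(x,y,s)\,w(y,s)\,dy\,ds.
\end{equation*}
Splitting $w = w_{+}-w_{-}$ with $w_{\pm}\geq 0$, and using $K\geq 0$ together with $e^{-\int_{s}^{t}M\,dr}\leq 1$ to discard the nonnegative contribution of $w_{+}$, this yields
\begin{equation*}
w_{-}(x,t) \leq \int_{0}^{t}\int_{\X}K(x,y,s)\,w_{-}(y,s)\,dy\,ds \leq \ka^{+}\int_{0}^{t}\rho(s)\,ds,
\end{equation*}
with $\rho(s) := \sup_{x\in\X}w_{-}(x,s) \leq \|w\|_{T} < \infty$. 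Taking the supremum in $x$ gives the linear Volterra inequality $\rho(t)\leq\ka^{+}\int_{0}^{t}\rho(s)\,ds$ on $[0,T]$, and iteration produces $\rho(t)\leq\|w\|_{T}\,(\ka^{+}T)^{n}/n!$ for every $n\in\N$, hence $\rho\equiv 0$. Thus $w\geq 0$, i.e.\ $u_{1}\leq u_{2}$; the bound $u_{1}\leq c$ then follows at once from $u_{2}\leq c$.

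The main obstacle is really the sign/structure step at the beginning: the troublesome cross-term $-\ka^{-}u_{2}(a^{-}*w)$ could be made arbitrarily negative where $w<0$, and it is precisely the combination of the hypothesis \eqref{compare} with the a~priori bound $u_{2}\leq c$ that absorbs it into the nonnegative diffusion term $\ka^{+}(a^{+}*w)$, turning the linearised equation for $w$ into an order-preserving dynamics. Once this positivity of the kernel $K$ is in place, everything else is a routine Duhamel + Gronwall computation; the mild regularity $u_{i}\in\xt^{1}$ (continuous differentiability only on $(0,T]$, with continuity up to $t=0$) is handled by integrating on $[\tau,t]$ and passing to the limit $\tau\downarrow 0$.
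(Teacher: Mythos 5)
Your proof is correct, and it takes a genuinely different route from the paper's. The paper rewrites the difference $v=e^{Kt}(u_2-u_1)$ (with the \emph{constant} integrating factor $K=m+\ka^-\|u_1\|_T$) as a solution of a linear integral equation $\partial_t v = F(t,v)$, observes that $w\geq 0\Rightarrow F(t,w)\geq 0$ under \eqref{compare} and $u_2\leq c$, and then proves nonnegativity of $v$ by a truncated Picard fixed-point argument: the auxiliary operator $\Psi w = v(\cdot,0)+\int_0^t\max\{F(s,w),0\}\,ds$ is a contraction on a short interval, its (automatically nonnegative) fixed point solves the untruncated equation because the truncation is inactive, and by uniqueness it must equal $v$. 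You instead use the $x$-dependent multiplier $M(x,t)=m+\ka^-(a^-*u_1)(x,t)$, the exact rewriting of the quadratic term as $u_2(a^-*w)+(a^-*u_1)w$, and the same positivity of the effective kernel $K(x,y,t)=\ka^+a^+(x-y)-\ka^-u_2(x,t)a^-(x-y)$; you then integrate pointwise in $x$, drop the nonnegative $w_+$-contribution, and kill $\rho(t)=\|w_-(\cdot,t)\|$ by a direct Volterra--Gronwall iteration. Both approaches have the same algebraic heart (the absorption of $-\ka^-u_2(a^-*w)$ into $\ka^+(a^+*w)$ thanks to \eqref{compare} and $u_2\leq c$), but yours avoids the fixed-point machinery and the auxiliary truncation, making it shorter and more transparent; the paper's version fits more naturally into the Banach-space ODE framework it already set up for Theorem~\ref{thm:exist_uniq_BUC}, and the truncated-Picard technique recurs elsewhere in the paper. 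One bookkeeping remark: the supremum $\rho(t)=\sup_x w_-(x,t)$ is finite and continuous because $w\in\xt$ and $w\mapsto\max\{-w,0\}$ is $1$-Lipschitz on $\Buc$, which you should state explicitly so the Volterra iteration $\rho(t)\leq\|w\|_T(\ka^+t)^n/n!$ is fully justified.
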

\begin{proof}
Define the following function
\begin{equation}
f(x,t):=(\mathcal{F}u_{2})(x,t)-(\mathcal{F}u_{1})(x,t)\geq0, \quad x\in\X, t\in(0,T],\label{dif_pos}
\end{equation}
cf. \eqref{eq:max_pr_BUC:ineq}. We set
\begin{equation}
K= m+\ka^{-}\Vert u_{1}\Vert_{T},\label{eq:max_pr_BUC:K_as}
\end{equation}
and consider a linear mapping
\begin{multline}\label{mapF}
  F(t,w):=Kw-mw+\ka^{+}(a^{+}*w)\\-\ka^{-}w(a^{-}*u_{1})-\ka^{-}u_{2}(a^{-}*w)+e^{Kt}f,
\end{multline}
for $w\in\xt$.
By \eqref{eq:init_for_compar}, \eqref{dif_pos}, \eqref{eq:max_pr_BUC:K_as}, \eqref{compare}, \eqref{convbdd}, $w\geq0$ implies
\begin{align}\notag
F(t,w)&=Kw-mw+(\ka^{+}a^{+}-c\ka^-a^-)*w\\&\quad +\ka^{-}(c-u_{2})(a^{-}*w)
-\ka^{-}w(a^{-}*u_{1})+e^{Kt}f \notag\\
&\geq Kw-mw+(\ka^{+}a^{+}-c\ka^{-} a^{-})*w-\ka^{-}\Vert u_{1}\Vert_{T}w+e^{Kt}f\ge0.
\label{Fispos}
\end{align}
Define also the function
\begin{equation*}
v(x,t):=e^{Kt}(u_{2}(x,t)-u_{1}(x,t)), \quad x\in\X, t\in[0,T].
\end{equation*}
Clearly, $v\in\xt^1$, and it is straightforward to check that
\begin{equation}\label{tautology}
  F(t,v(s,t))=\frac{\partial}{\partial t} v(x,t),
\end{equation}
for all $x\in\X$, $t\in(0,T]$. Therefore, $v$ solves the following integral equation in $\Buc$:
\begin{equation}
\begin{cases}
\displaystyle v(x,t)=v(x,0)+\int_0^t F(s,v(x,s))ds, & \quad (x,t)\in\X {\times}(0,T],\\[3mm]
v(x,0)=u_{2}(x,0)-u_{1}(x,0), & \quad x\in\X,
\end{cases}\label{eq:max_pr_BUC:u2-u1_lin}
\end{equation}
where $v(x,0)\geq0$, by \eqref{eq:init_for_compar}.

Consider also another integral equation in $\Buc$:
\begin{align}
\tilde{v}(x,t)&=(\Psi \tilde{v})(x,t)\label{eq:max_pr_BUC:pos_sol} \\
\shortintertext{where}
(\Psi w)(x,t)&:=v(x,0)+\int_0^t\max\{F(s,w(x,s)),0\}\,ds,\qquad w\in\xt.\label{eq:max_pr_BUC:Psi}
\end{align}
It is easily seen that $w\in\xt^+$ yields $\Phi w\in\xt^+$.
Next, for any $\tilde{T}< T$ and for any $w_{1},w_{2}\in\mathcal{X}_{\tilde{T}}^+$, one gets from \eqref{mapF}, \eqref{eq:max_pr_BUC:Psi}, that
\begin{align}
\|\Psi w_{1}-\Psi w_{2}\|_{\tilde{T}}
&\leq\tilde{T}(K+m+\ka^{+}+\ka^{-}\Vert u_{1}\Vert_{T}+c\ka^{-})\|w_{2}-w_{1}\|_{\tilde{T}}\notag\\
&: =q_T \tilde{T}\|w_{2}-w_{1}\|_{\tilde{T}},\label{contractioncomp}
\end{align}
where we used the elementary inequality $\lvert \max\{a,0\}-\max\{b,0\}\rvert\leq |a-b|$, $a,b\in\R$.
Therefore, for $\tilde{T}<(q_T)^{-1}$, $\Psi$ is a contraction on $\mathcal{X}_{\tilde{T}}^+$. Thus, there exists a unique solution
to \eqref{eq:max_pr_BUC:pos_sol} on $[0,\tilde{T}]$. In the same
way, the solution can be extended on $[\tilde{T},2\tilde{T}], [2\tilde{T},3\tilde{T}]$, \ldots, and therefore, on the whole $[0,T]$.
By \eqref{eq:max_pr_BUC:pos_sol}, \eqref{eq:max_pr_BUC:Psi},
\begin{equation}\label{result}
\tilde{v}(x,t)\ge v(x,0)\ge0,
\end{equation}
hence, by \eqref{Fispos}, \eqref{eq:max_pr_BUC:Psi},
\begin{equation}\label{safas}
\tilde{v}(x,t)=v(x,0)+\int_0^t F(s,\tilde{v}(x,s))\,ds=:\Xi(\tilde{v})(x,t).
\end{equation}
Since $\tilde{v}\in\xt$, \eqref{safas} implies that $\tilde{v}$ is a solution to \eqref{eq:max_pr_BUC:u2-u1_lin} as well.
The same estimate as in \eqref{contractioncomp} shows that $\Xi$ is a contraction on $\mathcal{X}_{\tilde{T}}$, for small enough $\tilde{T}$. Thus $\tilde{v}=v$ on $\X \times[0,\tilde{T}]$, and one continue this consideration as before on the whole $[0,T]$.
Then, by \eqref{result}, $v(x,t)\geq0$ on $\X \times[0,T]$, that yields the statement.
\end{proof}

\begin{remark}\label{rem:max_pr_Linf}
The previous theorem holds true in $\tilde{\mathcal{X}}_T^1$. Here and below, for the $L^\infty$-case, one can assume that \eqref{eq:max_pr_BUC:ineq}, \eqref{eq:init_for_compar} hold almost everywhere in $x$ only.
\end{remark}

From the proof of Theorem~\ref{thm:compar_pr}, one can see that we used the fact that $u_1,u_2$ belong to $\xt^1$ to ensure that \eqref{tautology} implies \eqref{eq:max_pr_BUC:u2-u1_lin} only. For technical reasons we will need to extend the result of Theorem~\ref{thm:compar_pr} for a wider class of functions. Naturally, to get
\eqref{eq:max_pr_BUC:u2-u1_lin} from \eqref{tautology}, it is enough to assume absolute continuity of $v(x,t)$ in $t$, for a fixed $x$. Consider the corresponding statement.

For any $T\in(0,\infty]$, define the set $\D_T$ of all functions $u:\X\times\R_+\to\R$, such that, for all $t\in[0,T)$, $u(\cdot,t)\in \Buc$, and, for all $x\in\X$, the function $f(x,t)$ is absolutely continuous in $t$ on $[0,T)$. Then, for any $u\in\D_T$, one can define the function \eqref{Foper}, for all $x\in\X$ and a.a. $t\in[0,T)$.

\begin{proposition}\label{compprabscont}
The statement of Theorem~\ref{thm:compar_pr} remains true, if we assume that $u_1,u_2\in\D_T$ and, for any $x\in\X$, the inequality \eqref{eq:max_pr_BUC:ineq} holds for a.a.~$t\in(0,T)$~only.
\end{proposition}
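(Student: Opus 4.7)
In the proof of Theorem~\ref{thm:compar_pr}, the regularity $u_1,u_2\in\xt^1$ is used exclusively to pass from the pointwise identity \eqref{tautology}, $\partial_t v(x,t)=F(t,v(x,t))$, to the integral equation \eqref{eq:max_pr_BUC:u2-u1_lin} satisfied by $v(x,t):=e^{Kt}(u_2(x,t)-u_1(x,t))$. Under the weaker hypothesis $u_1,u_2\in\D_T$, for each fixed $x\in\X$ the function $v(x,\cdot)$ is the product of the smooth map $t\mapsto e^{Kt}$ with $u_2(x,\cdot)-u_1(x,\cdot)$, hence is absolutely continuous on $[0,T)$. Computing $\partial_t v(x,t)$ at every $t\in(0,T)$ at which both $\partial_t u_1(x,t)$ and $\partial_t u_2(x,t)$ exist (i.e.\ a.e.\ in $t$) and substituting the expressions for these derivatives given by \eqref{Foper} for $u_1$ and $u_2$, a direct rearrangement yields $\partial_t v(x,t)=F(t,v(x,t))$ with $F$ as in \eqref{mapF}. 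The Newton--Leibniz formula for absolutely continuous functions then recovers \eqref{eq:max_pr_BUC:u2-u1_lin} at every $(x,t)\in\X\times[0,T]$.

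With \eqref{eq:max_pr_BUC:u2-u1_lin} at hand, the construction of the nonnegative auxiliary function $\tilde v\in\xt^+$ as a fixed point of $\Psi$ from \eqref{eq:max_pr_BUC:Psi}, the positivity estimate \eqref{result}, and the verification via \eqref{Fispos} that $\tilde v$ solves the linear equation $\tilde v(x,t)=v(x,0)+\int_0^t F(s,\tilde v(x,s))\,ds$, all carry over verbatim from the proof of Theorem~\ref{thm:compar_pr}. These steps depend only on the estimate \eqref{convbdd}, the hypothesis \eqref{compare}, and the boundedness of $u_1,u_2$ on $\X\times[0,T]$, never on the $\xt^1$ regularity we have just dropped.

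The one genuinely new step is to identify $v$ with $\tilde v$: since $v$ is not a priori in $\xt$, the uniqueness obtained for free from the contraction argument for $\Xi$ in the original proof is no longer directly applicable. Instead, observe that the difference $w:=\tilde v-v$ satisfies a homogeneous linear integral equation with vanishing initial datum,
\begin{equation*}
w(x,t)=\int_0^t\Bigl[(K-m)w-\ka^- w\,(a^-*u_1)-\ka^- u_2\,(a^-*w)+\ka^+(a^+*w)\Bigr](x,s)\,ds.
\end{equation*}
Because $\tilde v\in\xt$ and $v(\cdot,t)\in\Buc$ is uniformly bounded for $t\in[0,T]$, the function $\rho(t):=\|w(\cdot,t)\|$ is bounded; using that $w(\cdot,t)\in\Buc$ is continuous in $x$ one has $\rho(t)=\sup_{y\in\mathbb{Q}^d}|w(y,t)|$, which is a countable supremum of functions continuous in $t$ (each $w(y,\cdot)$ inherits continuity in $t$ from the absolute continuity of $v(y,\cdot)$ and the continuity of $\tilde v(y,\cdot)$), hence Borel measurable. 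Estimating all convolutions by $\|a^\pm\|_{L^1(\X)}=1$ yields $\rho(t)\leq C\int_0^t\rho(s)\,ds$ for some finite $C=C(K,m,\ka^\pm,\|u_1\|_T,c)$, and Gronwall's lemma forces $\rho\equiv 0$, so $v=\tilde v\geq 0$, i.e.\ $u_1\leq u_2$. The main obstacle is precisely this final identification, which requires combining the density of $\mathbb{Q}^d$ in $\X$, the uniform continuity of $w(\cdot,t)$, and Gronwall's inequality to replace the automatic uniqueness lost by leaving the space $\xt$.
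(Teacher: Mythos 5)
Your proof is correct. The first two stages (passing from pointwise a.e.\ identities to the integral equation \eqref{eq:max_pr_BUC:u2-u1_lin} via Newton--Leibniz for absolutely continuous functions, then constructing $\tilde v$ exactly as before) are exactly what the paper does. Where you diverge is in the final identification $v=\tilde v$. You assert that the contraction argument for $\Xi$ on $\xt$ is no longer available because $v$ need not be in $\xt$, and you therefore run Gronwall's inequality on $\rho(t)=\lVert \tilde v(\cdot,t)-v(\cdot,t)\rVert$, supplying a measurability argument through the countable density of $\mathbb{Q}^d$. The paper, by contrast, points out that once \eqref{eq:max_pr_BUC:u2-u1_lin} holds one has $v=\Xi(v)$, and $\Xi$ maps any bounded function with $\Buc$-valued slices into $\xt$ (the integral $\int_0^t F(s,\cdot)\,ds$ is automatically Lipschitz in $t$ in the sup-norm even when the integrand is not $t$-continuous in $\Buc$); hence $v\in\xt$ after all, and the original contraction argument for $\Xi$ applies verbatim. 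Both routes are valid: yours bypasses the observation that $v\in\xt$ at the price of a Borel-measurability lemma and Gronwall, while the paper's is shorter because it recognizes that the fixed-point equation itself regularizes $v$ back into $\xt$. It would be worth noticing the latter simplification, but there is no gap in what you wrote.
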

\begin{proof}
  One can literally repeat the proof of Theorem~\ref{thm:compar_pr}; for any $x\in\X$, the function \eqref{dif_pos} and the mapping \eqref{mapF} will be defined for a.a. $t\in(0,T)$ now (and it will not be a mapping on $\xt$, of course). Similarly, \eqref{Fispos} and \eqref{tautology} hold, for all $x$ and a.a. $t$. However, for any $x\in\X$, one gets that \eqref{eq:max_pr_BUC:u2-u1_lin} holds still for all $t\in[0,T]$. Hence, the rest of the proof remains the same, stress that, in general, $F(t)v\notin\xt$, whereas $\Xi(v)\in\xt$, cf.~\eqref{safas}.
\end{proof}

The standard way to use Theorem~\ref{thm:compar_pr} is to take $u_1$ and $u_2$ which solve \eqref{eq:basic}, thus, $\mathcal{F}u_1=\mathcal{F}u_2=0$, and \eqref{eq:max_pr_BUC:ineq} holds. Then Theorem~\ref{thm:compar_pr} gives a comparison between these solutions provided that there exists a comparison between the initial conditions. However, to do this, one needs to know {\it \`{a} priori} that $u_2(x,t)\leq c$. For example, one can demand that $c$ is not smaller than the constant in the r.h.s. of \eqref{globalbound}. Another possibility is to compare the solution to \eqref{eq:basic} with the solution to its homogeneous version \eqref{eq:homogen} (with $t_0=0$).

Namely, let \eqref{compare} hold, $0<\upsilon\leq c$, and, cf.~\eqref{homogensol},
\begin{equation*}
\begin{split}
\psi(t,\upsilon)&:=\dfrac{\upsilon}{\upsilon g_\theta(t)+\exp(-\theta\ka^- t)}\geq0,\\ g_\theta(t)&:=\lim\limits_{y\to\theta}\dfrac{1-\exp(-y\ka^-t)}{y}\geq0.
\end{split}
\end{equation*}
It is easily seen that, for $\theta\leq0$, $\psi(t,\upsilon)$ decreases monotonically to $0$ on $t\in[0,\infty)$: exponentially fast, for $\theta<0$, and linearly fast, for $\theta=0$. In particular, $\psi(t,\upsilon)\leq \upsilon\leq c$, $t\geq0$. As a result,
\begin{itemize}
  \item if $\ka^+\leq m$ and $0\leq u_0\in \Buc$ be such that $\|u_0\|\leq c$, then $\|u(\cdot,t)\|\leq \psi\bigl(t,\|u_0\|\bigr)$. In particular, $u$ converges to $0$  uniformly in space as $t\to\infty$.
\end{itemize}

Next, for $\theta>0$, the function $\psi(t,\upsilon)$ increases monotonically to $\theta$ on $t\in[0,\infty)$, if~$\upsilon<\theta$; and it decreases monotonically to $\theta$, if $\upsilon>\theta$, and, clearly, $\psi(t)\equiv\theta$, if~$\upsilon=\theta$.~Therefore, if~\eqref{compare} holds with $c>\theta$ and $0 <\|u_0\|\leq c$ then $\psi(t,\|u_0\|)\leq \|u_0\|\leq c$, and therefore, $\|u(\cdot,t)\|\leq \psi\bigl(t,\|u_0\|\bigr)\to\theta$, $t\to\infty$. Set also $\inf\limits_{\X} u_0(x)=:\beta\geq0$, then one can apply the comparison principle to the functions $u_1=\psi(t,\beta)$ and $u_2=u$. (Note that $\psi(t,0)=0$.)
As a result,
\begin{itemize}
  \item if $\ka^+>m$ and $0\leq u_0\in \Buc$ be such that $0 <\|u_0\|\leq c$, then $\psi(t,\beta)\leq u(x,t)\leq \psi\bigl(t,\|u_0\|\bigr)$, $x\in\X$, $t\geq0$, where $\beta=\inf\limits_{\X} u_0(x)\geq0$. In particular, if $\beta>0$ then $u$ converges to $\theta$ exponentially fast as $t\to\infty$ and uniformly in space.
\end{itemize}

Consider the case in which \eqref{compare} holds with $c\geq\theta$ and $\|u_0\|\leq \theta$,  in more details. Then, one can set $u_2\equiv \theta$ (that is a solution to \eqref{eq:basic}), and $\|u(\cdot,t)\|\leq\theta=\psi(t,\theta)$. Of course, for this case it is enough to have \eqref{compare} with $c=\theta$ only. The latter constitutes the following basic assumptions for the most part of our further results:
\begin{assum}\label{as:chiplus_gr_m}
\ka^{+}>m,
\end{assum}\vspace{-4mm}
\begin{assum}\label{as:aplus_gr_aminus}
\ka^{+}a^{+}(x)\geq(\ka^{+}-m)a^{-}(x),\quad \text{a.a.}\ x\in\X.
\end{assum}

\begin{proposition} \label{prop:comp_pr_BUC}
Suppose that \eqref{as:chiplus_gr_m} and \eqref{as:aplus_gr_aminus} hold.
Let $0\leq u_0\in \Buc$ be an initial condition to \eqref{eq:basic} and $u\in\xt$ be the corresponding solutions on any $[0,T]$, $T>0$. Suppose that
$0\leq u_0(x)\leq \theta$, $x\in\X$.
Then $u\in\Xinf$, with $\|u\|_\infty\leq \theta$.

Let $v_0\in \Buc$ be another initial condition to \eqref{eq:basic} such that $u_0(x)\leq v_0(x)\leq\theta$, $x\in\X$; and $v\in\Xinf$ be the corresponding solution. Then
\begin{equation*}
u(x,t)\leq v(x,t), \quad x\in\X, t\geq0.
\end{equation*}

If, additionally, $\beta:=\inf\limits_{x\in\X} u_0(x)>0$, then
\begin{equation}\label{doubleest}
  \dfrac{\beta\theta}{\beta +(\theta-\beta)\exp(-\theta\ka^- t)}\leq u(x,t)\leq \theta, \quad x\in\X, t\geq0.
\end{equation}
In particular,
\begin{equation*}
\|u(\cdot,t)-\theta\|\leq \frac{\theta-\beta}{\beta} \exp(-\theta\ka^- t), \quad t\geq0.
\end{equation*}
\end{proposition}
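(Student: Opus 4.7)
The plan is to apply Theorem~\ref{thm:compar_pr} three times, each with the choice $c := \theta$; this choice is admissible precisely because assumption \eqref{as:aplus_gr_aminus} reads $\ka^{+}a^{+}(x) \geq \theta\ka^{-}a^{-}(x)$ a.e. The key preliminary observation is that the constant $\theta$ is a stationary solution of \eqref{eq:basic}: indeed, by \eqref{theta_def}, $\mathcal{F}\theta = m\theta + \ka^{-}\theta^{2} - \ka^{+}\theta = \theta(m + (\ka^{+}-m) - \ka^{+}) = 0$.

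For the upper bound $u \leq \theta$, I would invoke Theorem~\ref{thm:exist_uniq_BUC} on an arbitrary finite interval $[0,T]$ to obtain $u \in \xt$, then apply Theorem~\ref{thm:compar_pr} with $u_1 := u$, $u_2 \equiv \theta$: hypotheses \eqref{eq:max_pr_BUC:ineq}--\eqref{eq:init_for_compar} reduce to $u_0 \leq \theta$ and $u \geq 0$, which hold by assumption and construction. Since $T>0$ is arbitrary, this gives $u \in \Xinf$ with $\|u\|_{\infty} \leq \theta$, and the same argument applied to $v$ produces $v \in \Xinf$ with $v \leq \theta$. The comparison $u \leq v$ is then a third application of Theorem~\ref{thm:compar_pr}, this time with $u_1 := u$, $u_2 := v$: the newly-established bound $v \leq \theta$ supplies the required $u_{2} \leq c$ hypothesis, while $u_0 \leq v_0$ is given.

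For the two-sided bound \eqref{doubleest} I would exploit the spatially-constant solution provided by Corollary~\ref{cor:startwithconst} with initial value $\beta := \inf_{\X} u_{0} > 0$; specialising \eqref{homogensol} to $\theta > 0$ yields
\[
\psi(t) = \dfrac{\beta\,\theta}{\beta + (\theta - \beta)\,e^{-\theta\ka^{-} t}},
\]
which satisfies $\beta \leq \psi(t) \leq \theta$ for all $t \geq 0$. A final application of Theorem~\ref{thm:compar_pr}, with $u_1 := \psi$ (independent of $x$) and $u_2 := u$, closes the argument: the hypothesis $u_{2} \leq \theta$ is the upper bound just established, and $\psi(0) = \beta \leq u_{0}(x)$ by definition of $\beta$. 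The exponential rate follows from the identity
\[
\theta - \psi(t) = \frac{\theta(\theta-\beta)\,e^{-\theta\ka^{-}t}}{\beta + (\theta-\beta)\,e^{-\theta\ka^{-}t}}
\]
together with the trivial estimate $\beta + (\theta-\beta)e^{-\theta\ka^{-}t} \geq \beta$.

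No genuine obstacle arises: the whole proposition is a sequence of comparisons against explicit solutions ($\theta$ constant, $v$, and $\psi(t,\beta)$). The only point that needs care is the logical ordering—the upper bound $u \leq \theta$ must be proved \emph{first}, because it is used to verify the hypothesis $u_{2} \leq c = \theta$ in each of the subsequent two applications of Theorem~\ref{thm:compar_pr}.
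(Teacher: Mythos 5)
Your proof is correct and takes essentially the same approach as the paper: Theorem~\ref{thm:compar_pr} with $c=\theta$, applied first against the constant solution $u_2\equiv\theta$, then against $v$, and finally against the spatially homogeneous logistic solution $\psi(t,\beta)$ from Corollary~\ref{cor:startwithconst}. Note that your computation (and \eqref{doubleest} itself) actually gives $\|u(\cdot,t)-\theta\|\leq \tfrac{\theta(\theta-\beta)}{\beta}\exp(-\theta\ka^- t)$; the final displayed inequality in the proposition appears to be missing a factor of~$\theta$.
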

\begin{proof}
The first two parts were proved above; note that $\theta\ka^-=\ka^+-m$. The last one is followed from the definition of the function $\psi$ above and the estimate for the difference between low and upper bounds in \eqref{doubleest}.
\end{proof}
\begin{remark}
\label{rem:comp_pr_Linf}
The same result may be formulated for $\tilde{\mathcal{X}}_T$ and $\tilde{\mathcal{X}}_\infty$. All inequalities will hold true almost everywhere only.
\end{remark}

We did not consider all possible relations between $c$, $\theta>0$, and $\|u_0\|$. In particular, the previous-type considerations do not cover the situation in which \eqref{compare} holds with $c<\theta$. In such a case, the solution to \eqref{eq:homogen} (with $t_0=0$) can not be considered as a function $u_2$ in Theorem~\ref{thm:compar_pr} since that solution tends to $\theta$ as $t\to\infty$, hence, \eqref{eq:init_for_compar} will not hold. This situation remains open.

Another case, which is not covered by the comparison method is the following: let $\theta>0$, i.e. \eqref{as:chiplus_gr_m} holds, and $\|u_0\|>c$. However, it may be analyzed using stability arguments provided that $c\geq\theta$, the latter evidently implies \eqref{as:aplus_gr_aminus}. Under assumptions \eqref{as:chiplus_gr_m}, \eqref{as:aplus_gr_aminus}, we set, cf.~\eqref{Jq} below,
\begin{equation}\label{diffofkernels}
  J_\theta(x):=\ka^+a^+(x)-(\ka^+-m)a^-(x)\geq0,\quad x\in\X.
\end{equation}
Next, denote the r.h.s. of \eqref{eq:basic} by $G(u)$. Recall, that $G(\theta)=0$, hence, $u^*\equiv\theta$ is a stationary solution to~\eqref{eq:basic}. Another stationary solution is $u_*\equiv0$. Consider the stability property of these solutions. To do this, find the linear operator $G'(u)$ on $\Buc$: for $v\in\Buc$,
\begin{align}\label{derofG}
  G'(u)v&=\frac{d}{ds} G(u+sv)\biggr|_{s=0}\notag\\&=\ka^+(a^+*v)-mv-\ka^-v(a^-*u)-\ka^-u(a^-*v).
\end{align}
Therefore, by \eqref{diffofkernels},
\[
G'(\theta)v=\ka^+(a^+*v)-mv-\ka^-\theta v-\ka^-\theta(a^-*v)=J_\theta*v-\ka^+ v.
\]
By \eqref{diffofkernels}, $\int_\X J_\theta(x)\,dx=m$, thus, the spectrum $\sigma(A)$ of the operator $Av:=J_\theta *v$ on $\Buc$ is a subset of $\{z\in\mathbb{C}\mid |z|\leq m\}$. Therefore,
\[
\sigma(G'(\theta))=\sigma(A-\ka^+)\subset\{z\in\mathbb{C}\mid |z+\ka^+|\leq m\}\subset \{z\in\mathbb{C}\mid \mathrm{Re}\, z<0\},
\]
by \eqref{as:chiplus_gr_m}. Hence, by e.g. \cite[Chapter VII]{DK1974}, $u^*\equiv\theta$ is uniformly and asymptotically stable solution, in the sense of Lyapunov, i.e., for any $\eps>0$ there exists $\delta>0$ such that, for any solution $u\in\Buc$ to \eqref{eq:basic} and for all $t_1\geq0$, the inequality $\|u(\cdot,t_1)-\theta\|<\delta$ implies that, for any $t\geq t_1$, $\|u(\cdot,t)-\theta\|<\eps$; and, for some $\delta_0>0$, the inequality $\|u(\cdot,t_1)-\theta\|<\delta_0$ yields $\lim\limits_{t\to\infty}\|u(\cdot,t)-\theta\|=0$.
In~particular, it works if $\theta<\|u_0\|\leq \theta+\delta_0$. Moreover, it is possible to show that $u^*\equiv\theta$ is a globally asymptotically (exponentially) stable solution to \eqref{eq:basic}, that means, in particular, that $\|u_0\|>\theta$ may be arbitrary; we expect to discuss this in a forthcoming paper.

Note also, that, by \eqref{derofG},
$G'(0)v=\ka^+(a^+*v)-mv$. Then, the operator $G'(0)$ has an eigenvalue $\ka^+-m>0$ whose corresponding eigenfunctions will be constants on $\X$. Therefore $\sigma(G'(0))$ has points in the right half-plane and since $G''(0)$ exists, one has, again by \cite[Chapter VII]{DK1974}, that $u_*\equiv0$ is unstable, i.e. there exists a solution $u$ such that $\inf\limits_\X |u(x,t)|\geq \eps$, for some $\eps>0$, for all $x\in\X$ and for all $t\geq t_0=t_0(\eps)$. It is worth noting that, $u_*\equiv\theta$ is a locally stable solution, see Subsection~\ref{subsec:loc_stab} below.

The natural question arises whether it is possible to characterize some properties of the solution to \eqref{eq:basic} without comparison between $a^+$ and $a^-$ like \eqref{compare}. For a particular answer, one can refer to \cite[Theorem~4.3]{FKKozK2014}, namely, there was proved that if \eqref{compare} holds on a set $\Omega\subset\X$ of positive Lebesgue measure, and if $\int_\Omega (\ka^+a^+(x)-c\ka^-a^-(x))\,dx<m$, then $\|u_0\|<\theta$ implies $\|u(\cdot,t)\|\leq\theta$, $t>0$.

\begin{remark}\label{rem:necineq}
The condition \eqref{compare} is the necessary one to have a comparison principle for
nonnegative (essentially) bounded by the constant $c$ solutions to \eqref{eq:basic}, provided that $c\geq\theta$. To show this, consider, for simplicity, the case $c=\theta$. Let the condition \eqref{as:aplus_gr_aminus} fails in a ball $B_{r}(y_0)$ only, ${r}>0$, $y_0\in\X$, i.e. $J_\theta(x)<0$, for a.a. $x\in B_{r}(y_0)$, where $J_\theta$ is given by \eqref{diffofkernels}. Take any $y\in B_{r}(y_0)$ with $\frac{{r}}{4}<|y-y_0|<\frac{3{r}}{4}$, then $y_0\notin B_{\frac{{r}}{4}}(y)$ whereas $B_{\frac{{r}}{4}}(y)\subset B_{r}(y_0)$.
Take $u_0\in\Buc$ such that $u_0(x)=\theta$, $x\in \X\setminus B_{\frac{{r}}{4}}(y)$, and $u_0(x)< \theta$, $x\in B_{\frac{{r}}{4}}(y)$. Since $\int_\X J_\theta(x)\,dx=m$, one has
\begin{align*}
\frac{\partial u}{\partial t}(y_0,0)&=-m\theta+\ka^+(a^+*u)(y_0,\theta)-\ka^-\theta (a^-*u)(y_0,0)\notag\\&=(J_\theta*u)(y_0,0)-m\theta=(J_\theta*(u_0-\theta))(y_0)
\\&
=\int_{B_{\frac{{r}}{4}}(y)} J_\theta(y_0-x)(u_0(x)-\theta)\,dx>0,\notag
\end{align*}
Therefore, $u(y_0,t)>u(y_0,0)=\theta$, for small enough $t>0$, and hence, the statement of Proposition~\ref{prop:comp_pr_BUC} does not hold in this case. The similar counterexample may be considered if \eqref{compare} fails, for $c>\theta$. Note that the case $c<\theta$ is again unclear.
\end{remark}

\subsection{Maximum principle}\label{subsec:max_principle}

The maximum principle is a `standard counterpart' of the comparison principle, see e.g. \cite{Cov2007}.

We will present sufficient conditions that solutions to \eqref{eq:basic} never reach at positive times the stationary values $\theta$ and $0$, provided that the corresponding initial conditions were not these constants. Moreover, we will prove the so-called strong maximum principle (Theorem~\ref{thm:strongmaxprinciple}), cf. e.g. \cite{CDM2008}.

Through the rest of the paper we will suppose that \eqref{as:chiplus_gr_m}, \eqref{as:aplus_gr_aminus} hold and $\theta>0$ is given by \eqref{theta_def}. Under these assumptions, for any $q\in(0,\theta]$, one can generalize the function \eqref{diffofkernels} as follows
\begin{equation}\label{Jq}
    \begin{split}
    J_q(x):&=\ka^+a^+(x)-q\ka^-a^-(x), \\
    &\geq  \ka^+a^+(x)-\theta\ka^-a^-(x)\geq 0, \qquad x\in\X.
    \end{split}
  \end{equation}
since $\theta\ka^-=\ka^+-m$ and \eqref{as:aplus_gr_aminus} holds.

\begin{definition}
For $\theta>0$, given by \eqref{theta_def}, consider the following sets
\begin{align}
\Utheta&:=\{f\in \Buc \mid 0\le f(x)\le\theta,\ x\in\X \},\label{eq:B_theta}\\
\Ltheta&:=\{f\in L^{\infty}(\X)\mid 0\le f(x)\le\theta,\ \text{for a.a.}\ x\in\X \}.\label{eq:L_theta}
\end{align}
\end{definition}

We introduce also the following assumption:
\begin{assum}\label{as:a+nodeg}
\text{there exists $\rho,\delta>0$ such that} \ a^{+}(x)\geq\rho, \text{ for a.a. } x\in B_\delta(0).
\end{assum}

Prove that then the solutions to \eqref{eq:basic} (or, equivalently, \eqref{usefulform}) are strictly positive; this is quite common feature of linear parabolic equations, however, in general, it may fail for nonlinear ones.

\begin{proposition}\label{prop:u_gr_0}
Let \eqref{as:chiplus_gr_m}, \eqref{as:aplus_gr_aminus}, \eqref{as:a+nodeg}
hold. Let $u_0\in \Utheta$, $u_0\not\equiv0$, $u_0\not\equiv\theta$, be~the~initial condition to \eqref{eq:basic}, and $u\in\Xinf$ be the corresponding solution. Then
\[
u(x,t)>\inf_{\substack{y\in\X\\ s>0}}u(y,s)\geq0, \qquad x\in\X, t>0.
\]
\end{proposition}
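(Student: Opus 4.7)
The plan is to prove this by a strong minimum principle argument, by contradiction. Set $m_0 := \inf_{y \in \X,\, s > 0} u(y, s)$; by Proposition~\ref{prop:comp_pr_BUC}, $0 \leq m_0 \leq \theta$. I assume that $u(x_0, t_0) = m_0$ for some $x_0 \in \X$ and some $t_0 > 0$, and I aim to show that $u \equiv m_0$ on $\X \times [0, t_0]$, which will contradict the hypotheses $u_0 \not\equiv 0$, $u_0 \not\equiv \theta$.

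I rewrite \eqref{eq:basic} in the linear form $\partial_t u + c(x,t) u = \ka^{+}(a^{+}*u)(x,t)$, where $c(x,t) := m + \ka^{-}(a^{-}*u)(x,t)$ satisfies $m \leq c \leq \ka^{+}$ (the upper bound uses $\|u\|_\infty \leq \theta$ from Proposition~\ref{prop:comp_pr_BUC} and $\|a^-\|_{L^1}=1$). For $v := u - m_0 \geq 0$ a direct computation gives
\[
\partial_t v + c v = \ka^{+}(a^{+}*v) + (\ka^{+} - c)\, m_0,
\]
and integrating in $t$ with the factor $\exp\!\bigl(\int_0^t c(x,s)\,ds\bigr)$ produces
\[
v(x,t) = v(x,0)\, e^{-\int_0^t c(x,s)\,ds} + \int_0^t e^{-\int_s^t c(x,r)\,dr} \bigl[\ka^{+}(a^{+}*v)(x,s) + (\ka^{+}-c(x,s))\,m_0\bigr] ds.
\]
Every term on the right is pointwise nonnegative, so $v(x_0, t_0) = 0$ forces $v(x_0, 0) = 0$ and $(a^{+}*v)(x_0, s) = 0$ for a.e.\ $s \in (0, t_0)$; continuity (Lemma~\ref{le:simple}, Proposition~\ref{prop:u_in_BUC}) upgrades this to all $s \in [0, t_0]$. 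Since $a^{+} \geq \rho$ on $B_\delta(0)$ by \eqref{as:a+nodeg} and $v \geq 0$, the identity $\int a^{+}(x_0 - y)\, v(y, s)\,dy = 0$ then forces $v \equiv 0$ on $B_\delta(x_0)$ for every $s \in [0, t_0]$.

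The iteration now proceeds by a clopen argument. Define
\[
\Omega := \{x \in \X : u(x, s) = m_0 \text{ for all } s \in [0, t_0]\}.
\]
The set $\Omega$ is nonempty (it contains $x_0$ because the propagation step applied at $x_0$ gives $v(x_0, s) = 0$ for every $s \in [0, t_0]$), it is closed by continuity of $u$, and the previous paragraph shows that it is open (each $x \in \Omega$ satisfies $B_\delta(x) \subset \Omega$). Connectedness of $\X$ therefore implies $\Omega = \X$, and in particular $u_0 \equiv m_0$. Corollary~\ref{cor:startwithconst} then identifies $u(x, t) = \psi(t, m_0)$ for all $t \geq 0$; but $m_0 = 0$ contradicts $u_0 \not\equiv 0$, $m_0 = \theta$ contradicts $u_0 \not\equiv \theta$, and for $0 < m_0 < \theta$ the function $\psi(\cdot, m_0)$ is strictly increasing to $\theta$ (as recorded in the text following Proposition~\ref{prop:comp_pr_BUC}), so $\psi(t_0, m_0) > m_0 = u(x_0, t_0)$, a contradiction. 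The principal obstacle is the single propagation step that upgrades a pointwise vanishing of $v$ at $(x_0, t_0)$ to a vanishing on the whole slab $B_\delta(x_0) \times [0, t_0]$; this is exactly where \eqref{as:a+nodeg} and the explicit integral representation must be combined, and once it is in hand the clopen/connectedness argument and the contradiction via Corollary~\ref{cor:startwithconst} are essentially immediate.
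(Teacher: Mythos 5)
Your proof is correct, and it takes a genuinely different route from the paper's. The paper works at the single time slice $t=t_0$: it reads the equation in the differential form $\partial_t u - L_{a^+}u \ge 0$, uses $\partial_t u(x_0,t_0)=0$ at the interior minimum to force $(L_{a^+}u)(x_0,t_0)=0$, iterates \eqref{as:a+nodeg} to get $u(\cdot,t_0)$ constant, and then from the equation deduces $u(\cdot,t_0)\in\{0,\theta\}$. The case $u(\cdot,t_0)=\theta$ is dismissed because $\theta$ is a supremum; the case $u(\cdot,t_0)=0$ requires propagating forward via \eqref{homogensol} and then, crucially, \emph{reversing time} in \eqref{eq:basic} and proving uniqueness of the (time-reversed) linear equation to conclude $u\equiv 0$ on $[0,t_0]$. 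Your Duhamel argument replaces all of this: by rewriting the equation in integrated form with the factor $e^{\int c}$, every summand on the right-hand side is manifestly nonnegative, and $v(x_0,t_0)=0$ forces vanishing not just at $t_0$ but on the entire slab $\{x_0\}\times[0,t_0]$ (in particular you recover $v(x_0,0)=0$ directly, with no time reversal). The clopen argument then yields $u\equiv m_0$ on $\X\times[0,t_0]$, hence $u_0\equiv m_0$, and a single appeal to Corollary~\ref{cor:startwithconst} disposes simultaneously of the three subcases $m_0=0$, $m_0=\theta$, $m_0\in(0,\theta)$ (the last one, which the paper doesn't reach because it already knows the constant is $0$ or $\theta$, is killed by strict monotonicity of $\psi(\cdot,m_0)$). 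Your version buys a cleaner structure — it avoids the most delicate step of the paper's argument, the backward-in-time uniqueness for \eqref{eq:inverse_basic} — at the mild cost of needing the somewhat stronger observation that $c\le\ka^+$ (from $\|u\|\le\theta$) to make the inhomogeneous term $(\ka^+-c)m_0$ nonnegative.
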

\begin{proof}
By Theorem~\ref{thm:exist_uniq_BUC} and Proposition~\ref{prop:comp_pr_BUC}, $0\leq u(x,t)\leq \theta$, $x\in\X$, $t\geq0$. Then, by \eqref{usefulform},
\begin{equation}\label{hintineq}
  \dfrac{\partial u}{\partial t}(x,t)-(L_{a^{+}}u)(x,t) \ge0.
\end{equation}
Prove that, under \eqref{hintineq}, $u$ cannot attain its infimum on $\X\times(0,\infty)$ without being a constant. Indeed, suppose that, for some $x_0\in\X$, $t_0>0$,
\begin{equation}\label{minleq}
u(x_0,t_0)\leq u(x,t), \quad x\in\X, t>0.
\end{equation}
Then, clearly,
\begin{equation}\label{minpoint}
  \dfrac{\partial u}{\partial t}(x_0,t_0)=0,
\end{equation}
and \eqref{hintineq} yields
$(L_{a^{+}}u)(x_0,t_0)\le0$. On the other hand, \eqref{minleq} and \eqref{jump} imply
$(L_{a^{+}}u)(x_0,t_0)\ge0$. Therefore,
\begin{equation}\label{backgrforfurther}
  \int_\X a^+(x_0-y)(u(y,t_0)-u(x_0,t_0)) \,dy=0.
\end{equation}
Then, by \eqref{as:a+nodeg}, for all $y\in B_\delta(x_0)$,
\begin{equation}
u(y,t_0)=u(x_0,t_0).\label{eq:getmax}
\end{equation}
By the same arguments, for an arbitrary $x_{1}\in\partial B_{\delta}(x_0)$,
we obtain \eqref{eq:getmax}, for all $y\in B_{\delta}(x_{1})$.
Hence, \eqref{eq:getmax} holds on $B_{2\delta}(x_0)$, and so on. As a result, \eqref{eq:getmax}~holds, for all $y\in\X $, thus $u(\cdot,t_0)$ is a constant.
Then, considering \eqref{eq:basic} at $(x_0,t_0)$, and taking into account \eqref{minpoint}, one gets $u(x_0,t_0) (\theta - u(x_0,t_0))=0$ with $u(x,t_0)=u(x_0,t_0)$, $x\in\X$; cf. \eqref{eq:homogen}. By \eqref{minleq}, $u(x_0,t_0)=\theta\geq\sup_{y\in\X,s>0}u(y,s)$ implies $u\equiv\theta$, that contradicts $u_0\not\equiv\theta$. Hence $u(x,t_0)=u(x_0,t_0)=0$, $x\in\X$. Then, by \eqref{homogensol}, $u(x,t)=0$, $x\in\X$, $t\geq t_0$.
And now one can consider the reverse time in \eqref{eq:basic} starting from $t=t_0$. Namely, we set $w(x,t):=u(x,t_0-t)$, $t\in[0,t_0]$, $x\in\X$. Then $w(x,0)=v(t_0)=0$, $x\in\X$, and
\begin{equation}
\dfrac{\partial w}{\partial t}(x,t)= mw(x,t) -\ka^{+}(a^{+}*w)(x,t)
 +\ka^{-}w(x,t)(a^{-}*w)(x,t).
\label{eq:inverse_basic}
\end{equation}
The equation \eqref{eq:inverse_basic} has a unique classical solution in $\Buc$ on $[0,t_0]$. Indeed, if $w_1,w_2\in\mathcal{X}_{t_0}$ both solve \eqref{eq:inverse_basic}, then the difference $w_2-w_1$ is a solution to the following linear equation
\begin{equation}
\begin{aligned}
\dfrac{\partial h}{\partial t}(x,t)&=mh(x,t) -\ka^{+}(a^{+}*h)(x,t)\\
 &\quad +\ka^{-}h(x,t)(a^{-}*w_2)(x,t)\ka^{-}w_{1}(x,t)(a^{-}*h)(x,t),
\end{aligned}
\label{eq:inverse_basic_lin}
\end{equation}
with $h(x,0)=0$, $x\in\X$. The r.h.s. of \eqref{eq:inverse_basic_lin}, for any $w_1,w_2\in\mathcal{X}_{t_0}$, is a bounded linear operator on $\Buc$, therefore, there exists a unique solution to \eqref{eq:inverse_basic_lin}, hence, $h\equiv0$. As a result, $w_1\equiv w_2$. Since $w\equiv0$ satisfies \eqref{eq:inverse_basic} with the initial condition above, one has $u(x,t_0-t)=0$, $t\in[0,t_0]$, $x\in\X$. Hence, $u(\cdot,t)\equiv0$, for all $t\geq0$, that contradicts $u_0\not\equiv0$. Thus, the initial assumption was wrong, and \eqref{minleq} can not hold.
\end{proof}

In contrast to the case of the infimum, the solution to \eqref{eq:basic} may attain its supremum but not the value $\theta$. One can prove this under a modified version of \eqref{as:a+nodeg}: suppose that, cf. \eqref{Jq},
\begin{assum}\label{as:aplus-aminus-is-pos}
\begin{gathered}
\text{there exists $\rho,\delta>0$, such that} \\
J_\theta(x)=\ka^{+}a^{+}(x)-(\ka^{+}-m)a^{-}(x)\geq\rho, \text{ for a.a. }  x\in B_\delta(0).
\end{gathered}
\end{assum}
As a matter of fact, under \eqref{as:aplus-aminus-is-pos}, a much stronger statement than unattainability of $\theta$ does hold.

\begin{theorem}\label{thm:strongmaxprinciple}
Let \eqref{as:chiplus_gr_m}, \eqref{as:aplus_gr_aminus}, \eqref{as:aplus-aminus-is-pos}
hold. Let $u_1,u_2\in\Xinf$ be two solutions to \eqref{eq:basic}, such that $u_1(x,t)\leq u_2(x,t)\leq\theta$, $x\in\X$, $t\geq0$. Then either $u_1(x,t)= u_2(x,t)$, $x\in\X$, $t\geq0$ or
$u_1(x,t)< u_2(x,t)$, $x\in\X$, $t>0$.
\end{theorem}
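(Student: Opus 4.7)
The plan is to set $w := u_2 - u_1 \geq 0$ and to establish a propagation-of-zeros statement: if $w(x_0,t_0) = 0$ at some $(x_0,t_0) \in \X \times (0,\infty)$, then $w \equiv 0$ on $\X \times [0,\infty)$.

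First I would derive the evolution equation for $w$. Subtracting the equations \eqref{eq:basic} for $u_1$ and $u_2$ and using $u_2(a^-*u_2) - u_1(a^-*u_1) = u_2(a^-*w) + w(a^-*u_1)$ gives
\begin{equation*}
\frac{\partial w}{\partial t} = \ka^+(a^+*w) - mw - \ka^- u_2(a^-*w) - \ka^- w(a^-*u_1).
\end{equation*}
Since $u_1, u_2 \leq \theta$ and $w, a^-*w \geq 0$, and since $\int a^- = 1$ yields $a^-*u_1 \leq \theta$, one can use $\theta\ka^- = \ka^+ - m$ together with the definition \eqref{Jq} of $J_\theta$ to obtain the differential inequality
\begin{equation*}
\frac{\partial w}{\partial t} + \ka^+ w \geq (J_\theta * w) \geq 0.
\end{equation*}

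Multiplying by $e^{\ka^+ t}$ and integrating in $t$ then yields the Duhamel-type lower bound
\begin{equation*}
w(x,t) \geq e^{-\ka^+ t} w(x,0) + \int_0^t e^{-\ka^+(t-s)}(J_\theta * w)(x,s)\,ds,
\end{equation*}
with both terms on the right nonnegative. If $w(x_0,t_0) = 0$, this forces $w(x_0,0) = 0$ and $(J_\theta * w)(x_0,s) = 0$ for a.a.\ $s \in [0,t_0]$; by Lemma~\ref{le:simple} the map $s \mapsto (J_\theta * w)(x_0,s)$ is continuous, so the identity holds for every $s$ in this range. Assumption \eqref{as:aplus-aminus-is-pos} then gives
\begin{equation*}
0 = (J_\theta * w)(x_0,s) \geq \rho \int_{B_\delta(x_0)} w(y,s)\,dy,
\end{equation*}
whence, by continuity of $w$ (Proposition~\ref{prop:u_in_BUC}), $w(y,s) = 0$ for all $y \in B_\delta(x_0)$ and $s \in [0,t_0]$.

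To conclude, I would propagate spatially: for every $x_1 \in B_\delta(x_0)$ one has $w(x_1,t_0) = 0$, so repeating the previous step with $x_1$ in place of $x_0$ extends the zero set to $B_{2\delta}(x_0) \times [0,t_0]$, and iterating covers all of $\X$. Thus $w \equiv 0$ on $\X \times [0,t_0]$; in particular $u_1(\cdot,0) \equiv u_2(\cdot,0)$, and the uniqueness statement of Theorem~\ref{thm:exist_uniq_BUC} forces $u_1 \equiv u_2$ on $\X \times [0,\infty)$. The step I expect to require the most care is the rigorous passage from the differential inequality to its integrated Duhamel form (absolute continuity of $t \mapsto e^{\ka^+ t} w(x,t)$ pointwise in $x$, in the spirit of Proposition~\ref{compprabscont}); once that is in place, what remains is a standard propagation-of-zeros argument closely parallel to Proposition~\ref{prop:u_gr_0}.
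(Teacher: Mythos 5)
Your proof is correct, and it takes a genuinely different route from the paper's. After deriving the same linear equation for $w=u_2-u_1$, the paper exploits the vanishing of the time derivative at the touching point: since $w\geq 0$ and $w(x_0,t_0)=0$, one has $\partial_t w(x_0,t_0)=0$, which forces $(J_\theta*w)(x_0,t_0)=0$; this gives spatial propagation of zeros only at the single time $t_0$, so the paper then has to extend \emph{forward} in $t$ via uniqueness and \emph{backward} in $t$ by solving the linearized equation \eqref{lineareqw} with reversed time. You instead integrate the differential inequality $\partial_t w+\ka^+ w\geq J_\theta*w\geq 0$ into a variation-of-constants lower bound, which hands you $(J_\theta*w)(x_0,s)=0$ for \emph{all} $s\in[0,t_0]$ at once. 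That lets you propagate the zero set to $B_\delta(x_0)\times[0,t_0]$ and then, by iterating the same argument from any $x_1\in B_\delta(x_0)$, to $\X\times[0,t_0]$; forward uniqueness (Theorem~\ref{thm:exist_uniq_BUC}) then finishes the job, and the time-reversal argument is avoided entirely. The passage from differential to integral form that you flag as delicate is actually unproblematic here: $u_1,u_2\in\Xinf$ are $C^1$ in $t$ in the $\Buc$-norm, so $t\mapsto e^{\ka^+t}w(x,t)$ is $C^1$ pointwise in $x$ and the integration is elementary. Both proofs are valid; yours is cleaner in eliminating the backward-in-time step, while the paper's is the more standard "touching point" style of strong maximum principle argument.
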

\begin{proof}
Let $u_1(x,t)\leq u_2(x,t)$, $x\in\X$, $t\geq0$, and suppose that there exist $t_0>0$, $x_0\in\X $,
such that $u_1(x_0,t_0)=u_2(x_0,t_0)$. Define $w:=u_2-u_1\in\Xinf$. Then $w(x,t)\geq0$ and $w(x_0,t_0)=0$, hence $\frac{\partial}{\partial t}w(x_0,t_0)=0$. Since both $u_1$ and $u_2$ solve \eqref{eq:basic}, one easily gets that $w$ satisfies the following linear equation
\begin{multline}\label{lineareqw}
  \frac{\partial}{\partial t}w(x,t)=(J_\theta* w)(x,t)+\ka^-(\theta-u_1(x,t))(a^-*w)(x,t)\\-w(x,t)\bigl(m+(a^-*u_2)(x,t)\bigr);
\end{multline}
or, at the point $(x_0,t_0)$, we will have
\begin{equation}\label{atx0t0}
  0=(J_\theta* w)(x_0,t_0)+\ka^-(\theta-u_1(x_0,t_0))(a^-*w)(x_0,t_0).
\end{equation}
Since the both summands in \eqref{atx0t0} are nonnegative, one has $(J_\theta* w)(x_0,t_0)=0$. Then, by \eqref{as:aplus-aminus-is-pos}, we have that $w(x,t_0)=0$, for all $x\in B_\delta (x_0)$. Using the same arguments as in the proof of Proposition~\ref{prop:u_gr_0}, one gets that $w(x,t_0)=0$, $x\in\X$. Then, by Corollary~\ref{cor:startwithconst}, $w(x,t)=0$, $x\in\X$, $t\geq t_0$. Finally, one can reverse the time in the linear equation \eqref{lineareqw} (cf.~the proof of Proposition~\ref{prop:u_gr_0}), and the uniqueness arguments imply that $w\equiv 0$, i.e. $u_1(x,t)= u_2(x,t)$, $x\in\X$, $t\geq0$. The statement is proved.
\end{proof}

By choosing $u_2\equiv\theta$ in Theorem~\ref{thm:strongmaxprinciple}, we immediately get the following
\begin{corollary}\label{cor:lesstheta}
Let \eqref{as:chiplus_gr_m}, \eqref{as:aplus_gr_aminus}, \eqref{as:aplus-aminus-is-pos}
hold. Let $u_0\in \Utheta$, $u_0\not\equiv\theta$, be~the~initial condition to \eqref{eq:basic}, and $u\in\Xinf$ be the corresponding solution. Then
$u(x,t)<\theta$, $x\in\X$, $t>0$.
\end{corollary}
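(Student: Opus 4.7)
The plan is to apply Theorem~\ref{thm:strongmaxprinciple} with the upper solution taken to be the constant $\theta$. First I would verify the hypotheses of that theorem: by Corollary~\ref{cor:startwithconst}, the constant function $u_2(x,t) \equiv \theta$ is a (stationary) solution to \eqref{eq:basic} in $\Xinf$, and by Proposition~\ref{prop:comp_pr_BUC}, since $0 \leq u_0(x) \leq \theta$ the solution $u$ starting from $u_0$ satisfies $u(x,t) \leq \theta$ for all $x\in\X$, $t\geq 0$, and belongs to $\Xinf$.

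Setting $u_1 := u$ and $u_2 \equiv \theta$, the assumption $u_1 \leq u_2 \leq \theta$ of Theorem~\ref{thm:strongmaxprinciple} is satisfied. The dichotomy provided by that theorem states that either $u_1 \equiv u_2$ on $\X\times\R_+$ or $u_1 < u_2$ on $\X\times(0,\infty)$. In the first case we would have $u(x,t)\equiv\theta$, and in particular $u_0(x) = u(x,0) \equiv \theta$, contradicting the assumption $u_0 \not\equiv \theta$. Hence the second alternative holds, which is precisely the desired strict inequality $u(x,t) < \theta$ for all $x\in\X$ and $t > 0$.

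There is no real obstacle here: the corollary is a direct specialisation of Theorem~\ref{thm:strongmaxprinciple} to the case where the upper function is the stable stationary state. The only point to be slightly careful about is ensuring that $u_2\equiv\theta$ genuinely qualifies as a solution in $\Xinf$ (handled by Corollary~\ref{cor:startwithconst}) and that the ordering $u_1\leq u_2$ holds globally in time (handled by Proposition~\ref{prop:comp_pr_BUC} under hypotheses \eqref{as:chiplus_gr_m}, \eqref{as:aplus_gr_aminus}, which are already assumed).
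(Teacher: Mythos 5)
Your proof is correct and follows exactly the same route as the paper, which simply observes that taking $u_2\equiv\theta$ in Theorem~\ref{thm:strongmaxprinciple} yields the corollary immediately. You have merely spelled out the (straightforward) verifications that $\theta$ is a stationary solution in $\Xinf$ and that $u\leq\theta$ holds globally, which the paper leaves implicit.
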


\subsection{Further toolkits}\label{subsec:loc_stab}

We start with the proof that any solution to \eqref{eq:basic} is locally stable with respect to the locally uniform convergence of Definition~\ref{def:loc_conv}, provided that \eqref{compare} holds. This stability is very `weak', for example, $u_*\equiv 0$, being unstable solution (see Subsection~\ref{subsec:comparison_pr} above), will be still locally stable.

\begin{theorem}\label{thm:loc_stab_BUC}
Let \eqref{as:chiplus_gr_m}, \eqref{as:aplus_gr_aminus} hold.
Let $T>0$ be fixed. Consider a sequence of functions $u_{n}\in\xt$ which are solutions
to \ref{eq:basic} with uniformly bounded initial conditions: $u_{n}(\cdot,0)\in \Utheta$, $n\in\N$. Let $u\in\xt$ be a solution to \eqref{eq:basic} with initial condition
$u(\cdot,0)$ such that
$u_{n}(\cdot,0)\locun u(\cdot,0)$. Then
$u_{n}(\cdot,t)\locun u(\cdot,t)$, uniformly in $t\in[0,T]$.
\end{theorem}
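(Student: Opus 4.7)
The plan is to dominate the difference $w_n:=u_n-u$ by the solution of a linear, positive-coefficient equation and then apply an $L^1$-approximation argument uniform in time. First, since $u_n(\cdot,0)\in\Utheta$ and local uniform convergence preserves the pointwise bound $0\le u(\cdot,0)\le\theta$, the limit initial datum also lies in $\Utheta$; hence Proposition~\ref{prop:comp_pr_BUC} gives $0\le u,u_n\le\theta$ on $\X\times[0,T]$. Using the algebraic identity $u_n(a^-*u_n)-u(a^-*u)=u_n(a^-*w_n)+w_n(a^-*u)$, I would observe that $w_n\in\xt^1$ satisfies the linear equation
\[
\partial_t w_n=\ka^+(a^+*w_n)-mw_n-\ka^- u_n(a^-*w_n)-\ka^- w_n(a^-*u),
\]
whence, invoking $0\le u,u_n\le\theta$ together with $m+\ka^-\theta=\ka^+$, I would derive the integral inequality
\[
|w_n(x,t)|\le|w_n(x,0)|+\int_0^t\!\bigl[\ka^+(a^+*|w_n|)(x,s)+\ka^-\theta(a^-*|w_n|)(x,s)+\ka^+|w_n|(x,s)\bigr]ds.
\]

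Next I would introduce the majorant $V_n\in\xt$ as the unique solution of $\partial_t V_n=\ka^+V_n+BV_n$, $V_n(\cdot,0)=|w_n(\cdot,0)|$, where $Bf:=(\ka^+a^+ + \ka^-\theta a^-)*f$ is a bounded operator on $\Buc$ by Lemma~\ref{le:simple}. A standard Volterra/Picard iteration of the integral form of this linear equation, together with positivity of all kernels, yields $|w_n|\le V_n$ on $\X\times[0,T]$. Writing $e^{tB}=I+k_t\,*$ with
\[
k_t:=\sum_{j\ge1}\frac{t^j}{j!}(\ka^+a^+ + \ka^-\theta a^-)^{*j}\in L^1(\X),
\]
one gets the explicit formula $V_n(\cdot,t)=e^{\ka^+t}\bigl(|w_n(\cdot,0)|+k_t*|w_n(\cdot,0)|\bigr)$. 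Since $t\mapsto k_t$ is continuous from $[0,T]$ into $L^1(\X)$, the family $\{k_t:t\in[0,T]\}$ is compact in $L^1(\X)$.

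Finally, fix a compact $\La\subset\X$ and $\eps>0$. By compactness, choose a finite $\eps$-net $k_{t_1},\ldots,k_{t_N}$ in $L^1(\X)$ and, for each $i$, pick $g_i\in C_0(\X)$ with $\|k_{t_i}-g_i\|_{L^1}<\eps$, so that for every $t\in[0,T]$ there is $i=i(t)$ with $\|k_t-g_{i(t)}\|_{L^1}<2\eps$. Splitting $k_t*|w_n(\cdot,0)|=g_{i(t)}*|w_n(\cdot,0)|+(k_t-g_{i(t)})*|w_n(\cdot,0)|$, the error term is bounded uniformly by $4\theta\eps$, thanks to the global bound $|w_n|\le 2\theta$. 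Since $K:=\bigcup_i\supp g_i$ is a common compact set, for $x\in\La$ the leading term $(g_{i(t)}*|w_n(\cdot,0)|)(x)$ depends only on $|w_n(\cdot,0)|$ restricted to the compact set $\La':=\La-K$; the hypothesis $\|\1_{\La'}(u_n-u)(\cdot,0)\|\to 0$, together with the uniform bound on $\|g_i\|_{L^1}$, then kills this term as $n\to\infty$. Letting $\eps\to 0$ afterwards gives $\sup_{t\in[0,T]}\|\1_\La V_n(\cdot,t)\|\to 0$, and hence the same for $w_n$.

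The principal technical obstacle is the nonlocality of the convolutions $a^\pm *$, which prevents a purely local bound on $w_n(\cdot,0)$ from controlling $(a^\pm*w_n)(x)$ pointwise. The uniform compactly-supported approximation of $\{k_t\}$ is precisely what resolves this: it upgrades Lemma~\ref{convluc} to a uniform-in-parameter statement for the compact family $\{k_t:t\in[0,T]\}$ of $L^1$-kernels.
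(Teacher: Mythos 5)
Your proof is correct, but it takes a genuinely different route from the paper's. The paper first uses the comparison principle (enabled by \eqref{as:aplus_gr_aminus}) to sandwich $u$ and $u_n$ between solutions $\underline{u}_n$ and $\overline{u}_n$ whose initial data are the pointwise $\min$ and $\max$ of $u_n(\cdot,0)$ and $u(\cdot,0)$; this reduces matters to nonnegative differences $h_n=\overline{u}_n-u$. It then writes $h_n=e^{tA_n}h_{n,0}$ for an $n$-dependent bounded operator $A_n$, dominates $A_nh\le Ah$ for $h\ge 0$ by a fixed operator $A$ (whose positivity again rests on \eqref{as:aplus_gr_aminus}), bounds $h_n(\cdot,t)\le e^{TA}h_{n,0}$ with $T$ fixed --- which builds in the $t$-uniformity from the start --- and finally truncates the operator-exponential series uniformly in $n$ before applying Lemma~\ref{convluc} to the finitely many surviving terms. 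You bypass the $\max$/$\min$ construction entirely: you take absolute values directly in the linear equation for $w_n=u_n-u$, derive an integral inequality with $n$-independent coefficients using only the a priori bound $0\le u, u_n\le\theta$ from Proposition~\ref{prop:comp_pr_BUC}, and dominate $|w_n|$ via a monotone Volterra/Picard iteration by the explicit majorant $V_n(\cdot,t)=e^{\ka^+t}\bigl(|w_n(\cdot,0)|+k_t*|w_n(\cdot,0)|\bigr)$, whose kernel $b=\ka^+a^++\ka^-\theta a^-$ is nonnegative by construction rather than by hypothesis. Since $k_t$ depends on $t$, you recover $t$-uniformity by a different device: compactness of $\{k_t : t\in[0,T]\}$ in $L^1(\X)$ plus a finite $\eps$-net of compactly supported approximating kernels, which is in effect a uniform-over-a-compact-family upgrade of Lemma~\ref{convluc}. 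Both arguments are sound and use essentially the same ingredients; yours is more explicit at the $L^1$-kernel level and makes positivity of the dominating operator transparent, while the paper's is slightly more economical by exploiting the semigroup series directly and gets $t$-uniformity for free by bounding at the terminal time~$T$.
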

\begin{proof}
It is easily seen that $u(\cdot,0)\in\Utheta$. By Proposition~\ref{prop:comp_pr_BUC}, $u_n(\cdot,t), u(\cdot,t)\in\Utheta$, $n\in\N$, for any $t\geq0$.
We define, for any $n\in\N$, the following functions on $\X$:
\[
\overline{u}_{n}(x,0):=\max\left\{ u_{n}(x,0),u(x,0)\right\},\qquad \underline{u}_{n}(x,0):=\min\left\{ u_{n}(x,0),u(x,0)\right\}.
\]
Then, clearly, $0\le\underline{u}_n(x,0)\le u(x,0)\le\overline{u}_n(x,0)\le\theta$, $x\in\X$, $n\in\N$. Hence the corresponding solutions $\overline{u}_{n}(x,t)$, $\underline{u}_{n}(x,t)$ to  \eqref{eq:basic} belongs to $\Utheta$ as well. By Theorem \ref{prop:comp_pr_BUC}, one has
\[
\underline{u}_{n}(x,t)\leq u(x,t)\leq\overline{u}_{n}(x,t), \quad x\in\X, t\in[0,T].
\]
In the same way, one gets $\underline{u}_{n}(x,t)\leq u_n(x,t)\leq\overline{u}_{n}(x,t)$ on
$\X\times[0,T]$. Therefore, it is enough to prove that $\overline{u}_{n}$ and $\underline{u}_{n}$ converge locally uniformly to $u$.

Prove that $\overline{u}_{n}\locun u$. For any $n\in\N$, the function $h_{n}(\cdot,t)=\overline{u}_{n}(\cdot,t)-u(\cdot,t)\in\Utheta$, $t\geq0$, satisfies the equation
$\frac{\partial}{\partial t} h_{n}= A_n h_n$ with
$h_{n,0}(x):=h_n(x,0)=\overline{u}_{n}(x,0)-u(x,0)\geq0$, $x\in\X$, where, for any $0\leq h\in\xt$,
\[
A_{n}h:=-mh+\ka^{+}(a^{+}*h)-\ka^{-}h(a^{-}*\overline{u}_{n})
-\ka^{-}u(a^{-}*h).
\]
For any $u_n$ and $u$, $A_n$ is a bounded linear operator on $\Buc$, therefore,
$h_{n}(x,t)=(e^{tA_{n}}h_{n,0})(x)$, $x\in\X$, $t\in[0,T]$. Since $u\geq0$, one has that, for any $0\leq h\in\xt$,
$(A_nh)(x,t)\leq (Ah)(x,t)$, $x\in\X$, $t\in[0,T]$, where a bounded linear operator $A$ is given on $\Buc$ by
\[
Ah:=\ka^{+}(a^{+}*h)-\ka^{-}u(a^{-}*h).
\]
Next, the series expansions for $e^{tA_n}$ and $e^{tA}$ converge in the topology of norms of operator on the space $\Buc$. Then, for any $n\in\N$, and for $x\in\X$, $t\in[0,T]$,
\begin{equation}
h_n(x,t)=(e^{tA_{n}}h_{n,0})(x)\leq(e^{TA}h_{n,0})(x)=\sum_{m=0}^{\infty}\dfrac{T^{m}}{m!}A^m h_{n,0}, \label{eq:loc_stab_BUC:suff_est}
\end{equation}
and, moreover, for any $\eps>0$ one can find $M=M(\eps)\in\N$, such that we get from \eqref{eq:loc_stab_BUC:suff_est} that
\begin{equation}
h_n(x,t)\leq\sum\limits _{m=0}^{M}\dfrac{T^{m}}{m!}A^m h_{n,0}(x)+\eps \theta, \quad x\in\X, t\in[0,T]. \label{eq:loc_stab_BUC:final_est}
\end{equation}
as $h_{n,0}\in\Utheta$, $n\in\N$.
Finally, the assumptions of the statement yield that $h_{n,0}\locun 0$. Then, by \eqref{eq:loc_stab_BUC:suff_est} and Lemma~\ref{convluc}, $h_n(x,t)\locun 0$ uniformly in $t\in[0,T]$. Hence, $\overline{u}_{n}\locun u$ uniformly on $[0,T]$. The convergence $\underline{u}_{n}\locun u$ may be proved by an analogy.
\end{proof}

\begin{remark}\label{rem:cfwith}
An analogous statement holds in the space $\tilde{\mathcal{X}}_T$, $T>0$.
\end{remark}

In the case of measurable bounded functions, cf. Remark~\ref{rem:cfwith}, we will need also a weaker form of the local stability above.
\begin{proposition}\label{prop:stab_Linf_ae}
Let \eqref{as:chiplus_gr_m}, \eqref{as:aplus_gr_aminus}
hold. Let $T>0$ be fixed. Consider a sequence of functions $u_{n}\in\tilde{\mathcal{X}}_T$ which are solutions
to \ref{eq:basic} with uniformly bounded initial conditions: $u_{n}(\cdot,0)\in \Ltheta$, $n\in\N$. Let $u\in\tilde{\mathcal{X}}_T$ be a solution to \ref{eq:basic} with initial condition $u(\cdot,0)$ such that
$u_{n}(x,0)\to u(x,0)$, for a.a. $x\in\X$. Then
$u_{n}(x,t)\to u(x,t)$, for a.a. $x\in\X$, uniformly in $t\in[0,T]$.
\end{proposition}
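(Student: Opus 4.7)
The plan is to follow the structure of the proof of Theorem~\ref{thm:loc_stab_BUC}, replacing the locally uniform convergence machinery (and Lemma~\ref{convluc}) by the dominated convergence theorem applied to convolutions with the $L^1$-kernel $a^+$. Since each $u_n(\cdot,0)\in\Ltheta$ and $u_n(\cdot,0)\to u(\cdot,0)$ a.e., one has $u(\cdot,0)\in\Ltheta$, and Remark~\ref{rem:comp_pr_Linf} gives $u_n(\cdot,t),u(\cdot,t)\in\Ltheta$ for every $t\in[0,T]$. Setting
\[
\overline{u}_n(\cdot,0):=\max\{u_n(\cdot,0),u(\cdot,0)\}, \qquad \underline{u}_n(\cdot,0):=\min\{u_n(\cdot,0),u(\cdot,0)\}\in\Ltheta,
\]
and denoting by $\overline{u}_n,\underline{u}_n\in\tilde{\mathcal{X}}_T$ the corresponding solutions, the $L^\infty$-comparison principle yields $\underline{u}_n\leq u,u_n\leq \overline{u}_n$ a.e.\ on $\X\times[0,T]$. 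Therefore it suffices to prove that $h_n:=\overline{u}_n-u\geq 0$ and $g_n:=u-\underline{u}_n\geq 0$ both tend to $0$ a.e., uniformly in $t\in[0,T]$.

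A direct differentiation shows that $h_n$ satisfies, a.e., the linear equation $\partial_t h_n=\ka^+(a^+*h_n)+R_n h_n$, where
\[
R_n h:=-mh-\ka^- h (a^-*\overline{u}_n)-\ka^- u (a^-*h);
\]
since $h_n,\overline{u}_n,u\geq 0$ a.e., $R_n h_n\leq 0$ a.e. Integrating the resulting differential inequality in $t$ and iterating---equivalently, exploiting the positivity of the bounded semigroup generated by $Bh:=\ka^+(a^+*h)$ on $L^\infty(\X)$---one obtains
\[
0\leq h_n(x,t)\leq (e^{tB}h_{n,0})(x)=\sum_{m=0}^\infty \frac{(t\ka^+)^m}{m!}\,(a^{+*m}*h_{n,0})(x)
\]
for a.a.\ $x$ and every $t\in[0,T]$, where $h_{n,0}:=h_n(\cdot,0)$ and $a^{+*m}\in L^1(\X)$ is the $m$-fold convolution of $a^+$, which has unit $L^1$-norm.

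The passage to the limit is now where the argument genuinely diverges from that of Theorem~\ref{thm:loc_stab_BUC}. Since $0\leq h_{n,0}\leq \theta$ a.e.\ and $h_{n,0}\to 0$ a.e., the dominated convergence theorem gives, for \emph{every} $x\in\X$ and every $m\geq 0$,
\[
(a^{+*m}*h_{n,0})(x)\xrightarrow[n\to\infty]{} 0, \qquad \bigl\lvert(a^{+*m}*h_{n,0})(x)\bigr\rvert\leq\theta.
\]
Given $\eps>0$, choose $M$ so that $\theta\sum_{m>M}\frac{(T\ka^+)^m}{m!}<\eps$; then, uniformly in $t\in[0,T]$ and for a.a.\ $x$,
\[
h_n(x,t)\leq \eps+\sum_{m=0}^M \frac{(T\ka^+)^m}{m!}\,(a^{+*m}*h_{n,0})(x),
\]
and the finite sum tends to $0$ as $n\to\infty$ for every $x$. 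Letting $\eps\downarrow 0$ (using a countable sequence $\eps=1/k$ to fix a common null set) yields $\sup_{t\in[0,T]}h_n(x,t)\to 0$ for a.a.\ $x$; the identical argument applied to $g_n$ completes the proof.

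The only technical subtlety is the clean derivation of the exponential bound $h_n\leq e^{tB}h_{n,0}$ in the $L^\infty$ setting: representatives must be chosen consistently, and one has to use that $B$ generates a positive semigroup (because $a^+$ is a nonnegative probability kernel) in order to iterate the integrated differential inequality while preserving a.e.\ comparisons. Once this is in place, the convolution by the $L^1$-kernel $a^+$ automatically upgrades a.e.\ convergence of a uniformly bounded sequence to pointwise convergence at every $x$, which is exactly what substitutes for Lemma~\ref{convluc} in the $\Buc$ proof.
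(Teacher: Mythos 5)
Your proof is correct and follows the same overall strategy as the paper's (very terse) proof: sandwich $u_n$ between $\underline{u}_n$ and $\overline{u}_n$ via the $L^\infty$ comparison principle, reduce to a linear estimate for $h_n=\overline{u}_n-u$, bound by a convolution-semigroup majorant, truncate the exponential series uniformly in $n$ and $x$, and pass to the limit term by term using the dominated convergence theorem in place of Lemma~\ref{convluc}. The one place where you genuinely depart from the paper is the choice of dominating operator. The paper bounds $A_n\leq A$ where $Ah:=\ka^+(a^+*h)-\ka^-u(a^-*h)$, and then compares $e^{tA_n}h_{n,0}\leq e^{TA}h_{n,0}$; this requires that $e^{TA}$ be a \emph{positive} semigroup, which is true but only because of hypothesis \eqref{as:aplus_gr_aminus} (indeed $Ah\geq\ka^-(\theta-u)(a^-*h)\geq0$ for $h\geq0$ and $u\in\Utheta$), a point the paper leaves implicit. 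You instead drop the whole negative part $R_n h_n\leq0$ and bound by the pure convolution operator $Bh=\ka^+(a^+*h)$, whose positivity is unconditional and whose powers $B^m h_{n,0}=(\ka^+)^m(a^{+*m}*h_{n,0})$ are explicit convolutions with unit-$L^1$-norm kernels. This buys two things: the Gr\"onwall iteration $h_n\leq e^{tB}h_{n,0}$ is elementary and does not rely on \eqref{as:aplus_gr_aminus} for positivity, and the term-by-term dominated-convergence passage is completely transparent (for \emph{every} $x$, not merely a.a.\ $x$, since $h_{n,0}\to0$ a.e.\ with $|h_{n,0}|\leq\theta$ and $a^{+*m}\in L^1$). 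The trade-off is a slightly cruder estimate, which is irrelevant for the qualitative conclusion. Your observation that the truncation level $M$ can be taken independent of $x$ (because $\|B^m h_{n,0}\|_\infty\leq(\ka^+)^m\theta$ uniformly) is also correct and in fact simpler than the paper's remark that $M=M(\eps,x)$.
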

\begin{proof}
The proof will be fully analogous to that for Theorem \ref{thm:loc_stab_BUC} until the inequality \eqref{eq:loc_stab_BUC:final_est}, in which $M=M(\eps,x)$ now. The rest of the proof is the same, taking into account that an analogue of Lemma~\ref{convluc} with both convergences almost everywhere holds true by the dominated convergence theorem.
\end{proof}

In the sequel, it will be useful to consider the solution to \eqref{eq:basic} as a nonlinear transformation of the initial condition.

\begin{definition}\label{def:def:Q_t}
For a fixed $T>0$, define the mapping $Q_{T}$ on $L^{\infty}_{+}(\X):=\{f\in L^\infty(\X)\mid f\geq0 \ \mathrm{a.e.}\}$, as follows
\begin{equation}
(Q_{T}f)(x):=u(x,T),\quad x\in\X,\label{def:Q_T}
\end{equation}
where $u(x,t)$ is the solution to \eqref{eq:basic} with the initial condition $u(x,0)=f(x)$.
\end{definition}
Let us collect several properties of $Q_T$ needed below.
\begin{proposition}
\label{prop:Q_def}
Let \eqref{as:chiplus_gr_m}, \eqref{as:aplus_gr_aminus} hold. The mapping $Q=Q_{T}:L^{\infty}_{+}(\X)\to L^{\infty}_{+}(\X)$ satisfies the following properties
\begin{enumerate}[label=\textnormal{(Q\arabic*)}]
  \item $Q:\Ltheta\to \Ltheta$, $Q:\Utheta\to \Utheta$, \label{eq:QBtheta_subset_Btheta}
  \item let $T_y:L^{\infty}_{+}(\X)\to L^{\infty}_{+}(\X)$, $y\in\X$, be a translation operator, given by
      \begin{equation}\label{shiftoper}
      (T_y f)(x)=f(x-y), \quad x\in\X;
\end{equation}
then \label{prop:QTy=TyQ}
      \begin{equation}
        (QT_{y}f)(x)=(T_{y}Qf)(x), \quad x,y\in\X,\label{eq:QTy=TyQ}
      \end{equation}
  \item $Q0=0$, $Q\theta=\theta$, and $Q r>r$, for any constant $r\in(0,\theta)$, \label{prop:Ql_gr_l}
  \item if $f(x)\leq g(x)$, for a.a. $x\in\X$, then $(Qf)(x)\leq (Qg)(x)$, for a.a. $x\in\X$;\label{prop:Q_preserves_order}
  \item if $f_{n}\locun f$, then $(Qf_{n})(x)\to (Qf)(x)$, for a.a. $x\in\X$.\label{prop:Q_cont}
\end{enumerate}
\end{proposition}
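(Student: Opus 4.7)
The plan is to verify the five properties in order, invoking the machinery already established in Sections~\ref{sec:basic} and~\ref{sec:comparison}; none of the items should require new arguments.

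For \ref{eq:QBtheta_subset_Btheta}, I would simply apply Proposition~\ref{prop:comp_pr_BUC} (and Remark~\ref{rem:comp_pr_Linf} for the $L^\infty$ case): taking $v_0=\theta$ there, any initial condition $f\in\Ltheta$ (respectively $\Utheta$) yields $0\le u(x,t)\le\theta$ for all $t\ge0$, so $Qf\in\Ltheta$ (resp.\ $\Utheta$). For \ref{prop:QTy=TyQ}, note that the convolutions $a^\pm *u$ commute with translations in $x$; hence if $u(x,t)$ is the solution to \eqref{eq:basic} with initial condition $f$, then the function $\tilde u(x,t):=u(x-y,t)=(T_y u(\cdot,t))(x)$ still solves \eqref{eq:basic} and has initial datum $T_y f$. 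Uniqueness (Theorem~\ref{thm:exist_uniq_BUC} in the $\Buc$-case, Remark~\ref{rem:exist_uniq_Linf} in the $L^\infty$-case) then forces $\tilde u(\cdot,T)=Q(T_y f)$, i.e.\ \eqref{eq:QTy=TyQ}.

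For \ref{prop:Ql_gr_l}, the constants $0$ and $\theta$ are stationary solutions to \eqref{eq:basic}, so $Q0=0$ and $Q\theta=\theta$. For $r\in(0,\theta)$, Corollary~\ref{cor:startwithconst} gives the explicit expression
\begin{equation*}
(Qr)(x)=\frac{r}{r\, g_\theta(T)+\exp(-\theta\ka^- T)},
\end{equation*}
and since $0<r<\theta$ one has $r g_\theta(T)+\exp(-\theta\ka^- T)<r\cdot\frac{1-\exp(-\theta\ka^- T)}{\theta}\cdot\frac{\theta}{r}\cdot r/\theta+\exp(-\theta\ka^- T)<1$ (a direct computation: the denominator is a convex combination between $1$ and $r/\theta<1$), giving $Qr>r$; this also follows immediately from the strict monotonicity of $\psi(\cdot,r)$ noted after \eqref{homogensol}. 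Property \ref{prop:Q_preserves_order} is a direct application of the comparison principle: taking $u_0=f$, $v_0=g$ in Proposition~\ref{prop:comp_pr_BUC} (or Remark~\ref{rem:comp_pr_Linf}) yields $Qf\le Qg$ a.e.

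Finally, \ref{prop:Q_cont} reduces to an invocation of Proposition~\ref{prop:stab_Linf_ae}: the locally uniform convergence $f_n\locun f$ of Definition~\ref{def:loc_conv} implies $f_n(x)\to f(x)$ for a.a.\ $x\in\X$ (convergence in $L^\infty$ on each compact subset forces a.e.\ convergence there, and exhausting $\X$ by countably many compacts gives the global a.e.\ statement), and the hypotheses $f_n\in\Ltheta$ are inherited from property \ref{eq:QBtheta_subset_Btheta} (or can be enforced by replacing $f_n,f$ with $\min\{f_n,\theta\}$, $\min\{f,\theta\}$ — but by the definition of $Q$ on $L^\infty_+$, the only subtlety is boundedness which is not automatic for general $f\in L^\infty_+$). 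The main minor obstacle is exactly this last point: property \ref{prop:Q_cont} as stated is for general $f,f_n\in L^\infty_+(\X)$, so one either restricts to uniformly bounded sequences (which is the setting needed in the sequel and implicit from the locally uniform convergence together with a uniform $L^\infty$ bound) or argues by sandwiching between constant solutions via \ref{prop:Q_preserves_order}; Proposition~\ref{prop:stab_Linf_ae} then delivers a.e.\ convergence of $Qf_n=u_n(\cdot,T)$ to $Qf=u(\cdot,T)$.
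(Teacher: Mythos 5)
Your proof is correct and, for (Q1), (Q3), (Q4), (Q5), follows essentially the same route as the paper: (Q1) and (Q4) from Proposition~\ref{prop:comp_pr_BUC} and Remark~\ref{rem:comp_pr_Linf}, (Q3) from the explicit solution in Corollary~\ref{cor:startwithconst}, and (Q5) from Proposition~\ref{prop:stab_Linf_ae}. For (Q2) you argue from translation-invariance of the equation together with uniqueness of the solution, whereas the paper traces the translation through the Picard iterates $\Phi^n$ built in the proof of Theorem~\ref{thm:exist_uniq_BUC}. Both are sound; your version is a bit more direct, treating $Q_T$ as a black box and never reopening the fixed-point construction, while the paper's recycles the operator identity $\Phi T_y=T_y\Phi$ that it already has in hand. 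Two small remarks. In (Q3) the displayed chain of inequalities is off: the middle expression you write down is in fact \emph{equal} to $r g_\theta(T)+e^{-\theta\ka^-T}$, not strictly larger; your parenthetical is nonetheless the correct argument, since $r g_\theta(T)+e^{-\theta\ka^-T}=\tfrac{r}{\theta}(1-\alpha_T)+\alpha_T$ is a convex combination of $\tfrac{r}{\theta}<1$ and $1$ and so is strictly less than $1$, which is exactly the paper's computation. And the caveat you raise about (Q5) is genuine: Proposition~\ref{prop:stab_Linf_ae} and Remark~\ref{rem:cfwith} require the initial data to lie in $\Ltheta$, and neither $f_n\in L^{\infty}_{+}(\X)$ nor $f_n\locun f$ forces a uniform bound; the paper, like you, implicitly invokes (Q5) only for $\Ltheta$-valued sequences, which is the only way it is used later (e.g.\ in Proposition~\ref{cstarsareequal}).
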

\begin{proof}
The property \ref{eq:QBtheta_subset_Btheta} follows from Remark~\ref{rem:comp_pr_Linf} and Proposition~\ref{prop:comp_pr_BUC}.
To prove \ref{prop:QTy=TyQ} we note that, by \eqref{def:conv}, $T_y(a^\pm*u)=a^\pm*(T_y u)$, and then, by \eqref{eq:exist_uniq_BUC:B}, $B(T_y v)=T_y(Bv)$, therefore, by \eqref{eq:exist_uniq_BUC:Phi_v}, if $\tau=0$ and $u_\tau=T_y f$, then $\Phi_\tau T_y =T_y \Phi $, where $\Phi$ is given by \eqref{eq:exist_uniq_BUC:Phi_v} with $f$ in place of $u_\tau$ only. As a result, $\Phi_\tau^n T_y=T_y \Phi^n$ hence
\[
Q_\Tau(T_y f)=\lim_{n\to\infty}\Phi_\tau^n T_y f
=\lim_{n\to\infty}T_y \Phi^n f=T_y(Q_\Tau f);
\]
and one can continue the same considerations on the next time-interval.
The property~\ref{prop:Ql_gr_l} is a straightforward consequence of Corollary~\ref{cor:startwithconst}; indeed, \eqref{homogensol} implies, for $\alpha_T:=\exp(-\theta\ka^-T)\in(0,1)$,
\[
  Q_T r-r=\frac{\theta r}{r (1-\alpha_T)+\theta\alpha_T}-r
  =\frac{r(\theta-r)(1-\alpha_T)}{r (1-\alpha_T)+\theta\alpha_T}>0.
\]
The property \ref{prop:Q_preserves_order} holds also by
Remark~\ref{rem:comp_pr_Linf} and Proposition~\ref{prop:comp_pr_BUC}
The property \ref{prop:Q_cont} is a weaker version of Remark~\ref{rem:cfwith} and Proposition~\ref{prop:stab_Linf_ae}.
\end{proof}

Let $\S$ denotes a unit sphere in $\X$ centered at the origin:
\begin{equation}\label{unitsphere}
\S =\bigl\{x\in\X\bigm| |x|=1\bigr\};
\end{equation}
in particular, $S^{0}=\{-1,1\}$.

\begin{definition}\label{def:monotoneindirection}
A function $f\in L^\infty(\X)$ is said to be increasing (decreasing, constant) along the vector $\xi\in\S $ if, for a.a. $x\in\X $, the function $f(x+s\xi)=(T_{-s\xi}f)(x)$ is increasing (decreasing, constant) in $s\in\R$, respectively.
\end{definition}

\begin{proposition}\label{prop:monot_along_vector_sol}
Let \eqref{as:chiplus_gr_m}, \eqref{as:aplus_gr_aminus} hold. Let $u_0\in \Ltheta$ be the initial condition for the equation \eqref{eq:basic} which is
increasing (decreasing, constant) along a vector $\xi\in\S $; and $u(\cdot,t)\in\Ltheta$, $t\geq0$, be the corresponding solution (cf. Proposition~\ref{prop:comp_pr_BUC} and Remark~\ref{rem:comp_pr_Linf}). Then, for any $t>0$, $u(\cdot,t)$ is increasing (decreasing, constant, respectively) along the $\xi$.
\end{proposition}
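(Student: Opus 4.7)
The proof rests on two properties of the nonlinear solution map $Q_t$ from Definition~\ref{def:def:Q_t}, namely translation invariance \ref{prop:QTy=TyQ} and order preservation \ref{prop:Q_preserves_order} of Proposition~\ref{prop:Q_def}. My plan is to rephrase "monotone along $\xi$" as a family of order comparisons between $u_0$ and its translates, push these through $Q_t$, and then repackage the conclusion.

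First, I would reformulate the hypothesis. If $u_0$ is increasing along $\xi$, then for a.a.\ $x\in\X$ the function $s\mapsto u_0(x+s\xi)$ is nondecreasing on $\R$; by Fubini applied on $\X=\xi^\bot\oplus\R\xi$, this is equivalent to saying that, for every fixed $s\geq 0$,
\begin{equation*}
(T_{-s\xi}u_0)(x)=u_0(x+s\xi)\geq u_0(x)\quad\text{for a.a.\ }x\in\X.
\end{equation*}
In the decreasing (resp.\ constant) case one has the reverse inequality (resp.\ equality) for every $s\geq 0$.

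Second, I would apply \ref{prop:QTy=TyQ} and \ref{prop:Q_preserves_order} to the solution operator with $y=-s\xi$. For every $s\geq 0$,
\begin{equation*}
T_{-s\xi}u(\cdot,t)=T_{-s\xi}Q_t u_0=Q_t(T_{-s\xi}u_0)\geq Q_t u_0=u(\cdot,t)\quad\text{a.e.\ on }\X,
\end{equation*}
where the inequality comes from \ref{prop:Q_preserves_order}. This means $u(x+s\xi,t)\geq u(x,t)$ for a.a.\ $x\in\X$, for each individual $s\geq 0$. The decreasing case is identical with the inequality reversed, and in the constant case both inequalities combine to give $T_{-s\xi}u(\cdot,t)=u(\cdot,t)$ a.e.

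Third, I would assemble the pointwise-in-$s$ comparisons into the definitional form. Restrict to $s\in\Q_+$ (countable), so that the union of the null sets where the inequality fails is still null; thus there is a set $N\subset\X$ of measure zero such that for every $x\notin N$ and every pair $s_1<s_2$ in $\Q$, $u(x+s_1\xi,t)\leq u(x+s_2\xi,t)$ whenever both sides make sense. By Fubini, for a.a.\ $y\in\xi^\bot$ the line $y+\R\xi$ meets $N$ in a set of one-dimensional measure zero and carries a well-defined $L^\infty$-representative of $u(\cdot,t)$; the restriction of this representative to $\Q$ is monotone, so it agrees a.e.\ on $\R$ with a genuinely monotone function in $s$. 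Performing this consideration uniformly in $t\in(0,\infty)$ yields the statement of Definition~\ref{def:monotoneindirection} for $u(\cdot,t)$.

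The substantive content is contained in the abstract properties of $Q_t$ already established, so the only real obstacle is the measure-theoretic bookkeeping in the third step: one must pass from a.a.-$x$ inequalities parameterized by $s$ to a joint statement valid for a.a.\ line and a.e.\ $s$ along that line. The countability of $\Q_+$ together with Fubini suffices, and no additional regularity of $a^\pm$ or $u_0$ beyond \eqref{as:chiplus_gr_m}, \eqref{as:aplus_gr_aminus} is needed.
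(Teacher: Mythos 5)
Your proof is correct and follows the same route as the paper's: translation invariance \ref{prop:QTy=TyQ} plus order preservation \ref{prop:Q_preserves_order} of $Q_t$ applied to $u_0$ versus its shifts $T_{-s\xi}u_0$. The only difference is your third step, where you carefully pass from the family of a.e.-in-$x$ comparisons (one null set per $s$) back to the single-null-set form required by Definition~\ref{def:monotoneindirection} via restriction to $s\in\Q$ and Fubini; the paper leaves this measure-theoretic repackaging implicit, so your treatment is slightly more complete on that point.
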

\begin{proof}
Let $u_0$ be decreasing along a $\xi\in\S $. Take any $s_1\leq s_2$ and consider
two initial conditions to \eqref{eq:basic}: $u_0^i(x)=u_0(x+s_i\xi)=(T_{-s_i\xi}u_0)(x)$, $i=1,2$. Since $u_0$ is decreasing, $u_0^1(x)\geq u_0^2(x)$, $x\in\X$. Then, by Proposition~\ref{prop:Q_def},
\[
T_{-s_1\xi}Q_tu_0=Q_tT_{-s_1\xi}u_0=Q_t u_0^1\geq Q_tu_0^2=Q_tT_{-s_2\xi}u_0=
T_{-s_2\xi}Q_tu_0,
\]
that proves the statement. The cases of a decreasing $u_0$ can be considered in the same way. The constant function along a vector is decreasing and decreasing simultaneously.
\end{proof}

For the sequel, we need also to show that any solution to \eqref{eq:basic} is bounded from below by a solution to the corresponding equation with `truncated' kernels~$a^\pm$.
Namely, suppose that the conditions \eqref{as:chiplus_gr_m}, \eqref{as:aplus_gr_aminus} hold. Consider a family of Borel sets $\{\Delta_R\mid R>0\}$, such that $\Delta_R\nearrow\X$, $R\to\infty$. Define, for any $R>0$, the following kernels:
\begin{equation}\label{trkern}
  a_R^{\pm}(x)=\1_{\Delta_R}(x)a^{\pm}(x),\quad x\in\X,
\end{equation}
and the corresponding `truncated' equation, cf. \eqref{eq:basic},
\begin{equation}
\begin{cases}
\begin{aligned}
\dfrac{\partial w}{\partial t}(x,t)&= \ka^{+}(a_R^+*w)(x,t)-mw(x,t)\\
&\quad -\ka^{-}w(x,t)(a_R^-*w)(x,t), \qquad x\in\X,\  t>0,
\end{aligned}\\
w(x,0)=w_{0}(x), \qquad \qquad \qquad \qquad \qquad \quad \, x\in\X.
\end{cases}\label{eq:basic_R}
\end{equation}
We set
\begin{equation}\label{ARdef}
  A_R^\pm:=\int_{\Delta_R}a^\pm(x)\,dx \nearrow 1, \quad R\to\infty,
\end{equation}
by \eqref{normed}. Then the non-zero constant solution to \eqref{eq:basic_R} is equal to
\begin{equation}\label{defofthetaR}
\theta_R=\dfrac{\ka^+A_R^+-m}{\ka^-A_R^-}\to \theta, \quad R\to\infty,
\end{equation}
however, the convergence $\theta_R$ to $\theta$ is, in general, not monotonic. Clearly, by~\eqref{as:chiplus_gr_m}, $\theta_R>0$ if only
\begin{equation}\label{bigR}
  A_R^+>\frac{m}{\ka^+}\in(0,1).
\end{equation}

\begin{proposition}\label{lowestsuppCub}
Let \eqref{as:chiplus_gr_m}, \eqref{as:aplus_gr_aminus} hold, and $R>0$ be such that \eqref{bigR} holds, cf.~\eqref{ARdef}. Let $w_0\in\Buc$ be such that $0\leq w_0(x)\leq \theta_R,\ x\in\X$. Then there exists the unique solution $w\in\Xinf$ to \eqref{eq:basic_R}, such that
\begin{equation}\label{wlessthetaR}
0\leq w(x,t)\leq\theta_R, \quad x\in\X,\ t>0.
\end{equation}
Let $u_0\in\Utheta$ and $u\in\Xinf$ be the corresponding solution to \eqref{eq:basic}. If $w_0(x)\leq u_0(x), x\in \X$, then
\begin{equation}\label{ineqtrunc}
w(x,t)\leq u(x,t),\quad x\in\X, \ t>0.
\end{equation}
\end{proposition}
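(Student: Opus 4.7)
The plan has two parts: (i) recognise the truncated problem \eqref{eq:basic_R} as an instance of \eqref{eq:basic} after renormalising the kernels, and quote Theorem~\ref{thm:exist_uniq_BUC} and Proposition~\ref{prop:comp_pr_BUC} for existence, uniqueness, and \eqref{wlessthetaR}; (ii) view $u$ as a supersolution of the truncated equation and apply Theorem~\ref{thm:compar_pr} (in the truncated setting) to obtain \eqref{ineqtrunc}.

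\textbf{Existence and the bound $w\leq\theta_R$.} Setting $\hat{a}_R^\pm:=(A_R^\pm)^{-1}a_R^\pm$ and $\hat{\ka}^\pm:=\ka^\pm A_R^\pm$, the truncated equation \eqref{eq:basic_R} takes the form of \eqref{eq:basic} with probability densities $\hat{a}_R^\pm$ and constants $\hat{\ka}^\pm$ satisfying $(\hat{\ka}^+-m)/\hat{\ka}^-=\theta_R$. The analogue of \eqref{as:chiplus_gr_m} is exactly \eqref{bigR}, while the analogue of \eqref{as:aplus_gr_aminus} reduces to $\ka^+a^+(x)\geq\ka^-\theta_R a^-(x)$ for a.e.\ $x\in\Delta_R$ (and is trivial off $\Delta_R$). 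The key preliminary step is the a priori inequality $\theta_R\leq\theta$, which I would prove by integrating \eqref{as:aplus_gr_aminus} against $\1_{\X\setminus\Delta_R}$: this gives $\ka^+(1-A_R^+)\geq(\ka^+-m)(1-A_R^-)$, which rearranges to $\ka^+A_R^+-m\leq(\ka^+-m)A_R^-$, i.e.\ $\theta_R\leq\theta$. Combined with \eqref{as:aplus_gr_aminus} on $\Delta_R$ this yields the required analogue. Theorem~\ref{thm:exist_uniq_BUC} then produces a unique $w\in\xt^+$ for every $T>0$, and Proposition~\ref{prop:comp_pr_BUC} applied to the renormalised equation gives $0\leq w\leq\theta_R$; hence $w\in\Xinf$.

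\textbf{Comparison with $u$.} I would apply Theorem~\ref{thm:compar_pr} to the renormalised truncated equation with $u_1:=w$, $u_2:=u$, and $c:=\theta$. In this setting the condition \eqref{compare} reads $\ka^+a_R^+(x)\geq\theta\ka^-a_R^-(x)$ a.e., which is \eqref{as:aplus_gr_aminus} on $\Delta_R$ and trivial elsewhere. Let $\mathcal{F}_R$ denote the operator \eqref{Foper} with $a^\pm$ replaced by $a_R^\pm$. Since $u$ solves \eqref{eq:basic}, substituting $\partial_t u$ yields
\[
(\mathcal{F}_R u)(x,t)=\ka^+\bigl((a^+-a_R^+)*u\bigr)(x,t)-\ka^-u(x,t)\bigl((a^--a_R^-)*u\bigr)(x,t).
\]
Bounding the second term via $u\leq\theta$ and using \eqref{diffofkernels}, one gets
\[
(\mathcal{F}_R u)(x,t)\geq\bigl((\1_{\X\setminus\Delta_R}J_\theta)*u\bigr)(x,t)\geq 0=(\mathcal{F}_R w)(x,t),
\]
by \eqref{as:aplus_gr_aminus} and $u\geq 0$. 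The remaining hypotheses, $w\geq 0$, $u\leq\theta$ (from Proposition~\ref{prop:comp_pr_BUC}), and $w_0\leq u_0$, are all in place, so \eqref{ineqtrunc} follows.

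\textbf{Main obstacle.} The only nontrivial point is the a priori inequality $\theta_R\leq\theta$: without it the renormalised truncated equation would not inherit assumption~\eqref{as:aplus_gr_aminus}, and the upper bound $w\leq\theta_R$ would not be accessible through Proposition~\ref{prop:comp_pr_BUC}. It is resolved cleanly by integrating \eqref{as:aplus_gr_aminus} over $\X\setminus\Delta_R$, which captures precisely the deficit $\theta-\theta_R$; the same restriction to $\X\setminus\Delta_R$ then drives the sign of $\mathcal{F}_R u$ in step (ii).
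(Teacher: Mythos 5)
Your proposal is correct and follows essentially the paper's own route: integrate \eqref{as:aplus_gr_aminus} over $\X\setminus\Delta_R$ to get $\theta_R\leq\theta$, renormalise the truncated kernels to fit \eqref{eq:basic}, and invoke Theorem~\ref{thm:exist_uniq_BUC}, Proposition~\ref{prop:comp_pr_BUC}, and Theorem~\ref{thm:compar_pr}. The one cosmetic difference is the dual framing of the final step: you show $\mathcal{F}_R u\geq 0$ (i.e., $u$ is a supersolution of the \emph{truncated} equation) and apply the comparison principle to the truncated equation, whereas the paper shows $\mathcal{F}w\leq 0$ (i.e., $w$ is a subsolution of the \emph{original} equation) and applies the comparison principle to \eqref{eq:basic}; both hinge on the same sign computation $(\1_{\X\setminus\Delta_R}J_\theta)*(\cdot)\geq 0$ and are interchangeable.
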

\begin{proof}
Denote $\Delta_R^c:=\X\setminus \Delta_R$. We have
\begin{align*}
  \theta-\theta_R&=\frac{\ka^+A^-_R-mA^-_R-\ka^+A^+_R+m}{\ka^-A^-_R}
  =\frac{\ka^+(1-A^+_R)-(\ka^+-m)(1-A^-_R)}{\ka^-A^-_R}\\
  &=\frac{1}{\ka^-A^-_R}\int_{\Delta_R^c}\bigl( \ka^+ a^+(x)-(\ka^+-m)a^-(x)\bigr)\,dx\geq0,
\end{align*}
by \eqref{as:aplus_gr_aminus}. Therefore,
\begin{equation}\label{thetaRlesstheta}
  0<\theta_R\leq\theta.
\end{equation}
Clearly, \eqref{as:aplus_gr_aminus} and \eqref{thetaRlesstheta} yield
\begin{equation}\label{as:aplus_geq_aminus_R}
\ka^+a_R^+(x)\geq \theta_R\ka^-a_R^-(x),\quad x\in\X.
\end{equation}
Thus one can apply Proposition~\ref{prop:comp_pr_BUC} to the equation~\eqref{eq:basic_R} using trivial equalities $a^\pm_R(x)=A_R^\pm \tilde{a}^\pm_R(x)$, where the kernels $\tilde{a}^\pm_R(x)=(A_R^\pm)^{-1}a^\pm_R(x)$ are normalized, cf. \eqref{normed}; and the inequality \eqref{as:aplus_geq_aminus_R} is the corresponding analog of \eqref{as:aplus_gr_aminus}, according to \eqref{defofthetaR}. This proves the existence and uniqueness of the solution to \eqref{eq:basic_R} and the bound \eqref{wlessthetaR}.

Next, for $\mathcal{F}$ given by \eqref{Foper}, one gets from \eqref{trkern} and \eqref{eq:basic_R}, that the solution $w$ to \eqref{eq:basic_R} satisfies the following equality
\begin{multline}\label{dopR}
  (\mathcal{F}w)(x,t)=-\ka^+\int_{\Delta_R^c} a^+(y) w(x-y,t)\,dy\\
+\ka^-w(x,t)\int_{\Delta_R^c} a^-(y)w(x-y,t)\,dy.
\end{multline}
By \eqref{wlessthetaR}, \eqref{thetaRlesstheta}, \eqref{as:aplus_gr_aminus}, one gets from \eqref{dopR} that
\begin{align*}
(\mathcal{F}w)(x,t)
&\leq-\ka^+\int_{\Delta_R^c} a^+(y) w(x-y,t)\,dy+\ka^-\theta\int_{\Delta_R^c} a^-(y)w(x-y,t)\,dy \\
&\leq0=(\mathcal{F}u)(x,t),
\end{align*}
 where $u$ is the solution to \eqref{eq:basic}. Therefore, we may apply Theorem~\ref{thm:compar_pr} to get the statement.
\end{proof}
\begin{remark}
The statements of Proposition~\ref{lowestsuppCub} remains true for the functions from $L^\infty(\X)$ (the inequalities will hold a.e.\! only).
\end{remark}

\section{Traveling waves}\label{sec:tr-waves}

Traveling waves were studied intensively for the original Fisher--KPP equation \eqref{kpp}, see e.g. \cite{AW1978,Bra1983,HN2001}; for locally nonlinear equation with nonlocal diffusion \eqref{nl+l}, see e.g. \cite{CDM2008,Yag2009,SLW2011}; and for nonlocal nonlinear equation with local diffusion \eqref{l+nl}, see e.g. \cite{BNPR2009,NPT2011,AC2012,HR2014}.

Through this section we will mainly work in $L^\infty$-setting, see Remarks~\ref{rem:exist_uniq_Linf}, \ref{rem:max_pr_Linf}, \ref{rem:comp_pr_Linf}, \ref{rem:cfwith} above. Recall that we will always assume that \eqref{as:chiplus_gr_m} and \eqref{as:aplus_gr_aminus} hold, and $\theta>0$ is given by \eqref{theta_def}.

Let us give a brief overview for the results of this Section. First, we will show (Proposition~\ref{prop:monot_sol}) that the study of a traveling wave solution to the equation \eqref{eq:basic} in a direction $\xi\in\S$ (cf.~Definition~\ref{def:trw} below) may be reduced to the study of the corresponding one-dimensional equation \eqref{eq:basic_one_dim}, whose kernels are given by \eqref{apm1dim}. The existence and properties of the traveling wave solutions will be considered under the so-called Mollison condition \eqref{aplusexpla}, cf. e.g.\cite{AGT2012,CDM2008,CD2007,BCGR2014,Mol1972,Mol1972a}. Namely, in Theorem~\ref{thm:trwexists} we will prove that, for any $\xi\in\S$, there exists $c_*(\xi)\in\R$, such that, for any $c\geq c_*(\xi)$, there exists a traveling wave with the speed $c$, and, for any $c<c_*(\xi)$,  such a traveling wave does not exist. Moreover, we will find an expression for $c_*(\xi)$, see \eqref{minspeed}. We will that the profile of a traveling wave with a non-zero speed is smooth, whereas the zero-speed traveling wave (provided it exists, i.e. if $c_*(\xi)\leq0$) has a continuous profile (Proposition~\ref{prop:reg_trw}, Corollary~\ref{cor:infsmoothprofile}).
In Theorem~\ref{thm:speedandprofile}, we will show a connection between traveling wave speeds and the corresponding profiles.
Next, using the Ikehara--Delange-type Tauberian theorem (Proposition~\ref{prop:tauber}), we will find the exact asymptotic of a decaying traveling wave profile at $+\infty$ (Proposition~\ref{asymptexact}). This will allow us to prove the uniqueness (up to shifts) of a traveling wave wave profile with a given speed $c\geq c_*(\xi)$ (Theorem~\ref{thm:tr_w_uniq}).

\subsection{Existence and properties of traveling waves}
\begin{definition}
  Let $\M$ denote the set of all decreasing and right-continuous functions $f:\R\to[0,\theta]$.
\end{definition}

\begin{remark}\label{rem:inclus}
  There is a natural embedding of $\M$ into $L^\infty(\R)$. According to this, for a function $f\in L^\infty(\R)$, the inclusion $f\in\M$ means that there exists $g\in\M$, such that $f=g$ a.s. on $\R$.
\end{remark}

\begin{definition}\label{def:trw}
Let  $\tilde{\x}_\infty^1:=\tilde{\x}_\infty\cap C^{1}((0,\infty)\to L^{\infty}(\X))$.
A function $u\in \tilde{\x}_\infty^1$ is said to be a traveling
wave solution to the equation \eqref{eq:basic} with
a speed $c\in\R$ and in a direction $\xi\in\S $ if and only if (iff, in the sequel) there
exists a function $\psi\in\M$,
such that
\begin{equation}\label{trwv}
  \begin{aligned}
  &\psi(-\infty)=\theta, \qquad \psi(+\infty)=0,\\
  &u(x,t)=\psi(x\cdot\xi-ct),\quad t\geq0, \ \mathrm{a.a.}\ x\in\X.
  \end{aligned}
\end{equation}
Here and below $\S $ is defined by \eqref{unitsphere} and $x\cdot y=(x,y)_\X$ is the scalar product in $\X$. The function $\psi$ is said to be the profile for the traveling wave, whereas $c$ is its speed.
\end{definition}

We will use some ideas and results from \cite{Yag2009}.

To study traveling wave solutions to \eqref{eq:basic}, it is natural to consider
the corresponding initial conditions of the form
 \begin{equation}\label{trwvincond}
u_0(x)=\psi(x\cdot\xi),
\end{equation}
for some $\xi\in\S $, $\psi\in\M$. Then the solutions will have a special form as well, namely, the following proposition holds.
\begin{proposition}\label{prop:monot_sol}
Let $\xi\in\S $, $\psi\in\M$, and an initial condition to \eqref{eq:basic} be given by
$u_0(x)=\psi(x\cdot\xi)$, a.a.\;$x\in\X$; let also $u\in\tilde{\x}_\infty^+$ be the corresponding solution. Then there exist a function $\phi:\R\times\R_+\to[0,\theta]$, such that $\phi(\cdot,t)\in\M$, for any $t\geq0$, and
\begin{equation}\label{repres}
  u(x,t)=\phi(x\cdot\xi,t),\quad t\geq0,\ \mathrm{a.a.}\ x\in\X.
\end{equation}

Moreover, there exist functions $\check{a}^\pm$ (depending on $\xi$) on $\R$ with
$0\leq \check{a}^\pm\in L^1(\R)$, $\int_\R \check{a}^\pm(s)\,ds=1$, such that $\phi$ is a solution to the following one-dimensional version of \eqref{eq:basic}:
\begin{equation}
\begin{cases}
\begin{aligned}
\dfrac{\partial \phi}{\partial t}(s,t)&=\ka^{+}(\check{a}^{+}*\phi)(s,t)-m\phi(s,t)\\&\quad
-\ka^{-}\phi(s,t)(\check{a}^{-}*\phi)(s,t), \qquad t>0, \ \mathrm{a.a.}\ s\in\R,
\end{aligned}\\
\phi(s,0)=\psi(s),\qquad \mathrm{a.a.}\ s\in\R.
\end{cases}\label{eq:basic_one_dim}
\end{equation}
\end{proposition}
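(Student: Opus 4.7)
The plan has three main movements: reduce the solution to a function of $x\cdot\xi$ alone, verify membership in $\M$, and derive the one-dimensional equation by integrating the kernels over $\{\xi\}^\perp$.

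First I would establish the dimensional reduction using translation invariance. The initial condition $u_0(x)=\psi(x\cdot\xi)$ is invariant under $T_y$ for any $y\in\{\xi\}^\perp$, since $(T_y u_0)(x)=\psi((x-y)\cdot\xi)=\psi(x\cdot\xi)=u_0(x)$ a.e. By property \ref{prop:QTy=TyQ} of Proposition~\ref{prop:Q_def}, the solution operator commutes with translations, hence $T_y u(\cdot,t)=Q_t T_y u_0=Q_t u_0=u(\cdot,t)$ a.e. for every $y\in\{\xi\}^\perp$. Running this identity over a countable dense set of such $y$'s and using that $u(\cdot,t)\in L^\infty(\X)$, one concludes that $u(\cdot,t)$ is a.e. constant on each hyperplane $\{x:x\cdot\xi=s\}$. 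This produces a function $\phi(s,t)$, defined a.e. in $s$, such that $u(x,t)=\phi(x\cdot\xi,t)$ a.e. in $x$.

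Second I would verify $\phi(\cdot,t)\in\M$. By Remark~\ref{rem:comp_pr_Linf} and Proposition~\ref{prop:comp_pr_BUC}, since $0\le u_0\le\theta$, the solution satisfies $0\le u(x,t)\le\theta$ a.e., so $\phi$ takes values in $[0,\theta]$. Since $\psi$ is decreasing, $u_0$ is decreasing along $\xi$ in the sense of Definition~\ref{def:monotoneindirection}; Proposition~\ref{prop:monot_along_vector_sol} then gives that $u(\cdot,t)$ is decreasing along $\xi$ for each $t\geq0$. Thus the a.e.-defined $\phi(\cdot,t)$ admits a decreasing representative, which in turn has a canonical right-continuous modification obtained by taking right-limits at its (at most countably many) jumps; this is the element of $\M$ we retain, as permitted by Remark~\ref{rem:inclus}.

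Third I would compute the convolutions under this ansatz. Decomposing the Lebesgue measure via an orthogonal splitting $\X=\R\xi\oplus\{\xi\}^\perp$, writing $y=s\xi+z$ with $z\in\{\xi\}^\perp$, and applying Fubini, one obtains for $r:=x\cdot\xi$,
\begin{equation*}
(a^\pm*u)(x,t)=\int_\X a^\pm(x-y)\phi(y\cdot\xi,t)\,dy=\int_\R \check{a}^\pm(r-s)\phi(s,t)\,ds,
\end{equation*}
where
\begin{equation*}
\check{a}^\pm(r):=\int_{\{\xi\}^\perp} a^\pm(r\xi+z)\,dz.
\end{equation*}
Nonnegativity of $\check{a}^\pm$ is inherited, and Fubini yields $\check{a}^\pm\in L^1(\R)$ with $\int_\R\check{a}^\pm(s)\,ds=\int_\X a^\pm(x)\,dx=1$ by \eqref{normed}. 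Substituting these identities into \eqref{eq:basic} and using that the time derivative of $u(x,t)=\phi(x\cdot\xi,t)$ equals $\frac{\partial\phi}{\partial t}(x\cdot\xi,t)$ in $L^\infty$-sense, evaluation at $s=x\cdot\xi$ gives exactly \eqref{eq:basic_one_dim}.

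The delicate step I expect to spend most care on is the first one: pushing the a.e.-equalities $T_y u(\cdot,t)=u(\cdot,t)$, valid for each individual $y\in\{\xi\}^\perp$, to a genuine hyperplane-wise constancy so that $\phi(s,t)$ is well defined. The monotonicity obtained in the second step is what ultimately removes any residual ambiguity by singling out a canonical right-continuous representative, making the reduction to \eqref{eq:basic_one_dim} unambiguous.
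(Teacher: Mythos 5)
Your proposal is correct and follows essentially the same route as the paper's proof: translation commutation (property~\ref{prop:QTy=TyQ} of Proposition~\ref{prop:Q_def}) reduces the solution to a function of $x\cdot\xi$, Proposition~\ref{prop:monot_along_vector_sol} gives monotonicity in the first slot, and Fubini applied to the orthogonal splitting $\X=\R\xi\oplus\{\xi\}^\perp$ yields the one-dimensional kernels $\check{a}^\pm$ and equation \eqref{eq:basic_one_dim}. The only cosmetic difference is in the first step, where the paper phrases hyperplane constancy as $u(\cdot,t)$ being ``constant along'' each $\eta\perp\xi$ in the sense of Definition~\ref{def:monotoneindirection} (again via Proposition~\ref{prop:monot_along_vector_sol}) rather than your countable dense family of translations, but both rest on the same commutation identity and neither makes the measure-theoretic passage from ``$T_yu=u$ a.e.\ for each $y$'' to ``$u(x,t)$ depends only on $x\cdot\xi$ a.e.'' fully explicit.
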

\begin{proof}
Choose any $\eta\in\S $ which is orthogonal to the $\xi$. Then the initial condition $u_0$ is constant along $\eta$, indeed, for any $s\in\R$,
\[
u_0(x+s\eta)=\psi((x+s\eta)\cdot\xi)=\psi(x\cdot\xi)=u_0(x),\quad \mathrm{a.a.}\ x\in\X.
\]
Then, by Proposition~\ref{prop:monot_along_vector_sol}, for any fixed $t>0$, the solution $u(\cdot,t)$ is constant along $\eta$ as well. Next, for any $\tau\in\R$, there exists $x\in\X$ such that $x\cdot\xi=\tau$; and, clearly, if $y\cdot\xi=\tau$ then $y=x+s\eta$, for some $s\in\R$ and some $\eta$ as above. Therefore, if we just set, for a.a. $x\in\X$, $\phi(\tau,t):=u(x,t)$, $t\geq0$, this definition will be correct a.e.\! in $\tau\in\R$; and it will give \eqref{repres}. Next, for a.a. fixed $x\in\X$, $u_0(x+s\xi)=\psi(x\cdot\xi+s)$ is decreasing in $s$, therefore, $u_0$ is decreasing along the $\xi$, and by Proposition~\ref{prop:monot_along_vector_sol},
$u(\cdot,t)$, $t\geq0$, will be decreasing along the $\xi$ as well. The latter means that, for any $s_1\leq s_2$, we have, by \eqref{repres},
\[
\phi(x\cdot\xi+s_1,t)=u(x+s_1\xi,t)\geq u(x+s_2\xi,t)=\phi(x\cdot\xi+s_2,t),
\]
and one can choose in the previous any $x$ which is orthogonal to $\xi$ to prove that $\phi$ is decreasing in the first coordinate.

To prove the second statement, for $d\geq2$, choose any $\{\eta_{1},\ \eta_{2},\ ...,\ \eta_{d-1}\}\subset\S $ which form a complement of $\xi\in\S $ to an orthonormal basis in $\X $.
Then, for a.a.\,\,$x\in\X$, with $x=\sum_{j=1}^{d-1}\tau_j\eta_j+s\xi$, $\tau_1,\ldots,\tau_{d-1},s\in\R$, we have (using an analogous expansion of $y$ inside the integral below an taking into account that any linear transformation of orthonormal bases preserves volumes)
\begin{align}
&\quad (a^\pm*u)(x,t)=\int_\X a^\pm(y)u(x-y,t)dy\notag\\
&=\int_\X a^\pm\biggl(\sum_{j=1}^{d-1}\tau_j'\eta_j+s'\xi\biggr)\,
    u\biggl(\sum_{j=1}^{d-1}(\tau_j-\tau_j')\eta_j+(s-s')\xi,t\biggr)\,d\tau_{1}'\ldots d\tau_{d-1}'ds'\notag\\
&=\int_\R\Biggl(\int_{\R^{d-1}}a^\pm\biggl(\sum_{j=1}^{d-1}\tau_j'\eta_j+s'\xi\biggr)\,
d\tau_1'\ldots d\tau_{d-1}'\Biggr)u\bigl((s-s')\xi,t\bigr)\,ds',\label{reducingto1dim}
\end{align}
where we used again Proposition~\ref{prop:monot_along_vector_sol} to show that $u$ is constant along the vector $\eta=\sum_{j=1}^{d-1}(\tau_j-\tau_j')\eta_j$ which is orthogonal to the $\xi$.

Therefore, one can set
\begin{equation}\label{apm1dim}
\check{a}^\pm(s):=\begin{cases}
\displaystyle \int_{\R^{d-1}} a^\pm(\tau_1\eta_1+\ldots+\tau_{d-1}\eta_{d-1}+s\xi)\,d\tau_1\ldots d\tau_{d-1}, &d\geq2,\\[3mm]
a^\pm(s\xi), &d=1.
\end{cases}
\end{equation}
It is easily seen that $\check{a}^\pm=\check{a}^\pm_\xi$ does not depend on the choice of $\eta_1,\ldots,\eta_{d-1}$, which constitute a basis in the space $H_\xi:=\{x\in\X\mid x\cdot\xi=0\}=\{\xi\}^\bot$.
Note that, clearly,
\begin{equation}\label{cleareq}
\int_\R \check{a}^\pm(s)\,ds=\int_\X a^\pm(y)\,dy=1.
\end{equation}
Next, by \eqref{repres}, $u\bigl((s-s')\xi,t\bigr)=\phi(s-s',t)$, therefore, \eqref{reducingto1dim} may be rewritten as
\[
(a^\pm*u)(x,t)=\int_\R \check{a}^\pm(s')\phi(s-s',t\bigr)\,ds'=:(\check{a}^\pm*\phi)(s,t),
\]
where $s=x\cdot\xi$. The rest of the proof is obvious now.
\end{proof}
\begin{remark}\label{rem:multi-one}
Let $\xi\in\S$ be fixed and $\check{a}^\pm$ be defined by \eqref{apm1dim}. Let $\phi$ be a traveling wave solution to the equation \eqref{eq:basic_one_dim} (in the sense of Definition~\ref{def:trw}, for $d=1$) in the direction $1\in S^0=\{-1,1\}$, with a profile $\psi\in\M$ and a speed $c\in\R$. Then the function $u$ given by
\begin{equation}\label{tw1d}
u(x,t)=\psi(x\cdot\xi-ct)=\psi(s-ct)=\phi(s,t),
\end{equation}
for $x\in\X$, $t\geq0$, $s=x\cdot\xi\in\R$,
is a traveling wave solution to \eqref{eq:basic} in the direction $\xi$, with the profile $\psi$ and the speed $c$.
\end{remark}

\begin{remark}\label{incrinsteadofdecr}
  One can realize all previous considerations for increasing traveling wave, increasing solution along a vector $\xi$ etc. Indeed, it is easily seen that the function $\tilde{u}(x,t)=u(-x,t)$ with the initial condition $\tilde u_0(x)=u_0(-x)$ is a solution to the equation \eqref{eq:basic} with $a^\pm$ replaced by $\tilde{a}^\pm(x)=a^\pm(-x)$; note that $(a^\pm*u)(-x,t)=(\tilde{a}^\pm*\tilde{u})(x,t)$.
\end{remark}

\begin{remark}\label{shiftoftrw}
  It is a straightforward application of \eqref{eq:QTy=TyQ}, that if $\psi\in\M$, $c\in\R$ gets \eqref{trwv} then, for any $s\in\R$, $\psi(\cdot+s)$ is a traveling wave to \eqref{eq:basic} with the same $c$.
\end{remark}

We will need also the following simple statement.
\begin{proposition}\label{prop:Qtilde}
Let \eqref{as:chiplus_gr_m}, \eqref{as:aplus_gr_aminus} hold and $\xi\in\S $ be fixed.
  Define, for an arbitrary $T>0$, the mapping $\tilde{Q}_{T}:L^{\infty}(\R)\to L^{\infty}(\R)$ as follows: $\tilde{Q}_T\psi(s)=\phi(s,T)$, $s\in\R$, where $\phi:\R\times\R_+\to[0,\theta]$ solves \eqref{eq:basic_one_dim} with $0\leq\psi\in L^{\infty}_{+}(\R)$. Then such a $\tilde{Q}_T$ is well-defined, satisfies all properties of Proposition~\ref{prop:Q_def} (with $d=1$), and, moreover, $\tilde{Q}_T(\M)\subset\M$.
\end{proposition}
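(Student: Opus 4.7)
The plan is to reduce everything to the already-proved multi-dimensional results by recognising that \eqref{eq:basic_one_dim} is simply \eqref{eq:basic} in dimension one with the kernels $\check a^\pm$, and then to handle the $\M$-preservation separately. First I would verify that $\check a^\pm$ satisfy the standing hypotheses in dimension one: by \eqref{apm1dim} one has $0\leq\check a^\pm\in L^1(\R)$, \eqref{cleareq} is exactly the normalisation condition analogous to \eqref{normed}, and Fubini applied to \eqref{as:aplus_gr_aminus} gives
\[
\ka^+\check a^+(s)-(\ka^+-m)\check a^-(s)=\int_{\R^{d-1}}J_\theta(\tau_1\eta_1+\ldots+\tau_{d-1}\eta_{d-1}+s\xi)\,d\tau_1\ldots d\tau_{d-1}\geq0
\]
for a.a.\;$s\in\R$, so the one-dimensional analogs of \eqref{as:chiplus_gr_m}, \eqref{as:aplus_gr_aminus} hold with the same $\theta$.

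With this in place, Theorem~\ref{thm:exist_uniq_BUC} together with Remark~\ref{rem:exist_uniq_Linf} produces, for any $0\leq\psi\in L^\infty(\R)$, a unique solution $\phi\in\tilde{\x}_T$ to \eqref{eq:basic_one_dim}, and uniqueness ensures that replacing $\psi$ by an a.e.\;equal function only changes $\phi(\cdot,T)$ on a null set; hence $\tilde Q_T:L^\infty_+(\R)\to L^\infty_+(\R)$ is well-defined. Properties \ref{eq:QBtheta_subset_Btheta}--\ref{prop:Q_cont} are then immediate: they are precisely Proposition~\ref{prop:Q_def} applied in dimension one, invoking Remarks~\ref{rem:comp_pr_Linf} and \ref{rem:cfwith} for the $L^\infty$-versions of comparison and local stability.

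It remains to show $\tilde Q_T(\M)\subset\M$. Let $\psi\in\M$, identified with its equivalence class in $L^\infty(\R)$ via Remark~\ref{rem:inclus}. Property \ref{eq:QBtheta_subset_Btheta} gives $0\leq\tilde Q_T\psi\leq\theta$ a.e. Since $\psi$ is decreasing on $\R$, it is decreasing along $\xi=1\in S^0$ in the sense of Definition~\ref{def:monotoneindirection}; applying Proposition~\ref{prop:monot_along_vector_sol} to the one-dimensional equation \eqref{eq:basic_one_dim} (its proof relies only on \ref{prop:QTy=TyQ} and \ref{prop:Q_preserves_order}, which we have just established in $d=1$) yields that $\phi(\cdot,T)$ is decreasing along $\xi=1$ as well, i.e.\ it admits a decreasing representative in its $L^\infty$-equivalence class.

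The main obstacle is the slight mismatch between $L^\infty$-equivalence classes and the pointwise right-continuity demanded by $\M$. The key fact is that any bounded decreasing function $\R\to[0,\theta]$ agrees a.e.\;with a unique right-continuous decreasing function (obtained by redefining it at its countable jump set as $g(s):=\lim_{\eps\downarrow0}g(s+\eps)$); this right-continuous representative lies in $\M$. Choosing this representative of $\phi(\cdot,T)$ gives $\tilde Q_T\psi\in\M$, completing the proof.
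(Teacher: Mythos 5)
Your proof is correct, and the part on $\tilde Q_T(\M)\subset\M$ follows a genuinely different route than the paper's. After establishing the one-dimensional analogues of (A1), (A2) via \eqref{apm1dim} and Fubini (same as the paper), and importing Theorem~\ref{thm:exist_uniq_BUC}/Remark~\ref{rem:exist_uniq_Linf} and Proposition~\ref{prop:Q_def} in $d=1$ (again the same), you diverge at the final step. The paper argues directly that the fixed-point iteration maps of Theorem~\ref{thm:exist_uniq_BUC} preserve $\M$ (asserting that $B$ and $\Phi_\tau$ ``map the set $\M$ into itself''), whereas you invoke the one-dimensional version of Proposition~\ref{prop:monot_along_vector_sol}, correctly noting that its proof uses only properties \ref{prop:QTy=TyQ} and \ref{prop:Q_preserves_order}, which you have just re-established for $\tilde Q_T$. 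Your route is arguably cleaner and more robust: it re-uses the abstract translation/order machinery of Section~\ref{sec:comparison} rather than dissecting the concrete iteration. In fact the paper's formulation is imprecise — $B$ in \eqref{eq:exist_uniq_BUC:B} sends a decreasing $v$ to an \emph{increasing} function of $x$ (since $a^-*v$ is decreasing and sits inside a negative exponent), so $B$ alone does not map $\M$ into itself; what is really being invoked is that the composite $\Phi_\tau$ preserves both the range $[0,\theta]$ and monotonicity, a nontrivial computation relying on \eqref{as:aplus_gr_aminus} that the paper omits. Your argument sidesteps this entirely.

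One spot where you are a little quick, in the same way the paper is: Proposition~\ref{prop:monot_along_vector_sol} yields that $\phi(\cdot,T)$ is decreasing along $\xi=1$ in the sense of Definition~\ref{def:monotoneindirection}, which literally asserts that for each pair $s_1\le s_2$ the a.e.\ inequality $\phi(\cdot+s_1,T)\ge\phi(\cdot+s_2,T)$ holds. Upgrading this family of a.e.\ inequalities (quantifier ``for all $s_1,s_2$, for a.a.\ $x$'') to an actual decreasing representative (quantifier ``for some $x_0$, for all $s$'') is a standard but nontrivial measure-theoretic step (e.g.\ restrict to rationals and take the countable union of null sets, or mollify). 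You assert the existence of a decreasing representative with an ``i.e.'' rather than justifying it, but since the paper glosses over exactly the same point via Remark~\ref{rem:inclus}, this is not a gap relative to the paper's own standard of rigor. Your handling of the right-continuous representative (the $\M$-normalisation) is correct and more explicit than the paper's.
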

\begin{proof}
  Consider one-dimensional equation \eqref{eq:basic_one_dim}, where $\check{a}^\pm$ are given by \eqref{apm1dim}. The latter equality together with \eqref{as:aplus_gr_aminus} imply that \begin{equation}\label{acheckpos}
\ka^+\check{a}^+(s)\geq (\ka^+-m)\check{a}^-(s), \quad \text{a.a.} \ s\in\R.
\end{equation}
Therefore, all previous results (e.g. Theorem~\ref{thm:exist_uniq_BUC}) hold true for the solution to \eqref{eq:basic_one_dim} as well. In particular, all statements of Proposition~\ref{prop:Q_def} hold true, for $Q=\tilde{Q}_T$, $d=1$. Moreover, by the proof of Theorem~\ref{thm:exist_uniq_BUC} (in the $L^\infty$-case, cf.~Remark~\ref{rem:exist_uniq_Linf}), since
the mappings $B$ and $\Phi_\tau$, cf. \eqref{eq:exist_uniq_BUC:B}, \eqref{eq:exist_uniq_BUC:Phi_v}, map the set $\M$ into itself, we have that $\tilde{Q}_T$ has this property as well, cf.~Remark~\ref{rem:inclus}.
\end{proof}

Now we are going to prove the existence of the traveling wave solution to \eqref{eq:basic}. Denote, for any $\la>0$, $\xi\in\S $,
\begin{equation}\label{aplusexpla}
  \A_\xi(\la):=\int_\X a^+(x) e^{\la x\cdot \xi}\,dx\in[0,\infty].
\end{equation}

For a given $\xi\in\S $, consider the following assumption on $a^+$:
\begin{assum}\label{aplusexpint1}
  \text{there exists} \ \mu=\mu(\xi)>0 \ \text{such that} \ \A_{\xi}(\mu)<\infty.
\end{assum}

\begin{theorem}\label{thm:trwexists}
Let \eqref{as:chiplus_gr_m} and \eqref{as:aplus_gr_aminus} hold and $\xi\in\S $ be fixed. Suppose also that \eqref{aplusexpint1} holds.
Then there exists $c_*(\xi)\in\R$ such that
\begin{enumerate}[label={\arabic*})]
    \item for any $c\geq c_*(\xi)$, there exists a traveling wave solution, in the sense of Definition~\ref{def:trw}, with a profile $\psi\in\M$ and the speed $c$,
    \item for any $c<c_*(\xi)$, such a traveling wave does not exist.
\end{enumerate}
\end{theorem}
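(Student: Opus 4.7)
The plan is to reduce the problem to one spatial dimension along $\xi$ by Proposition~\ref{prop:monot_sol} and Remark~\ref{rem:multi-one}, and then to apply the abstract monotone-semiflow result of~\cite{Yag2009} to the time-$T$ solution map $\tilde{Q}_T$ of Proposition~\ref{prop:Qtilde}. Fix $T>0$ once and for all and write $Q:=\tilde{Q}_T$, viewed as a map on the ordered set $\M$ of right-continuous decreasing $[0,\theta]$-valued profiles endowed with pointwise-a.e.\ convergence on bounded intervals. A direct application of Fubini to \eqref{apm1dim} together with \eqref{aplusexpint1} yields
\begin{equation*}
\int_\R \check{a}^+(s)e^{\la s}\,ds=\A_\xi(\la)<\infty,\qquad 0<\la\leq\mu,
\end{equation*}
so the one-dimensional kernel $\check{a}^+$ automatically inherits the Mollison condition, which is the only exponential-decay hypothesis required henceforth.

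Next I would verify in turn the hypotheses under which Yagisita's abstract theorem applies to $Q$. The order interval structure $Q(\M)\subset\M$, the two fixed points $Q0=0$ and $Q\theta=\theta$ with $Qr>r$ for $r\in(0,\theta)$, order preservation and translation invariance, are collected in Proposition~\ref{prop:Qtilde} together with the one-dimensional form of Proposition~\ref{prop:Q_def}. Continuity in the required topology is Proposition~\ref{prop:Q_def}\ref{prop:Q_cont} combined with Proposition~\ref{prop:stab_Linf_ae}. Compactness of $Q$-orbits follows from Helly's selection principle: every sequence of $\M$-profiles is pointwise bounded by $\theta$ and monotone, hence admits a pointwise-a.e.\ convergent subsequence whose limit again lies in $\M$.

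The decisive step is the construction of an exponentially decaying super-solution that feeds Yagisita's iteration scheme, together with the identification of the critical speed. Linearising \eqref{trweqn} at the trivial equilibrium and substituting $\psi(s)=e^{-\la s}$ produces the characteristic equation $c\la=\ka^+\A_\xi(\la)-m$; by \eqref{aplusexpint1} the right-hand side is finite on some $(0,\mu]$, convex in $\la$, and, because $\ka^+-m>0$, the map $\la\mapsto \la^{-1}(\ka^+\A_\xi(\la)-m)$ tends to $+\infty$ as $\la\downarrow 0$, so its infimum
\begin{equation*}
c_*(\xi):=\inf_{\la>0}\frac{\ka^+\A_\xi(\la)-m}{\la}
\end{equation*}
is finite. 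For every $c\geq c_*(\xi)$ one picks $\la=\la(c)>0$ with $\ka^+\A_\xi(\la)\leq m+c\la$ and verifies that $\min\{\theta,e^{-\la s}\}$ is a super-solution of the traveling-wave problem with speed $c$. Monotone iteration of the shift-composed map $\psi\mapsto T_{-cT}Q\psi$ starting from this super-solution, controlled from below by the trivial profile via the comparison principle of Theorem~\ref{thm:compar_pr}, yields a fixed point $\psi_c\in\M$ of the discrete shift-semiflow, which is the sought profile. Non-existence for $c<c_*(\xi)$ follows from a bilateral Laplace-transform argument on \eqref{trweqn}: applying $\L$ and using non-negativity of $\psi\,(\check{a}^-*\psi)$ gives $(c\la-\ka^+\A_\xi(\la)+m)(\L\psi)(\la)\geq 0$ at every $\la$ in the abscissa of convergence, contradicting the strict inequality $c\la<\ka^+\A_\xi(\la)-m$ which is available there by the definition of $c_*(\xi)$.

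The hardest parts I anticipate are two. First, Yagisita's precise compactness/continuity hypotheses for $Q$ require a careful translation of the pointwise-a.e.\ control of Proposition~\ref{prop:stab_Linf_ae} into his abstract topological framework, and in particular the preservation of that continuity along the monotone iteration scheme. Second, a profile produced by that scheme is a priori only a fixed point of $T_{-cT}Q$ for the one chosen $T$; upgrading it to a genuine traveling wave in the sense of Definition~\ref{def:trw}, valid for all times, requires showing independence of the construction from $T$, which follows from the semigroup property $\tilde{Q}_{T_1+T_2}=\tilde{Q}_{T_1}\tilde{Q}_{T_2}$ and the uniqueness built into the fixed-point scheme of Theorem~\ref{thm:exist_uniq_BUC}.
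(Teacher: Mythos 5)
Your overall roadmap --- 1D reduction via Proposition~\ref{prop:monot_sol}, construction of the exponential super-solution $\theta\min\{1,e^{-\mu s}\}$ with $c=(\ka^+\A_\xi(\mu)-m)/\mu$, and invocation of Yagisita's abstract monotone-semiflow theorems through the map $\tilde Q_T$ --- matches the paper. However, there are two substantive problems in the way you close the argument.

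First, your nonexistence step has a sign error that undoes the contradiction. Taking $\L$ of \eqref{trweqn} and using $\L\psi'=-\la\L\psi$ gives
\begin{equation*}
\bigl(\ka^+\A_\xi(\la)-m-c\la\bigr)(\L\psi)(\la)=\ka^{-}\L\bigl(\psi(\check{a}^{-}*\psi)\bigr)(\la)\geq 0,
\end{equation*}
that is $c\la\leq\ka^+\A_\xi(\la)-m$ on $(0,\la_0(\psi))$, which is the \emph{same} direction as the ``by definition of $c_*(\xi)$'' inequality you invoke; there is no contradiction. (You wrote $(c\la-\ka^+\A_\xi(\la)+m)(\L\psi)\geq 0$, which is precisely the wrong sign.) Second, and more fundamentally, even with the correct sign the argument cannot close at this stage: the Laplace identity controls $c$ only for $\la$ in the abscissa of convergence $(0,\la_0(\psi))$, and there is a priori nothing that forces $\la_0(\psi)$ to reach the minimizer of $\la\mapsto(\ka^+\A_\xi(\la)-m)/\la$. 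Establishing that $\la_0(\psi)$ does extend far enough, and that consequently $c_*(\xi)$ equals the variational infimum, is precisely the delicate content of Theorem~\ref{thm:speedandprofile} (Steps~1--4 of its proof, including the truncated-kernel comparison). In the present theorem, the paper deliberately does \emph{not} identify $c_*(\xi)$: it only proves $c_*(\xi)\leq\inf_\la(\ka^+\A_\xi(\la)-m)/\la$, see \eqref{cstarestimate}, and obtains the dichotomy (existence for $c\geq c_*$, nonexistence for $c<c_*$) as a black-box from Yagisita's Theorem~6, which is a purely order-theoretic statement about the map $\tilde Q_T$. You should likewise defer the nonexistence and the exact value of $c_*(\xi)$ to that abstract result rather than try to derive them here by Laplace transforms.

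A smaller point: in the existence half, after constructing the super-solution and iterating $\psi\mapsto T_{-cT}\tilde Q_T\psi$ downward from it, you still have to rule out collapse of the iteration to the trivial profile $0$ (i.e.\ prove $\psi_c(-\infty)=\theta$). This is again part of what Yagisita's Theorem~5 delivers; it is not automatic from comparison against the trivial sub-solution. Citing that theorem, as the paper does, is the appropriate way to close this gap rather than re-deriving it.
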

\begin{proof}
Let $\mu>0$ be such that \eqref{aplusexpint1} holds. Then, by \eqref{apm1dim},
\begin{align}\notag
\int_\R \check{a}^+(s) e^{\mu s}ds&=\int_\R \int_{\R^{d-1}} a^\pm(\tau_1\eta_1+\ldots+\tau_{d-1}\eta_{d-1}+s\xi)e^{\mu s}\,d\tau_1\ldots d\tau_{d-1} ds\\&=\A_\xi(\mu)<\infty.\label{expintla1}
\end{align}
Clearly, the integral equality in \eqref{expintla1} holds true for any $\la\in\R$ as well, with $\A_\xi(\la)\in[0,\infty]$.

Let $\mu>0$ be such that \eqref{aplusexpint1} holds. Define a function from $\M$ by
\begin{equation}\label{defvarphi}
\varphi(s):=\theta\min\{e^{-\mu s},1\}.
\end{equation}
Let us prove that there exists $c\in\R$ such that $\bar{\phi}(s,t):=\varphi(s-ct)$ is a super-solution to \eqref{eq:basic_one_dim}, i.e.
\begin{equation}\label{supersol}
\mathcal{F}\bar{\phi}(s,t)\geq0,\quad s\in\R, t\geq0,
\end{equation}
where $\mathcal{F}$ is given by \eqref{Foper} (in the case $d=1$).
We have
\begin{align*}
  (\mathcal{F}\bar{\phi})(s,t) & =-c\varphi'(s-ct)- \ka^+(\check{a}^+*\varphi)(s-ct)+m\varphi(s-ct)\\&\quad +\ka^-\varphi(s-ct) (\check{a}^-*\varphi)(s-ct),
\end{align*}
hence, to prove \eqref{supersol}, it is enough to show that, for all $s\in\R$,
\begin{equation}\label{suffcond}
  \mathcal{J}_c(s):=c\varphi'(s)+\ka^+(\check{a}^+*\varphi)(s)-m\varphi(s)-\ka^-\varphi(s)(\check{a}^-*\varphi)(s)\leq 0.
\end{equation}

By \eqref{defvarphi}, \eqref{acheckpos}, for $s<0$, we have
\[
\mathcal{J}_c(s)=\ka^+(\check{a}^+*\varphi)(s)-m\theta-\ka^-\theta(\check{a}^-*\varphi)(s)
\leq \bigl((\ka^+\check{a}-\ka^-\theta\check{a}^-)*\theta\bigr)(s)-m\theta=0.
\]
Next, by \eqref{defvarphi},
\[
(\check{a}^+*\varphi)(s)\leq\theta \int_\R \check{a}^+(\tau)e^{-\mu(s-\tau)}\,d\tau=\theta e^{-\mu s} \A_\xi(\mu),
\]
therefore, for $s\geq0$, we have
\[
    \mathcal{J}_c(s)\leq -\mu c\theta e^{-\mu s}
    +\ka^+\theta  e^{-\mu s} \A_\xi(\mu) -m\theta e^{-\mu s};
\]
and to get \eqref{suffcond} it is enough to demand that $\ka^+ \A_\xi(\mu)-m- \mu c\leq0$, in particular,
\begin{equation}\label{demand}
c=\frac{\ka^+ \A_\xi(\mu)-m}{\mu}.
\end{equation}
As a result, for $\bar\phi(s,t)=\varphi(s-ct)$ with $c$ given by \eqref{demand}, we have
\begin{equation}\label{supersol2}
\mathcal{F}\bar\phi\geq0=\mathcal{F}(\tilde{Q}_t\varphi),
\end{equation}
as $\tilde{Q}_t\varphi$ is a solution to \eqref{eq:basic_one_dim}. Then, by \eqref{as:aplus_gr_aminus} and the inequality $\bar\phi\leq\theta$, one can apply Proposition~\ref{compprabscont} and get that
\[
\tilde{Q}_t\varphi(s')\leq \bar\phi(t,s')=\varphi(s'-ct), \quad \text{a.a.}\ s'\in\R,
\]
where $c$ is given by \eqref{demand}; note that, by \eqref{defvarphi}, for any $s\in\R$, the function $\bar{\phi}(s,t)$ is absolutely continuous in $t$. In particular, for $t=1$, $s'=s+c$, we will have
\begin{equation}\label{mayYag}
\tilde{Q}_1\varphi(s+c)\leq \varphi(s), \quad \text{a.a.}\ s\in\R.
\end{equation}
And now one can apply \cite[Theorem 5]{Yag2009} which states that, if there exists a flow of abstract mappings $\tilde{Q}_t$, each of them maps $\M$ into itself and has properties \ref{eq:QBtheta_subset_Btheta}--\ref{prop:Q_cont} of Proposition~\ref{prop:Q_def}, and if, for some $t$ (e.g. $t=1$), for some $c\in\R$, and for some $\varphi\in\M$, the inequality \eqref{mayYag} holds, then there exists $\psi\in\M$ such that, for any $t\geq0$,
\begin{equation}\label{getbyYag}
(\tilde{Q}_t \psi)(s+ct)=\psi(s), \quad \text{a.a.}\ s\in\R,
\end{equation}
that yields the solution to \eqref{eq:basic_one_dim} in the form \eqref{tw1d}, and hence, by Remark~\ref{rem:multi-one}, we will get the existence of a solution to \eqref{eq:basic} in the form \eqref{trwv}. It is worth noting that, in \cite{Yag2009}, the results were obtained for increasing functions. By~Remark~\ref{incrinsteadofdecr}, the same results do hold for decreasing functions needed for our settings.

Next, by \cite[Theorem 6]{Yag2009}, there exists $c_*=c_*(\xi)\in(-\infty,\infty]$ such that, for any $c\geq c_*$, there exists $\psi=\psi_c\in\M$ such that \eqref{getbyYag} holds, and for any $c<c_*$ such a $\psi$ does not exist. Since for $c$ given by \eqref{demand} such a $\psi$ exists, we have that $c_*\leq c<\infty$, moreover, one can take any $\mu$ in \eqref{demand} for that \eqref{aplusexpint1} holds. Therefore,
\begin{equation}\label{cstarestimate}
c_*\leq \inf_{\la>0}\frac{\ka^+ \A_\xi(\la)-m}{\la}.
\end{equation}
The statement is proved.
\end{proof}

\begin{remark}
  It can be seen from the proof above that we didn't use the special form \eqref{defvarphi} of the function $\varphi$ after the inequality \eqref{supersol2}. Therefore, if a function $\varphi_1\in\M$ is such that the function $\bar\phi(s,t):=\varphi_1(s-ct)$, $s\in\R$, $t\geq0$, is a super-solution to \eqref{eq:basic_one_dim}, for some $c\in\R$, i.e. if \eqref{supersol} holds, then there exists a traveling wave solution to \eqref{eq:basic_one_dim}, and hence to \eqref{eq:basic}, with some profile $\psi\in\M$ and the same speed $c$.
\end{remark}

Next two statements describe the properties of a traveling wave solution.

\begin{proposition}\label{prop:reg_trw}
Let $\psi\in\M$ and $c\in\R$ be such that there exists a solution $u\in\tilde{\x}_\infty^1$ to the equation \eqref{eq:basic} such that \eqref{trwv} holds, for some $\xi\in\S $. Then $\psi\in C^{1}(\R\to[0,\theta])$, for $c\neq0$, and $\psi\in C(\R\to[0,\theta])$, otherwise.
\end{proposition}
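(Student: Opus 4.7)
The plan is to reduce, via Proposition~\ref{prop:monot_sol}, to the one-dimensional equation \eqref{eq:basic_one_dim}, so that $\phi(s,t):=\psi(s-ct)$ solves \eqref{eq:basic_one_dim}. Introduce
\[
H(s):=\kappa^{+}(\check{a}^{+}*\psi)(s)-m\psi(s)-\kappa^{-}\psi(s)(\check{a}^{-}*\psi)(s),
\]
so that the right-hand side of \eqref{eq:basic_one_dim} along the traveling wave reads $H(s-ct)$. Since $\check{a}^{\pm}\in L^{1}(\R)$ and $\psi\in L^{\infty}(\R)$, Lemma~\ref{le:simple} gives $\check{a}^{\pm}*\psi\in C_{ub}(\R)$, so $H$ will be continuous as soon as $\psi$ is.

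The first step is to promote $\psi$ to a continuous function. Since $u\in\tilde{\x}_\infty=C_{b}(\R_{+}\to L^{\infty}(\X))$, the map $t\mapsto u(\cdot,t)$ is $L^{\infty}$-continuous, which in view of $u(x,t)=\psi(x\cdot\xi-ct)$ amounts to
\[
\esssup_{s\in\R}|\psi(s-ch)-\psi(s)|\to 0\quad\text{as }h\to 0.
\]
For $c\neq 0$, a jump $\psi(s_0-)>\psi(s_0)$ would keep this essential supremum bounded below by $\psi(s_0-)-\psi(s_0)$ (witnessed on the set $(s_0,s_0+|ch|)$ of positive measure), so $\psi$ has no jumps; being decreasing with limits $\theta$ and $0$ at $\mp\infty$, it is then uniformly continuous on $\R$. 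For $c=0$, $u$ is $t$-independent and the equation simplifies a.e.\ to
\[
\psi(s)\bigl[m+\kappa^{-}(\check{a}^{-}*\psi)(s)\bigr]=\kappa^{+}(\check{a}^{+}*\psi)(s);
\]
the right-hand side divided by the positive, uniformly continuous bracket is continuous, so $\psi$ agrees a.e.\ with a continuous function, and a decreasing right-continuous function that does so must itself be continuous.

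The second step, needed only for $c\neq 0$, upgrades continuity to $C^{1}$. By $u\in\tilde{\x}_\infty^{1}$ and the equation, $\partial_t u(\cdot,t)\in L^{\infty}(\X)$ equals $H(\cdot\xi-ct)$, so the difference quotients $(u(\cdot,t+h)-u(\cdot,t))/h$ converge to $H(\cdot\xi-ct)$ in $L^{\infty}(\X)$. Reparametrizing by $s=x\cdot\xi-ct$ and $k=-ch$, this reads
\[
\frac{\psi(s+k)-\psi(s)}{k}\xrightarrow[k\to 0]{L^{\infty}(\R)}-\frac{H(s)}{c}.
\]
Uniform convergence of these difference quotients makes $\psi$ Lipschitz on $\R$; together with pointwise convergence a.e.\ this gives $\psi'=-H/c$ a.e., and since the latter is continuous by Step~1 and Lemma~\ref{le:simple}, the absolute continuity of $\psi$ yields $\psi\in C^{1}(\R)$ with $\psi'=-H/c$.

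The delicate point is the first step: transferring the purely measure-theoretic identity $u(x,t)=\psi(x\cdot\xi-ct)$ into genuine regularity of $\psi$. When $c\neq 0$, translating in $t$ is the same as translating in $s$, so $L^{\infty}$-continuity in $t$ forces uniform continuity of $\psi$; when $c=0$ this leverage is lost and continuity must come from the stationary equation itself, which also explains why one cannot expect $C^{1}$ in that case.
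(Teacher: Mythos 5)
Your proof is correct and, for $c\neq 0$, takes a genuinely cleaner route than the paper's. The paper never establishes continuity of $\psi$ separately: it proves differentiability directly from the a.e.\ inequalities expressing the $L^{\infty}$-differentiability of $\phi$, using the monotonicity of $\psi$ together with essential suprema over shrinking windows $(x_0,x_0+h^2)$ to squeeze out one-sided derivatives, and only afterwards proves continuity of $\psi'$ in a separate final step. Your Step~1 supplies a preliminary observation the paper circumvents at greater cost: a decreasing right-continuous profile with a jump of size $J>0$ at $s_0$ forces $\esssup_s|\psi(s-ch)-\psi(s)|\geq J$ for every small $h\neq 0$, which contradicts the $L^{\infty}$-continuity of $t\mapsto u(\cdot,t)$ as soon as $c\neq 0$. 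Once $\psi$ is known continuous, the $L^{\infty}$-convergence of the (now continuous) difference quotients becomes uniform convergence, so $\psi'=-H/c$ holds at every point and is continuous by Lemma~\ref{le:simple}; the Lipschitz/absolute-continuity detour in your Step~2 is sound but unnecessary---uniform convergence of difference quotients to a continuous limit already yields the derivative pointwise. The $c=0$ case is handled identically in both proofs, via the stationary quotient and right-continuity. The trade-off: the paper's argument never needs to pass from $\esssup$ to $\sup$, whereas yours avoids the delicate double-shift manipulations entirely and is substantially shorter and more transparent.
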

\begin{proof}
The condition \eqref{trwv} implies \eqref{trwvincond} for the $\xi\in\S $. Then, by Proposition~\ref{prop:monot_sol}, there exists $\phi$ given by \eqref{repres} which solves \eqref{eq:basic_one_dim}; moreover, by Remark~\ref{rem:multi-one}, \eqref{tw1d} holds.

Let $c\neq0$. It is well-known that any monotone function is differentiable almost everywhere. Prove first that $\psi$ is differentiable everywhere on $\R$. Fix any $s_{0}\in\R$.
It follows directly from Proposition~\ref{prop:monot_sol}, that $\phi\in C^1((0,\infty)\to L^\infty(\R))$. Therefore, for any $t_0>0$ and for any $\varepsilon>0$, there exists $\delta=\delta(t_0,\varepsilon)>0$
such that, for all $t\in\R$ with $|ct|<\delta$ and $t_0+t>0$, the following inequalities hold, for a.a.~$s\in\R$,
\begin{gather}
\dfrac{\partial \phi}{\partial t}(s,t_{0}) -\eps< \dfrac{\phi(s,t_{0}+t)-\phi(s,t_{0})}{t}<\dfrac{\partial \phi}{\partial t}(s,t_{0})+\varepsilon,\label{firstfromeq}\\
\dfrac{\partial \phi}{\partial t}(s,t_{0})-\eps<\dfrac{\partial \phi}{\partial t}(s,t_{0}+t)<\dfrac{\partial \phi}{\partial t}(s,t_{0})+\varepsilon.\label{secondfromeq}
\end{gather}

Set, for the simplicity of notations, $x_0=s_0+ct_0$. Take any $0<h<1$ with $2h<\min\bigl\{\delta,|c|t_0, |c|\delta \bigr\}$.
Since $\psi$ is a decreasing function, one has, for almost all $s\in(x_0,x_0+h^{2})$,
\begin{align}
&\quad \dfrac{\psi(s_0+h)-\psi(s_0)}{h}\leq\dfrac{\psi(s-ct_0+h-h^{2})-\psi(s-ct_0)}{h} \nonumber \\
&=\dfrac{\phi(s,t_0+\frac{h^{2}-h}{c})-\phi(s,t_0)}{\frac{h^{2}-h}{c}}\dfrac{h^{2}-h}{ch}\leq\left(\dfrac{\partial \phi}{\partial t}(s,t_0)\mp\varepsilon\right)\dfrac{h-1}{c},\label{ff1}
\end{align}
by \eqref{firstfromeq} with $t=\frac{h^{2}-h}{c}$; note that then $|ct|=h-h^2<h<\delta$, and $t_0+t>0$ (the latter holds, for $c<0$, because of $t_0+t>t_0$ then; and, for $c>0$, it is equivalent to $ct_0>-ct=h-h^2$, that follows from $h<ct_0$).
Stress, that, in \eqref{ff1}, one needs to choose $-\eps$, for $c>0$, and $+\eps$, for $c<0$, according to the left and right inequalities in \eqref{firstfromeq}, correspondingly.

Similarly, for almost all $s\in(x_0-h^{2},x_0)$, one has
\begin{align}
&\quad \dfrac{\psi(s_0+h)-\psi(s_0)}{h}\geq\dfrac{\psi(s-ct_0+h+h^{2})-\psi(s-ct_0)}{h} \nonumber\\
&=\dfrac{\phi(s,t_0-\frac{h^{2}+h}{c})-\phi(s,t_0)}{-\frac{h^{2}+h}{c}}\dfrac{h^{2}+h}{-ch}\geq\left(\dfrac{\partial \phi}{\partial t}(s,t_0)\pm\varepsilon\right)\dfrac{h+1}{-c},\label{ff2}
\end{align}
where we take again the upper sign, for $c>0$, and the lower sign, for $c<0$; note also that $h+h^2<2h<\delta$.
Next, one needs to `shift' values of $s$ in \eqref{ff2} to get them the same as in \eqref{ff1}. To do this note that, by \eqref{tw1d},
\begin{equation}\label{ff3}
\phi\Bigl(s+h^2,t_0+\frac{h^2}{c}\Bigr)=\phi(s,t_0), \quad \text{a.a.}\ s\in\X.
\end{equation}
As a result,
\begin{equation}\label{ff4}
\begin{split}(\check{a}^\pm *\phi)\Bigl(s+h^2,t_0+\frac{h^2}{c}\Bigr)&=\int_{\R} \check{a}^\pm (s') \phi\Bigl(s-s'+h^2,t_0+\frac{h^2}{c}\Bigr)\,ds\\
&=(\check{a}^\pm *\phi)(s,t_0), \quad \text{a.a.}\ s\in\X.
\end{split}
\end{equation}
Then, by \eqref{eq:basic_one_dim}, \eqref{ff3}, \eqref{ff4}, one gets
\begin{equation}\label{ff5}
\frac{\partial}{\partial t}\phi\Bigl(s+h^2,t_0+\frac{h^2}{c}\Bigr)=\frac{\partial}{\partial t}\phi(s,t_0), \quad \text{a.a.}\ s\in\X.
\end{equation}
Therefore, by \eqref{ff5}, one gets from \eqref{ff2} that, for almost all $s\in(x_0,x_0+h^{2})$, cf. \eqref{ff1},
\begin{align}
 \dfrac{\psi(s_0+h)-\psi(s_0)}{h}&\geq\left(\dfrac{\partial \phi}{\partial t}\Bigl(s,t_0+\frac{h^2}{c}\Bigr)\pm\varepsilon\right)\dfrac{h+1}{-c},\notag\\
 \intertext{and, since $\bigl\lvert \frac{h^2}{c}\bigr\rvert<\delta$, one can apply the right and left inequalities in \eqref{secondfromeq}, for $c>0$ and $c<0$, correspondingly, to continue the estimate}
& \geq\left(\dfrac{\partial \phi}{\partial t}(s,t_0)\pm 2\varepsilon\right)\dfrac{h+1}{-c}.\label{ff6}
\end{align}
Combining \eqref{ff1} and \eqref{ff6}, we obtain
\begin{multline}
\left(\esssup_{s\in(x_0,x_0+h^2)}\dfrac{\partial \phi}{\partial t}(s,t_0)\pm 2\varepsilon\right)\dfrac{h+1}{-c} \leq \dfrac{\psi(s_0+h)-\psi(s_0)}{h}\\ \leq\left(\esssup_{s\in(x_0,x_0+h^2)}\dfrac{\partial \phi}{\partial t}(s,t_0)\mp\varepsilon\right)\dfrac{h-1}{c}. \label{ff7}
\end{multline}
For fixed $s_0\in\R$, $t_0>0$ and for $x_0=s_0+ct_0$, the function
\[
f(h):=\esssup\limits_{s\in(x_0,x_0+h^2)}\frac{\partial \phi}{\partial t}(s,t_0), \quad h\in(0,1)
\]
is bounded, as $|f(h)|\leq \bigl\lVert \frac{\partial \phi}{\partial t}(\cdot,t_0)\bigr\rVert_\infty<\infty$, and monotone; hence there exists $\bar f=\lim\limits_{h\to0+}f(h)$. As a result, for small enough $h$, \eqref{ff7} yields
\begin{equation*}
(\bar f\pm 2\varepsilon)\dfrac{1}{-c} -\eps \leq \dfrac{\psi(s_0+h)-\psi(s_0)}{h} \leq(\bar f\mp\varepsilon)\dfrac{-1}{c}+\eps,
\end{equation*}
and, therefore, there exists
$\dfrac{\partial\psi}{\partial s}(s_0+)=\dfrac{-\bar f}{c}$. In the same way, one can prove that there exists $\dfrac{\partial\psi}{\partial s}(s_0-)=\dfrac{-\bar f}{c}$, and, therefore, $\psi$ is differentiable at $s_0$.
As a result, $\psi$ is differentiable (and hence continuous) on the whole $\R$.

Next, for any $s_1,s_2,h\in\R$, we have
\begin{multline*}
  \biggl\lvert \frac{\psi(s_1+h)-  \psi(s_1)}{h}-\frac{\psi(s_2+h)-  \psi(s_2)}{h}\biggr\rvert\\
  =\frac{1}{|c|}\biggl\lvert \frac{\phi\bigl(s_1+ct_0,t_0-\frac{h}{c}\bigr)-  \phi(s_1+ct_0,t_0)}{-\frac{h}{c}}\qquad\qquad\qquad\\
  -\frac{\phi\bigl(s_1+ct_0,t_0+\frac{s_1-s_2}{c}-\frac{h}{c}\bigr)-  \phi\bigl(s_1+ct_0,t_0+\frac{s_1-s_2}{c}\bigr)}{-\frac{h}{c}}\biggr\rvert;
\end{multline*}
and if we pass $h$ to $0$, we get
\begin{align}\notag
  \lvert \psi'(s_1)-\psi'(s_2)\rvert&=\frac{1}{|c|}\biggl\lvert \frac{\partial}{\partial t}\phi(s_1+ct_0,t_0)
  -\frac{\partial}{\partial t}\phi\Bigl(s_1+ct_0,t_0+\frac{s_1-s_2}{c}\Bigr)\biggr\rvert
  \\& \leq \frac{1}{|c|}\biggl\lVert \frac{\partial}{\partial t}\phi(\cdot,t_0)
  -\frac{\partial}{\partial t}\phi\Bigl(\cdot,t_0+\frac{s_1-s_2}{c}\Bigr)\biggr\rVert.\label{eq333}
\end{align}
And now, by the continuity of $\frac{\partial}{\partial t}\phi(\cdot,t)$ in $t$ in the sense of the norm in $L^\infty(\R)$, we have that, by \eqref{secondfromeq}, the inequality $|s_1-s_2|\leq |c|\delta$ implies that, by \eqref{eq333},
\[
\lvert \psi'(s_1)-\psi'(s_2)\rvert\leq \frac{1}{|c|} \eps.
\]
As a result, $\psi'(s)$ is uniformly continuous on $\R$ and hence continuous.

Finally, consider the case $c=0$. Then \eqref{tw1d} implies that $\phi(s,t)$ must be constant in time, i.e. $\phi(s,t)=\psi(s)$, for a.a. $s\in\R$. Thus one can rewrite \eqref{eq:basic_one_dim} as follows
\begin{equation}\label{statwave}
  \ka^{+}(\check{a}^{+}*\psi)(s)-m\psi(s)-\ka^{-}\psi(s) (\check{a}^{-}*\psi)(s)=0,
\end{equation}
or, equivalently,
\begin{equation}\label{asquotient}
 \psi(s)=\frac{\ka^{+}(\check{a}^{+}*\psi)(s)}{m+\ka^{-}(\check{a}^{-}*\psi)(s)}.
\end{equation}
Since $\psi\in L^\infty(\R)$, then, by Lemma~\ref{le:simple}, the r.h.s. of \eqref{asquotient} is a continuous in $s$ function, and hence $\psi\in C(\R)$.
\end{proof}

Let $u\in\tilde{\x}_\infty^1$ be a traveling wave solution to \eqref{eq:basic}, in the sense of Definition~\ref{def:trw}, with a profile $\psi\in\M$ and a speed $c\in\R$. Then, by~Remark~\ref{rem:multi-one} and Proposition~\ref{prop:reg_trw}, for any $c\neq0$, one can differentiate $\psi(s-ct)$ in $t\geq0$. Thus (cf. also Lemma~\ref{le:simple}) we get
\begin{equation}
c\psi'(s)+\ka^{+}(\check{a}^{+}*\psi)(s)-m\psi(s)-\ka^{-}\psi(s) (\check{a}^{-}*\psi)(s)=0, \quad s\in\R.\label{eq:trw}
\end{equation}
For $c=0$, one has \eqref{statwave}, i.e. \eqref{eq:trw} holds in this case as well.

Let $k\in\N\cup\{\infty\}$ and $C_b^k(\R)$ denote the class of all functions on $\R$ which are $k$ times differentiable and whose derivatives (up to the order $k$) are continuous and bounded on $\R$.
\begin{corollary}\label{cor:infsmoothprofile}
In conditions and notations of Proposition~\ref{prop:reg_trw}, for any speed $c\neq0$, the profile $\psi\in C_b^\infty(\R)$.
\end{corollary}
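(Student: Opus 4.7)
The plan is a straightforward bootstrap on regularity, starting from the $C^1$ information given by Proposition~\ref{prop:reg_trw} and using the traveling wave equation \eqref{eq:trw} to trade each regularity class for the next.

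First I would upgrade the conclusion of Proposition~\ref{prop:reg_trw} from $\psi \in C^1(\R)$ to $\psi \in C^1_b(\R)$ with an explicit bound. Since $c \neq 0$, equation \eqref{eq:trw} can be solved for $\psi'$:
\begin{equation*}
\psi'(s) = \frac{1}{c}\bigl[m\psi(s) + \ka^{-}\psi(s)(\check{a}^{-}*\psi)(s) - \ka^{+}(\check{a}^{+}*\psi)(s)\bigr].
\end{equation*}
Since $0 \leq \psi \leq \theta$ and $\|\check{a}^\pm\|_{L^1(\R)} = 1$ by \eqref{cleareq}, every term on the right-hand side is bounded, so $\|\psi'\|_{\infty} < \infty$.

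Next I would record the auxiliary fact that if $a \in L^1(\R)$ and $g \in C_b^k(\R)$ then $a*g \in C_b^k(\R)$, with $(a*g)^{(j)} = a*g^{(j)}$ for $j = 0, 1, \ldots, k$; boundedness follows from \eqref{convbdd}, continuity from Lemma~\ref{le:simple}, and differentiation under the integral is justified by dominated convergence since $g^{(j)}$ is bounded and $a$ is integrable.

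The induction now runs as follows. Suppose $\psi \in C_b^k(\R)$ for some $k \geq 1$. By the convolution fact, $\check{a}^\pm * \psi \in C_b^k(\R)$. Hence the right-hand side of the displayed expression for $\psi'$, being a linear combination and product of elements of $C_b^k(\R)$, belongs itself to $C_b^k(\R)$. Therefore $\psi' \in C_b^k(\R)$, i.e., $\psi \in C_b^{k+1}(\R)$. Iterating, $\psi \in \bigcap_{k \geq 1} C_b^k(\R) = C_b^\infty(\R)$.

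I expect no real obstacle here — the argument is essentially the classical ``smoothing by an $L^1$ convolution when one factor is already smooth'' combined with algebra. The only mild subtlety is that we cannot gain smoothness from the kernels $\check{a}^\pm$ themselves (which are merely $L^1$), so it is crucial that the equation expresses $\psi'$ in terms of $\psi$ and convolutions of $\psi$, allowing the regularity already possessed by $\psi$ to be transferred through the convolutions; this, in turn, is precisely what the hypothesis $c \neq 0$ provides.
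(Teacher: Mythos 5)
Your proof is correct and takes essentially the same approach as the paper: solve the traveling wave equation \eqref{eq:trw} for $\psi'$ (using $c\neq0$) and bootstrap, using that convolution with an $L^1(\R)$ kernel preserves the class $C_b^k(\R)$. The only cosmetic difference is that the paper cites \cite[Proposition~5.4.1]{Sta2005} for the identity $(\check{a}^\pm*\psi)'=\check{a}^\pm*\psi'$, where you justify it directly via dominated convergence.
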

\begin{proof}
By Lemma~\ref{le:simple}, $\check{a}^\pm*\psi\in C_b(\R)$. Then \eqref{eq:trw} yields $\psi'\in C_b(\R)$, i.e. $\psi\in C_b^1(\R)$. By e.g. \cite[Proposition~5.4.1]{Sta2005}, $\check{a}^\pm*\psi\in C_b^1(\R)$ and $(\check{a}^\pm*\psi)'=\check{a}^\pm*\psi'$, therefore, the equality \eqref{eq:trw} holds with $\psi'$ replaced by $\psi''$ and $\psi$ replaced by $\psi'$. Then, by the same arguments $\psi\in C_b^2(\R)$, and so on. The statement is proved.
\end{proof}

\begin{proposition}\label{prop:psidecaysstrictly}
  In conditions and notations of Proposition~\ref{prop:reg_trw}, $\psi$ is a strictly decaying function, for any speed $c$.
\end{proposition}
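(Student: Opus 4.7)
The plan is to combine a sliding argument with the strong maximum principle (Theorem~\ref{thm:strongmaxprinciple}). The overall strategy is: if $\psi$ were flat on some interval, then shifting $\psi$ would produce a second traveling wave that agrees with the original on a nontrivial set, and the strong maximum principle would promote this coincidence to global equality, forcing $\psi$ to be periodic and hence constant, against the boundary values.

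Concretely, suppose for contradiction that $\psi$ fails to be strictly decreasing. Since $\psi\in\M$ is non-increasing, there exist $s_{1}<s_{2}$ with $\psi(s_{1})=\psi(s_{2})$; set $h:=s_{2}-s_{1}>0$, so in particular $\psi(s_{1}+h)=\psi(s_{1})$. Introduce the shifted function $\tilde u(x,t):=u(x+h\xi,t)=\psi(x\cdot\xi-ct+h)$. By translation invariance of \eqref{eq:basic} in the spatial variable, $\tilde u$ is again a solution to \eqref{eq:basic}. By Proposition~\ref{prop:reg_trw} one has $\psi\in C(\R)$, and the existence of the limits $\psi(\pm\infty)$ makes $\psi$ uniformly continuous on $\R$; consequently $u,\tilde u\in\Xinf$, which is the class in which Theorem~\ref{thm:strongmaxprinciple} operates. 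Monotonicity of $\psi$ and $h>0$ yield $\tilde u(x,t)\leq u(x,t)\leq\theta$ pointwise, while the identity $\psi(s_{1})=\psi(s_{1}+h)$ gives $\tilde u(x_{0},t_{0})=u(x_{0},t_{0})$ at every $(x_{0},t_{0})$ with $t_{0}>0$ and $x_{0}\cdot\xi-ct_{0}=s_{1}$ (plenty of such points exist for any $c\in\R$ and any $\xi\in\S$).

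Applying Theorem~\ref{thm:strongmaxprinciple} with $u_{1}:=\tilde u$ and $u_{2}:=u$, the dichotomy (either $u_{1}\equiv u_{2}$, or $u_{1}<u_{2}$ strictly for $t>0$) forces $\tilde u\equiv u$, i.e.\ $\psi(\cdot+h)\equiv\psi$ on $\R$. Iterating this identity gives $\psi(s+nh)=\psi(s)$ for every $n\in\Z$, so $\psi$ is $h$-periodic. Combined with the monotonicity of $\psi$, this forces $\psi$ to be constant on $\R$, contradicting $\psi(-\infty)=\theta\neq 0=\psi(+\infty)$. Hence $\psi$ is strictly decreasing.

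The main obstacle (and the only nontrivial step) is the applicability of Theorem~\ref{thm:strongmaxprinciple}, which requires the non-degeneracy assumption \eqref{as:aplus-aminus-is-pos} on top of the running hypotheses \eqref{as:chiplus_gr_m}, \eqref{as:aplus_gr_aminus}; under \eqref{as:aplus-aminus-is-pos} the sliding argument above is essentially a formality. Without it, one would need to argue more carefully by looking at the linear equation satisfied by $w:=u-\tilde u\geq0$, deduce from $w(x_{0},t_{0})=0$ that $(\check J_{\theta}*w)(\cdot,t_{0})$ vanishes at $x_{0}$, and then propagate the zero set of $w$ using the positive-measure support of $\check J_{\theta}$; this propagation step is the technical heart and the piece I would expect to consume the most effort in a self-contained proof.
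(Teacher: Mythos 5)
Your argument is correct and is essentially the paper's own proof: both shift the profile by a positive amount within a flat interval, compare the two resulting traveling-wave solutions, and invoke the strong maximum principle (Theorem~\ref{thm:strongmaxprinciple}) to get a contradiction (the identity branch forces $\psi$ to be periodic and hence constant, the strict-inequality branch is ruled out by the touching point). Your remark that this step genuinely requires \eqref{as:aplus-aminus-is-pos} beyond the standing Section~4 hypotheses \eqref{as:chiplus_gr_m}--\eqref{as:aplus_gr_aminus} is a fair observation, since the paper invokes Theorem~\ref{thm:strongmaxprinciple} here without flagging that extra assumption.
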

\begin{proof}
  Let $c\in\R$ be the speed of a traveling wave with a profile $\psi\in\M$ in a direction $\xi\in\S$. By~Proposition~\ref{prop:reg_trw}, $\psi\in C(\R)$. Suppose that $\psi$ is not strictly decaying, then there exists $\delta_0>0$ and $s_0\in\R$, such that $\psi(s)=\psi(s_0)$, for all $|s-s_0|\leq\delta_0$. Take any $\delta\in\bigl(0,\frac{\delta_0}{2}\bigr)$, and consider the function $\psi^\delta(s):=\psi(s+\delta)$. Clearly, $\psi^\delta(s)\leq\psi(s)$, $s\in\R$. By~Remark~\ref{shiftoftrw}, $\psi^\delta$ is a profile for a traveling wave with the same speed $c$. Therefore, one has two solutions to \eqref{eq:basic}: $u(x,t)=\psi(x\cdot\xi-ct)$ and $u^\delta(x,t)=\psi^\delta(x\cdot\xi-ct)$ and hence
  $u(x,t)\leq u^\delta(x,t)$, $x\in\X$, $t\geq0$. By the maximum principle, see Theorem~\ref{thm:strongmaxprinciple}, either $u\equiv u^\delta$, that contradicts $\delta>0$ or $u(x,t)< u^\delta(x,t)$, $x\in\X$, $t>0$. The latter, however, contradicts the equality $u(x,t)=u^\delta(x,t)$, which holds e.g.\! if $x\cdot\xi-ct=s_0$. Hence $\psi$ is a strictly decaying function.
\end{proof}

Under assumptions \eqref{as:chiplus_gr_m} and \eqref{as:aplus_gr_aminus}, define the following function, cf.~\eqref{Jq},
\begin{equation}\label{speckern}
  \check{J}_\upsilon(s):=\ka^+\check{a}^+(s)-\upsilon\ka^-\check{a}^-(s), \quad s\in\R, \upsilon\in(0,\theta].
\end{equation}
Then, by \eqref{acheckpos},
\begin{equation*}
\check{J}_\upsilon(s)\geq \check{J}_\theta(s)\geq0, \quad s\in\R, \upsilon\in(0,\theta].
\end{equation*}

\begin{proposition}\label{prop:trw_exp_est}
Let \eqref{as:chiplus_gr_m} and \eqref{as:aplus_gr_aminus} hold. Then, in the conditions and notations of Proposition \ref{prop:reg_trw}, there exists $\mu=\mu( c, a^+,\ka^-,\theta)>0$ such that
\begin{equation*}
  \int_\R\psi(s)e^{\mu s}\,ds<\infty.
\end{equation*}
\end{proposition}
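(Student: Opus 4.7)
For $c>0$ (the case $c<0$ is symmetric by reflection of the kernels, while $c=0$ is handled directly via the pointwise identity \eqref{asquotient}), my first step is to recast the traveling-wave ODE \eqref{eq:trw} as
$$c\psi'(s)-m\psi(s) = -\mathcal{R}(s), \qquad \mathcal{R}(s):=\ka^+(\check{a}^+*\psi)(s) - \ka^-\psi(s)(\check{a}^-*\psi)(s),$$
and observe that $\mathcal{R}\geq 0$ since $\mathcal{R}= (\check{J}_\theta*\psi) + \ka^-(\theta-\psi)(\check{a}^-*\psi)$ with both summands nonnegative (using \eqref{as:aplus_gr_aminus} together with $0\leq\psi\leq\theta$, cf.~Proposition~\ref{prop:psidecaysstrictly}). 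Multiplying by the integrating factor $e^{-ms/c}$, integrating from $s$ to $\infty$, and using $\psi(+\infty)=0$ yields the key representation
$$\psi(s) = \frac{1}{c}\int_s^\infty e^{-m(r-s)/c}\mathcal{R}(r)\,dr \;\leq\; \frac{\ka^+}{c}\int_s^\infty e^{-m(r-s)/c}(\check{a}^+*\psi)(r)\,dr.$$

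Since the existence of $\psi$ requires the Mollison-type hypothesis~\eqref{aplusexpint1} by Theorem~\ref{thm:trwexists}, there is $\mu_0>0$ with $\A_\xi(\mu)<\infty$ for every $\mu\in(0,\mu_0]$. The next step is to multiply the above integral inequality by $e^{\mu s}$, integrate in $s$ on a finite window $[S,T]$, apply Fubini, and use the translation identity $\int(\check{a}^+*\psi)(r)e^{\mu r}\,dr = \A_\xi(\mu)\int\psi(s)e^{\mu s}\,ds$, to produce an estimate of the schematic form
$$\int_S^T\psi(s)e^{\mu s}\,ds \;\leq\; \frac{\ka^+\A_\xi(\mu)}{c\mu+m}\int_S^T\psi(s)e^{\mu s}\,ds \;+\; B_\mu(S,T),$$
in which the boundary contribution $B_\mu(S,T)$ arises from the convolution spilling outside $[S,T]$ and is controlled via $\psi\leq\theta$ and the monotonicity of $\psi$.

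The main obstacle is the absorption step: the factor $\ka^+\A_\xi(\mu)/(c\mu+m)$ equals $\ka^+/m>1$ at $\mu=0$ and drops below $1$ only for $\mu$ lying strictly between the positive roots of the characteristic equation $\ka^+\A_\xi(\mu) = c\mu+m$ (a range which is non-empty exactly when $c>c_*(\xi)$), yet multiplying \eqref{eq:trw} by $e^{\mu s}$ and integrating formally on that same range forces $\int\psi(s)e^{\mu s}\,ds=\infty$ (since the nonnegative quantity $\int\psi(s)(\check{a}^-*\psi)(s)e^{\mu s}\,ds$ requires $\ka^+\A_\xi(\mu)\geq c\mu+m$ whenever $\int\psi(s)e^{\mu s}\,ds$ is finite). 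To break this loop I would combine the truncated estimate with a comparison against the explicit super-solution $\theta\min(e^{-\mu(s-x_0)},1)$ constructed in the proof of Theorem~\ref{thm:trwexists}, taking $\mu$ at (or just below) the smaller root of the characteristic equation so that the associated wave speed matches $c$, and choosing the shift $x_0$ large enough that the comparison principle (Proposition~\ref{prop:comp_pr_BUC}, applied in the traveling frame) propagates the bound, giving $\psi(s)\leq \theta\,e^{-\mu(s-x_0)}$ for $s\geq x_0$ and hence $\int_\R \psi(s)e^{\mu' s}\,ds<\infty$ for every $\mu'\in(0,\mu)$.
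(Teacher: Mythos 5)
Your first three steps are sound: the integral representation for $c>0$, the Fubini computation, and—crucially—your diagnosis that absorption with factor $\ka^+\A_\xi(\mu)/(c\mu+m)<1$ over the whole line is impossible, since the identity \eqref{eqforLtr} forces $\ka^+\A_\xi(\mu)-m-c\mu\geq 0$ wherever $(\L\psi)(\mu)$ is finite. That obstruction is real and you have explained it correctly. The trouble lies in the proposed escape.

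The comparison route is circular. To invoke Theorem~\ref{thm:compar_pr} or Proposition~\ref{prop:comp_pr_BUC} with $u_1(x,t)=\psi(x\cdot\xi-ct)$ and $u_2(x,t)=\theta\min(e^{-\mu(x\cdot\xi-x_0-ct)},1)$ you must first have $\psi(s)\le\theta\min(e^{-\mu(s-x_0)},1)$ for \emph{all} $s\in\R$ at $t=0$. Both candidates are translates of their initial profiles, so comparison at positive times adds nothing; the initial inequality is equivalent to the conclusion. But before this proposition is proved, $\psi$ could a priori decay like $1/s$ at $+\infty$, and then no shift $x_0$ makes $\psi(s)\le\theta e^{-\mu(s-x_0)}$ hold for large $s$. (The sliding argument used in Theorem~\ref{thm:tr_w_uniq} overcomes precisely this difficulty, but only because it already invokes the asymptotic $\psi(t)\sim De^{-\la_0(\psi)t}t^{j-1}$, which depends on the present proposition.) Similarly, referring to ``the smaller root of the characteristic equation'' and ``the associated wave speed matches $c$'' quietly uses the content of Theorem~\ref{thm:speedandprofile}, which is proved after this proposition. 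Finally, your $c=0$ disposal is also incomplete: the fixed-point identity \eqref{asquotient} gives continuity of $\psi$, not exponential decay.

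The paper circumvents the obstruction without comparison. The key observation you lose by bounding $\mathcal{R}\le\ka^+(\check a^+*\psi)$ is that, for large $s$, the discarded term is \emph{helpful}: writing $-m\psi=-\ka^+\psi+\ka^-\theta\psi$, one regroups \eqref{eq:trw} as $c\psi'=\ka^+\bigl((\check a^+*\psi)-\psi\bigr)+\ka^-\psi\bigl(\theta-(\check a^-*\psi)\bigr)$, and for $s>R$ with $(\check a^-*\psi)(s)<\theta/2$ the last summand contributes the strictly positive coefficient $\ka^-\theta/2$ in front of $\int_R^\infty\psi\,e^{\la s}$. That surplus, not a factor $<1$, is what closes the loop: after multiplying by the truncated weight $e^{\la s}\varphi_N(s)$ and integrating, the estimate takes the schematic form $\bigl(\tfrac{\ka^-\theta}{4}-\la\bar c\bigr)\int_R^\infty\psi\varphi_N e^{\la s}\le\bigl(\tfrac{\ka^-\theta}{4}+\la\bar c\bigr)\int_{-\infty}^R\psi\varphi_N e^{\la s}$, whose right side is finite and independent of $N$, and whose left-hand coefficient is positive as soon as $\la\bar c<\ka^-\theta/4$. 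To make every term in that computation integrable, though, one needs a preliminary qualitative fact, namely $\psi\in L^1(\R_+)$. The paper establishes that first by integrating \eqref{eq:tr_w_ii} over $[r_0,r]$: choosing $\upsilon\in(\psi(r_0),\theta)$ makes the term $\ka^-(\upsilon-\psi)(\check a^-*\psi)$ a uniformly positive multiple of $\int_{r_0}^r(\check a^-*\psi)$, while the $\check J_\upsilon$-contribution and the $c(\psi(r)-\psi(r_0))$ boundary term stay bounded as $r\to\infty$; hence $\check a^-*\psi\in L^1(\R_+)$, hence $\psi\in L^1(\R_+)$. This two-step structure—$L^1$ first, then truncated weights keeping the logistic term—is the essential missing ingredient in your proposal.
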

\begin{proof}
At first, we prove that $\psi\in L^1(\R_+)$. Let $\upsilon\in(0,\theta)$ and $\check{J}_\upsilon(s)>0$, $s\in\R$ be given by \eqref{speckern}.
Since $\int_\R \check{J}_\upsilon(s)\,ds= \ka^+-\upsilon\ka^->m$, one can choose $R_0>0$, such that
\begin{equation}\label{Risproper}
\int_{-R_0}^{R_0} \check{J}_\upsilon(s)\,ds=m.
\end{equation}
We rewrite \eqref{eq:trw} as follows
\begin{equation}\label{eq:tr_w_ii}
c\psi '(s)+(\check{J}_\upsilon*\psi)(s)+\ka^-\bigl(\upsilon-\psi(s)\bigr)(\check{a}^-*\psi)(s)-m\psi(s)=0,\quad s\in\R.
\end{equation}
Fix arbitrary ${r_0}>0$, such that
\begin{equation}\label{rhocond}
  \psi({r_0})<\upsilon.
\end{equation}
Let $r>{r_0}+R_0$. Integrate \eqref{eq:tr_w_ii} over $[{r_0},r]$; one gets
\begin{equation}\label{eq:tr_w_ii_int}
c(\psi(r)-\psi({r_0}))+A+B=0,
\end{equation}
where
\begin{align*}
A&:=\int_{{r_0}}^{r}(\check{J}_\upsilon*\psi)(s)\, ds-m\int_{{r_0}}^{r}\psi(s)ds,\\
B&:=\ka^-\int_{{r_0}}^{r}(\upsilon-\psi(s))(\check{a}^-*\psi)(s)\,ds.
\end{align*}
By \eqref{speckern}, \eqref{Risproper}, one has
\begin{align}
A&\geq\int_{r_0}^{r}\int_{-R_0}^{R_0}\check{J}_\upsilon(\tau)\psi(s-\tau)d\tau ds-m\int_{r_0}^{r}\psi(s)\,ds\nonumber\\&= \int_{-R_0}^{R_0}\check{J}_\upsilon(\tau)\left( \int_{{r_0}-\tau}^{r-\tau}\psi(s)\,ds-\int_{{r_0}}^{r}\psi(s)\,ds \right)\,d\tau\nonumber \\
&=\int_{0}^{R_0}\check{J}_\upsilon(\tau)\left( \int_{{r_0}-\tau}^{{r_0}}\psi(s)\,ds-\int_{r-\tau}^{r}\psi(s)\,ds \right)\,d\tau\nonumber\\&\quad+\int_{-R_0}^{0}\check{J}_\upsilon(\tau)\left( \int_{r}^{r-\tau}\psi(s)\,ds-\int_{{r_0}}^{{r_0}-\tau}\psi(s)\,ds \right)\,d\tau; \label{eq:gen_est12}
\end{align}
and since $\psi$ is a decreasing function and $r-R_0>{r_0}$, we have from \eqref{eq:gen_est12}, that
\begin{align}
A &\geq (\psi({r_0})-\psi(r-R_0))\int_{0}^{R_0}\tau \check{J}_\upsilon(\tau)\,d\tau+(\psi(r+R_0)-\psi({r_0}))\int_{-R_0}^{0}(-\tau) J_\upsilon(\tau)\,d\tau \notag \\ &\geq -\theta \int_{-R_0}^{0}(-\tau) J_\upsilon(\tau)\,d\tau =:-\theta \bar{J}_{\upsilon,R_0}. \label{eq:gen_est}
\end{align}
Next, \eqref{rhocond} and monotonicity of $\psi$ imply
\begin{equation}\label{B-est}
  B\geq \ka^-(\upsilon-\psi({r_0}))\int_{{r_0}}^{r}(\check{a}^-*\psi)(s)\,ds.
\end{equation}
Then, by \eqref{eq:tr_w_ii_int}, \eqref{eq:gen_est}, \eqref{B-est}, \eqref{rhocond}, one gets
\begin{align*}
  0\leq\ka^-(\upsilon-\psi({r_0}))\int_{{r_0}}^{r}(\check{a}^-*\psi)(s)\,ds
  &\leq \theta \bar{J}_{\upsilon,R_0} + c(\psi({r_0})-\psi(r))\\
  &\to \theta \bar{J}_{\upsilon,R_0} + c\psi({r_0})<\infty, \quad r\to\infty,
\end{align*}
therefore, $\check{a}^-*\psi\in L^1(\R_+)$. Finally, \eqref{cleareq} implies that there exist a measurable bounded set $\Delta\subset\R$, with $m(\Delta):=\int_\Delta \,ds\in (0,\infty)$, and a constant $\mu>0$, such that $\check{a}^-(\tau)\geq\mu$, for a.a. $\tau\in\Delta$. Let $\delta=\inf \Delta\in\R$. Then, for any $s\in\R$, one has
\begin{equation*}
(\check{a}^-*\psi)(s)\geq \int_\Delta \check{a}^-(\tau) \psi(s-\tau)\,d\tau\geq \mu \psi(s-\delta) m(\Delta).
\end{equation*}
Therefore $\psi\in L^1(\R_+)$.

For any $N\in\N$, we define $\varphi_N(s):=\1_{(-\infty,N)}(s)+e^{-\la(s-N)}\1_{[N,\infty)}(s)$, where $\la>0$. By the proved above, $\psi,\check{a}^\pm*\psi\in L^1(\R_+)\cap L^\infty(\R)$ hence, by \eqref{eq:trw}, $c\psi'\in L^1(\R_+)\cap L^\infty(\R)$. Therefore, all terms of \eqref{eq:trw} being multiplied on $e^{\lambda s}\varphi_{N}(s)$ are~integrable over $\R$. After this integration, \eqref{eq:trw} will be read as follows
\begin{equation}
I_1+I_2+I_3=0,\label{eq:int_trw_zeta_exp}
\end{equation}
where (recall that $\ka^-\theta-\ka^+=-m$)
\begin{align*}
I_1&:=c\int_{\R}\psi' (s) e^{\lambda s}\varphi_{N}(s)\,ds,\\
I_2&:=\ka^{+}\int_\R\bigl((\check{a}^{+}*\psi)(s)-\psi(s)\bigr)e^{\lambda s}\varphi_{N}(s)\,ds,\\
I_3&:=\ka^{-}\int_{\R}\psi(s)\bigl(\theta-(\check{a}^{-}*\psi)(s)\bigr)
e^{\lambda s}\varphi_{N}(s)\,ds
\end{align*}
We estimate now $I_1,I_2,I_3$ from below.

We start with $I_2$. One can write
\begin{align}
&\quad\int_{\R}(\check{a}^{+}*\psi)(s)e^{\lambda s}\varphi_{N}(s)\,ds
=\int_{\R}\int_{\R}\check{a}^{+}(s-\tau)\psi(\tau)e^{\lambda s}\varphi_{N}(s)\,d\tau ds\notag\\
&=\int_{\R}\int_{\R}\check{a}^{+}(s)e^{\lambda s}\varphi_{N}(\tau+s)\,ds\, e^{\lambda \tau}\psi(\tau)\,d\tau\notag\\
&\ge\int_{\R}\biggl(\int_{-\infty}^{R}\check{a}^{+}(s)e^{\lambda s}\,ds\biggr)\varphi_{N}(\tau+R)e^{\lambda \tau}\psi(\tau)\,d\tau,\label{eq:111}
\end{align}
for any $R>0$, as $\varphi$ is nonincreasing. By \eqref{cleareq}, one can choose $R>0$ such that
\[
\int_{-\infty}^{R}\check{a}^{+}(\tau)\,d\tau>1-\dfrac{\ka^{-}\theta}{4}.
\]
By continuity arguments, there exists $\nu>0$ such that, for any $0<\la<\nu$,
\begin{equation}\label{eq:222}
  \int_{-\infty}^{R}\check{a}^{+}(\tau)e^{\lambda \tau}\,d\tau\geq\Bigl(1-\dfrac{\ka^{-}\theta}{4}\Bigr)e^{\lambda R}.
\end{equation}
Therefore, combining \eqref{eq:111} and \eqref{eq:222}, we get
\begin{align}
I_2&\geq\int _{\R}\Bigl(1-\dfrac{\ka^{-}\theta}{4}\Bigr)e^{\lambda R}\varphi_{N}(\tau+R)e^{\lambda \tau}\psi(\tau)\,d\tau-\int _{\R}\psi(s)e^{\lambda s}\varphi_{N}(s)\,ds\notag \\
&=\int_{\R}\Bigl(1-\dfrac{\ka^{-}\theta}{4}\Bigr)\varphi_{N}(\tau)e^{\lambda \tau}\psi(\tau-R)\,d\tau-\int _{\R}\psi(s)e^{\lambda s}\varphi_{N}(s)\,ds\notag \\
&\ge-\dfrac{\ka^{-}\theta}{4}\int_{\R}\psi(s)e^{\lambda s}\varphi_{N}(s)\,ds,\label{eq:trw_exp_est:i}
\end{align}
as $\psi(\tau-R)\geq\psi(\tau)$, $\tau\in\R$, $R>0$.

Now we estimate $I_3$. By \eqref{trwv}, it is easily seen that the function $(\check{a}^{-}*\psi)(s)$ decreases monotonically to $0$ as $s\to\infty$.
Suppose additionally that $R>0$ above is such that
\[
(\check{a}^{-}*\psi)(s)<\dfrac{\theta}{2}, \quad s>R.
\]
Then, one gets
\begin{align*}\notag
I_3&\geq\dfrac{\theta}{2}\int _{R}^{\infty}\psi (s)e^{\lambda s}\varphi_{N}(s)\,ds+\int _{-\infty}^{R}\psi(s)\bigl(\theta-(\check{a}^{-}*\psi)(s)\bigr)e^{\lambda s}\varphi_{N}(s)\,ds\notag\\
&\geq \dfrac{\theta}{2}\int _{R}^{\infty}\psi (s)e^{\lambda s}\varphi_{N}(s)\,ds,
\end{align*}
as $\psi\geq0$, $\varphi_N\geq0$, $(\check{a}^{-}*\psi)(s)\leq\theta$.

It remains to estimate $I_1$ (in the case $c\neq0$). Since
$\lim\limits_{s\to\pm\infty} \psi(s)e^{\la s}\varphi_N(s) =0$, we have from the inntegration by parts formula, that
\begin{equation*}
I_1=-c\int_{\R}\psi(s)(\lambda\varphi_{N}(s)+\varphi_{N}'(s))e^{\lambda s}\,ds.
\end{equation*}
For $c>0$, one can use that $\varphi_N'(s)\leq0$, $s\in\R$, and hence
\begin{equation*}
  I_1\geq -c \lambda\int _{\R}\psi(s)\varphi_{N}(s)e^{\lambda s}\,ds.
\end{equation*}
For $c<0$, we use that, by the definition of $\varphi_N$, $\lambda\varphi_{N}(s)+\varphi_{N}'(s)=0$, $s\geq N$; therefore,
\begin{equation}
I_1=-c\lambda\int_{-\infty}^N\psi(s)\,ds>0. \label{eq:trw_exp_est:iv}
\end{equation}

Therefore, combining \eqref{eq:trw_exp_est:i}--\eqref{eq:trw_exp_est:iv}, we get from \eqref{eq:int_trw_zeta_exp}, that
\begin{multline*}
0\geq
-\lambda \bar{c}\int _{\R}\psi(s)\varphi_{N}(s)e^{\lambda s}\,ds
-\dfrac{\ka^{-}\theta}{4}\int_{\R}\psi(s) e^{\lambda s}\varphi_{N}(s)\,ds
\\+\dfrac{\ka^{-}\theta}{2}\int_{R}^{\infty}\psi(s) e^{\lambda s}\varphi_{N}(s)\,ds,
\end{multline*}
where $\bar{c}=\max\{c,0\}$.

The latter inequality can be easily rewritten as
\begin{align}\notag
&\quad \Bigl(\dfrac{\ka^{-}\theta}{4}-\lambda \bar{c}\Bigr)\int_{R}^{\infty}\psi (s) e^{\lambda s}\varphi_{N}(s)\,ds\leq \Bigl(\dfrac{\ka^{-}\theta}{4}+\lambda \bar{c}\Bigr)\int_{-\infty}^{R}\psi(s)\varphi_{N}(s)e^{\lambda s}\,ds\\
&\leq \Bigl(\dfrac{\ka^{-}\theta}{4}+\lambda \bar{c}\Bigr)\theta \int_{-\infty}^{R}e^{\lambda s}\,ds=:I_{\la,R}<\infty,\label{eq:333}
\end{align}
for any $0<\la<\nu$.

Take now $\mu<\min\bigl\{\nu, \frac{\ka^-\theta}{4c}\bigr\}$, for $c>0$, and $\mu<\nu$, otherwise. Then, by \eqref{eq:333}, for any $N>R$, one get
\[
\infty>\Bigl(\dfrac{\ka^{-}\theta}{4}-\mu \bar{c}\Bigr)^{-1}I_{\mu,R}>\int_{R}^{\infty}\psi (s) e^{\mu s}\varphi_{N}(s)\,ds\geq
\int_{R}^{N}\psi (s) e^{\mu s}\,ds,
\]
thus,
\begin{align*}
\int_{\R}\psi (s) e^{\mu s}\,ds&=\int_{-\infty}^R \psi (s) e^{\mu s}\,ds
+\int_{R}^{\infty}\psi (s) e^{\mu s}\,ds
\\&\leq \theta \int_{-\infty}^R e^{\mu s}\,ds+I_{\mu,R}\Bigl(\dfrac{\ka^{-}\theta}{4}-\mu \bar{c}\Bigr)^{-1}<\infty,
\end{align*}
that gets the statement.
\end{proof}

\subsection{Speed and profile of a traveling wave}

Through this subsection we will suppose, additionally to \eqref{as:chiplus_gr_m} and \eqref{as:aplus_gr_aminus}, that
\begin{assum}\label{boundedkernels}
    a^+\in L^\infty(\X).
  \end{assum}
Clearly, \eqref{as:aplus_gr_aminus} and \eqref{boundedkernels} imply $a^-\in L^\infty(\X)$.
    \begin{remark}
  All further statements remain true if we change \eqref{boundedkernels} on the condition $\check{a}^+\in L^\infty(\R)$, where $\check{a}^+$ is given by \eqref{apm1dim}; evidently, the latter condition is, for $d\geq2$, weaker than \eqref{boundedkernels}.
\end{remark}

Let $\xi\in\S $ be fixed and \eqref{aplusexpint1} hold. Assume also that
  \begin{assum}\label{firstmoment}
   \int_\X \lvert x\cdot \xi \rvert \, a^+(x)\,dx<\infty.
  \end{assum}
Under assumption \eqref{firstmoment}, we define
\begin{equation}\label{firstdirmoment}
\m_\xi:=\int_\X x\cdot \xi \, \, a^+(x)\,dx.
\end{equation}

Suppose also, that the following modification of \eqref{as:a+nodeg} holds:  \begin{assum}\label{as:a+nodeg-mod}
    \begin{gathered}
    \text{there exist $r=r(\xi)\geq0$, $\rho=\rho(\xi)>0$, $\delta=\delta(\xi)>0$, such that}\\
    a^+(x)\geq\rho, \text{ for a.a.\! $x\in B_{\delta}(r\xi)$.}
    \end{gathered}
  \end{assum}

For an $f\in L^\infty(\R)$, let $\L f$ be a bilateral-type Laplace transform of $f$, cf.~\cite[Chapter~VI]{Wid1941}:
\begin{equation}\label{Laplace}
(\L f)(z)=\int_\R f(s)e^{z s}\,ds,\quad \Re z>0.
\end{equation}

We collect several results about $\L$ in the following lemma.
\begin{lemma}\label{lem:allaboutLapl}
Let $f\in L^\infty(\R)$.
\begin{enumerate}[label=\textnormal{(L\arabic*)}]
\item\label{L-converges} There exists $\la_0(f)\in [0,\infty]$ such that the integral \eqref{Laplace} converges in the strip $\{0<\Re z<\la_0(f)\}$ (provided that $\la_0(f)>0$) and diverges in the half plane $\{\Re z> \la_0(f)\}$ (provided that $\la_0(f)<\infty$).
\item\label{L-analytic} Let $\la_0(f)>0$. Then $(\L f) (z)$ is analytic in $\{0<\Re z<\la_0(f)\}$, and, for any $n\in\N$,
    \begin{equation*}
      \dfrac{d^n}{dz^n}(\L f)(z)=\int_\R e^{zs} s^n f(s)\,ds, \quad 0<\Re z<\la_0(f).
    \end{equation*}
\item\label{L-singular} Let $f\geq0$ a.e.\!\! and $0<\la_0(f)<\infty$. Then $(\L f)(z)$ has a singularity at $z=\la_0(f)$. In particular, $\L f$ has not an analytic extension to a strip  $0<\Re z<\nu$, with $\nu>\la_0(f)$.
\item\label{L-derivative} Let $f':=\frac{d}{ds}f\in L^\infty(\R)$, $f(\infty)=0$, and $\la_0(f')>0$. Then $\la_0(f)\geq\la_0(f')$ and, for any $0<\Re z<\la_0(f')$,
\begin{equation}\label{LaplaceofDer}
(\L f')(z)=-z (\L f)(z).
\end{equation}
\item\label{L-convolution} Let $g\in L^\infty(\R)\cap L^1(\R)$ and $\la_0(f)>0$, $\la_0(g)>0$. Then $\la_0(f*g)\geq\min\{\la_0(f),\la_0(g)\}$ and, for any $0<\Re z<\min\{\la_0(f),\la_0(g)\}$,
\begin{equation}\label{LaplaceofConv}
\bigl(\L(f*g)\bigr)(z)=(\L f)(z) (\L g)(z)
\end{equation}
\item\label{L-onesidelimit}
Let $0\leq f\in L^1(\R)\cap L^\infty(\R)$ and $\la_0(f)>0$. Then $\lim\limits_{\la\to 0+} (\L f)(\la)=\int_\R f(s)\,ds$.
\item\label{L-onesidelimit2}
Let $f\geq0$, $\la_0(f)\in(0,\infty)$ and $A:=\int_\R f(s) e^{\la_0(f)s}\,ds<\infty$. Then $\lim\limits_{\la\to \la_0(f)-} (\L f)(\la)=A$.
\item\label{L-decaying} Let $f\geq0$ be decreasing on $\R$, and let $\la_0(f)>0$. Then, for any $0<\la<\la_0(f)$,
\begin{equation}\label{expdecay}
f(s)\leq \frac{\la e^\la}{e^\la-1} (\L f)(\la) e^{-\la s}, \quad s\in\R.
\end{equation}
Moreover, for any $0\leq g\in L^\infty(\R)\cap L^1(\R)$, $\la_0(g)>0$,
\begin{equation}\label{absofsq}
  \la_0\bigl(f(g *f)\bigr)\geq\la_0(f)+ \min\bigl\{\la_0(g),\la_0(f)\bigr\}.
\end{equation}
\end{enumerate}
\end{lemma}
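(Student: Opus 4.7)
Items~\ref{L-converges}--\ref{L-singular} constitute the classical Landau--Pringsheim theory of bilateral Laplace integrals, and I would treat them concisely. For~\ref{L-converges}, I set $\la_0(f):=\sup\bigl\{\la\geq 0\bigm| \int_\R f(s)e^{\la s}\,ds \text{ converges}\bigr\}$ and note that, since $f\in L^\infty(\R)$, the integrand $f(s)e^{zs}$ decays exponentially as $s\to-\infty$ whenever $\Re z>0$; convergence at some $\la$ therefore propagates by dominated comparison to every $z$ with $0<\Re z<\la$. For~\ref{L-analytic}, I differentiate under the integral sign on each compact sub-strip, dominating $|s|^n|f(s)|e^{\Re z\,s}$ by $\|f\|_\infty\,e^{(\la_0(f)-\eps/2)s}$ for $|s|$ large. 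For~\ref{L-singular}, I use the standard Pringsheim-type contradiction: if $\L f$ extended analytically into a disk around $\la_0(f)$, one expands it in a Taylor series around $\la_0(f)-\eps$, and positivity of $f$ permits a Fubini--Tonelli interchange of sum and integral, producing convergence of $(\L f)(\la)$ for some $\la>\la_0(f)$.

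Items~\ref{L-derivative}--\ref{L-onesidelimit2} are verified by direct computation. For~\ref{L-derivative}, I integrate $\int_{-R}^{R}f'(s)e^{\la s}\,ds$ by parts; the boundary term $f(-R)e^{-\la R}$ vanishes since $f\in L^\infty$, while $f(R)e^{\la R}\to 0$ follows from writing $f(R)=-\int_R^\infty f'(u)\,du$ and dominating via $f'\in L^\infty$ together with convergence of $(\L f')(\la)$; the resulting identity yields~\eqref{LaplaceofDer} and simultaneously shows $\la_0(f)\geq\la_0(f')$. For~\ref{L-convolution}, Fubini applied to $\iint g(s-u)f(u)e^{zs}\,du\,ds$ is justified, after passing to absolute values, by the product formula $\int|g(v)|e^{\Re z\,v}\,dv\cdot\int|f(u)|e^{\Re z\,u}\,du$ and finiteness of each factor in the strip (the negative tails being controlled by $g,f\in L^\infty$). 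For~\ref{L-onesidelimit} and~\ref{L-onesidelimit2}, I split the integrals at $s=0$ and apply monotone convergence, noting that $f(s)e^{\la s}$ is monotone in $\la$ on each half-line whenever $f\geq 0$.

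The novel content is~\ref{L-decaying}. For~\eqref{expdecay}, monotonicity alone suffices: for any $s\in\R$ and $0<\la<\la_0(f)$,
\begin{equation*}
(\L f)(\la)\geq \int_{s-1}^{s}f(u)e^{\la u}\,du\geq f(s)\int_{s-1}^{s}e^{\la u}\,du=f(s)e^{\la s}\frac{1-e^{-\la}}{\la},
\end{equation*}
and rearranging gives the claimed bound with constant $\dfrac{\la e^\la}{e^\la-1}$. For~\eqref{absofsq}, I fix any $\mu<\la_0(f)+\min\{\la_0(g),\la_0(f)\}$ and decompose $\mu=\la_1+\la_2$ with $\la_1<\la_0(f)$ and $\la_2<\min\{\la_0(g),\la_0(f)\}$. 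The bound just proved produces $C=C(\la_1)>0$ with $f(s)e^{\la_1 s}\leq C$ for all $s$, hence
\begin{equation*}
\int_\R f(s)(g*f)(s)e^{\mu s}\,ds=\int_\R\bigl(f(s)e^{\la_1 s}\bigr)(g*f)(s)e^{\la_2 s}\,ds\leq C\,(\L g)(\la_2)(\L f)(\la_2)<\infty
\end{equation*}
by~\ref{L-convolution}; letting $\mu$ approach $\la_0(f)+\min\{\la_0(g),\la_0(f)\}$ completes the argument.

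The only delicate point I anticipate is~\ref{L-singular}, where the two-sided nature of the transform forces one to shrink the purported disk of analyticity and tune the Taylor-expansion centre $\la_0(f)-\eps$ so that the resulting non-negative double series converges on both tails when the Fubini--Tonelli swap is performed; everything else is bookkeeping or a direct consequence of the pointwise estimate~\eqref{expdecay}.
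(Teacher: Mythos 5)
Your overall strategy is genuinely different from the paper's: there, the authors split $\L=\L^+ + \L^-$ over the two half-lines and quote Widder's unilateral Laplace-transform theorems, exploiting the fact that $\L^- f$ is automatically analytic on $\{\Re z>0\}$ because $f\in L^\infty$. You instead give direct arguments for each item. This is self-contained, but it quietly exposes you to a distinction that Widder's theory handles for you: $\la_0(f)$ is an abscissa of (possibly conditional) convergence, and your ``dominated comparison'' justifications for \ref{L-converges}, \ref{L-analytic}, and \ref{L-convolution} tacitly assume absolute convergence on $\R_+$. Those arguments are only complete for $f\geq 0$; that happens to be the only case the paper actually uses, but it is not what the lemma asserts.

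The most concrete gap is in \ref{L-derivative}. You assert $f(R)e^{\la R}\to 0$ from $f(R)=-\int_R^\infty f'(u)\,du$ ``and dominating via $f'\in L^\infty$ together with convergence of $(\L f')(\la)$''. But $f'\in L^\infty$ plus $f(\infty)=0$ only give $f(R)\to0$, not $e^{-\la R}$ decay; and the naive bound $\bigl\lvert\int_R^\infty f'(u)\,du\bigr\rvert\leq e^{-\la R}\int_R^\infty|f'(u)|e^{\la u}\,du$ requires $f'$ of fixed sign, which again holds for the paper's application ($f=\psi$ decreasing) but not for the lemma as stated. For general $f'$ one needs an Abel-summation step: put $\beta(R):=\int_R^\infty f'(u)e^{\la u}\,du\to 0$, write $\int_R^\infty f'(u)\,du=\int_R^\infty(-\beta'(u))e^{-\la u}\,du$, integrate by parts, and use $\beta\to 0$ to control the tail, yielding $|f(R)e^{\la R}|\leq|\beta(R)|+\sup_{u\geq R}|\beta(u)|\to0$. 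This is exactly what the paper outsources to Widder's Theorem~II.2.3. The remaining items \ref{L-singular}, \ref{L-onesidelimit}, \ref{L-onesidelimit2}, and \ref{L-decaying} are fine; in particular your second-part estimate in \ref{L-decaying} splits $\mu=\la_1+\la_2$ and uses \eqref{expdecay} for $f$ alone, whereas the paper applies \eqref{expdecay} to $g*f$, but both routes reach \eqref{absofsq}.
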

\begin{proof}
We can rewrite $\L=\L^++\L^-$, where
\begin{equation*}
(\L^\pm f)(z)=\int_{\R_\pm} f(s)e^{z s}\,ds, \quad \Re z>0,
\end{equation*}
$\R_+=[0,\infty)$, $\R_-=(\infty,0]$.  Let $\mathcal{L}$ denote the classical (unilateral) Laplace transform:
\[
(\mathcal{L}f)(z)=\int_{\R_+}f(s)e^{-z s}\,ds,
\]
and $\mathfrak{l}_0(f)$ be its abscissa of convergence (see details, e.g. in \cite[Chapter~II]{Wid1941}).
Then, clearly,
$(\L^+ f)(z)=(\mathcal{L}f)(-z)$, $(\L ^- f)(z)=(\mathcal{L}f^-)(z)$, where $f^-(s)=f(-s)$, $s\in\R$. As a result, $\la_0(f)=-\mathfrak{l}_0(f)$.

It is easily seen that, for $f\in L^\infty(\R)$, $\mathfrak{l}_0(f^-)\leq0$, in particular,the function $(\L^- f)(z)$ is analytic on $\Re z>0$.

Therefore, the properties \ref{L-converges}--\ref{L-singular} are direct consequences of \cite[Theorems II.1, II.5a, II.5b]{Wid1941}, respectively. The property \ref{L-derivative} may be easily derived from \cite[Theorem II.2.3a, II.2.3b]{Wid1941}, taking into account that $f(\infty)=0$. The property~\ref{L-convolution} one gets by a straightforward computation, cf. \cite[Theorem VI.16a]{Wid1941}; note that $f*g\in L^\infty(\R)$.

Next, $\la_0(f)>0$ implies $\mathfrak{l}_0(f)<0$, therefore, $\L^+f$ can be analytically continued to $0$. If $\mathfrak{l}(f^-)<0$, then $\L^- f$ can be analytically continued to $0$ as well, and \ref{L-onesidelimit} will be evident. Otherwise, if $\mathfrak{l}(f^-)=0$ then \ref{L-onesidelimit} follows from \cite[Theorem~V.1]{Wid1941}. Similar arguments prove \ref{L-onesidelimit2}.

To prove \ref{L-decaying} for decreasing nonnegative $f$, note that, for any $0<\la<\la_0(f)$,
\[
f(s) \int_{s-1}^{s}e^{\la \tau}\,d\tau\leq
\int_{s-1}^s f(\tau) e^{\la \tau}\,d\tau\leq (\L f)(\la), \quad s\in\R,
\]
that implies \eqref{expdecay}. Next, by \ref{L-convolution},
$\la_0(g*f)>0$, and conditions on $g$ yield that $g*f\geq0$ is decreasing as well. Therefore, by \eqref{expdecay}, for any $0<\la<\la_0(g *f)$,
\begin{align*}
  \bigl\lvert\bigl(\L(f(g *f))\bigr)(z)\bigr\rvert&\leq \int_\R f(s)(g *f)(s)
  e^{s\Re z}\,ds\\
  &\leq \frac{\la e^\la}{e^\la-1}\bigl(\L (g *f)\bigr)(\la)\int_\R f(s)e^{-s\la}
  e^{s\Re z}\,ds<\infty,
\end{align*}
provided that $\Re z<\la_0(f)+\la<\la_0(f)+\la_0(g*f)$. As a result,
$\la_0\bigl( f(g*f)\bigr)\geq\la_0(f)+\la_0(g*f)$ that, by \ref{L-convolution}, implies \eqref{absofsq}.
\end{proof}

\medskip

Fix any $\xi\in\S $. Then, by \eqref{expintla1}, one has that $\la_0(\check{a}^\pm)>0$.
Consider, cf. \eqref{demand}, \eqref{cstarestimate}, the following complex-valued function
\begin{equation}\label{dedGxi}
G_\xi(z):=\frac{\ka^+(\L\check{a}^+)(z)-m}{z}, \quad \Re z>0,
\end{equation}
which is well-defined on $0<\Re z<\la_0(\check{a}^+)$. Note that, by \eqref{expintla1},
\[
(\L\check{a}^+)(\la)= \A_\xi(\la), \qquad G_\xi(\la)=\frac{\ka^+\A_\xi(\la)-m}{\la}, \qquad 0<\la<\la_0(\check{a}^+),
\]
and hence, by \eqref{cstarestimate},
\begin{equation}\label{ineqwillbeeq}
c_*(\xi)\leq \inf\limits_{\la>0} G_\xi(\la),
\end{equation}
where $c_*(\xi)$ is the minimal speed of traveling waves, cf.~Theorem~\ref{thm:trwexists}. We will show below that in fact there exists equality in \eqref{ineqwillbeeq}, and hence in \eqref{cstarestimate}.

We start with the following notations to simplify the further statements.
\begin{definition}\label{def:Uxi}
  Let $m>0$, $\ka^\pm>0$, $0\leq a^-\in L^1(\X)$  be fixed, and \eqref{as:chiplus_gr_m} holds. For an arbitrary $\xi\in\S $, denote by $\Uxi$ the subset of functions $0\leq a^+\in L^1(\X)$ such that \eqref{as:aplus_gr_aminus} and \eqref{aplusexpint1}--\eqref{as:a+nodeg-mod} hold.

For $a^+\in\Uxi$, denote also the interval $I_\xi \subset (0,\infty)$ by
\[
I_\xi :=\begin{cases}
  (0,\infty), &\text{if}  \ \
  \la_0(\check{a}^+)=\infty,\\[2mm]
  \bigl(0,\la_0(\check{a}^+)\bigr), & \text{if} \ \la_0(\check{a}^+)<\infty \ \ \text{and} \ \ (\L \check{a}^+)\bigl(\la_0(\check{a}^+)\bigr)=\infty \\[2mm]
  \bigl(0,\la_0(\check{a}^+)\bigr], & \text{if} \  \la_0(\check{a}^+)<\infty \ \ \text{and} \ \   (\L \check{a}^+)\bigl(\la_0(\check{a}^+)\bigr)<\infty.
\end{cases}
\]
\end{definition}

\begin{proposition}\label{prop:infGisreached}
Let $\xi\in\S $ be fixed and $a^+\in\Uxi$.
Then there exists a unique $\la_*=\la_*(\xi)\in I_\xi $ such that
\begin{equation}\label{arginf}
  \inf\limits_{\la>0} G_\xi(\la)=\min_{\la\in I_\xi }G_\xi(\la)=G_\xi(\la_*)>\ka^+\m_\xi.
\end{equation}
Moreover, $G_\xi$ is strictly decreasing on $(0,\la_*]$ and $G_\xi$ is strictly increasing on $I_\xi\setminus(0,\la_*]$ (the latter interval may be empty).
\end{proposition}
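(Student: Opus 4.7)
The plan is to study $G_\xi$ through the sign of the numerator of its derivative. Introduce
\[
H(\la):=\ka^+(\L\check{a}^+)(\la)-m=\ka^+\A_\xi(\la)-m,\qquad N(\la):=\la H'(\la)-H(\la),
\]
so that $G_\xi(\la)=H(\la)/\la$ and $G_\xi'(\la)=N(\la)/\la^2$ on $(0,\la_0(\check{a}^+))$. By property \ref{L-analytic} of Lemma~\ref{lem:allaboutLapl}, $H$ is analytic on this interval with $H''(\la)=\ka^+\int_\R s^2\check{a}^+(s)e^{\la s}\,ds>0$; strict positivity is ensured by assumption~\eqref{as:a+nodeg-mod}, which forces $\check{a}^+$ to be bounded below by a positive constant on an interval of the form $(r-\delta,r+\delta)$ of length $2\delta>0$. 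Hence $H$ is strictly convex on $(0,\la_0(\check{a}^+))$. A dominated-convergence argument using \eqref{firstmoment} yields $H(0+)=\ka^+-m>0$ and $H'(0+)=\ka^+\m_\xi$; in particular $G_\xi(\la)\to+\infty$ as $\la\to 0+$.

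Next I would exploit $N'(\la)=\la H''(\la)>0$: $N$ is strictly increasing on $(0,\la_0(\check{a}^+))$ with $N(0+)=-(\ka^+-m)<0$ by \eqref{as:chiplus_gr_m}. Consequently $N$ has at most one zero, and wherever $G_\xi'$ changes sign it does so only from negative to positive. To show a minimizer $\la_*\in I_\xi$ exists and is unique, I would split along the three cases of Definition~\ref{def:Uxi}. In the first case, $\la_0(\check{a}^+)=\infty$, and the lower bound on $\check{a}^+$ from \eqref{as:a+nodeg-mod} gives $\A_\xi(\la)\geq c\,e^{s_1\la}$ for some $c,s_1>0$ (any $s_1<r+\delta$ suffices, and $r+\delta>0$), whence $G_\xi(\la)\to+\infty$ as $\la\to\infty$. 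In the second case, $G_\xi\to+\infty$ as $\la\to\la_0(\check{a}^+)-$ directly. In both situations $G_\xi$ is continuous on the open interval $I_\xi$ and tends to $+\infty$ at each endpoint, hence attains its minimum at some interior critical point, which must be the unique zero $\la_*$ of $N$.

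In the third case, $I_\xi=(0,\la_0(\check{a}^+)]$ and $G_\xi$ is continuous on this closed-right interval. If $\lim_{\la\to\la_0(\check{a}^+)-}N(\la)>0$, the previous interior argument applies and $\la_*\in(0,\la_0(\check{a}^+))$; otherwise $N<0$ throughout $(0,\la_0(\check{a}^+))$, $G_\xi$ is strictly decreasing on all of $I_\xi$, and I set $\la_*:=\la_0(\check{a}^+)$, so that $I_\xi\setminus(0,\la_*]=\emptyset$ as permitted by the statement. In every scenario, strict monotonicity of $G_\xi$ on each side of $\la_*$ follows from the sign of $N$. The equality $\inf_{\la>0}G_\xi(\la)=\min_{\la\in I_\xi}G_\xi(\la)$ is automatic, since $G_\xi(\la)=+\infty$ for $\la>\la_0(\check{a}^+)$.

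Finally, the bound $G_\xi(\la_*)>\ka^+\m_\xi$ follows uniformly from strict convexity of $H$: for any $\la>0$ in $I_\xi$,
\[
H(\la)>H(0+)+\la H'(0+)=(\ka^+-m)+\la\,\ka^+\m_\xi,
\]
so $G_\xi(\la_*)>(\ka^+-m)/\la_*+\ka^+\m_\xi>\ka^+\m_\xi$. The main technical obstacle I foresee is the third case when $\lim_{\la\to\la_0(\check{a}^+)-}H'(\la)=+\infty$ even though $H(\la_0(\check{a}^+))$ is finite (this occurs, e.g., when $\check{a}^+(s)\sim C s^{-2}e^{-\la_0(\check{a}^+) s}$ at $+\infty$); the dichotomy between interior and boundary minima must be framed through the limit of $N$, not $H'$ alone, so that uniqueness of $\la_*$ and the monotonicity statement are preserved.
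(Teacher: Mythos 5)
Your proof is correct and tracks the same core mechanism as the paper's: convexity of $\la\mapsto\ka^+\A_\xi(\la)$, uniqueness of the critical point of $G_\xi$, and the trichotomy driven by Definition~\ref{def:Uxi}. The two presentations differ in packaging. The paper introduces $G_\xi''$ and runs a \emph{local} second-derivative test at stationary points ($G_\xi'(\mu)=0 \Rightarrow G_\xi''(\mu)=\mu^{-1}F_\xi''(\mu)>0$), then separately checks, case by case, when a stationary point exists and where the infimum sits; you instead work \emph{globally} with the numerator $N(\la)=\la H'(\la)-H(\la)$ (what the paper later calls $H_\xi$ in Proposition~\ref{prop:Gnegholds}), noting $N'(\la)=\la H''(\la)>0$ and $N(0+)=-(\ka^+-m)<0$, so $N$ has at most one zero and $G_\xi'=N/\la^2$ changes sign at most once, from $-$ to $+$. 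This gives the strict monotonicity on each side of $\la_*$ in one stroke, and makes the ``critical'' boundary case ($\la_0<\infty$, $\A_\xi(\la_0)<\infty$, $H'(\la_0{-})=\infty$ possible) automatic, as you rightly flag. Your argument for the lower bound $G_\xi(\la_*)>\ka^+\m_\xi$ is also a genuine simplification: you derive, from strict convexity of $H$, the single inequality $G_\xi(\la)>(\ka^+-m)/\la+\ka^+\m_\xi$ valid on all of $(0,\la_0)$ and pass to the limit at $\la_0$ if needed, whereas the paper proves it separately in its Case~1 (via $G_\xi(\mu)=F_\xi'(\mu)>\ka^+\m_\xi$) and Case~2 (via an infimum comparison with $F_\xi'$ on $(\la_0/2,\la_0)$). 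Both approaches buy the same conclusion; yours is slightly more economical and avoids repeating the lower-bound argument per case. Two minor points: the strict positivity $H''>0$ already follows from $\check{a}^+\not\equiv 0$ (normalization \eqref{cleareq}), so invoking \eqref{as:a+nodeg-mod} for it is unnecessary, though harmless; and the justification of $H'(0+)=\ka^+\m_\xi$ requires the domination $\int_{\R_+}s\,\check a^+(s)e^{\la_1 s}\,ds<\infty$ for small $\la_1>0$, which follows from $\A_\xi(\la_2)<\infty$ for $\la_1<\la_2<\la_0$ by absorbing the factor $s$ into $e^{(\la_2-\la_1)s}$ — worth spelling out, but you clearly have the right idea.
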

\begin{proof}
First of all, by \eqref{apm1dim}, the condition \eqref{firstmoment} implies, cf.~\eqref{expintla1},
\begin{equation}\label{zeromoment1dim}
 \m_\xi=\int_\R s \check{a}^+(s)\,ds \in \R.
\end{equation}
Next, to simplify notations, we set $\la_0:=\la_0(\check{a}^+)\in (0,\infty]$. Denote also
  \begin{equation}\label{defF}
 F_\xi(\la):=\ka^+ \A_\xi(\la)-m=\la G_\xi(\la), \qquad \la\in I_\xi .
 \end{equation}

By \ref{L-analytic}, for any $\la\in(0,\la_0)$,
\begin{equation}\label{secderA}
  \A_\xi''(\la)=\int_\R s^2 \check{a}^+(s) e^{\la s}\,ds>0,
\end{equation}
therefore, $\A_\xi'(\la)$ is increasing on $(0,\la_0)$; in particular, by \eqref{zeromoment1dim}, we have, for any $\la\in(0,\la_0)$,
\begin{equation}\label{posderL}
\int_\R s \check{a}^+(s) e^{\la s}\,ds = \A_\xi'(\la)> \A_\xi'(0)=\int_\R  s \check{a}^+(s)\,ds=\m_\xi.
\end{equation}
Next, by \ref{L-onesidelimit}, $F_\xi(0+)=\ka^+-m>0$, hence,
\begin{equation}\label{valat0}
  G_\xi(0+)=\infty.
\end{equation}
Finally, for $\la\in (0,\la_0)$, we have
  \begin{align}\label{Gder1}
    G_\xi'(\la)&=\la^{-2}\bigl(\la F_\xi'(\la)-F_\xi(\la)\bigr)=\la^{-1}\bigl(F_\xi'(\la)-G_\xi(\la)\bigr),\\
    G_\xi''(\la)&=\la^{-1} (F_\xi''(\la)- 2G_\xi'(\la)).\label{Gder2}
  \end{align}

We will distinguish two cases.

\textit{Case 1.\/} There exists $\mu\in(0,\la_0)$ with $G_\xi'(\mu)=0$. Then, by \eqref{Gder2}, \eqref{secderA},
\begin{equation*}
   G_\xi''(\mu)=\mu^{-1} F_\xi''(\mu)=\mu^{-1}\ka^+ \A_\xi''(\mu)>0.
\end{equation*}
Hence any stationary point of $G_\xi$ is with necessity a point of local minimum, therefore, $G_\xi$ has at most one such a point, thus it will be a global minimum. Moreover, by \eqref{Gder1}, \eqref{posderL}, $G'(\mu)=0$ implies
\begin{equation}\label{gatmin}
    G_\xi(\mu)=F_\xi'(\mu) =\ka^+ \A_\xi'(\mu)> \ka^+\m_\xi.
\end{equation}
Therefore, in the Case 1, one can choose $\la_*=\mu$ (which is unique then) to fulfill the statement.

List the conditions under which the Case 1 is possible.
 \begin{enumerate}
 \item Let $\la_0=\infty$. Note that \eqref{as:a+nodeg-mod} implies that there exist  $\delta'>0$, $\rho'>0$, such that $\check{a}^+(s)\geq\rho'$, for a.a.\! $s\in[r-\delta',r+\delta']$. Indeed, fix, for the case $d\geq2$, a basis $\eta_1,\ldots,\eta_{d-1}$ of $H_\xi=\{\xi\}^\bot$, cf. definition of \eqref{apm1dim}, then
 \[
 B_{\delta}(r\xi)\supset\Bigl\{(r+\sigma)\xi+\tau_1\eta_1+\ldots+\tau_{d-1}\eta_{d-1}\Bigm\vert \lvert \sigma\rvert \leq \frac{\delta}{\sqrt{d}}, \lvert \tau_i \rvert \leq \frac{\delta}{\sqrt{d}}\Bigr\}.
 \]
 Therefore, by \eqref{apm1dim} and \eqref{as:a+nodeg-mod},
\begin{equation}\label{checkapluspos}
 \check{a}^+(s)\geq \rho \Bigl(\frac{2\delta}{\sqrt{d}}\Bigr)^{d-1}=:\rho', \quad s\in[r-\delta',r+\delta'], \quad \delta':=\frac{\delta}{\sqrt{d}}.
\end{equation}

Hence if $\la_0=\infty$, then
\begin{equation}\label{toinfasltoinf}
  \frac{1}{\la}\A_\xi(\la)\geq \frac{1}{\la}\int_r^{r+\delta'}\check{a}^+(s)e^{\la s}\,ds\geq\rho'\frac{1}{\la^2}\bigl(e^{\la(r+\delta')}-e^{\la r}\bigr)\to\infty,
\end{equation}
as $\la\to\infty$. Then, in such a case, $G_\xi(\infty)=\infty$. Therefore, by \eqref{valat0}, there exists a zero of $G_\xi'$.

\item Let $\la_0<\infty$ and $\A_\xi(\la_0)=\infty$. Then, again, \eqref{valat0} implies the existence of a zero of $G_\xi'$ on $(0,\la_0)$.

\item Let $\la_0<\infty$ and $\A_\xi(\la_0)<\infty$. By \eqref{defF}, \eqref{Gder1},
\begin{equation*}
\lim_{\la\to0+}\la^2 G_\xi'(\la)=-F_\xi(0+)=-(\ka^+-m)<0.
\end{equation*}
Therefore, the function $G_\xi'$ has a zero on $(0,\la_0)$ if and only if takes a positive value at some point from $(0,\la_0)$.
\end{enumerate}

Now, one can formulate and consider the opposite to the Case 1.

\textit{Case 2.\/} Let $\la_0<\infty$, $\A_\xi(\la_0)<\infty$, and
\begin{equation}\label{Gneg}
  G_\xi'(\la)<0,\quad \la\in(0,\la_0).
\end{equation}
Therefore,
\begin{equation}\label{infattheend}
\inf\limits_{\la>0} G_\xi(\la)=\inf\limits_{\la\in(0,\la_0]} G_\xi(\la)=\lim_{\la\to\la_0-} G_\xi(\la)=G_\xi(\la_0),
\end{equation}
by \ref{L-onesidelimit2}. Hence we have the first equality in \eqref{arginf}, by setting $\la_*:=\la_0$. To~prove the second inequality in \eqref{arginf}, note that, by \eqref{Gder1}, the inequality \eqref{Gneg} is equivalent to $F_\xi'(\la)< G_\xi(\la)$, $\la\in(0,\la_0)$. Therefore, by \eqref{infattheend}, \eqref{defF}, \eqref{posderL},
  \[
  G_\xi(\la_0)=\inf\limits_{\la\in\bigl(\frac{\la_0}{2},\la_0\bigr)} G_\xi(\la)\geq \inf\limits_{\la\in\bigl(\frac{\la_0}{2},\la_0\bigr)} F_\xi'(\la)\geq \ka^+ \A_\xi'\Bigl(\frac{\la_0}{2}\Bigr)> \ka^+\m_\xi,
  \]
where we used again that, by \eqref{secderA}, $\A_\xi'$ and hence $F_\xi'$ are increasing on $(0,\la_0)$.
The statement is fully proved now.
\end{proof}

The second case in the proof of Proposition~\ref{prop:infGisreached} requires additional analysis. Let $\xi\in\S $ be fixed and $a^+\in\Uxi$, $\la_0:=\la_0(\check{a}^+)$. By \ref{L-analytic}, one can define the following function
\begin{equation}\label{specfunc}
    \T_\xi(\la):=\ka^+\int_\R (1-\la s)\check{a}^+(s)e^{\la s}\,ds\in\R, \qquad \la\in[0,\la_0).
\end{equation}
Note that
\begin{equation}\label{triv}
    \int_{\R_-}|s|\check{a}^+(s)e^{\la_0 s}\,ds<\infty,
\end{equation}
and
$\int_{\R_+}s\check{a}^+(s)e^{\la_0 s}\,ds\in(0,\infty]$ is well-defined. Then, in the case $\la_0<\infty$ and $\A_\xi(\la_0)<\infty$, one can continue $\T_\xi$ at $\la_0$, namely,
  \begin{equation}\label{newquant}
    \T_\xi(\la_0):=\ka^+\int_\R (1-\la_0 s)\check{a}^+(s)e^{\la_0 s}\,ds\in[-\infty,\ka^+).
  \end{equation}
To prove the latter inclusion, i.e. that $\T_\xi(\la_0)<\ka^+$, consider the function
$f_0(s):=(1-\la_0 s)e^{\la_0 s}$, $s\in\R$. Then, $f'_0(s)=-\la_0^2 se^{\la_0 s}$, and thus $f_0(s)<f_0(0)=1$, $s\neq 0$. Moreover, the function $g_0(s)=f_0(-s)-f_0(s)$, $s\geq0$ is such that $g_0'(s)=\la_0^2 s(e^{\la_0 s}-e^{-\la_0 s})>0$, $s>0$. As a result, for any $\delta>0$, $f_0(-\delta)>f_0(\delta)$, and
\[
\int_\R f_0(s)\check{a}^+(s)\,ds\leq f_0(-\delta)\int_{\R\setminus[-\delta,\delta]}\check{a}^+(s)\,ds
+\int_{[-\delta,\delta]}\check{a}^+(s)\,ds<\int_\R \check{a}^+(s)\,ds=1.
\]

\begin{proposition}\label{prop:Gnegholds}
  Let $\xi\in\S $ be fixed and $a^+\in\Uxi$. Suppose also that $\la_0:=\la_0(\check{a}^+)<\infty$ and $\A_\xi(\la_0)<\infty$. Then \eqref{Gneg} holds iff
   \begin{gather}\label{posquant}
    \T_\xi(\la_0)\in(0,\ka^+),\\
    \label{smallm}
     m\leq\T_\xi(\la_0).
   \end{gather}
\end{proposition}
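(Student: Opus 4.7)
The plan is to reduce both conditions to a monotonicity statement about the one-variable function $\T_\xi$. The first step will be a one-line calculation: differentiating the formula \eqref{dedGxi} for $G_\xi$ and using $\A_\xi'(\la)=\int_\R s\check{a}^+(s)e^{\la s}\,ds$ (valid on $(0,\la_0)$ with $\la_0:=\la_0(\check{a}^+)$ by \ref{L-analytic}), I will get
\begin{equation*}
\la^2 G_\xi'(\la)=\la\ka^+\A_\xi'(\la)-\ka^+\A_\xi(\la)+m=m-\T_\xi(\la),
\end{equation*}
after recognising that \eqref{specfunc} may be rewritten as $\T_\xi(\la)=\ka^+\A_\xi(\la)-\la\ka^+\A_\xi'(\la)$. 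This identity already shows that \eqref{Gneg} is equivalent to $\T_\xi(\la)>m$ for every $\la\in(0,\la_0)$.

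The second step is to analyse $\T_\xi$ on $[0,\la_0]$. From the same formula together with \ref{L-analytic} I will compute
\begin{equation*}
\T_\xi'(\la)=-\la\ka^+\A_\xi''(\la)=-\la\ka^+\int_\R s^2\check{a}^+(s)e^{\la s}\,ds<0,\qquad \la\in(0,\la_0),
\end{equation*}
with $\T_\xi(0)=\ka^+$, so $\T_\xi$ is strictly decreasing on $[0,\la_0)$. To extend this monotonicity across $\la_0$ I need left-continuity at $\la_0$: splitting the integral defining $\T_\xi$ at $s=0$, I will apply monotone convergence on $[0,\infty)$ to the two nonnegative pieces $\check{a}^+(s)e^{\la s}$ and $\la s\check{a}^+(s)e^{\la s}$, and dominated convergence on $(-\infty,0]$ with the integrable dominator $(1+\la_0|s|)\check{a}^+(s)$ (integrability being \eqref{triv}). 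This yields $\lim_{\la\nearrow\la_0}\T_\xi(\la)=\T_\xi(\la_0)\in[-\infty,\ka^+)$, so that $\T_\xi(\la_0)=\inf_{\la\in(0,\la_0]}\T_\xi(\la)$ in the extended sense.

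With these two ingredients the equivalence becomes immediate. For sufficiency, if $m\leq\T_\xi(\la_0)$ then strict monotonicity gives $\T_\xi(\la)>\T_\xi(\la_0)\geq m$ for every $\la\in(0,\la_0)$, so by the key identity $G_\xi'(\la)<0$ throughout $(0,\la_0)$; note that \eqref{posquant} is then automatic, since $m>0$ forces $\T_\xi(\la_0)>0$ and the bound $\T_\xi(\la_0)<\ka^+$ follows from monotonicity starting from $\T_\xi(0)=\ka^+$. For necessity, \eqref{Gneg} rewrites as $\T_\xi(\la)>m$ on $(0,\la_0)$, and passing $\la\nearrow\la_0$ using the continuity above delivers $\T_\xi(\la_0)\geq m>0$; combined with $\T_\xi(\la_0)<\ka^+$ this is \eqref{posquant} together with \eqref{smallm}.

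The only delicate technical point is the left-continuity of $\T_\xi$ at $\la_0$ in the case $\T_\xi(\la_0)=-\infty$, i.e., when $\A_\xi'(\la_0)=+\infty$; here naive dominated convergence on $[0,\infty)$ fails, so one must split into the positive and negative contributions of $(1-\la s)e^{\la s}$ and invoke monotone convergence for each separately. Everything else is a direct computation together with the sign of $\T_\xi'$.
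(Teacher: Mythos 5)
Your argument is correct and is essentially the same as the paper's proof: your identity $\la^2 G_\xi'(\la)=m-\T_\xi(\la)$ is the paper's \eqref{rewrH} combined with \eqref{defH} and \eqref{Gder1}, your strict decrease of $\T_\xi$ via $\T_\xi'(\la)=-\la\ka^+\A_\xi''(\la)<0$ is the paper's strict increase of $H_\xi=m-\T_\xi$ via $H_\xi'=\la F_\xi''>0$, and both proofs pass to the left limit at $\la_0$ by monotone convergence on the $\R_+$ part of the integral (with dominated convergence handling the $\R_-$ part, cf.\ \eqref{triv}). The only cosmetic difference is that you keep $\T_\xi$ while the paper works with the sign-flipped $H_\xi$.
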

\begin{proof}
  Define the function, cf. \eqref{defF},
\begin{equation}\label{defH}
  H_\xi(\la):=\la F'_\xi(\la)-F_\xi(\la), \quad \la\in(0,\la_0).
\end{equation}
By \eqref{Gder1}, the condition \eqref{Gneg} holds iff $H_\xi$ is negative on $(0,\la_0)$.
By \eqref{defH}, \eqref{secderA}, one has $H'_\xi(\la)=\la F''_\xi(\la)>0$, $\la\in(0,\la_0)$ and, therefore, $H_\xi$ is (strictly) increasing on $(0,\la_0)$.
By~Proposition~\ref{prop:infGisreached}, $G_\xi'$, and hence $H_\xi$, are negative on a right-neighborhood of $0$. As a result, $H_\xi(\la)<0$ on $(0,\la_0)$ iff
\begin{equation}\label{leftlimHisneg}
\lim\limits_{\la\to\la_0-}H_\xi(\la)\leq0.
\end{equation}
On the other hand, by \eqref{defF}, \eqref{specfunc}, one can rewrite $H_\xi(\la)$ as follows:
\begin{equation}\label{rewrH}
  H_\xi(\la)=-\T_\xi(\la)+m, \quad \la\in(0,\la_0).
\end{equation}
By the monotone convergence theorem,
\[
\lim\limits_{\la\to\la_0-}\int_{\R_+} (\la s-1)\check{a}^+(s)e^{\la s}\,ds=\int_{\R_+} (\la_0 s-1)\check{a}^+(s)e^{\la_0 s}\,ds\in(-1,\infty].
\]
Therefore, by \eqref{triv}, \eqref{rewrH}, $\T_\xi(\la_0)\in\R$ iff $H_\xi(\la_0)=\lim\limits_{\la\to\la_0-}H_\xi(\la)\in\R$. Next, clearly, $H_\xi(\la_0)\in(m-\ka^+,0]$ holds true iff both \eqref{smallm} and \eqref{posquant} hold.

As a result, \eqref{Gneg} is equivalent to \eqref{leftlimHisneg} and the latter, by \eqref{triv}, implies that $\T_\xi(\la_0)\in\R$ and hence $H_\xi(\la_0)\in(m-\ka^+,0]$.
Vice versa, \eqref{posquant} yields $\T_\xi(\la_0)\in\R$ that together with \eqref{smallm} give that $H_\xi(\la_0)\leq0$, i.e. that \eqref{Gneg} holds.
\end{proof}

According to the above, it is natural to consider two subclasses of functions from $\Uxi$, cf.~Definition~\ref{def:Uxi}.

\begin{definition}\label{def:VWxi}
  Let $\xi\in\S $ be fixed. We denote by $\Vxi$ the class of all kernels $a^+\in\Uxi$ such that one of the following assumptions does hold:
  \begin{enumerate}
    \item $\la_0:=\la_0(\check{a}^+)=\infty$;
    \item $\la_0<\infty$ and $\A_\xi(\la_0)=\infty$;
    \item $\la_0<\infty$, $\A_\xi(\la_0)<\infty$ and
  $\T_\xi(\la_0)\in[-\infty,m)$, where $\T_\xi(\la_0)$ is given by \eqref{newquant}.
  \end{enumerate}
\noindent Correspondingly, we denote by $\Wxi$ the class of all kernels $a^+\in\Uxi$ such that
  $\la_0<\infty$, $\A_\xi(\la_0)<\infty$, and $\T_\xi(\la_0)\in[m,\ka^+)$.
  Clearly, $\Uxi=\Vxi\cup\Wxi$.
\end{definition}

As a result, combining the proofs and statements of Propositions~\ref{prop:infGisreached} and~\ref{prop:Gnegholds}, one immediately gets the following corollary.
\begin{corollary}\label{cor_alltogether}
   Let $\xi\in\S $ be fixed, $a^+\in\Uxi$, and $\la_*$ be the same as in Proposition~\ref{prop:infGisreached}. Then $\la_*<\la_0:=\la_0(\check{a}^+)$ iff $a^+\in\Vxi$; moreover, then $G'(\la_*)=0$. Correspondingly, $\la_*=\la_0$ iff $a^+\in\Wxi$; in this case,
   \begin{equation}\label{leftlimG}
     \lim\limits_{\la\to\la_0-}G_\xi'(\la)=\frac{m-\T_\xi(\la_0)}{\la_0^2}\leq 0.
   \end{equation}
\end{corollary}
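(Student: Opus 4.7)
The corollary is essentially a bookkeeping exercise combining the outputs of the two preceding propositions, so I would organize the proof around that combination rather than re-deriving anything.

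The first step is to recall from the proof of Proposition~\ref{prop:infGisreached} that exactly one of two dichotomous cases occurs for the infimum of $G_\xi$. Either (Case~1) there is some $\mu\in(0,\la_0)$ at which $G_\xi'(\mu)=0$, in which case such a $\mu$ is automatically the unique global minimizer and $\la_*=\mu<\la_0$ with $G_\xi'(\la_*)=0$; or (Case~2) $\la_0<\infty$, $\A_\xi(\la_0)<\infty$, the strict inequality \eqref{Gneg} holds on $(0,\la_0)$, and $\la_*=\la_0$. The three sub-conditions listed after \eqref{Gneg} in that proof already tell us which of the two cases happens: if $\la_0=\infty$ or if $\la_0<\infty$ with $\A_\xi(\la_0)=\infty$ then Case~1 is forced; when $\la_0<\infty$ and $\A_\xi(\la_0)<\infty$, both cases are a priori possible and a finer analysis is needed.

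The second step is to translate that finer analysis via Proposition~\ref{prop:Gnegholds}. In the ambiguous subcase $\la_0<\infty$, $\A_\xi(\la_0)<\infty$, Proposition~\ref{prop:Gnegholds} says that Case~2 holds iff $\T_\xi(\la_0)\in(0,\ka^+)$ and $m\leq\T_\xi(\la_0)$. Since $m>0$ and since $\T_\xi(\la_0)<\ka^+$ always (this was verified just after \eqref{newquant}), those two conditions collapse to the single condition $\T_\xi(\la_0)\in[m,\ka^+)$, which is precisely the defining condition of $\Wxi$. Matching against Definition~\ref{def:VWxi}, the three possibilities that force Case~1 are exactly the three clauses defining $\Vxi$, and the condition for Case~2 is exactly the condition for $\Wxi$. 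Thus $a^+\in\Vxi$ iff $\la_*<\la_0$ (with $G_\xi'(\la_*)=0$ by Case~1) and $a^+\in\Wxi$ iff $\la_*=\la_0$.

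For the final claim, the limit formula, I would use the identity derived in Proposition~\ref{prop:Gnegholds}: combining \eqref{Gder1} and \eqref{rewrH} gives
\begin{equation*}
G_\xi'(\la)=\frac{H_\xi(\la)}{\la^{2}}=\frac{m-\T_\xi(\la)}{\la^{2}},\qquad \la\in(0,\la_0).
\end{equation*}
When $a^+\in\Wxi$, one has $\T_\xi(\la_0)\in[m,\ka^+)\subset\R$, so the monotone convergence theorem (applied separately to the positive and negative parts of $(1-\la s)\check{a}^+(s)e^{\la s}$ near $\la_0-$, using \eqref{triv} for the negative part and the finiteness of $\T_\xi(\la_0)$ for the positive part) yields $\lim_{\la\to\la_0-}\T_\xi(\la)=\T_\xi(\la_0)$. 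Passing to the limit in the displayed identity produces the stated value $\frac{m-\T_\xi(\la_0)}{\la_0^2}$, and the inequality $\leq 0$ is immediate from $m\leq\T_\xi(\la_0)$. There is no real obstacle here beyond keeping the bookkeeping clean: the only mildly subtle point is justifying the continuity of $\T_\xi$ at $\la_0-$, which is why I flag the use of monotone convergence explicitly.
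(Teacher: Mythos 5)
Your proof is correct and is essentially the argument the paper has in mind (the paper simply asserts the corollary follows by ``combining the proofs and statements'' of Propositions~\ref{prop:infGisreached} and~\ref{prop:Gnegholds} without writing out the details): the case dichotomy of Proposition~\ref{prop:infGisreached} identifies $\la_*<\la_0$ with Case~1, Proposition~\ref{prop:Gnegholds} converts the ambiguous subcase $\la_0<\infty$, $\A_\xi(\la_0)<\infty$ into the condition $\T_\xi(\la_0)\in[m,\ka^+)$, and the identity $G_\xi'(\la)=(m-\T_\xi(\la))/\la^2$ (which the paper records as \eqref{Txirepr}) together with the left-continuity of $\T_\xi$ at $\la_0$ gives \eqref{leftlimG}. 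One small phrasing caveat: ``positive and negative parts of $(1-\la s)\check a^+(s)e^{\la s}$'' reads as the $f^\pm$ decomposition, whose supports depend on $\la$; what is meant (and what the paper does) is the split of the integral over $\R_-$ (controlled by \eqref{triv} and dominated convergence) and $\R_+$ (where the integrand is monotone in $\la$), but this is cosmetic and does not affect the correctness of the argument.
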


\begin{example}\label{ex:spefunc}
To demonstrate the cases of Definition~\ref{def:VWxi} on an example, consider the following family of functions
\begin{equation}\label{exofa}
\check{a}^+(s):=\frac{\alpha e^{-\mu|s|^p}}{1+|s|^q}, \quad s\in\R, p\geq0, q\geq0, \mu>0,
\end{equation}
where $\alpha>0$ is a normalising constant to get \eqref{cleareq}. Clearly, the case $p\in[0,1)$ implies $\la_0(\check{a}^+)=0$, that is impossible under assumption \eqref{aplusexpint1}. Next, $p>1$ leads to $\la_0(\check{a}^+)=\infty$, in particular, the corresponding $a^+\in\Vxi$. Let now $p=1$, then $\la_0(\check{a}^+)=\mu$. The case $q\in[0,1]$ gives $\A_\xi(\la_0)=\infty$, i.e. $a^+\in\Vxi$ as well. In~the~case $q\in (1,2]$, we will have that $\A_\xi(\la_0)<\infty$, however,$\int_\R s \check{a}^+(s)e^{\mu s}\,ds=\infty$, i.e. $\T_\xi(\mu)=-\infty$, and again $a^+\in\Vxi$. Let $q>2$; then, by \eqref{specfunc},
\begin{align*}
\T_\xi(\mu)&=\ka^+\alpha \int_{\R_-} \frac{1-\mu s}{1+|s|^q}e^{2\mu s}\,ds
+\ka^+\alpha \int_{\R_+} \frac{1-\mu s}{1+s^q}\,ds\\&\geq \ka^+\alpha \int_{\R_+} \frac{1-\mu s}{1+s^q}\,ds=\frac{\pi\ka^+\alpha }{q}\biggl(\frac{1}{\sin\frac{\pi}{q}}-\frac{\mu}{\sin\frac{2\pi}{q}}\biggr)\geq m,
\end{align*}
if only $\mu\leq 2\cos\frac{\pi}{q}- \frac{m q}{\ka^+\alpha\pi}\sin\frac{2\pi}{q}$ (note that $q>2$ implies $\sin\frac{2\pi}{q}>0$); then we have $a^+\in\Wxi$. On the other hand, using the inequality $te^{-t}\leq e^{-1}$, $t\geq0$, one gets
\begin{align}\label{muexpr}
  \T_\xi(\mu)&=\ka^+\alpha \int_{\R_+} \frac{(1+\mu s)e^{-2\mu s}+1-\mu s}{1+s^q}\,ds\\
  &\leq \ka^+\alpha \int_{\R_+} \frac{1+\frac{1}{2e}+1-\mu s}{1+s^q}\,ds=
  \frac{\pi\ka^+\alpha }{q}\biggl(\frac{1+4e}{2e\sin\frac{\pi}{q}}-\frac{\mu}{\sin\frac{2\pi}{q}}\biggr)<m,  \notag
\end{align}
if only $\mu>\frac{1+4e}{e}\cos\frac{\pi}{q}- \frac{m q}{\ka^+\alpha\pi}\sin\frac{2\pi}{q}$; then we have $a^+\in\Vxi$.
 Since
 \[
 \frac{d}{d\mu}\bigl((1+\mu s)e^{-2\mu s}+1-\mu s\bigr)=-se^{-2\mu s}(1+2s\mu)-s<0, \quad s>0, \mu>0,
 \]
we have from \eqref{muexpr}, that $\T_\xi(\mu)$ is strictly decreasing and continuous in $\mu$, therefore, there exist a critical value
\[
\mu_*\in \Bigl(2\cos\frac{\pi}{q}- \frac{m q}{\ka^+\alpha\pi}\sin\frac{2\pi}{q},(4+e^{-1})\cos\frac{\pi}{q}- \frac{m q}{\ka^+\alpha\pi}\sin\frac{2\pi}{q}\Bigr),
\]
such that, for all $\mu>\mu_*$, $a^+\in\Vxi$, whereas, for $\mu\in(0,\mu_*]$, $a^+\in\Wxi$.
\end{example}

Now we are ready to prove the main statement of this subsection.

\begin{theorem}\label{thm:speedandprofile}
 Let $\xi\in\S $ be fixed and $a^+\in\Uxi$. Let $c_*(\xi)$ be the minimal traveling wave speed according to Theorem~\ref{thm:trwexists}, and let, for any $c\geq c_*(\xi)$, the function $\psi=\psi_c\in\M$ be a traveling wave profile corresponding to the speed~$c$. Let $\la_*\in I_\xi$ be the same as in Proposition~\ref{prop:infGisreached}. Denote, as usual, $\la_0:=\la_0(\check{a}^+)$.
\begin{enumerate}
\item The following relations hold
\begin{gather}
c_*(\xi)=\min_{\la>0}\frac{\ka^+\A_\xi(\la)-m}{\la}=
\frac{\ka^+\A_\xi(\la_*)-m}{\la_*}>\ka^+\m_\xi,\label{minspeed}
\\
\label{finitespeed}
\la_0(\psi)\in(0,\la_*],\\
\label{Laplinfatabs}
(\L \psi)\bigl(\la_0(\psi)\bigr)=\infty;
\end{gather}
and the mapping $(0,\la_*]\ni\la_0(\psi)\mapsto c\in[c_*(\xi),\infty)$ is a (strictly) monotonically decreasing bijection, given by
\begin{equation}\label{speedviaabs}
c=\frac{\ka^+\A_\xi\bigl(\la_0(\psi)\bigr)-m}{\la_0(\psi)}.
\end{equation}
In particular, $\la_0(\psi_{c_*(\xi)})=\la_*$.
\item For $a^+\in\Vxi$, one has $\la_*<\la_0$ and there exists another representation for the minimal speed than \eqref{speedviaabs}, namely,
\begin{equation}\label{minspeed-spec}
    \begin{split}c_*(\xi)&= \ka^+\int_{\X} x\cdot \xi \, a^+(x) e^{\la_* x\cdot \xi} \,dx\\&=\ka^+\int_\R s\check{a}^+(s)e^{\la_* s}\,ds>\ka^+\m_\xi.
    \end{split}
\end{equation}
Moreover, for all $\la\in(0,\la_*]$,
\begin{equation}\label{estonm}
\T_\xi(\la)\geq m,
\end{equation}
and the equality holds for $\la=\la_*$ only.
\item For $a^+\in\Wxi$, one has $\la_*=\la_0$.
Moreover, the inequality \eqref{estonm} also holds as well as, for all $\la\in(0,\la_*]$,
\begin{equation}
    c\geq \ka^+\int_\R s\check{a}^+(s)e^{\la s}\,ds,\label{estonc}
\end{equation}
whereas the equalities in \eqref{estonm} and \eqref{estonc} hold true now for $m=\T_\xi(\la_0)$, $\la=\la_*$, $c=c_*(\xi)$ only.
\end{enumerate}
\end{theorem}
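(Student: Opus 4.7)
The proof centers on the Laplace transform $\L$ applied to the traveling wave equation \eqref{eq:trw}. Since \eqref{acheckpos} yields $\la_0(\check{a}^-) \geq \la_0(\check{a}^+) =: \la_0$ and Proposition~\ref{prop:trw_exp_est} gives $\la_0(\psi) > 0$, properties \ref{L-derivative}--\ref{L-convolution} of Lemma~\ref{lem:allaboutLapl} convert \eqref{eq:trw} into the Laplace-domain identity
\[
\la\bigl(G_\xi(\la) - c\bigr)(\L\psi)(\la) = \ka^-(\L Z)(\la), \qquad Z(s) := \psi(s)(\check{a}^- * \psi)(s),
\]
valid on $0 < \la < \min\{\la_0(\psi), \la_0\}$. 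By \eqref{absofsq} applied with $f = \psi$, $g = \check{a}^-$, one has $\la_0(Z) > \la_0(\psi)$, so $\L Z$ is analytic and strictly positive at $\la_0(\psi)$; strict positivity of both sides of the identity then forces $G_\xi > c$ on the whole interval of validity.

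The central step is to deduce $G_\xi(\la_0(\psi)) = c$ together with $\la_0(\psi) \in (0, \la_*]$. If $\la_0(\psi) < \la_0$, rewriting the identity as $(\L\psi)(\la) = \ka^-(\L Z)(\la)/(\la(G_\xi(\la) - c))$ and invoking the singularity principle \ref{L-singular} forces $G_\xi(\la_0(\psi)) = c$, since otherwise $\L\psi$ would extend analytically past $\la_0(\psi)$. The case $\la_0(\psi) > \la_0$ is excluded by an analogous analytic continuation argument: $G_\xi$ inherits a singularity from $\A_\xi$ at $\la_0$ (by \ref{L-singular}), while under this assumption $\L\psi$ would be analytic across $\la_0$, contradicting the same representation; the quantitative form of the contradiction takes a distinct shape in each subclass of Definition~\ref{def:VWxi}. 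The borderline case $\la_0(\psi) = \la_0$, which is possible only for $a^+ \in \Wxi$, is handled by taking $\la \to \la_0^-$ in the identity and using $G_\xi(\la_0) = c_*(\xi)$; this simultaneously yields $G_\xi(\la_0(\psi)) = c$ (hence $c = c_*(\xi)$) and $(\L\psi)(\la_0(\psi)) = \infty$. Combined with $G_\xi > c$ on $(0, \la_0(\psi))$ and the strict unimodality of $G_\xi$ from Proposition~\ref{prop:infGisreached}, this forces $\la_0(\psi) \leq \la_*$.

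Part~(1) is then essentially immediate: Proposition~\ref{prop:infGisreached} identifies $G_\xi|_{(0,\la_*]}$ as a strictly decreasing bijection onto $[c_*(\xi), \infty)$ with $G_\xi(0^+) = \infty$ and $c_*(\xi) = G_\xi(\la_*) > \ka^+\m_\xi$, yielding \eqref{minspeed}, \eqref{speedviaabs}, and $\la_0(\psi_{c_*(\xi)}) = \la_*$. Assertion \eqref{Laplinfatabs} follows from the same explicit representation of $\L\psi$, since $G_\xi(\la) - c$ has a simple zero at $\la_0(\psi)$ while $(\L Z)(\la_0(\psi)) > 0$.

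For parts~(2) and (3), combining \eqref{Gder1} with \eqref{rewrH} gives the identity $\la^2 G_\xi'(\la) = m - \T_\xi(\la)$; the monotonicity $G_\xi' \leq 0$ on $(0, \la_*]$ from Proposition~\ref{prop:infGisreached} therefore yields \eqref{estonm}, with the equality cases supplied by Corollary~\ref{cor_alltogether}. For $a^+ \in \Vxi$, substituting $G_\xi'(\la_*) = 0$ into \eqref{Gder1} produces \eqref{minspeed-spec} via \eqref{apm1dim}. For $a^+ \in \Wxi$, the strict monotonicity of $F_\xi'$ from \eqref{secderA} gives $F_\xi'(\la) \leq F_\xi'(\la_*) = G_\xi(\la_*) + \la_* G_\xi'(\la_*) \leq c_*(\xi) \leq c$ on $(0, \la_*]$, establishing \eqref{estonc}; equality there forces $c = c_*(\xi)$, $\la = \la_*$, and $G_\xi'(\la_*) = 0$, i.e., $m = \T_\xi(\la_0)$. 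The principal obstacle throughout is the central step above — specifically the exclusion $\la_0(\psi) \geq \la_0$, which must be treated by case analysis keyed to Definition~\ref{def:VWxi} and the threshold comparison of $\T_\xi(\la_0)$ with $m$ from Proposition~\ref{prop:Gnegholds}.
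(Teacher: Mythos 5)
Your Laplace-transform setup, singularity arguments for $\la_0(\psi)<\la_0$ and $\la_0(\psi)>\la_0$, and the treatment of parts (2)--(3) via $\la^2G_\xi'(\la)=m-\T_\xi(\la)$ all match the paper's Steps 1--3 and the tail of the argument. However, your handling of the borderline case $\la_0(\psi)=\la_0$ contains a genuine gap, and this is precisely the case the paper spends most of its effort on.

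You propose to resolve $\la_0(\psi)=\la_0$ by ``taking $\la\to\la_0^-$ in the identity and using $G_\xi(\la_0)=c_*(\xi)$; this simultaneously yields $G_\xi(\la_0(\psi))=c$ and $(\L\psi)(\la_0(\psi))=\infty$.'' This argument does not close for two reasons. First, $G_\xi(\la_0)=G_\xi(\la_*)$ is only the infimum of $G_\xi$; the identity $c_*(\xi)=\inf_{\la>0}G_\xi(\la)$ is precisely what the theorem (specifically \eqref{minspeed}) is asserting, so invoking it here is circular. At the point where this case arises, the only information available is the one-sided bound \eqref{cstarestimate}/\eqref{ineqwillbeeq}, $c_*(\xi)\leq\inf G_\xi$, coming from the super-solution construction in Theorem~\ref{thm:trwexists}. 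Second, even granting that identity, passing $\la\to\la_0^-$ in $\la(G_\xi(\la)-c)(\L\psi)(\la)=\ka^-(\L Z)(\la)$ produces no contradiction when $c<G_\xi(\la_0)$: on the $\Wxi$ branch one has $\A_\xi(\la_0)<\infty$, the paper's \eqref{finiteleftlims} shows $(\L\psi)(\la_0^-)$ and $(\L\check{a}^+)(\la_0^-)$ are both finite, and $(\L Z)(\la_0^-)$ is finite too; so the limit is just a finite positive number equal to a finite positive number, forcing neither $G_\xi(\la_0)=c$ nor $(\L\psi)(\la_0)=\infty$. The ``simultaneously yields'' step therefore fails outright.

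What the paper actually does in this case (Step~4) is qualitatively different and cannot be bypassed: it truncates the kernels $\check{a}^\pm$ to $\check{a}_{R_n}^\pm=\1_{(-\infty,R_n)}\check{a}^\pm$, invokes Proposition~\ref{lowestsuppCub} to obtain a sub-solution and then (via \cite[Theorem~5]{Yag2009}) a traveling wave of speed $c$ for the truncated equation, shows that the truncated $G_\xi^{(n)}$ are entire in the right half-plane so the simple Step-3 argument applies and $c\geq c_*^{(n)}(\xi)=\inf_\la G_\xi^{(n)}(\la)$, and finally proves $\inf_\la G_\xi^{(n)}(\la)\to\inf_\la G_\xi(\la)$ (the identity \eqref{infsareequal}), which for large $n$ contradicts $c<\inf G_\xi$. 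This approximation-and-comparison step supplies the missing lower bound $c_*(\xi)\geq\inf G_\xi$ and is the heart of the theorem; your proposal has no substitute for it.
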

\begin{proof}
By Theorem~\ref{thm:trwexists}, for any $c\geq c_*(\xi)$,
there exists a profile $\psi\in\M$, cf.~Remark~\ref{shiftoftrw}, which define a traveling wave solution \eqref{trwv} to \eqref{eq:basic} in the direction $\xi$. Then, by \eqref{eq:trw}, we get
\begin{equation}
-c\psi'(s)=\ka^{+}(\check{a}^{+}*\psi)(s)-m\psi(s)-\ka^{-}\psi(s) (\check{a}^{-}*\psi)(s), \quad s\in\R.\label{eq:trwder}
\end{equation}

\textit{Step 1}.
By Proposition~\ref{prop:trw_exp_est}, we have that $\la_0(\psi)>0$.
Note also that the condition \eqref{as:aplus_gr_aminus} implies \eqref{acheckpos}, therefore, $\la_0(\check{a}^-)\geq \la_0(\check{a}^+)>0$. Take any $z\in\mathbb{C}$ with
\begin{equation}\label{unexpineq}
0<\Re z<\min\bigl\{\la_0(\check{a}^+),\la_0(\psi)\bigr\}\leq \la_0(\psi)<\la_0\bigl(\psi(\check{a}^-*\psi)\bigr),
\end{equation}
where the later inequality holds by \eqref{absofsq}. As a result, by \ref{L-convolution}, \ref{L-decaying}, being multiplied on $e^{z s}$ the l.h.s.\,of \eqref{eq:trwder} will be  integrable (in $s$) over $\R$. Hence,~for~any~$z$ which satisfies \eqref{unexpineq}, $(\L \psi')(z)$ converges. By~\ref{L-derivative}, it yields $\la_0(\psi)\geq\la_0(\psi')\geq\min\bigl\{\la_0(\check{a}^+),\la_0(\psi)\bigr\}$.

Therefore, by \eqref{LaplaceofDer}, \eqref{LaplaceofConv}, we get from \eqref{eq:trwder}
\begin{multline}
c z  (\L \psi)(z)=\ka^{+}(\L \check{a}^{+})(z) (\L\psi)(z)\\-m(\L\psi)(z)-
\ka^{-}\bigl(\L(\psi(\check{a}^{-}*\psi))\bigr)(z),\label{eqforLtr}
\end{multline}
if only
\begin{equation}\label{condz}
0<\Re z<\min\bigl\{\la_0(\check{a}^+),\la_0(\psi)\bigr\}.
\end{equation}

Since $\psi\not\equiv 0$, we have that $(\L \psi)(z)\neq 0$, therefore, one can rewrite \eqref{eqforLtr} as follows
\begin{equation}\label{neweq}
G_\xi(z)-c=\frac{\ka^{-}\bigl(\L(\psi(\check{a}^{-}*\psi))\bigr)(z)}{z(\L \psi)(z)},
\end{equation}
if \eqref{condz} holds.
By~\eqref{unexpineq}, both nominator and denominator in the r.h.s. of \eqref{neweq} are analytic on $0<\Re z<\la_0(\psi)$, therefore.
Suppose that $\la_0(\psi)>\la_0(\check{a}^+)$, then \eqref{neweq} holds on $0<\Re z<\la_0(\check{a}^+)$, however, the r.h.s. of \eqref{neweq} would be analytic at $z=\la_0(\check{a}^+)$, whereas, by \ref{L-singular}, the l.h.s. of \eqref{neweq} has a singularity at this point. As a result,
\begin{equation}\label{absofkernelisbigger}
\la_0(\check{a}^+)\geq\la_0(\psi),
\end{equation}
for any traveling wave profile $\psi\in\M$.
Thus one gets that \eqref{neweq} holds true on
$0<\Re z<\la_0(\psi)$.

Prove that
\begin{equation}\label{finiteabswavenew}
\la_0(\psi)<\infty.
\end{equation}
Since $0\leq\psi\leq\theta$ yields $0\leq a^-*\psi\leq\theta$, one gets from \eqref{neweq} that, for any $0<\la<\la_0(\psi)$,
\begin{equation}
c\geq G_\xi(\la)-\ka^{-}\dfrac{\theta}{\la}
=\frac{\ka^+(\L\check{a}^+)(\la)-\ka^+}{\la}.\label{lowestforspeed}
\end{equation}
If $\la_0(\check{a}^+)<\infty$ then \eqref{finiteabswavenew} holds by \eqref{absofkernelisbigger}. Suppose that $\la_0(\check{a}^+)=\infty$. By \eqref{toinfasltoinf}, the r.h.s. of \eqref{lowestforspeed} tends to $\infty$ as $\la\to\infty$, thus the latter inequality cannot hold for all $\la>0$; and, as a result, \eqref{finiteabswavenew} does hold.

\textit{Step 2}. Recall that \eqref{ineqwillbeeq} holds. Suppose that $c\geq c_*(\xi)$ is such that, cf.~\eqref{arginf},
\begin{equation}\label{cbiginf}
c\geq G_\xi(\la_*)=\inf_{\la_0\in (0,\la_*] } G_\xi(\la)=\inf_{\la_0\in I_\xi } G_\xi(\la).
\end{equation}
Then, by Proposition~\ref{prop:infGisreached}, the equation $G_\xi(\la)=c$, $\la\in I_\xi $, has one or two solutions. Let $\la_c$ be the unique solution in the first case or the smaller of the solutions in the second one. Since $G_\xi$ is decreasing on $(0,\la_*]$, we have $\la_c\leq \la_*$. Since the nominator in the r.h.s. of \eqref{neweq} is positive, we immediately get from \eqref{neweq} that
\begin{equation}\label{eqdopinfty}
(\L\psi)(\la_c)=\infty,
\end{equation}
therefore, $\la_c\geq\la_0(\psi)$.
On the other hand, one can rewrite \eqref{neweq} as follows
\begin{equation}\label{rewriting}
(\L\psi)(z)=\frac{\ka^{-}\bigl(\L(\psi(\check{a}^{-}*\psi))\bigr)(z)}{z(G_\xi(z)-c)}.
\end{equation}
By \eqref{neweq}, $G_\xi(z)\neq c$, for all $0<\Re z<\la_0(\psi)\leq\la_c\leq\la_*\leq\la_0(\check{a}^+)$. As a result, by \eqref{unexpineq}, \ref{L-converges}, and \ref{L-singular}, $\la_c=\la_0(\psi)$, that together with \eqref{eqdopinfty} proves \eqref{finitespeed} and \eqref{Laplinfatabs}, for waves whose speeds satisfy \eqref{cbiginf}. By \eqref{expintla1}, we immediately get, for such speeds, \eqref{speedviaabs} as well. Moreover, \eqref{speedviaabs} defines a strictly monotone function $(0,\la_*]\ni\la_0(\psi)\mapsto c\in[G_\xi(\la_*),\infty)$.

Next, by \eqref{specfunc}, \ref{L-analytic}, \eqref{defF}, \eqref{Gder1}, we have that, for any $\la\in I_\xi$,
\begin{equation}
  \T_\xi(\la)=\ka^+\A_\xi(\la)-\ka^+\la\A_\xi'(\la)=
m+F_\xi(\la)-\la F_\xi'(\la)=m-\la^2 G_\xi'(\la).\label{Txirepr}
\end{equation}
Recall that, by Proposition~\ref{prop:infGisreached}, the function $G_\xi$ is strictly decreasing on $(0,\la_*)$. Then \eqref{Txirepr} implies that $\T_\xi(\la)>m$, $\la\in(0,\la_*)$. On the other hand, by the second equality in \eqref{Gder1}, the inequality $G_\xi'(\la)<0$, $\la\in(0,\la_*)$, yields $G_\xi(\la)>F_\xi'(\la)$, for such a $\la$.
Let $c>G_\xi(\la_*)$. By \eqref{speedviaabs}, \eqref{defF}, we have then
$c>\ka^+\A_\xi'(\la)$, for all $\la\in[\la_0(\psi),\la_*)$. By~\eqref{secderA}, $F_\xi'$ is increasing, hence, by~\ref{L-analytic}, the strict inequality in \eqref{estonc} does hold, for $\la\in(0,\la_*)$.

 Let again $c\geq G_\xi(\la_*)$, and let $a^+\in\Vxi$. Then, by~Corollary~\ref{cor_alltogether},
 $\la_*<\la_0(\check{a}^+)$ and $G'(\la_*)=0$. By~\eqref{Gder1}, the latter equality and \eqref{Txirepr} give $\T_\xi(\la_*)=m$, that fulfills the proof of \eqref{estonm}, for such $a^+$ and $m$. Moreover, by \eqref{gatmin},
 \begin{equation}\label{dopequal}
   G_\xi(\la_*)=\ka^+\A_\xi'(\la_*)=\ka^+\int_\R s\check{a}^+(s)e^{\la_* s}\,ds.
 \end{equation}
 Let $a^+\in\Wxi$, then $\la_*=\la_0(\check{a}^+)$.
 It means that $\T_\xi(\la_*)=m$ if $m=\T_\xi(\la_0)$ only, otherwise, $\T_\xi(\la_*)>m$.
 Next, we get from \eqref{cbiginf}, \eqref{Gder1}
 \eqref{leftlimG},
 \begin{equation}\label{dsgdsggdetwet}
  c\geq G_\xi(\la_*)\geq \lim\limits_{\la\to\la_*-}F_\xi'(\la)
  =\ka^+\int_\R s\check{a}^+(s)e^{\la_* s}\,ds,
 \end{equation}
 where the latter equality may be easily verified if we rewrite, for $\la\in(0,\la_*)$,
 \begin{equation}\label{tew463634436}
   F_\xi'(\la)=\ka^+\int_{\R_-}s\check{a}^+(s)e^{\la s}\,ds
 +\ka^+\int_{\R_+}s\check{a}^+(s)e^{\la s}\,ds,
 \end{equation}
 and apply the dominated convergence theorem to the first integral and the monotone convergence theorem for the second one. On the other hand, \eqref{leftlimG} implies that the second inequality in \eqref{dsgdsggdetwet} will be strict iff $m<\T_\xi(\la_0)$, whereas, for $c=G_\xi(\la_*)=\inf\limits_{\la>0}G_\xi(\la)$ and $m=\T_\xi(\la_0)$, we will get all equalities in \eqref{dsgdsggdetwet}.

\textit{Step 3}. Let now $c\geq c_*(\xi)$ and suppose that $\la_0(\check{a}^+)>\la_0(\psi)$.
Prove that \eqref{cbiginf} does hold.
On the contrary, suppose that the $c$ is such that
\begin{equation}\label{hypoteticspeed}
c_*(\xi)\leq c<\inf_{\la\in (0,\la_*] } G_\xi(\la)=\inf_{\la>0 } G_\xi(\la).
\end{equation}
Again, by \eqref{neweq}, $G_\xi(z)\neq c$, for all $0<\Re z<\la_0(\psi)$, and \eqref{rewriting} holds, for such a~$z$.
Since we supposed that $\la_0(\check{a}^+)>\la_0(\psi)$, one gets from
\eqref{unexpineq}, that both
nominator and denominator of the r.h.s. of \eqref{rewriting} are analytic on
\[
\{0<\Re z<\nu\} \supsetneq \{0<\Re z<\la_0(\psi)\},
\]
where $\nu=\min\bigl\{ \la_0(\check{a}^+),\la_0\bigl(\psi(\check{a}^-*\psi)\bigr) \bigr\}$.
On the other hand, \ref{L-singular} implies that $\L\psi$ has a singularity at $z=\la_0(\psi)$. Since $\L(\psi(\check{a}^{-}*\psi))\bigr)(\la_0(\psi))>0$, the equality \eqref{rewriting} would be possible if only $G_\xi(\la_0(\psi))=c$, that contradicts \eqref{hypoteticspeed}.

\textit{Step 4}. By \eqref{absofkernelisbigger}, it remains to prove that, for $c\geq c_*(\xi)$, \eqref{cbiginf} does holds, provided that we have $\la_0(\check{a}^+)=\la_0(\psi)$.
Again on the contrary, suppose that \eqref{hypoteticspeed} holds.
For $0<\Re z<\la_0(\psi)$, we can rewrite \eqref{eqforLtr} as follows
\begin{equation}\label{rewriting2}
z(\L\psi)(z)(G_\xi(z)-c)=\ka^{-}\bigl(\L(\psi(\check{a}^{-}*\psi))\bigr)(z).
\end{equation}
In the notations of the proof of Lemma~\ref{lem:allaboutLapl}, the functions
$\L^-\psi$ and $\L^-\check{a}^+$ are analytic on $\Re z>0$. Moreover,
$(\L^+\psi)(\la)$ and $(\L^+\check{a}^+)(\la)$ are increasing on $0<\la<\la_0(\check{a}^+)=\la_0(\psi)$.
Then, cf.~\eqref{tew463634436}, by the monotone convergence theorem, we will get from \eqref{rewriting2} and \eqref{unexpineq}, that
\begin{equation}\label{finiteleftlims}
  \int_\R \psi(s) e^{\la_0(\psi)s}\,ds<\infty, \qquad
\int_\R \check{a}^+(s) e^{\la_0(\check{a}^+)s}\,ds<\infty.
\end{equation}

We are going to apply now Proposition~\ref{lowestsuppCub}, in the case $d=1$, to the equation~\eqref{eq:basic_one_dim}, where the initial condition $\psi$ is a wave profile with the speed $c$ which satisfies \eqref{hypoteticspeed}. Namely, we set $\Delta_R:=(-\infty,R)\nearrow\R$, $R\to\infty$, and let $\check{a}_R^\pm$, $\check{A}_R^\pm$ be given by \eqref{trkern}, \eqref{ARdef} respectively with $d=1$ and $a^\pm$ replaced by $\check{a}^\pm$. Consider a strictly monotonic sequence $\{R_n\mid n\in\N\}$, such that $0<R_n\to\infty$, $n\to\infty$ and
\begin{equation}\label{iiaslater}
  \check{A}_{R_n}^+>\frac{m}{\ka^+},
\end{equation}
cf.~\eqref{bigR}. Let $\theta_n:=\theta_{R_n}$ be given by \eqref{defofthetaR} with $A^\pm_R$ replaced by $\check{A}^\pm_{R_n}$. Then, by \eqref{thetaRlesstheta}, $\theta_n\leq \theta$, $n\in\N$. Fix an arbitrary $n\in\N$. Consider the `truncated' equation \eqref{eq:basic_R} with $d=1$, $a^\pm_R$ replaced by $a^\pm_{R_n}$, and the initial condition $w_0(s):=\min\{\psi(s),\theta_n\}\in C_{ub}(\R)$. By~Proposition~\ref{lowestsuppCub}, there exists the unique solution $w^{(n)}(s,t)$ of the latter equation. Moreover, if we denote the corresponding nonlinear mapping, cf.~Definition~\ref{def:def:Q_t} and Proposition~\ref{prop:Qtilde}, by~$\tilde{Q}^{(n)}_t$, we will have from \eqref{wlessthetaR} and \eqref{ineqtrunc} that
\begin{equation}\label{saewtwetwtgvxc}
  (\tilde{Q}^{(n)}_t w_0)(s)\leq\theta_n, \quad s\in\R, t\geq0,
\end{equation}
 and
\begin{equation}\label{s4366643}
  (\tilde{Q}^{(n)}_t w_0)(s)\leq \phi(s,t),
\end{equation}
where $\phi$ solves \eqref{eq:basic_one_dim}. By~\eqref{tw1d}, we get from \eqref{s4366643} that $(\tilde{Q}^{(n)}_1 w_0)(s+c)\leq \psi(s)$, $s\in\R$. The latter inequality together with \eqref{saewtwetwtgvxc} imply
\begin{equation}\label{wavedoesexists}
  (\tilde{Q}^{(n)}_1 w_0)(s+c)\leq w_0(s).
\end{equation}
Then, by the same arguments as in the proof of Theorem~\ref{thm:trwexists}, cf.~\eqref{mayYag}, we obtain from \cite[Theorem 5]{Yag2009} that there exists a traveling wave $\psi_n$ for the equation \eqref{eq:basic_R} (with $d=1$ and $a^\pm_R$ replaced by $a^\pm_{R_n}$), whose speed will be exactly $c$ (and $c$ satisfies \eqref{hypoteticspeed}).

Now we are going to get a contradiction, by proving that
\begin{equation}\label{infsareequal}
  \inf_{\la>0}G_\xi(\la)=\lim_{n\to\infty}\inf_{\la>0}G^{(n)}_\xi(\la),
\end{equation}
where $G^{(n)}_\xi$ is given by \eqref{dedGxi} with $\check{a}^\pm$ replaced by
$\check{a}_{n}^\pm:=\check{a}_{R_n}^\pm$.
The sequence of functions $G_\xi^{(n)}$ is point-wise monotone in $n$ and it converges to $G_\xi$ point-wise, for $0<\la\leq \la_0(\check{a}^+)$; note we may include $\la_0(\check{a}^+)$ here, according to \eqref{finiteleftlims}. Moreover, $G_\xi^{(n)}(\la)\leq G_\xi(\la)$, $0<\la\leq \la_0(\check{a}^+)$. As a result, for any $n\in\N$,
\begin{equation}\label{asft346eyeryd}
G_\xi^{(n)}(\la_*^{(n)})=\inf_{\la>0}G_\xi^{(n)}(\la)
\leq \inf_{\la>0}G_\xi(\la)=G_\xi(\la_*).
\end{equation}
Hence if we suppose that \eqref{infsareequal} does not hold, then
\[
\inf\limits_{\la>0}G_\xi(\la)-\lim\limits_{n\to\infty}\inf\limits_{\la>0}G^{(n)}_\xi(\la)>0.
\]
Therefore, there exist $\delta>0$ and $N\in\N$, such that
\begin{equation}\label{wqwqwqwqwe}
  G_\xi^{(n)}(\la_*^{(n)})=\inf\limits_{\la>0}G^{(n)}_\xi(\la)\leq \inf\limits_{\la>0}G_\xi(\la)-\delta=G_\xi(\la_*)-\delta, \quad n\geq N.
\end{equation}

Clearly, \eqref{trkern} with $\Delta_{R_n}=(-\infty,R_n)$ implies that $\la_0(\check{a}^+_n)=\infty$, hence $G^{(n)}_\xi$ is analytic on~\mbox{$\Re z>0$}. One can repeat all considerations of the first three steps of this proof for the equation \eqref{eq:basic_R}. Let $c^{(n)}_*(\xi)$ be the corresponding minimal traveling wave speed, according to Theorem~\ref{thm:trwexists}. Then the corresponding inequality \eqref{finiteabswavenew} will show that the abscissa of an arbitrary traveling wave to \eqref{eq:basic_R} is less than $\la_0(\check{a}_{n}^+)=\infty$. As a result, the inequality
$c^{(n)}_*(\xi)<\inf\limits_{\la>0 } G^{(n)}_\xi(\la)$, cf.~\eqref{hypoteticspeed}, is impossible, and hence, by the Step~3,
\begin{equation}\label{minspeedfortrunc}
  c\geq c^{(n)}_*(\xi)=\inf_{\la>0 } G^{(n)}_\xi(\la)=G^{(n)}_\xi(\la_*^{(n)}),
\end{equation}
where $\la_*^{(n)}$ is the unique zero of the function $\frac{d}{d\la}G^{(n)}_\xi (\la)$. Let $\T_\xi^{(n)}$ be given on $(0,\infty)$ by \eqref{specfunc} with $\check{a}^+$ replaced by
$\check{a}_n^+$. Then
\begin{equation}\label{derofT}
  \frac{d}{d\la}\T_\xi^{(n)}(\la)=-\la\ka^+ \int_{-\infty}^{R_n} \check{a}^+(s) s^2 e^{\la s}\,ds<0, \quad \la>0.
\end{equation}
By \eqref{estonm}, the unique point of intersection of the strictly decreasing function
$y=\T_\xi^{(n)}(\la)$ and the horizontal line $y=m$ is exactly the point  $(\la_*^{(n)},0)$.

Prove that there exist $\la_1>0$, such that $\la_*^{(n)}>\la_1$, $n\geq N$, and there exists $N_1\geq N$, such that $\T_\xi^{(n)}(\la)\leq \T_\xi^{(m)}(\la)$, $n> m\geq N_1$, $\la\geq\la_1$.
Recall that \eqref{iiaslater} holds; we have
\begin{align*}
  \la G_\xi^{(n)}(\la) & = \ka^+\int_\R \check{a}^+_n(s) (e^{\la s}-1)\,ds +\ka^+ \check{A}^+_{R_n}-m\\
  & \geq \ka^+\int_{-\infty}^0 \check{a}^+_n(s) (e^{\la s}-1)\,ds +\ka^+ \check{A}^+_{R_1}-m,
\end{align*}
and the inequality $1-e^{-s}\leq s$, $s\geq 0$ implies that
\[
\biggl\lvert \int_{-\infty}^0 \check{a}^+_n(s) (e^{\la s}-1)\,ds\biggr\rvert
\leq \la \int_{-\infty}^0 \check{a}^+_n(s) |s|\,ds\leq \la \int_\R \check{a}^+(s) |s|\,ds<\infty,
\]
by \eqref{firstmoment}. As a result, if we set
\[
\la_1:=(\ka^+ \check{A}^+_{R_1}-m)\biggl(\ka^+\int_\R \check{a}^+(s) |s|\,ds+|G_\xi(\la_*)|\biggr)^{-1}>0,
\]
then, for any $\la\in (0,\la_1)$, we have
\[
  \la G_\xi^{(n)}(\la)\geq \ka^+ \check{A}^+_{R_1}-m-\la_1\ka^+\int_\R \check{a}^+_n(s) |s|\,ds
  = \la_1 |G_\xi(\la_*)|\geq \la G_\xi(\la_*),
\]
i.e. $G_\xi^{(n)}(\la)\geq G_\xi(\la_*)=\inf\limits_{\la>0}G_\xi(\la)$. Then, for any $n\geq N$, \eqref{wqwqwqwqwe} implies that $\la_*^{(n)}$, being the minimum point for $G_\xi^{(n)}$, does not belong to the interval $(0,\la_1)$. Next, let $N_1\geq N$ be such that $R_n\geq \frac{1}{\la_1}$, for all $n\geq N_1$. Then, for any $\la\geq\la_1$, and for any $n>m\geq N_1$, we have $R_n>R_m$ and
\begin{align*}
\T_\xi^{(n)}(\la)-\T_\xi^{(m)}(\la)&=\ka^+\int_{R_m}^{R_n}(1-\la s)\check{a}^+(s)e^{\la s}\,ds\\ &\leq
\ka^+\int_{R_m}^{R_n}(1-\la_1 s)\check{a}^+(s)e^{\la s}\,ds\leq 0.
\end{align*}

As a result, the sequence $\{\la_*^{(n)}\mid n\geq N_1\}\subset[\la_1,\infty)$ is monotonically decreasing (cf.~\eqref{derofT}). We set
\begin{equation}\label{barla}
  \vartheta:=\lim_{n\to\infty}\la_*^{(n)}\geq\la_1.
\end{equation}
Next, for any $n,m\in\N$, $n>m\geq N_1$,
\begin{equation}\label{doubleineqcoolmy}
  G^{(n)}_\xi(\la_*^{(n)})\geq G^{(m)}_\xi(\la_*^{(n)})\geq
G^{(m)}_\xi(\la_*^{(m)}),
\end{equation}
where we used that $G^{(n)}_\xi$ is increasing in $n$ and $\la_*^{(m)}$ is the minimum point of $G^{(m)}_\xi$. Therefore, the sequence $\{G^{(n)}_\xi(\la_*^{(n)})\}$ is increasing and, by \eqref{wqwqwqwqwe}, is bounded. Then, there exists
\begin{equation}\label{limitgmy}
  \lim\limits_{n\to\infty}G^{(n)}_\xi(\la_*^{(n)})=:g \leq G_\xi(\la_*)-\delta.
\end{equation}
Fix $m\geq N_1$ in \eqref{doubleineqcoolmy} and pass $n$ to infinity; then, by the continuity of $G_\xi^{(n)}$,
\begin{equation}\label{wt3udh}
  g\geq \lim\limits_{\la\to\vartheta+}G_\xi^{(m)}(\la)=G_\xi^{(m)}(\vartheta)\geq
  G_\xi^{(m)}(\la^{(m)}),
\end{equation}
in particular, $\vartheta>0$, as $G_\xi^{(m)}(0+)=\infty$. Next, if we pass $m$ to $\infty$ in \eqref{wt3udh}, we will get from \eqref{limitgmy}
\begin{equation}\label{contradictionmy}
  \lim_{m\to\infty}G_\xi^{(m)}(\vartheta)=g\leq G_\xi(\la_*)-\delta<G_\xi(\la_*).
\end{equation}
If $0<\vartheta\leq\la_0(\check{a}^+)$ then
\[
\lim_{m\to\infty}G_\xi^{(m)}(\vartheta)=G_\xi(\vartheta)\geq G_\xi(\la_*),
\]
that contradicts \eqref{contradictionmy}. If $\vartheta>\la_0(\check{a}^+)$, then
$\lim\limits_{m\to\infty}G_\xi^{(m)}(\vartheta)=\infty$ (recall again that $\L^-(\check{a}^+)(\la)$ is analytic and $\L^-(\check{a}^+)(\la)$ is monotone in $\la$), that contradicts \eqref{contradictionmy} as well.

The contradiction we obtained shows that \eqref{infsareequal} does hold. Then, for the chosen $c\geq c_*(\xi)$ which satisfies \eqref{hypoteticspeed}, one can find $n$ big enough to ensure that, cf.~\eqref{minspeedfortrunc},
\[
c<\inf\limits_{\la>0}G^{(n)}_\xi(\la)=c_*^{(n)}(\xi).
\]
However, as it was shown above, for this $n$ there exists a profile $\psi_n$ of a traveling wave to the `truncated' equation
\eqref{eq:basic_R} (with, recall, $d=1$ and $a^\pm_R$ replaced by $a^\pm_{R_n}$). The latter contradicts the statement of Theorem~\ref{thm:trwexists} applied to this equation, as $c_*^{(n)}(\xi)$ has to be a minimal possible speed for such waves.

Therefore, the strict inequality in \eqref{hypoteticspeed} is impossible, hence, we have equality in \eqref{ineqwillbeeq}. As a result, \eqref{expintla1} implies \eqref{minspeed}, and \eqref{dopequal} may be read as \eqref{minspeed-spec}. The rest of the statement is evident now.
\end{proof}

\begin{remark}\label{rem:evenimplypos}
  Clearly, the assumption $a^+(-x)=a^+(x)$, $x\in\X$, implies $\m_\xi=0$, for any $\xi\in\S $. As a result, all speeds of traveling waves in any directions are positive, by \eqref{minspeed}.
\end{remark}

\subsection{Uniqueness of traveling waves}
In this subsection we will prove the uniqueness (up to shifts) of a profile $\psi$ for a traveling wave with given speed $c\geq c_*(\xi)$, $c\neq0$.
We will use the almost traditional now approach, namely, we find an {\em a priori} asymptotic for $\psi(t)$, $t\to\infty$, cf. e.g. \cite{CC2004,AGT2012} and the references therein.

We start with the so-called characteristic function of the equation \eqref{eq:basic}. Namely, for a given $\xi\in\S $ and for any $c\in [c_*(\xi),\infty)$, we set
\begin{equation}\label{charfunc}
\h_{\xi,c}(z):= \ka^+(\L \check{a}^+)(z)-m-z c=zG_\xi(z)-zc, \qquad \Re z\in I_\xi .
\end{equation}
\begin{proposition}\label{prop:charfuncpropert}
Let $\xi\in\S $ be fixed, $a^+\in\Uxi$, $\la_0:=\la_0(\check{a}^+)$, $c_*(\xi)$ be the minimal traveling wave speed in the direction $\xi$. Let, for any $c\geq c_*(\xi)$, the function $\psi\in\M$ be a traveling wave profile corresponding to the speed~$c$. For the case $a^+\in\Wxi$ with $m=\T_\xi(\la_0)$, we will assume, additionally, that
\begin{equation}\label{secmomadd}
  \int_\R s^2 \check{a}^+(s)e^{\la_0 s}\,ds<\infty.
\end{equation}
Then the function $\h_{\xi,c}$ is analytic on $\{0<\Re z<\la_0(\psi)\}$. Moreover, for any $\beta\in(0,\la_0(\psi))$, the function $\h_{\xi,c}$ is continuous and does not equal to $0$ on the closed strip $\{\beta\leq\Re z\leq\la_0(\psi)\}$, except the root at $z=\la_0(\psi)$, whose multiplicity $j$ may be $1$ or $2$ only.
\end{proposition}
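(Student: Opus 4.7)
The plan is to verify, in order, analyticity, continuity up to the boundary line $\Re z=\la_0(\psi)$, identification of the root at $\la_0(\psi)$ together with its multiplicity, and finally the absence of other roots on the closed strip. By Theorem~\ref{thm:speedandprofile} one has $\la_0(\psi)\le\la_*\le\la_0(\check{a}^+)$, so $(\L\check{a}^+)(z)$ is analytic on $\{0<\Re z<\la_0(\psi)\}$ by \ref{L-analytic}, and the same holds for $\h_{\xi,c}$. If $\la_0(\psi)<\la_0(\check{a}^+)$, continuity up to the boundary is automatic; otherwise Corollary~\ref{cor_alltogether} forces $a^+\in\Wxi$ with $\la_0(\psi)=\la_*=\la_0$, and $\A_\xi(\la_0)<\infty$ by the definition of $\Wxi$, so dominated convergence yields the required continuity.

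\textbf{Root and multiplicity.} Formula \eqref{speedviaabs} reads $\h_{\xi,c}(\la_0(\psi))=0$. On the real interval $(0,\la_0(\check{a}^+))$ one has $\h_{\xi,c}(\la)=\la(G_\xi(\la)-c)$, so $\h_{\xi,c}'(\la_0(\psi))=\la_0(\psi)G_\xi'(\la_0(\psi))$. If $\la_0(\psi)<\la_*$, strict monotonicity of $G_\xi$ on $(0,\la_*]$ from Proposition~\ref{prop:infGisreached} gives multiplicity~$1$. If $\la_0(\psi)=\la_*$, then $G_\xi'(\la_*)=0$ precisely in the $\Vxi$ case and in the $\Wxi$ subcase $m=\T_\xi(\la_0)$ by Corollary~\ref{cor_alltogether} and \eqref{leftlimG}; in the remaining $\Wxi$ subcase the multiplicity is again~$1$. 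When $G_\xi'(\la_0(\psi))=0$, differentiating once more gives $\h_{\xi,c}''(\la_0(\psi))=\la_0(\psi)G_\xi''(\la_0(\psi))>0$ via \eqref{Gder2} and \eqref{secderA}, with the added assumption \eqref{secmomadd} ensuring finiteness of $F_\xi''(\la_0)$ in the critical $\Wxi$ case; hence the multiplicity is exactly~$2$.

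\textbf{No roots on the boundary line.} Suppose $z=\la_0(\psi)+i\tau$ with $\tau\ne 0$ were a root. Under \eqref{as:a+nodeg-mod} and \eqref{checkapluspos}, $\check{a}^+$ is bounded below on an interval of positive length, so the strict triangle inequality yields $|(\L\check{a}^+)(z)|<\A_\xi(\la_0(\psi))$. Combined with $\ka^+(\L\check{a}^+)(z)=m+cz$ forced by $\h_{\xi,c}(z)=0$, one obtains $(m+c\la_0(\psi))^2+c^2\tau^2<(\ka^+\A_\xi(\la_0(\psi)))^2$, equivalently
\begin{equation*}
c^2\tau^2<\h_{\xi,c}(\la_0(\psi))\bigl(\ka^+\A_\xi(\la_0(\psi))+m+c\la_0(\psi)\bigr)=0,
\end{equation*}
a contradiction when $c\ne 0$; when $c=0$, the real part of $\h_{\xi,c}(z)=0$ directly forces $\cos(\tau s)\equiv 1$ on $\supp\check{a}^+$, again impossible.

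\textbf{Main obstacle: the interior of the strip.} The same factorization for $\beta\le\la<\la_0(\psi)$ only yields a bound $|\tau|\le C(\la)$ rather than $\tau=0$, so it does not by itself rule out interior complex zeros. I expect this to be the principal technical step: to finish, I would exploit the identity $(\L\psi)(z)\h_{\xi,c}(z)=\ka^-(\L(\psi(\check{a}^-*\psi)))(z)$ from \eqref{rewriting2}, whose right-hand side extends analytically to the larger strip $\{0<\Re z<2\la_0(\psi)\}$ by \ref{L-convolution} and \eqref{absofsq}, combined with Riemann--Lebesgue decay of $(\L\check{a}^+)(\la+i\tau)$ as $|\tau|\to\infty$ to confine zeros to a bounded rectangle, and then a Rouch\'e/argument-principle count on such a rectangle (whose boundary is already zero-free by the boundary-line analysis once it is taken thin enough in the $\Re z$-direction) to conclude that $\la_0(\psi)$ is the unique zero of $\h_{\xi,c}$ on the closed strip.
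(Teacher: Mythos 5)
Your handling of analyticity and continuity, of the identification and multiplicity of the real root at $\la_0(\psi)$, and of the exclusion of further roots on the vertical line $\Re z=\la_0(\psi)$ all track the paper's own argument quite closely. Your modulus-squared version of the boundary-line step is a minor variant of the paper's real-part computation (the latter works uniformly in $c$ and avoids having to split into $c\neq 0$ and $c=0$). Your multiplicity discussion is a bit looser than the paper's at the critical point $\la_*=\la_0(\check{a}^+)$, where $G_\xi$ need not be analytic; the paper passes to the limit $z\to 0$ in $\h_{\xi,c}(\la_0-z)/z$ and $\h_{\xi,c}(\la_0-z)/z^2$ by dominated convergence, with \eqref{secmomadd} guaranteeing finiteness and positivity of the second limit. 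But the substance of what you wrote is right.

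The genuine gap is the one you flag yourself: ruling out zeros of $\h_{\xi,c}$ in the open strip $\{0<\Re z<\la_0(\psi)\}$. Your sketched Rouch\'e/argument-principle route is neither carried out nor, as stated, executable. An argument-principle count requires a zero-free contour, and the boundary-line analysis covers only the right edge $\Re z=\la_0(\psi)$; it says nothing about the left edge $\Re z=\beta$ or the top and bottom edges $\Im z=\pm T$. Even granted a zero-free contour, you would still have to show the winding number vanishes, which is essentially what you set out to prove. The paper, by contrast, has nothing to prove at this point: writing $\h_{\xi,c}(z)=z\bigl(G_\xi(z)-c\bigr)$, the fact that $G_\xi(z)\neq c$ on $\{0<\Re z<\la_0(\psi)\}$ has already been established in the proof of Theorem~\ref{thm:speedandprofile}, where the traveling-wave equation was turned into the Laplace-transform identity \eqref{neweq} (equivalently \eqref{rewriting2}), $(\L\psi)(z)\,\h_{\xi,c}(z)=\ka^-\bigl(\L(\psi(\check{a}^-*\psi))\bigr)(z)$, valid on the whole strip. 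The missing ingredient in your proposal is precisely this link between $\h_{\xi,c}$ and the characteristic relation coming from the wave equation; once you have it, the interior zero-free property is simply inherited, and there is no need for a separate contour-integration argument.
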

\begin{proof}
By \eqref{neweq} and the arguments around, $\h_{\xi,c}(z)=z (G_\xi(z) -c)$ is analytic on $\{0<\Re z<\la_0(\psi)\}\subset I_\xi$ and does not equal to $0$ there. Then, by \eqref{speedviaabs} and Proposition~\ref{prop:infGisreached}, the smallest positive root of the function $\h_{\xi,c}(\la)$ on $\R$ is exactly $\la_0(\psi)$. Prove that if $z_0:=\la_0(\psi)+i \beta$ is a root of $\h_{\xi,c}$, then $\beta=0$. Indeed,  $\h_{\xi,c}(z_0)=0$ yields
\[
\ka^+\int_\R \check{a}^+(s) e^{\la_0(\psi) s}\cos\beta s \,ds=m+c\la_0(\psi),
\]
that together with \eqref{speedviaabs} leads to
\[
\ka^+\int_\R \check{a}^+(s) e^{\la_0(\psi) s}(\cos\beta s -1)\,ds=0,
\]
and thus $\beta=0$.

Regarding multiplicity of the root $z=\la_0(\psi)$, we note that, by Proposition~\ref{prop:infGisreached} and Corollary~\ref{cor_alltogether}, there exist two possibilities. If $a^+\in\Vxi$, then $\la_0(\psi)\leq\la_*<\la_0(\check{a}^+)$ and, therefore, $G_\xi$ is analytic at $z=\la_0(\psi)$. By the second equality in \eqref{charfunc}, the multiplicity $j$ of this root for $\h_{\xi,c}$ is the same as for the function $G_\xi(z)-c$. By Proposition~\ref{prop:infGisreached}, $G_\xi$ is strictly decreasing on $(0,\la_*)$ and, therefore, $j=1$ for $c>c_*(\xi)$. By Corollary~\ref{cor_alltogether}, for $c=c_*(\xi)$, we have $G_\xi'(\la_0(\psi))=G_\xi'(\la_*)=0$ and, since $\h_{\xi,c}''(\la_0)>0$, one gets $j=2$.

Let now $a^+\in\Wxi$. Then, we recall, $\la_*=\la_0:=\la_0(\check{a}^+)<\infty$, $G_\xi(\la_0)<\infty$ and \eqref{leftlimG} hold.
For $c>c_*(\xi)$, the arguments are the same as before, and they yield $j=1$. Let $c=c_*(\xi)$. Then $\h_{\xi,c}(\la_0)=0$, and,  for all $z\in\mathbb{C}$, $\Re z\in(0,{\la_0})$, one has
\begin{align}
 \h_{\xi,c}({\la_0}-z)&=\h_{\xi,c}({\la_0}-z)-\h_{\xi,c}({\la_0})=\ka^+\int_\R \check{a}^+(\tau )(e^{({\la_0}-z)\tau }-e^{{\la_0} \tau })d\tau +cz \notag\\
    &=z\biggl(-\ka^+\int_\R \check{a}^+(\tau )e^{{\la_0} \tau } \int_0^\tau e^{-zs}\,ds d\tau +c
    \biggr).\label{eq:i}
\end{align}
Let $z=\alpha+\beta i$, $\alpha\in(0,\la_0)$. Then $\bigl\lvert e^{\la_0 \tau }  e^{-zs}\bigr\rvert=e^{\la_0\tau-\alpha s}$. Next, for $\tau\geq0$, $s\in[0,\tau]$, we have
$e^{\la_0\tau-\alpha s}\leq e^{\la_0\tau}$; whereas, for $\tau<0$, $s\in[\tau,0]$, one has $e^{\la_0\tau-\alpha s}=e^{\la_0(\tau-s)}e^{(\la_0-\alpha) s}\leq 1$.
As a result, $\bigl\lvert e^{\la_0 \tau }  e^{-zs}\bigr\rvert\leq e^{\la_0\max\{\tau,0\}}$.
Then, using that $a^+\in\Wxi$ implies $\int_\R\check{a}^+(\tau)e^{\la_0\max\{\tau,0\}}\,ds<\infty$, one can apply the dominated convergence theorem to the double integral in \eqref{eq:i}; we get then
\begin{equation}\label{impeq}
  \lim_{\substack{\Re z\to0+\\ \Im z\to0}}\frac{\h_{\xi,c}({\la_0}-z)}{z}
  =-\ka^+\int_\R \check{a}^+(\tau )e^{{\la_0} \tau } \tau d\tau +c.
\end{equation}
According to the statement 3 of Theorem~\ref{thm:speedandprofile}, for $m<\T_\xi(\la_0)$, the r.h.s. of \eqref{impeq} is positive, i.e. $j=1$ in such a case. Let now $m=\T_\xi(\la_0)$, then the r.h.s. of \eqref{impeq} is equal to $0$. It is easily seen that one can rewrite then \eqref{eq:i} as follows
\begin{align}\notag
  \frac{\h_{\xi,c}({\la_0}-z)}{z}&=\ka^+\int_\R \check{a}^+(\tau )e^{{\la_0} \tau } \int_0^\tau (1-e^{-zs})\,ds d\tau\\\label{eq:ii}
  &=z\ka^+\int_\R \check{a}^+(\tau )e^{{\la_0} \tau } \int_0^\tau \int_0^s e^{-zt}\,dt\,ds\, d\tau.
\end{align}
Similarly to the above, for $\Re z\in (0,\la_0)$, one has that
$\lvert e^{{\la_0} \tau -zt}\rvert \leq e^{{\la_0} \max\{\tau,0\} }$.
Then, by \eqref{secmomadd} and the dominated convergence theorem, we get from \eqref{eq:ii} that
\begin{equation*}
  \lim_{\substack{\Re z\to0+\\ \Im z\to0}}\frac{\h_{\xi,c}({\la_0}-z)}{z^2}
  =\frac{\ka^+}{2}\int_\R \check{a}^+(\tau )e^{{\la_0} \tau } \tau^2 d\tau\in(0,\infty).
\end{equation*}
Thus $j=2$ in such a case. The statement is fully proved now.
\end{proof}

\begin{remark}
Combining results of Theorem~\ref{thm:speedandprofile} and Proposition~\ref{prop:charfuncpropert}, we immediately get that, for the case $j=2$, the minimal traveling wave speed $c_*(\xi)$ always satisfies \eqref{minspeed-spec}.
\end{remark}

\begin{remark}\label{very-critical-case}
  If $\check{a}^+$ is given by \eqref{exofa}, then, cf.~Example~\ref{ex:spefunc},
  the case $a^+\in\Wxi$, $m=\T_\xi(\la_0)$ together with \eqref{secmomadd} requires $p=1$, $\mu<\mu_*$, $q>3$.
\end{remark}

We consider now the following Ikehara--Delange type Tauberian theorem, cf.~\cite{Ten1995,Del1954,Kab2008}.

For any $\mu>\beta>0$, $T>0$, we set
\[
K_{\beta,\mu,T}:=\bigl\{z\in\mathbb{C} \bigm| \beta\leq \Re z \leq \mu,\ \lvert \Im z \rvert \leq T\bigr\}.
\]
Let, for any $D\subset\mathbb{C}$, $\An(D)$ be the class of all analytic functions on $D$.
\begin{proposition}\label{prop:tauber}
Let $\mu>\beta>0$ be fixed. Let $\varphi\in C^1(\R_+\to  \R_+)$ be a non-increasing function such that, for some $a>0$, the function $\varphi(t)e^{(\mu+a) t}$ is non-decreasing, and the integral
\begin{equation}\label{eq:psi_onesided_lap}
\int\limits_0^{\infty}e^{z t}\varphi'(t)dt, \quad 0<\Re z<\mu
\end{equation}
converges. Suppose also that there exist a constant $j\in\{1,2\}$ and complex-valued  functions $H,F:\{0<\Re z\leq \mu\}\to\mathbb{C}$, such that $H\in\An(0<\Re z\leq \mu)$, $F\in\An (0<\Re z<\mu)\cap C(0<\Re z\leq \mu)$, and, for any $T>0$,
\begin{equation}\label{loglim}
  \lim_{\sigma\to0+}(\log\sigma)^{2-j}\sup_{\tau\in[-T,T]}\bigl\lvert F(\mu-2\sigma-i\tau)-F(\mu-\sigma-i\tau)\bigr\rvert=0,
\end{equation}
and also that the following representation holds
\begin{equation}\label{eq:sing_repres}
\int\limits_0^{\infty}e^{z t}\varphi(t)dt=\dfrac{F(z)}{(z-\mu)^j}+H(z),\quad 0<\Re z<\mu.
\end{equation}
Then $\varphi$ has the following asymptotic
\begin{equation}\label{eq:psi_asympt}
\varphi(t)\sim F(\mu) e^{-\mu t}t^{j-1},\quad t\to +\infty.
\end{equation}
\end{proposition}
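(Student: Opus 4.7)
The plan is to adapt the classical Ikehara--Ingham Tauberian theorem to a decaying function $\varphi$, with the two-sided control (\emph{$\varphi$ non-increasing and $\varphi(\cdot)e^{(\mu+a)\cdot}$ non-decreasing}) playing the role of the usual Tauberian monotonicity assumption that lets one pass from averaged to pointwise asymptotics. Throughout I would work with the Fej\'er-smoothed inversion
\[
I_{T,\sigma}(y):=\frac{1}{2\pi}\int_{-T}^{T}\Bigl(1-\frac{|\tau|}{T}\Bigr)e^{-iy\tau}\hat\varphi(\mu-\sigma-i\tau)\,d\tau, \quad T>0,\ \sigma\in(0,\mu-\beta),
\]
where $\hat\varphi(z):=\int_0^{\infty}e^{zt}\varphi(t)\,dt$ converges absolutely on $0<\Re z<\mu$ (the convergence of \eqref{eq:psi_onesided_lap} and one integration by parts ensure that $\hat\varphi$ inherits the necessary regularity). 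By Fubini and the classical identity for the Fej\'er kernel $K_T\ge 0$ on $\R$, one has $I_{T,\sigma}(y)=\int_0^{\infty}\varphi(t)e^{(\mu-\sigma)t}K_T(y-t)\,dt$.

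Next I would use \eqref{eq:sing_repres} to split $I_{T,\sigma}=I^F_{T,\sigma}+I^H_{T,\sigma}$. The regular part $I^H_{T,\sigma}$ extends continuously to $\sigma=0$ since $H\in\An(\{0<\Re z\le\mu\})$, and Riemann--Lebesgue applied on $[-T,T]$ then gives $I^H_{T,0}(y)=o(1)$ as $y\to\infty$, for each fixed $T$. The singular part $I^F_{T,\sigma}$ is treated by a doubling/cancellation argument on the kernel $F(\mu-\sigma-i\tau)/(-\sigma-i\tau)^j$: writing
\[
F(\mu-\sigma-i\tau)=F(\mu)+\bigl(F(\mu-\sigma-i\tau)-F(\mu-2\sigma-i\tau)\bigr)+\bigl(F(\mu-2\sigma-i\tau)-F(\mu)\bigr),
\]
the first piece admits an explicit inverse Fourier transform, computable in closed form as $F(\mu)\,y^{j-1}/(j-1)!$ in the limit $\sigma\to 0^+$; the middle increment is precisely the quantity controlled by \eqref{loglim}; and the last piece vanishes uniformly on $[-T,T]$ by continuity of $F$ up to $\Re z=\mu$. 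Condition \eqref{loglim} is calibrated so that the $|\log\sigma|^{2-j}$ blow-up produced on integration against $(-\sigma-i\tau)^{-j}$ is exactly compensated by the decay of the $F$-increment, giving
\[
\lim_{\sigma\to0^+}I^F_{T,\sigma}(y)=F(\mu)\,\frac{y^{j-1}}{(j-1)!}+o(y^{j-1}),\quad y\to\infty,
\]
for each fixed $T$.

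The last step is the Tauberian sandwich. For $|t-y|\le\eta$ the two-sided monotonicity hypotheses yield the pinching
\[
\varphi(y)e^{\mu y}\,e^{-(\mu+a)\eta}\,\le\,\varphi(t)e^{\mu t}\,\le\,\varphi(y)e^{\mu y}\,e^{\mu\eta},
\]
so choosing $T=T(\eta)$ large enough that $K_T$ concentrates its mass within $[-\eta,\eta]$, and controlling the tail $|t-y|>\eta$ via the integrability of $\varphi(t)e^{(\mu-\sigma)t}$ that follows from \eqref{eq:psi_onesided_lap}, one sandwiches $I_{T(\eta),0}(y)/y^{j-1}$ by $\varphi(y)e^{\mu y}/y^{j-1}$ times factors that tend to $1$ as $\eta\to 0$. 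Passing to the limit $y\to\infty$ first and then $\eta\to 0$ produces \eqref{eq:psi_asympt}. The principal obstacle is the critical case $j=2$: the second-order pole forces cancellation of the leading term, and without the sharp $|\log\sigma|^{0}$ version of \eqref{loglim} one would inherit an extra factor of $\log y$ that would destroy the $y^{j-1}$ rate; the whole doubling trick in the middle paragraph is designed specifically to make this cancellation robust and uniform in $\tau\in[-T,T]$.
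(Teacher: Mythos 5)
Your proposal is a genuine alternative route. The paper reduces the problem to the classical setting by introducing the non-decreasing function $\alpha(t) := e^{(\mu+a)t}\varphi(t)$, expressing its Laplace--Stieltjes transform via $\hat\varphi$ and $\hat{\varphi'}$ (Widder's theorems), and then invoking Tenenbaum's \emph{effective} Ikehara--Ingham theorem (Lemma~\ref{lem:EII}) as a black box; the remaining work is the verification of the condition \eqref{eq:eta} via the estimates $A_j(\sigma)$, $B_j(\sigma)$. You instead propose to prove the result directly for the decaying $\varphi$ by a Fej\'er-kernel inversion, the doubling decomposition, and a Tauberian sandwich at the end. Both routes recognize the same two structural pillars --- the two-sided monotonicity hypothesis and the role of \eqref{loglim} in compensating the kernel singularity --- but the paper's reduction is cleaner because it delegates the delicate quantitative Fej\'er analysis to a published theorem, whereas yours would have to reprove essentially the full Ikehara--Ingham machinery from scratch.

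Two concrete gaps in your sketch. First, the claim that the ``last piece'' $F(\mu-2\sigma-i\tau)-F(\mu)$ \emph{vanishes uniformly} on $[-T,T]$ as $\sigma\to0^+$ is false: for $\tau$ bounded away from $0$ this difference tends to the generally nonzero value $F(\mu-i\tau)-F(\mu)$. What one actually needs (and what the paper's $B_j(\sigma)$ estimate does) is a split at $|\tau|\le\sqrt{\sigma}$: on the small scale uniform continuity makes the $F$-increment small, while off the small scale the \emph{difference} $\lvert (2\sigma+i\tau)^{-j}-(\sigma+i\tau)^{-j}\rvert$ is integrable with a small integral $\int_{\sqrt\sigma/\sigma}^{T/\sigma}h_j\to0$. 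Second, your attribution of the ``principal obstacle'' to $j=2$ is backwards as far as the role of \eqref{loglim} is concerned: the weighted $L^1$-norm $\sigma^{j-1}\int_{-T}^T|\sigma+i\tau|^{-j}\,d\tau$ produces a logarithmic blow-up $\sim 2|\log\sigma|$ precisely when $j=1$, which is why the factor $(\log\sigma)^{2-j}=(\log\sigma)^1$ in \eqref{loglim} is needed there; for $j=2$ the weighted norm stays bounded ($\sim 2\arctan(T/\sigma)$) and \eqref{loglim} demands only plain continuity. The $j=2$ case is genuinely harder elsewhere --- the conditional convergence of $\int t\,K_T(y-t)\,dt$ against a linearly growing $\varphi(t)e^{\mu t}$, and making the Tauberian sandwich produce the $y^{j-1}$ rate rather than constants --- and your sketch does not address those points; they are exactly what Tenenbaum's quantitative statement (which the paper imports) already settles.

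Finally, a small arithmetic slip: the pinching inequality should read
\[
\varphi(y)e^{\mu y}\,e^{-\max\{a,\mu\}\eta}\;\le\;\varphi(t)e^{\mu t}\;\le\;\varphi(y)e^{\mu y}\,e^{\max\{a,\mu\}\eta},\qquad |t-y|\le\eta,
\]
rather than the asymmetric exponents you wrote; this does not affect the structure since the correction factor still tends to $1$ as $\eta\to0$.
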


The proof of Proposition~\ref{prop:tauber} is based on the following Tenenbaum's result.
\begin{lemma}[{``Effective'' Ikehara--Ingham Theorem, cf.~{\cite[Theorem 7.5.11]{Ten1995}}}]\label{lem:EII}
Let $\alpha(t)$ be a non-decreasing function such that, for some fixed $a>0$, the following integral converges:
\begin{equation}\label{eq:onesided_laplace}
\int\limits_0^{\infty}e^{-z t}d\alpha(t), \quad \Re z>a.
\end{equation}
Let also there exist constants $D\geq0$ and $j>0$, such that for the functions
\begin{align}\label{eq:G}
G(z)&:=\dfrac{1}{a+z}\int\limits_0^\infty e^{-(a+z)t}d\alpha(t)-\dfrac{D}{z^{j}},\quad \Re z>0,\\
\eta(\sigma,T)&:=\sigma^{j-1}\int\limits_{-T}^{T}\bigl\lvert G(2\sigma+i\tau)-G(\sigma+i\tau)\bigr\rvert d\tau, \quad T>0,\notag
\end{align}
one has that
\begin{equation}\label{eq:eta}
\lim_{\sigma\to0+}\eta(\sigma,T)=0, \quad T>0.
\end{equation}
Then
\begin{equation}\label{eq:alpha}
\alpha(t)=\left\{ \dfrac{D}{\Gamma(j)}+O\bigl(\rho(t)\bigr)\right\}  e^{at}t^{j-1},\quad t\geq1,
\end{equation}
where
\begin{equation}\label{eq:rho}
\rho(t):=\inf\limits_{T\geq32(a+1)} \Bigl\{ T^{-1}+\eta\bigl(t^{-1},T\bigr)+(Tt)^{-j} \Bigr\}.
\end{equation}
\end{lemma}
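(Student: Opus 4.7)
The plan is to establish this as a quantitative form of the Wiener--Ikehara--Ingham Tauberian theorem, via smoothing plus a contour shift, tracking all constants so that the hypothesis \eqref{eq:eta} feeds directly into the error \eqref{eq:rho}. First I would reduce to studying the remainder $\beta(t):=e^{-at}\alpha(t)-\frac{D}{\Gamma(j)}t^{j-1}$. Integrating \eqref{eq:onesided_laplace} by parts (justified by monotonicity of $\alpha$ and convergence for $\Re z>a$) and using the identity $z^{-j}=\Gamma(j)^{-1}\int_{0}^{\infty}e^{-zt}t^{j-1}\,dt$, the function $G$ may be written
\[
G(z)=\frac{\alpha(0)}{a+z}+\int_{0}^{\infty}e^{-zt}\beta(t)\,dt,\qquad\Re z>0.
\]
The first summand is analytic through $z=0$ and will only contribute a lower-order $T^{-1}$-type error, so the task reduces to bounding $\beta(t)/t^{j-1}$ with an effective rate.

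Next I would smooth $\beta$ against a Fejér-type kernel. Fix $t\ge 1$, set $\sigma=1/t$, and pick $T\ge 32(a+1)$. Let $\kappa_{T}\ge 0$ have $\widehat{\kappa}_{T}$ supported in $[-T,T]$, $\int\kappa_{T}=1$, and $\kappa_{T}(u)\lesssim T^{-1}u^{-2}$; form
\[
B(t):=\int_{\mathbb{R}}\beta(t+u)\,e^{-\sigma u}\,\kappa_{T}(u)\,du.
\]
Parseval's identity combined with the Laplace representation above rewrites $B(t)$ as a contour integral of $G(\sigma+i\tau)\,e^{(\sigma+i\tau)t}\,\widehat{\kappa}_{T}(\tau)$ on $\tau\in[-T,T]$, up to a boundary contribution of order $T^{-1}$ coming from the $(a+z)^{-1}$-piece and the tail decay of $\widehat{\kappa}_{T}$.

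The key step is to shift the contour from $\Re z=2\sigma$ to $\Re z=\sigma$; the telescoping difference equals
\[
\int_{-T}^{T}\bigl(G(2\sigma+i\tau)-G(\sigma+i\tau)\bigr)\,e^{(2\sigma+i\tau)t}\,\widehat{\kappa}_{T}(\tau)\,d\tau,
\]
whose modulus is at most $e^{2}\|\widehat{\kappa}_{T}\|_{\infty}\cdot\sigma^{1-j}\,\eta(\sigma,T)\lesssim t^{j-1}\eta(1/t,T)$, by $\sigma t=1$, $\|\widehat{\kappa}_{T}\|_\infty\le 1$, and the definition \eqref{eq:G}--\eqref{eq:eta}. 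This yields exactly the $\eta(1/t,T)$ term of \eqref{eq:rho}. The de-smoothing step then returns from $B(t)$ to $\beta(t)$: since $e^{-at}\alpha(t)$ inherits bounded local oscillation from the monotonicity of $\alpha$ together with the global growth rate $e^{(a+1)t}$ forced by \eqref{eq:onesided_laplace}, the remainder $|\beta(t)-B(t)|$ is bounded by comparing $\beta$ with its averages on a window of size $h\sim 1/T$; optimizing $h$ gives an error $t^{j-1}(Tt)^{-j}$, the third summand in \eqref{eq:rho}.

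Assembling these steps yields, for every admissible $T$,
\[
|\beta(t)|\le C\,t^{j-1}\Bigl(T^{-1}+\eta(1/t,T)+(Tt)^{-j}\Bigr),
\]
and taking the infimum over $T\ge 32(a+1)$ delivers \eqref{eq:alpha} with the rate $\rho$ of \eqref{eq:rho}. The threshold $T\ge 32(a+1)$ is the quantitative version of the condition ``$T$ large compared with the abscissa of convergence,'' needed to guarantee that the tail of $\widehat{\kappa}_{T}$ absorbs the factor $e^{2\sigma t}=e^{2}$ from the shift. The hardest technical point is the bookkeeping in the contour-shift step: only the \emph{increment} $G(2\sigma+i\tau)-G(\sigma+i\tau)$ is controlled by hypothesis, not $G$ itself on $\Re z=\sigma$, so one must arrange the Fourier inversion so that the boundary values of $G$ on $\Re z=0$ are never required and only this telescoping difference appears.
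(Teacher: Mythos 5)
First, a point of comparison: the paper does not prove Lemma~\ref{lem:EII} at all. It is imported verbatim from Tenenbaum \cite[Theorem 7.5.11]{Ten1995} and used as a black box in the proof of Proposition~\ref{prop:tauber}; there is no argument in the paper to measure your attempt against, so your proposal stands or falls as a proof of the cited theorem itself.

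As such, it has a genuine gap at its central step, even though the ingredients you list (band-limited Fej\'er smoothing, Plancherel, de-smoothing via monotonicity of $\alpha$) are indeed the right ones. The hypothesis \eqref{eq:eta} controls only the \emph{increment} $G(2\sigma+i\tau)-G(\sigma+i\tau)$; it gives no information about $\int_{-T}^{T}G(\sigma+i\tau)e^{(\sigma+i\tau)t}\widehat{\kappa}_{T}(\tau)\,d\tau$ at any single abscissa. Your architecture presents the increment as a perturbative ``contour-shift error'' on top of the value of the smoothed transform at $\Re z=\sigma$ --- but that baseline value is exactly what cannot be computed or bounded from the hypotheses, so establishing $B_{2\sigma}(t)-B_{\sigma}(t)=O\bigl(t^{j-1}\eta(1/t,T)\bigr)$ leaves you with no estimate on either quantity and hence nothing to de-smooth. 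Worse, on the physical side this difference is $\int\beta(t+u)\bigl(e^{-2\sigma u}-e^{-\sigma u}\bigr)\kappa_{T}(u)\,du$, and the weight $e^{-2\sigma u}-e^{-\sigma u}$ vanishes at $u=0$ and changes sign there; since your $\beta$ is signed, smallness of this integral carries essentially no pointwise information about $\beta(t)$. The known resolution is to run the Plancherel identity for the difference of abscissas against the \emph{positive} measure $e^{-as}\,d\alpha(s)$ rather than against the signed remainder $\beta$: there the weight becomes $e^{-\sigma s}\bigl(1-e^{-\sigma s}\bigr)\ge 0$, which for $|s-t|\lesssim 1/T$ and $\sigma=1/t$ is bounded below by an absolute constant, so the difference genuinely dominates a $T$-scaled local increment of $\int_{0}^{\cdot}e^{-au}\,d\alpha(u)$ and yields an upper bound on such increments; one then needs the classical second, lower-bound pass (feeding the upper bound back in) to extract the main term $Dt^{j-1}/\Gamma(j)$, together with the bookkeeping for the factor $a+z$ relating $\int_{0}^{\infty}e^{-(a+z)s}\,d\alpha(s)$ to $G(z)+D z^{-j}$, which produces cross terms involving $G$ at a single point. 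None of this two-sided bootstrap appears in your plan; you flag the difficulty in your final sentence but the decomposition you chose (the signed $\beta$, the single-abscissa baseline) makes it unresolvable as stated. Since the paper itself only cites the result, the clean course is to do the same; a self-contained proof would have to follow Tenenbaum's actual scheme rather than the one sketched here.
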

\begin{proof}[Proof of Proposition~\ref{prop:tauber}]
We first express $\int_{0}^{\infty}e^{\la t}\varphi(t)dt$ in the form \eqref{eq:onesided_laplace}. By the assumption, the function $\alpha(t):= e^{(\mu+a)t}\varphi(t)$ is non-decreasing. For any $0<\Re z<\mu$, one has
\begin{equation}\label{eq:psi_iprime}
\int\limits_0^\infty e^{-(a+z)t}d\alpha(t)=(\mu+a)\int\limits_0^{\infty}e^{(\mu-z)t}\varphi(t)dt +\int\limits_0^\infty e^{(\mu-z)t}\varphi'(t)dt,
\end{equation}
and the r.h.s. of \eqref{eq:psi_iprime} converges, by \eqref{eq:psi_onesided_lap} and \ref{L-derivative}. Then, by \cite[Corollary~II.1.1a]{Wid1941}, the l.h.s. of \eqref{eq:psi_iprime} converges, for $\Re z>0$, and hence, by \cite[Theorem II.2.3a]{Wid1941}, one gets
\begin{equation}\label{eq:psi_i}
\int\limits_0^\infty e^{-(a+z)t}d\alpha(t)=-\varphi(0)+(a+z)\int\limits_0^\infty e^{(\mu-z)t}\varphi(t)dt.
\end{equation}
Therefore, by \eqref{eq:sing_repres} and \eqref{eq:psi_i}, we have
\begin{equation*}
\dfrac{1}{a+z}\int\limits_0^\infty e^{-(a+z)t}d\alpha(t)=\dfrac{F(\mu-z)}{z^j}+K(z),\quad 0<\Re z<\mu,
\end{equation*}
where $K(z):=H(\mu-z)-\dfrac{\varphi(0)}{a+z}$, $0< \Re z\leq\mu$.

Let now $G$ be given by \eqref{eq:G} with $\alpha(t)$ as above and $D:=F(\mu)$.
Check the condition \eqref{eq:eta}; one can assume, clearly, that $0<\sigma<2\sigma<\mu$. Since $K\in\An(0<\Re z\leq \mu)$, one easily gets that
\begin{align}
&\quad \lim\limits_{\sigma\to  0+} \sigma^{j-1}\int\limits_{-T}^{T} \bigl\lvert G(2\sigma+i\tau)-G(\sigma+i\tau)\bigr\rvert d\tau \notag \\
 &\leq \lim\limits_{\sigma\to  0+}\sigma^{j-1}\int\limits_{-T}^{T}\Bigl\lvert \dfrac{F(\mu-2\sigma-i\tau)-F(\mu)}{(2\sigma+i\tau)^j}-\dfrac{F(\mu-\sigma-i\tau)-F(\mu)}{(\sigma+i\tau)^j}\Bigr\rvert d\tau\notag\\
 &\leq \lim\limits_{\sigma\to  0+}\sigma^{j-1}\int\limits_{-T}^{T}\Bigl\lvert \dfrac{F(\mu-2\sigma-i\tau)-F(\mu-\sigma-i\tau)}{(\sigma+i\tau)^j}\Bigr\rvert d\tau\notag\\
 &\quad+\lim\limits_{\sigma\to  0+}\sigma^{j-1}\int\limits_{-T}^{T}\bigl\lvert F(\mu-2\sigma-i\tau)-F(\mu)\bigr\rvert \Bigl\lvert \dfrac{1}{(2\sigma+i\tau)^j}-\dfrac{1}{(\sigma+i\tau)^j}\Bigr\rvert d\tau\notag,\\
 &=: \lim\limits_{\sigma\to  0+} A_j(\sigma)+\lim\limits_{\sigma\to  0+} B_j(\sigma).\label{ABexpans}
 \end{align}
One has
 \begin{equation*}
    A_j(\sigma) \leq \sup_{\tau\in[-T,T]}\bigl\lvert F(\mu-2\sigma-i\tau)-F(\mu-\sigma-i\tau)\bigr\rvert \sigma^{j-1}\int_{-T}^T\frac{1}{|\sigma+i\tau|^j}d\tau,
 \end{equation*}
and since
\[
 \sigma^{j-1}\int_{-T}^T\frac{1}{|\sigma+i\tau|^j}d\tau=
 \sigma^{j-1}\int_{-T}^T\frac{1}{(\sigma^2+\tau^2)^\frac{j}{2}}d\tau=\begin{cases}
     2\log \dfrac{\sqrt{T^2+\sigma^2}+T}{\sigma}, &j=1,\\[2mm]
     2\arctan\dfrac{T}{\sigma}, &j=2,
   \end{cases}
\]
we get, by \eqref{loglim}, that $\lim\limits_{\sigma\to  0+} A_j(\sigma) =0$.

 Next, since $F\in C(K_{\beta,\mu,T})$, there exists $C_1>0$ such that $|F(z)|\leq C_1$, $z\in K_{\beta,\mu,T}$. Therefore,
 \begin{align}
   B_j(\sigma) & \leq \sigma^{j-1}\sup_{|\tau|\leq\sqrt{\sigma}}\bigl\lvert F(\mu-2\sigma-i\tau)-F(\mu)\bigr\rvert  \int\limits_{|\tau|\leq\sqrt{\sigma}} \Bigl\lvert\dfrac{1}{(2\sigma+i\tau)^j}-\dfrac{1}{(\sigma+i\tau)^j}\Bigr\rvert d\tau \notag\\
    & \quad+ 2C_1 \sigma^{j-1}\int\limits_{\sqrt{\sigma}\leq|\tau|\leq T}\Bigl\lvert \dfrac{1}{(2\sigma+i\tau)^j}-\dfrac{1}{(\sigma+i\tau)^j}\Bigr\rvert d\tau.\label{Bsigmaest}
 \end{align}
Note that, for any $a<b$,
\begin{align*}
\int\limits_a^b\left\vert \dfrac{1}{2\sigma+i\tau}-\dfrac{1}{\sigma+i\tau} \right\vert d\tau &=\int\limits_a^b \dfrac{\sigma}{\sqrt{(2\sigma^2-\tau^2)^2+9\sigma^2\tau^2}}d\tau=
\int\limits_{\frac{a}{\sigma}}^{\frac{b}{\sigma}}h_1(x)dx;\\
\int\limits_a^b\left\vert \dfrac{1}{(2\sigma+i\tau)^2}-\dfrac{1}{(\sigma+i\tau)^2} \right\vert d\tau&=\sigma\int\limits_a^b \dfrac{\sqrt{9\sigma^2+4\tau^2}}{(2\sigma^2-\tau^2)^2+9\sigma^2\tau^2}d\tau=
\dfrac{1}{\sigma}\int\limits_{\frac{a}{\sigma}}^{\frac{b}{\sigma}}
h_2(x)dx,
\end{align*}
where
\[
h_1(x):=\dfrac{1}{\sqrt{4+5x^2+x^4}}, \qquad h_2(x):=\dfrac{\sqrt{9+4x^2}}{4+5x^2+x^4}.
\]
Now, one can estimate terms in \eqref{Bsigmaest} separately. We have
\[
\sigma^{j-1} \int\limits_{|\tau|\leq\sqrt{\sigma}} \Bigl\lvert\dfrac{1}{(2\sigma+i\tau)^j}-\dfrac{1}{(\sigma+i\tau)^j}\Bigr\rvert d\tau
=\int\limits_{|x|\leq\frac{\sqrt{\sigma}}{\sigma}} h_j(x)dx<\int_{\R} h_j(x)dx<\infty.
\]
Next, since $F$ is uniformly continuous on $K_{\beta,\mu,T}$, we have that, for any $\eps>0$ there exists $\delta>0$ such that
$f(\mu,\sigma,\tau):=\bigl\lvert F(\mu-2\sigma-i\tau)-F(\mu)\bigr\rvert<\eps$,
if only $4\sigma^2+\tau^2<\delta$. Therefore, if $\sigma>0$ is such that $4\sigma^2+\sigma<\delta$ then $\sup_{|\tau|\leq\sqrt{\sigma}}f(\mu,\sigma,\tau)<\eps$ hence
\[
\sup_{|\tau|\leq\sqrt{\sigma}}\bigl\lvert F(\mu-2\sigma-i\tau)-F(\mu)\bigr\rvert\to 0, \quad \sigma\to0+.
\]
Finally,
\[
  \sigma^{j-1}\int\limits_{\sqrt{\sigma}\leq|\tau|\leq T}\Bigl\lvert \dfrac{1}{(2\sigma+i\tau)^j}-\dfrac{1}{(\sigma+i\tau)^j}\Bigr\rvert  =2\int\limits_{\frac{\sqrt{\sigma}}{\sigma}}^{\frac{T}{\sigma}}h_j(x)dx\to 0, \quad \sigma\to0+,
\]
as $\int_\R h_j(x)dx<\infty$. As a result, \eqref{Bsigmaest} gives $ B_j(\sigma)\to 0$, as $\sigma\to0+$. Combining this with $A_j(\sigma)\to 0$, one gets \eqref{eq:eta} from \eqref{ABexpans}; and we can apply Lemma~\ref{lem:EII}. Namely, by \eqref{eq:alpha},
there exist $C>0$ and $t_0\geq1$, such that
\[
D e^{at}t^{j-1}\leq \varphi(t)e^{(\mu+a)t} \leq \left\{ D+C\rho(t) \right\}e^{at}t^{j-1},\quad t\geq t_0.
\]
as $\Gamma(j)=1$, for $j\in\{1,2\}$.
By \eqref{eq:eta}, \eqref{eq:rho} $\rho(t)\to  0$ as $t\to \infty$. Therefore,
\[
\varphi(t)e^{(\mu+a)t}\sim D e^{at}t^{j-1},\quad t\to  \infty,
\]
that is equivalent to \eqref{eq:psi_asympt} and finishes the proof.
\end{proof}

To apply Proposition~\ref{prop:tauber} to our settings, we will need the following statement, which is an adaptation of \cite[Lemma 3.2, Proposition 3.7]{ZLW2012}.

\begin{proposition}\label{beincreas}
Let $\xi\in\S $ be fixed, $a^+\in\Uxi$, $c_*(\xi)$ be the minimal traveling wave speed in the direction $\xi$. Let a traveling wave profile $\psi\in\M$ correspond to a speed $c\geq c_*(\xi)$, $c\neq0$. Then there exists $\nu>0$, such that $\psi(t)e^{\nu t}$ is a monotonically increasing function.
\end{proposition}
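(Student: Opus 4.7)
The condition that $\psi(t)e^{\nu t}$ be monotonically increasing is equivalent, since $\psi\in C^\infty(\R)$ by Corollary~\ref{cor:infsmoothprofile} and $\psi(t)>0$ by Proposition~\ref{prop:psidecaysstrictly}, to the pointwise inequality $\psi'(t)+\nu\psi(t)\geq 0$, and hence to $\sup_{t\in\R}\bigl(-\psi'(t)/\psi(t)\bigr)\leq \nu$. Using the traveling wave equation \eqref{eq:trw}, for $c>0$ I would write
\begin{equation*}
-\psi'(t)/\psi(t)=\frac{\ka^+}{c}R(t)-\frac{h(t)}{c},\qquad R(t):=\frac{(\check{a}^+*\psi)(t)}{\psi(t)},
\end{equation*}
where $h(t)=m+\ka^-(\check{a}^-*\psi)(t)\in[m,\ka^+]$. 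Since $h\geq 0$, it suffices to establish that $R$ is uniformly bounded on $\R$, and then take $\nu:=(\ka^+/c)\sup R$. The case $c<0$ is handled symmetrically by integrating the corresponding variation-of-parameters formula from $-\infty$ to $s$ instead of $s$ to $+\infty$ and bounding the analogous ratio.

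As $t\to -\infty$, $\psi(t)\to\theta$ and $(\check{a}^+*\psi)(t)\to\theta$, so $R(t)\to 1$, while $R$ is continuous on compact sets with strictly positive denominator. Hence the task reduces to bounding $R$ as $t\to +\infty$. Using monotonicity of $\psi$ to split the convolution at $u=0$, together with the exponential upper bound $\psi(s)\leq K_\la e^{-\la s}$ from Lemma~\ref{lem:allaboutLapl}\ref{L-decaying} for any $\la\in(0,\la_0(\psi))$, I obtain
\begin{equation*}
(\check{a}^+*\psi)(t)\leq \psi(t)+K_\la\A_\xi(\la)\,e^{-\la t},
\end{equation*}
where $\A_\xi(\la)<\infty$ since $\la<\la_0(\psi)\leq\la_0(\check{a}^+)$ by \eqref{absofkernelisbigger}. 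Thus $R(t)\leq 1+K_\la\A_\xi(\la)\,e^{-\la t}/\psi(t)$, and the whole problem reduces to the \emph{exponential lower bound}
\begin{equation*}
(\star)\qquad \psi(t)\geq D e^{-\la t}\qquad \text{for all }t\geq t_0,
\end{equation*}
for some choice of $\la\in(0,\la_0(\psi))$, $D>0$ and $t_0\in\R$.

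The main obstacle is proving $(\star)$; I would follow the bootstrap strategy of \cite{ZLW2012}. From $c>0$ and $\psi'\leq 0$ the TW equation gives $(\check{a}^+*\psi)(s)\geq(m/\ka^+)\psi(s)$, and variation of parameters yields
\begin{equation*}
\psi(s)=\frac{\ka^+}{c}\int_s^\infty\exp\!\Bigl(-\tfrac{1}{c}\textstyle\int_s^\tau h(u)\,du\Bigr)(\check{a}^+*\psi)(\tau)\,d\tau,
\end{equation*}
together with the two-sided bound $e^{-\ka^+(\tau-s)/c}\leq\exp(-\int_s^\tau h/c)\leq e^{-m(\tau-s)/c}$. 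Combining these with the monotonicity of $\psi$ and the fact that $\la_0(\psi)<\infty$ (Theorem~\ref{thm:speedandprofile}), I would argue by contradiction: if $\liminf_{s\to\infty}\psi(s)e^{\la s}=0$ for every $\la<\la_0(\psi)$, then iterating the integral identity against a test exponential $e^{\la s}$ drives the Laplace transform $(\L\psi)(\la)$ to be analytic beyond $\la_0(\psi)$, contradicting Lemma~\ref{lem:allaboutLapl}\ref{L-singular} applied to the nonnegative function $\psi$. This yields $(\star)$, completes the bound on $R$, and gives the desired $\nu$. The delicate step is this singularity/iteration argument; everything else is bookkeeping with the equation.
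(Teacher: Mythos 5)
Your high-level plan --- reduce the monotonicity of $\psi(t)e^{\nu t}$ to a uniform bound on $-\psi'(t)/\psi(t)$, hence on $R(t)=(\check{a}^+*\psi)(t)/\psi(t)$, treating $c<0$ separately --- is sound and matches the logic of the paper's argument. The mechanism you propose for bounding $R$, however, has a genuine gap.

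The exponential lower bound $(\star)$, i.e.\ $\psi(t)\geq De^{-\la t}$ for all $t\geq t_0$ with some $\la\in(0,\la_0(\psi))$, is incompatible with the very definition of $\la_0(\psi)$: if $(\star)$ held, then for any $\la'\in(\la,\la_0(\psi))$ one would get
\[
\int_{t_0}^\infty\psi(s)e^{\la's}\,ds\geq D\int_{t_0}^\infty e^{(\la'-\la)s}\,ds=\infty,
\]
forcing $\la_0(\psi)\leq\la$, contradicting $\la<\la_0(\psi)$. So there are no $\la$, $D$, $t_0$ for which $(\star)$ holds, and the reduction collapses. (Indeed the eventual asymptotic $\psi(t)\sim Dt^{j-1}e^{-\la_0(\psi)t}$ of Proposition~\ref{asymptexact} --- whose proof uses the present proposition --- gives $\psi(t)e^{\la t}\to 0$ for every $\la<\la_0(\psi)$.) Your sketched contradiction --- that $\liminf_{s\to\infty}\psi(s)e^{\la s}=0$ for all $\la<\la_0(\psi)$ would force $\L\psi$ to extend analytically past $\la_0(\psi)$ --- is also not valid: that vanishing-liminf property is satisfied by every such profile (and, say, by any function equal to $e^{-\la_0 s}$ for $s\geq1$) without any gain in analyticity, so there is nothing to contradict.

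The paper bounds $R(t)$ with no pointwise lower estimate on $\psi$. For $c>0$ it fixes $\mu\geq\ka^+/c$, so that $m+\ka^-(\check{a}^-*\psi)\leq c\mu$, and sets $w(s)=\psi(s)e^{-\mu s}$, which is decreasing. Then \eqref{eq:trwder} gives
\[
0\geq cw'(s)+\ka^+\int_\R\check{a}^+(\tau)w(s-\tau)e^{-\mu\tau}\,d\tau ,
\]
and integrating this over $[t,t+\varrho]$ and over $[t-\varrho,t]$, with $\varrho$ chosen through \eqref{as:a+nodeg-mod} so that $\int_{2\varrho}^\infty\check{a}^+(\tau)e^{-\mu\tau}\,d\tau>0$, yields a uniform bound on the quotient $w(t-\varrho)/w(t)$. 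That quotient bound --- not an exponential minorant of $\psi$ --- is what controls $\int_\R\check{a}^+(\tau)\bigl(w(t-\tau)/w(t)\bigr)e^{-\mu\tau}\,d\tau=R(t)$ and hence $-\psi'(t)/\psi(t)$. For $c<0$ a one-line estimate from \eqref{eq:trw} with any $\nu>m/(-c)$, using \eqref{as:aplus_gr_aminus} and $\psi\leq\theta$, already gives $\bigl(\psi(s)e^{\nu s}\bigr)'>0$; no variation of parameters is needed. To repair your write-up, replace the step $(\star)$ by this quotient estimate.
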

\begin{proof}
We start from the case $c>0$. Since $\psi(t)>0,\ t\in\R$, it is sufficient to prove that
  \begin{equation}\label{weneed}
    \frac{\psi'(t)}{\psi(t)}> -\nu,\quad t\in\R.
  \end{equation}
  Fix any $\mu\geq\dfrac{\ka^+}{c}>0$. Then, clearly,
  \[
  \ka^-(\check{a}^-*\psi)(t)+m\leq \ka^-\theta+m=\ka^+\leq c\mu,
  \]
  and we will get from \eqref{eq:trwder}, that
  \begin{equation}
0\geq c\psi'(s)+\ka^{+}(\check{a}^{+}*\psi)(s)-c\mu\psi(s), \quad s\in\R.
\label{ineq1}
\end{equation}
Multiply both parts of \eqref{ineq1} on $e^{-\mu s}>0$ and set
\[
w(s):=\psi(s)e^{-\mu s}>0, \quad s\in\R.
\]
Then $w'(s)=\psi'(s)e^{-\mu s}-\mu w(s)$ and one can rewrite \eqref{ineq1} as follows
  \begin{align}
0&\geq c w'(s)+\ka^{+}(\check{a}^{+}*\psi)(s)e^{-\mu s}\notag\\&=c w'(s)+\ka^+\int_\R\check{a}^+(\tau)w(s-\tau)e^{-\mu \tau}d\tau, \quad s\in\R.
\label{eq:w_est}
\end{align}

As it was shown in the proof of Proposition~\ref{prop:infGisreached}, \eqref{as:a+nodeg-mod} implies that there exists $\varrho>0$, such that
\begin{equation}\label{intfrom2rhoispos}
\int_{2\varrho}^{\infty}\check{a}^+(s)e^{-\mu s}ds>0;
\end{equation}
indeed, it is enough to set $2\varrho:=r+\frac{\delta'}{2}$ in \eqref{checkapluspos}.

Integrate \eqref{eq:w_est} over $s\in[t,t+\varrho]$; one gets
  \begin{equation}\label{asd3}
  0 \geq c(w(t+\varrho)-w(t))+\ka^+\int_{t}^{t+\varrho}\int_\R\check{a}^+(\tau)w(s-\tau)e^{-\mu \tau}d\tau ds.
\end{equation}
Since $w(t)$ is a monotonically decreasing function, we have
  \begin{align}
     \int_{t}^{t+\varrho}\int_\R\check{a}^+(\tau)w(s-\tau)e^{-\mu \tau}d\tau ds     &\geq \varrho\int_\R\check{a}^+(\tau)w(t+\varrho-\tau)e^{-\mu \tau}d\tau \nonumber \\
      \geq \varrho \int_{2\varrho}^{\infty}\check{a}^+(\tau)w(t+\varrho-\tau)e^{-\mu \tau}d\tau &\geq \varrho w(t-\varrho)\int_{2\varrho}^{\infty}\check{a}^+(\tau)e^{-\mu \tau}d\tau.\label{asd4}
  \end{align}
We set, cf. \eqref{intfrom2rhoispos},
\[
C(\mu,\rho):=\dfrac{\ka^+}{c}\int_{2\varrho}^{\infty}\check{a}^+(s)e^{-\mu s}ds>0.
\]
Then \eqref{asd3} and \eqref{asd4} yield
  \begin{equation}\label{eq:ln_fy_est_i}
    w(t) -\varrho C(\mu,\rho)w(t-\varrho)\geq w(t+\varrho)>0,\quad t\in\R.
  \end{equation}

  Now we integrate \eqref{eq:w_est} over $s\in[t-\varrho,t]$. Similarly to above, one gets
  \begin{align}
    0 &\geq c(w(t)-w(t-\varrho))+\ka^+\int_{t-\varrho}^{t}\int_\R\check{a}^+(\tau)w(s-\tau)e^{-\mu \tau}d\tau ds \nonumber \\
      &\geq c(w(t)-w(t-\varrho))+\varrho\ka^+\int_\R\check{a}^+(\tau)w(t-\tau)e^{-\mu \tau}d\tau.\label{asda12}
  \end{align}
  By \eqref{eq:ln_fy_est_i} and \eqref{asda12}, we have
  \begin{equation}\label{eq:ln_fy_est_ii}
  \frac{1}{\varrho C(\mu,\rho)}\geq \dfrac{ w(t-\varrho)}{w(t)}\geq 1 +\dfrac{\varrho\ka^+}{c}\int_\R\check{a}^+(\tau)\dfrac{w(t-\tau)}{w(t)}e^{-\mu \tau}d\tau.
  \end{equation}
  On the other hand, \eqref{eq:trwder} implies that
  \begin{equation}
-\frac{\psi'(t)}{\psi(t)}\leq \frac{\ka^{+}}{c}\frac{(\check{a}^{+}*\psi)(t)}{\psi(t)}=\dfrac{\ka^+}{c}\int_\R\check{a}^+(\tau)\dfrac{w(t-\tau)}{w(t)}e^{-\mu \tau}d\tau, \quad t\in\R.\label{eq:ln_fy_est_ii11}
\end{equation}
Finally, \eqref{eq:ln_fy_est_ii} and \eqref{eq:ln_fy_est_ii11} yield \eqref{weneed} with $\nu=\dfrac{1}{\rho^2 C(\mu,\rho)}>0$.

Let now $c<0$. For any $\nu\in\R$, one has
\begin{equation*}
\psi'(s)=e^{-\nu s}(\psi(s)e^{\nu s})'-\nu \psi(s),\quad s\in\R.
\end{equation*}
Hence, by \eqref{eq:trw}, \eqref{as:aplus_gr_aminus},
\begin{align*}
0&= ce^{-\nu s}(\psi(s)e^{\nu s})'-c\nu\psi(s)+\chi^+(\check{a}^+*\psi)(s)-\chi^-\psi(s)(\check{a}^-*\psi)(s)-m\psi(s)\nonumber \\
&\geq ce^{-\nu s}(\psi(s)e^{\nu s})'-c\nu\psi(s)+\chi^+(\check{a}^+*\psi)(s)-\chi^-\theta(\check{a}^-*\psi)(s)-m\psi(s)\nonumber \\
&\geq ce^{-\nu s}(\psi(s)e^{\nu s})'-c\nu\psi(s)-m\psi(s),\quad s\in\R.
\end{align*}
As a result, choosing $\nu>\dfrac{m}{-c}$, one gets
\begin{equation*}
-c e^{-\nu s}(\psi(s)e^{\nu s})'\geq(-c\nu-m)\psi(s)>0,\quad s\in\R,
\end{equation*}
i.e. $\psi(s)e^{\nu s}$ is an increasing function.
\end{proof}

Now, we can apply Proposition~\ref{prop:tauber} to find the asymptotic of the profile of a traveling wave.
\begin{proposition}\label{asymptexact}
In conditions and notations of Proposition~\ref{prop:charfuncpropert},
for $c\neq0$, there exists $D=D_j>0$, such that
\begin{equation}\label{eq:trw_asympt}
\psi(t)\sim D e^{-\la_0(\psi) t}t^{j-1},\quad t\to  \infty.
\end{equation}
\end{proposition}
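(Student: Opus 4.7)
The strategy is to apply the Ikehara--Delange-type Tauberian theorem (Proposition~\ref{prop:tauber}) to $\varphi(t) := \psi(t)$ on $\R_+$ with decay exponent $\mu := \la_0(\psi)$ and multiplicity $j \in \{1,2\}$ supplied by Proposition~\ref{prop:charfuncpropert}; its conclusion \eqref{eq:psi_asympt} will translate directly into \eqref{eq:trw_asympt} with $D := F(\mu) > 0$. The preliminary hypotheses are immediate: Corollary~\ref{cor:infsmoothprofile} gives $\psi \in C_b^\infty(\R)$, Proposition~\ref{prop:psidecaysstrictly} gives strict monotonicity (so $\varphi \in C^1(\R_+ \to \R_+)$ is non-increasing), Proposition~\ref{beincreas} supplies $\nu > 0$ with $\psi(t)e^{\nu t}$ non-decreasing (choosing any $a > 0$ with $\mu + a \geq \nu$ then makes $\psi(t)e^{(\mu+a)t}$ non-decreasing), and the convergence of $\int_0^\infty e^{zt}\psi'(t)\,dt$ on $0 < \Re z < \mu$ follows from $\psi' \in L^\infty(\R)$ together with the identity $\la_0(\psi') = \la_0(\psi) = \mu$ established in Step~1 of the proof of Theorem~\ref{thm:speedandprofile}.

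The heart of the argument is the singular representation \eqref{eq:sing_repres}. Equation \eqref{neweq} gives, for $0 < \Re z < \mu$, the identity $(\L\psi)(z) = N(z)/\h_{\xi,c}(z)$, where $N(z) := \ka^{-}\bigl(\L(\psi(\check{a}^-*\psi))\bigr)(z)$. By the estimate \eqref{absofsq} in Lemma~\ref{lem:allaboutLapl}~\ref{L-decaying} together with $\la_0(\check{a}^-) \geq \la_0(\check{a}^+) > 0$ (from \eqref{acheckpos}), one has $\la_0(\psi(\check{a}^-*\psi)) \geq \mu + \min\{\la_0(\check{a}^-), \mu\} > \mu$, so $N$ is analytic in an open complex strip containing $\{0 < \Re z \leq \mu\}$. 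Proposition~\ref{prop:charfuncpropert} further asserts that $\h_{\xi,c}$ is continuous and nonvanishing on every $K_{\beta,\mu,T}$ apart from its zero at $z = \mu$ of exact multiplicity $j$, and the local expansions \eqref{impeq} (for $j = 1$) and \eqref{eq:ii} (for $j = 2$, using \eqref{secmomadd}) show that $g(z) := \h_{\xi,c}(z)/(z-\mu)^j$ admits a nonzero finite limit as $z \to \mu$, hence extends to a continuous, nonvanishing function on $K_{\beta,\mu,T}$, analytic on the open strip. Setting $H \equiv 0$ and $F(z) := N(z)/g(z)$ thus realises \eqref{eq:sing_repres} with $F \in \An(\{0 < \Re z < \mu\}) \cap C(\{0 < \Re z \leq \mu\})$ and $F(\mu) > 0$.

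The most delicate point, and the expected main obstacle, is the quantitative modulus condition \eqref{loglim}. For $j = 2$ the factor $(\log\sigma)^{2-j}$ equals one, and the uniform continuity of $F$ on the compact set $K_{\beta,\mu,T}$ (automatic from continuity on the closed strip) suffices. For $j = 1$, however, the unbounded factor $|\log\sigma|$ demands a quantitative modulus of continuity for $F$ near $\mu$. In every subcase except $a^+ \in \Wxi$, $c = c_*(\xi)$, $m < \T_\xi(\la_0)$, one has $\la_0(\psi) < \la_0(\check{a}^+)$, so $\h_{\xi,c}$ is analytic in a complex neighbourhood of $\mu$ and $F$ is locally Lipschitz there, producing a supremum of order $O(\sigma)$ in \eqref{loglim}. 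In the remaining subcase, the finiteness $\int|\tau|\check{a}^+(\tau)e^{\mu\tau}\,d\tau < \infty$ built into the definition of $\Wxi$ allows a near/tail split of the remainder of $e^{-\zeta\tau}$ in $\h_{\xi,c}(\mu - \zeta) - \h_{\xi,c}(\mu)$, yielding via dominated convergence a modulus of continuity for $g$ (and hence for $F$) at $\mu$ that decays faster than $1/|\log\sigma|$. Proposition~\ref{prop:tauber} then delivers \eqref{eq:trw_asympt} with $D = F(\mu) > 0$.
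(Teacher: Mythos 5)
Your overall framework is the right one and mirrors the paper's: feed $\psi$ into Proposition~\ref{prop:tauber} with $\mu=\la_0(\psi)$, read off $j$ from Proposition~\ref{prop:charfuncpropert}, and verify the modulus condition \eqref{loglim} via the smoothness of $F=N/g$ at $\mu$. Two points in your write-up, however, are not correct.

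First, you cannot take $H\equiv0$. The representation \eqref{eq:sing_repres} concerns the \emph{one-sided} integral $\int_0^\infty e^{zt}\psi(t)\,dt$, whereas the identity coming from \eqref{neweq} is $(\L\psi)(z)=N(z)/\h_{\xi,c}(z)$ with $\L$ the \emph{bilateral} transform $\int_\R$. Since $\psi>0$ and $\psi(-\infty)=\theta$, the difference $\int_{-\infty}^0 e^{zt}\psi(t)\,dt$ is a nonzero function, analytic on $\{\Re z>0\}$. With $H\equiv0$ the equation \eqref{eq:sing_repres} you claim to have realised is simply false. The fix is precisely what the paper does: set $H(z):=-\int_{-\infty}^0 e^{zt}\psi(t)\,dt$, which lies in $\An(0<\Re z\leq\mu)$ and absorbs the discrepancy; the rest of your $F$ is then unaffected.

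Second, your treatment of the genuinely delicate subcase $j=1$, $a^+\in\Wxi$, $c=c_*(\xi)$, $m<\T_\xi(\la_0)$ does not hold up. You claim that the first-moment bound $\int|\tau|\check a^+(\tau)e^{\mu\tau}\,d\tau<\infty$ together with a near/tail split and dominated convergence yields a modulus of continuity for $g_1$ at $\mu$ that is $o(1/|\log\sigma|)$. That conclusion does not follow: dominated convergence only supplies qualitative decay $R(T):=\int_T^\infty|\tau|\check a^+(\tau)e^{\mu\tau}\,d\tau\to0$ and gives no rate, and there is no choice of threshold $T(\sigma)$ making $|\log\sigma|\cdot\bigl(\sigma T+R(T)\bigr)$ go to zero in general; for instance $\check a^+(\tau)e^{\mu\tau}\sim\tau^{-2}(\log\tau)^{-2}$ produces a modulus of exact order $1/|\log\sigma|$, not smaller. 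The paper instead obtains the genuinely Lipschitz bound $\sup_\tau|g_1(z_1)-g_1(z_2)|=O(\sigma)$ by invoking the \emph{second}-moment hypothesis \eqref{secmomadd}, i.e.\ $\int_\R s^2\check a^+(s)e^{\la_0 s}\,ds<\infty$, in the final estimate (see the displayed chain ending in the two $s^2$-integrals). Your argument would need to cite that hypothesis as well; as written it asserts more than it can deliver.
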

\begin{proof}
We set $\mu:=\la_0(\psi)$ and
\begin{equation}\label{deffunc}
\begin{aligned}
f(z)&:=\ka^-\bigl(\L(\psi (\check{a}^-*\psi))\bigr)(z), & g_j(z)&:=\dfrac{\h_{\xi,c}(z)}{(z-\mu)^j},\\
H(z)&:=-\int\limits_{-\infty}^0\psi(t)e^{z t}dt, & F(z)&:=\dfrac{f(z)}{g_j(z)}.
\end{aligned}
\end{equation}
By \eqref{unexpineq} and Lemma~\ref{lem:allaboutLapl},  we have that $f, H\in\An(0<\Re z\leq \mu)$; in particular, for any $T>0$, $\beta>0$,
\begin{equation}\label{fbdd}
\bar f:=\sup_{z\in K_{\beta,\mu,T}}|f(z)|<\infty.
\end{equation}
By Proposition~\ref{prop:charfuncpropert}, the function $g_j$ is continuous and does not equal to $0$ on the strip $\{0<\Re z\leq\mu\}$, in particular, for any $T>0$, $\beta>0$,
\begin{equation}\label{gpos}
\bar g_j:=\inf_{z\in K_{\beta,\mu,T}}|g(z)|>0.
\end{equation}
Therefore, $F\in\An (0<\Re z<\mu)\cap C(0< \Re z \leq \mu)$. As a result, one can rewrite \eqref{rewriting} in the form \eqref{eq:sing_repres}, with $\varphi=\psi$ and with $F$, $H$ as in \eqref{deffunc}.

Taking into account Proposition~\ref{beincreas}, to apply Proposition~\ref{prop:tauber} it is enough to prove that \eqref{loglim} holds. Assume that $0<2\sigma<\mu$.

Let $j=2$. Clearly, $F\in C(0< \Re z \leq \mu)$ implies that $F$ is uniformly continuous
on $ K_{\beta,\mu,T}$. Then, for any $\eps>0$ there exists $\delta>0$ such that, for any $\tau\in[-T,T]$, the inequality
\[
|\sigma|=|(\mu-2\sigma-i\tau)-(\mu-\sigma-i\tau)|<\delta
\]
implies
\[
|F(\mu-2\sigma-i\tau)-F(\mu-\sigma-i\tau)|<\eps,
\]
and hence \eqref{loglim} holds (with $j=2$).

Let now $j=1$. If $F\in \An(K_{\beta,\mu,T})$, we have, evidently, that $F'$ is bounded on $K_{\beta,\mu,T}$, and one can apply a mean-value-type theorem for complex-valued functions, see e.g. \cite{EJ1992}, to get that $F$ is a Lipschitz function on $K_{\beta,\mu,T}$. Therefore, for some $K>0$,
\[
|F(\mu-2\sigma-i\tau)-F(\mu-\sigma-i\tau)|<K |\sigma|,
\]
for all $\tau\in[-T,T]$, that yields \eqref{loglim} (with $j=1$). By~Proposition~\ref{prop:infGisreached} and Corollary~\ref{cor_alltogether}, the inclusion $F\in \An(K_{\beta,\mu,T})$ always holds for $c>c_*$; whereas, for $c=c_*$ it does hold iff $a^+\in\Vxi$. Moreover, the case $a^+\in\Wxi$ with $m=\T_\xi(\la_0)$ and $c=c_*$ implies, by Proposition~\ref{prop:charfuncpropert}, $j=2$ and hence it was considered above.

Therefore, it remains to prove \eqref{loglim} for the case $a^+\in\Wxi$ with $m<\T_\xi(\la_0)$, $c=c_*$ (then $j=1$).
Denote, for simplicity,
\begin{equation}\label{zi}
z_1:=\mu-\sigma-i\tau, \qquad z_2:=\mu-2\sigma-i\tau.
\end{equation}
Then, by \eqref{deffunc}, \eqref{fbdd}, \eqref{gpos}, one has
\begin{align}
  \bigl\lvert F(z_2)-F(z_1)\bigr\rvert &\leq
  \Bigl\lvert \frac{f(z_2)}{g_1(z_2)}-
  \frac{f(z_1)}{g_1(z_2)}\Bigr\rvert+\Bigl\lvert
  \frac{f(z_1)}{g_1(z_2)}
  -\frac{f(z_1)}{g_1(z_1)}\Bigr\rvert\notag\\
  &\leq \frac{1}{\bar g_1}\bigl\lvert f(z_2)-
  f(z_1)\bigr\rvert+ \frac{\bar f}{\bar g_1^2 }|g_1(z_1)-g_1(z_2)|.\label{fgest}
\end{align}
Note that, if $0<\phi\in L^\infty(\R)\cap L^1(\R)$ be such that $\la_0(\phi)>\mu$ then
\begin{align}
&  \bigl\lvert (\L\phi)(z_2)-
  (\L\phi)(z_1)\bigr\rvert  \leq \int_\R \phi(s)e^{\mu s}|e^{-2\sigma s}-e^{-\sigma s}|ds\notag\\
   &\quad \leq\sigma \int_0^\infty \phi(s)e^{(\mu-\sigma) s}sds+
   \sigma \int_{-\infty}^0 \phi(s)e^{(\mu -2\sigma)s} |s|\,ds=O(\sigma), \label{genconv}
\end{align}
as $\sigma\to0+$, where we used that $\sup_{s<0}e^{(\mu -2\sigma)s} |s|<\infty$,  $0<2\sigma<\mu$, and that \ref{L-analytic} holds.
Applying \eqref{genconv} to $\phi=\psi (\check{a}^-*\psi)\leq \theta^2 \check{a}^-\in L^1(\R)\cap L^\infty(\R)$, one gets
\[
  \sup_{\tau\in[-T,T]}\bigl\lvert f(z_2)-
  f(z_1)\bigr\rvert  = O(\sigma), \quad \sigma\to0+.
\]
Therefore, by \eqref{fgest}, it remains to show that
\begin{equation}\label{enough}
  \lim_{\sigma\to0+} \log \sigma \sup_{\tau\in[-T,T]} |g_1(z_1)-g_1(z_2)|=0.
\end{equation}
Recall that, in the considered case $c=c_*$, one has $\h_{\xi,c}(\mu)=0$. Therefore, by \eqref{charfunc}, \eqref{deffunc}, \eqref{zi}, we have
\begin{align}
&\quad|g_1(z_1)-g_1(z_2)|=\biggl\lvert \frac{\h_{\xi,c}(z_1)-\h_{\xi,c}(\mu)}{z_1-\mu}
-\frac{\h_{\xi,c}(z_2)-\h_{\xi,c}(\mu)}{z_2-\mu}\biggr\rvert\notag\\&=
\biggl\lvert \frac{\ka^+(\L\check{a}^+)(z_1)-\ka^+(\L\check{a}^+)(\mu)}{z_1-\mu}
-\frac{\ka^+(\L\check{a}^+)(z_2)-\ka^+(\L\check{a}^+)(\mu)}{z_2-\mu}\biggr\rvert\notag\\
&\leq \ka^+\int_\R \check{a}^+(s) e^{\mu s}
\biggl\lvert \frac{1-e^{(-\sigma-i\tau)s}}{\sigma+i\tau}
-\frac{1-e^{(-2\sigma-i\tau)s}}{2\sigma+i\tau}\biggr\rvert\,ds\notag\\
&= \ka^+\int_\R \check{a}^+(s) e^{\mu s}\biggl\lvert\int_0^s \bigl( e^{(-\sigma-i\tau)t}-
e^{(-2\sigma-i\tau)t}\bigr)\,dt\biggr\rvert\,ds\notag\\&\leq
\ka^+\int_0^\infty \check{a}^+(s) e^{\mu s}\int_0^s \bigl| e^{-\sigma t}-
e^{-2\sigma t}\bigr|\,dt\,ds\notag
\\&\quad+\ka^+\int_{-\infty}^0 \check{a}^+(s) e^{\mu s}\int_s^0 \bigl| e^{-\sigma t}-
e^{-2\sigma t}\bigr|\,dt\,ds\label{forcont}\\
\intertext{and since, for $t\geq0$, $\bigl| e^{-\sigma t}-
e^{-2\sigma t}\bigr|\leq \sigma t$; and, for $s\leq t\leq0$,
\[
\bigl| e^{-\sigma t}- e^{-2\sigma t}\bigr|=e^{-2\sigma t}\bigl| e^{\sigma t}-
1\bigr|\leq e^{-2\sigma s} \sigma |t|,
\]
one can continue \eqref{forcont}}
&\leq \frac{1}{2}\sigma\ka^+\int_0^\infty \check{a}^+(s) e^{\mu s}s^2\,ds
+\frac{1}{2}\sigma\ka^+\int_{-\infty}^0 \check{a}^+(s) e^{(\mu-2\sigma) s}s^2\,ds.\notag
\end{align}
Since $\mu>2\sigma$, one has $\sup\limits_{s\leq0}e^{(\mu-2\sigma) s}s^2<\infty$, therefore, by \eqref{secmomadd}, one gets
\[
\sup_{\tau\in[-T,T]}|g_1(z_1)-g_1(z_2)|\leq\mathrm{const}\cdot\sigma,
\]
that proves \eqref{enough}. The statement is fully proved now.
\end{proof}

\begin{remark}\label{rem:shifting}
  By \eqref{eq:psi_asympt} and \eqref{deffunc}, one has that the constant $D=D_j$ in \eqref{eq:trw_asympt} is given by
  \[
  D=D(\psi)=\ka^-\bigl(\L(\psi (\check{a}^-*\psi))\bigr)(\mu) \lim_{z\to\mu}\dfrac{(z-\mu)^j}{\h_{\xi,c}(z)},
  \]
  where $\mu=\la_0(\psi)$. Note that, by Proposition~\ref{prop:charfuncpropert}, the limit above is finite and does not depend on $\psi$. Next, by Remark~\ref{shiftoftrw}, for any $q\in\R$, $\psi_q(s):=\psi(s+q)$, $s\in\R$ is a traveling wave with the same speed, and hence, by Theorem~\ref{thm:speedandprofile}, $\la_0(\psi_q)=\la_0(\psi)$. Moreover,
  \begin{align*}
  \bigl(\L(\psi_q (\check{a}^-*\psi_q))\bigr)(\mu)&=\int_\R \psi(s+q)\int_\R \check{a}^-(t)\psi(s-t+q)\,dt\, e^{\mu s}\,ds\\&=e^{-\mu q}\bigl(\L(\psi (\check{a}^-*\psi))\bigr)(\mu).
  \end{align*}
  Thus, for a traveling wave profile $\psi$ one can always choose a $q\in\R$ such that, for the shifted profile $\psi_q$, the corresponding $D=D(\psi_q)$ will be equal to $1$.
\end{remark}

Finally, we are ready to prove the uniqueness result.

\begin{theorem}\label{thm:tr_w_uniq}
Let $\xi\in\S $ be fixed and $a^+\in\Uxi$. Suppose, additionally, that \eqref{as:aplus-aminus-is-pos} holds. Let $c_*(\xi)$ be the minimal traveling wave speed according to Theorem~\ref{thm:trwexists}.
For the case $a^+\in\Wxi$ with $m=\T_\xi(\la_0)$, we will assume, additionally, that
\eqref{secmomadd} holds.
Then, for any $c\geq c_*$, such that $c\neq0$, there exists a unique, up~to~a~shift, traveling wave profile $\psi$ for \eqref{eq:basic}.
\end{theorem}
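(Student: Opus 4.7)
Let $\psi_1,\psi_2\in\M$ be two traveling wave profiles with common speed $c\neq 0$. By Propositions~\ref{prop:reg_trw} and~\ref{prop:psidecaysstrictly} together with Corollary~\ref{cor:infsmoothprofile}, both are strictly decreasing $C^{\infty}(\R)$-solutions of \eqref{eq:trw}; Theorem~\ref{thm:speedandprofile} gives $\la_0(\psi_1)=\la_0(\psi_2)=:\la_0$, and Proposition~\ref{asymptexact} yields $\psi_i(s)\sim D_i\,s^{j-1}e^{-\la_0 s}$ as $s\to\infty$ for some $D_1,D_2>0$. Using Remark~\ref{rem:shifting}, I would first replace $\psi_1$ by a suitable shift so that $D_1=D_2$; it then suffices to prove $\psi_1\equiv\psi_2$.

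Following the sliding technique of \cite{CC2004}, set
\[
\tau_{*}:=\sup\bigl\{\tau\in\R\,\big|\,\psi_1(\cdot+\tau)\geq\psi_2\text{ on }\R\bigr\}.
\]
This set is bounded above since $\psi_1(s+\tau)\to 0$ for fixed $s$ as $\tau\to+\infty$, making the pointwise inequality fail at $s=0$; it is non-empty because, for $\tau$ sufficiently negative, $\psi_1(\cdot+\tau)$ is close to $\theta$ on arbitrarily large compact sets and $\psi_2$ decays at $+\infty$, so the pointwise inequality holds. The ratio asymptotic $\psi_1(s+\tau_*)/\psi_2(s)\to e^{\la_0\tau_*}$ combined with $D_1=D_2$ and the pointwise inequality at large $s$ forces $\tau_*\geq 0$. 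The heart of the proof is to show $\tau_*=0$. Put $w:=\psi_1(\cdot+\tau_*)-\psi_2\geq 0$; subtracting the instances of \eqref{eq:trw} satisfied by $\psi_1(\cdot+\tau_*)$ and $\psi_2$ produces
\[
cw'(s)+\ka^+(\check a^+*w)(s)-\ka^-\psi_1(s+\tau_*)(\check a^-*w)(s)=w(s)\bigl[m+\ka^-(\check a^-*\psi_2)(s)\bigr].
\]
At any finite $s_0$ with $w(s_0)=0$, one has $w'(s_0)=0$; using $\psi_1(s_0+\tau_*)\leq\theta$, the equation reduces at $s_0$ to $\int_{\R}\check J_\theta(s_0-t)w(t)\,dt\leq 0$, where $\check J_\theta:=\ka^+\check a^+-(\ka^+-m)\check a^-$. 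Assumption~\eqref{as:aplus-aminus-is-pos}, transferred to the one-dimensional kernel through \eqref{apm1dim}, provides $\check J_\theta\geq\rho'>0$ on a neighborhood of the origin, which together with $w\geq 0$ forces $w\equiv 0$ near $s_0$; the propagation argument of Proposition~\ref{prop:u_gr_0} then gives $w\equiv 0$ on $\R$. Thus $\psi_2\equiv\psi_1(\cdot+\tau_*)$, and the normalization $D_1=D_2$ forces $\tau_*=0$.

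If $w>0$ everywhere (the non-contact case), I would combine the strictly positive rates of $w$ at $\pm\infty$ with its uniform positivity on compact sets and $C^1$-continuity of $\psi_1$ in the shift parameter, to construct $\tau'\in(\tau_*,\tau_*+\delta)$ with $\psi_1(\cdot+\tau')\geq\psi_2$ on $\R$, contradicting the maximality of $\tau_*$. This establishes $\tau_*=0$, so $\psi_1\geq\psi_2$; the symmetric sliding argument yields $\psi_2\geq\psi_1$, and hence $\psi_1\equiv\psi_2$. The hardest point is the non-contact scenario, in which $w$ vanishes only asymptotically at $-\infty$: here one needs to exploit the sub-leading asymptotics of $\psi_i$ at $-\infty$, governed by the linearization of \eqref{eq:trw} around $\theta$, to show that small increments past $\tau_*$ preserve the inequality in the left tail.
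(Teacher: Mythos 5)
Your outline follows the same high-level sliding strategy as the paper (adapted from \cite{CC2004}): normalize both profiles so that the constant $D$ in \eqref{eq:trw_asympt} agrees, slide one profile against the other, and argue via a maximum principle at a contact point that the critical shift is zero. However, there are genuine gaps in the proposal at exactly the points where the paper's argument is technically substantive.

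First, the non-emptiness of your sliding set $\{\tau\mid\psi_1(\cdot+\tau)\geq\psi_2\}$ is asserted but not established. Both profiles approach $\theta$ as $s\to-\infty$ without ever reaching it (cf. Corollary~\ref{cor:lesstheta}), so for any fixed shift the desired pointwise inequality is not automatic near $-\infty$. The paper's Step~2 proves this non-emptiness, and it is not a soft observation: it hinges on rewriting the difference of the two instances of \eqref{eq:trw} in the form $c\varphi_\nu'+(\check L_\theta\varphi_\nu)+\ka^- A=0$ with $\check L_\theta\varphi:=\check J_\theta*\varphi-m\varphi$, and then showing that at a hypothetical \emph{negative} minimum $s_0<-T_2$ of $\varphi_\nu$, the explicit bounds $\psi_1^\nu(s_0)>\theta-\delta$ and $(\check a^-*\psi_2)(s_0)>\delta$ (the tail controls \eqref{bigT2}, \eqref{bigT2conv}) force $A(s_0)>0$, a contradiction (cf.~\eqref{ufffffffff}). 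Your subtraction of the two wave equations is correct and your maximum-principle computation is correct, but it is carried out only at a point where $w\geq 0$ has a \emph{zero}; it does not give a contradiction at a point where the difference attains a strictly negative minimum, which is precisely what is needed both for non-emptiness and for the sliding step below.

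Second, for the non-contact case (where $w>0$ on all of $\R$ and decays to $0$ at $-\infty$), you propose to exploit sub-leading asymptotics of $\psi_i$ at $-\infty$ via the linearization of \eqref{eq:trw} around $\theta$. No such asymptotics are developed anywhere in the paper and would require a separate and nontrivial analysis. The paper avoids this entirely: it uses Dini's theorem to control the middle strip $[-T_3,T_3]$ uniformly in the shift, uses the right-tail strict inequality \eqref{ineqoutoftau} (which follows from the asymptotics at $+\infty$, already available), and then re-runs the same negative-minimum argument to dispose of the left tail without ever needing asymptotics at $-\infty$. Without the negative-minimum maximum principle, your proposed route has an unclosed step.

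A minor but real point: in the paragraph after the definition of $\tau_*$, the ratio asymptotic should be $\psi_1(s+\tau_*)/\psi_2(s)\to e^{-\la_0\tau_*}$ (shifting to the left by $\tau_*>0$ drops the amplitude), so the inequality forces $\tau_*\leq 0$ rather than $\tau_*\geq 0$. This does not destroy the overall plan, since what one ultimately wants is $\tau_*=0$ via the contact argument, but as written the claim is backwards.
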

\begin{proof}
  We will follow the sliding technique from \cite{CDM2008}. Let $\psi_1,\psi_2\in C^1(\R)\cap\M$ are traveling wave profiles with a speed $c\geq c_*$, $c\neq0$, cf.~Proposition~\ref{prop:reg_trw}. By~Proposition~\ref{asymptexact} and Remark~\ref{rem:shifting}, we may assume, without lost of generality, that \eqref{eq:trw_asympt} holds for both $\psi_1$ and $\psi_2$ with $D=1$. By~the~proof of Proposition~\ref{prop:charfuncpropert}, the corresponding $j\in\{1,2\}$ depends on $a^\pm$, $\ka^\pm$, $m$ only, and does not depend on the choice of $\psi_1$, $\psi_2$. By~Theorem~\ref{thm:speedandprofile}, $\la_0(\psi_1)=\la_0(\psi_2)=:\la_c\in (0,\infty)$.

\smallskip

  \textit{Step 1.} Prove that, for any $\tau>0$, there exists $T=T(\tau)>0$, such that
  \begin{equation}\label{ineqoutoftau}
    \psi^\tau_1(s):=\psi_1(s-\tau)>\psi_2(s), \quad s\geq T.
  \end{equation}

  Indeed, take an arbitrary $\tau>0$. Then \eqref{eq:trw_asympt} with $D=1$ yields
  \[
    \lim_{s\to\infty}\frac{\psi_1^\tau(s)}{(s-\tau)^{j-1}e^{-\la_c(s-\tau)}}=1=
    \lim_{s\to\infty}\frac{\psi_2(s)}{s^{j-1}e^{-\la_c s}}.
  \]
  Then, for any $\eps>0$, there exists $T_1=T_1(\eps)>\tau$, such that, for any $s>T_1$,
  \[
  \frac{\psi_1^\tau(s)}{(s-\tau)^{j-1}e^{-\la_c(s-\tau)}}-1>-\eps, \qquad
  \frac{\psi_2(s)}{s^{j-1}e^{-\la_c s}}-1<\eps.
  \]
  As a result, for $s>T_1>\tau$,
  \begin{align}
  &\quad \psi_1^\tau(s)-\psi_2(s)>(1-\eps)(s-\tau)^{j-1}e^{-\la_c(s-\tau)} -(1+\eps)s^{j-1}e^{-\la_c s}\notag\\
  &=s^{j-1}e^{-\la_c s}\biggl( \Bigl(1-\frac{\tau}{s}\Bigr)^{j-1}e^{\la_c \tau}-1 -\eps\Bigl(\Bigl(1-\frac{\tau}{s}\Bigr)^{j-1}e^{\la_c \tau} +1\Bigr)\biggr)\notag\\
  &\geq s^{j-1}e^{-\la_c s}\biggl( \Bigl(1-\frac{\tau}{T_1}\Bigr)^{j-1}e^{\la_c \tau}-1 -\eps\bigl(e^{\la_c \tau} +1\bigr)\biggr)>0  \label{poswillbe}
  \end{align}
  if only
  \begin{equation}\label{smallepsmy}
      0<\eps<\frac{\Bigl(1-\dfrac{\tau}{T_1}\Bigr)^{j-1}e^{\la_c \tau}-1 }{e^{\la_c \tau} +1}=:g(\tau,T_1).
  \end{equation}

  For $j=1$, the nominator in the r.h.s. of \eqref{smalleps} is positive. For $j=2$, consider $f(t):= \bigl(1-\frac{t}{T_1}\bigr)e^{\la_c t}-1$, $t\geq0$. Then
 $f'(t)=\frac{1}{T_1}e^{\la_c t}(\la_c T_1-\la_c t-1)>0$, if~only $T_1>t+\frac{1}{\la_c}$, that implies $f(t)>f(0)=0$, $t\in\bigl(0,T_1-\frac{1}{\la_c}\bigr)$.

 As a result, choose $\eps=\eps(\tau)>0$ with $\eps< g\bigl(\tau,\tau+\frac{1}{\la_c}\bigr)$, then, without loss of generality, suppose that $T_1=T_1(\eps)=T_1(\tau)>\tau+\frac{1}{\la_c}>\tau$. Therefore, $0<\eps<g\bigl(\tau,\tau+\frac{1}{\la_c}\bigr)\leq g(\tau,T_1)$, that fulfills \eqref{smallepsmy}, and hence
\eqref{poswillbe} yields \eqref{ineqoutoftau}, with any $T>T_1$.

\smallskip

\textit{Step 2.} Prove that there exists $\nu >0$, such that, cf.~\eqref{ineqoutoftau},
  \begin{equation}\label{ineqeverywhere}
    \psi^\nu _1(s) \geq \psi_2(s), \quad s\in\R.
  \end{equation}

  Let $\tau>0$ be arbitrary and $T=T(\tau)$ be as above.
  Choose any $\delta\in\bigl(0,\frac{\theta}{4}\bigr)$.
  By \eqref{cleareq}, \eqref{trwv}, and the dominated convergence theorem,
\begin{equation}\label{convtotthetaconv}
    \lim\limits_{s\to-\infty} (\check{a}^-*\psi_2)(s)=\lim\limits_{s\to-\infty} \int_\R \check{a}^-(\tau)\psi_2(s-\tau)\,d\tau=\theta>\delta.
\end{equation}
  Then, one can choose $T_2=T_2(\delta)>T$, such that, for all $s<-T_2$,
  \begin{gather}\label{bigT2}
    \psi_1^\tau(s)>\theta-\delta,\\
    (\check{a}^-*\psi_2)(s)>\delta.\label{bigT2conv}
  \end{gather}
  Note also that \eqref{ineqoutoftau} holds, for all $s\geq T_2>T$, as well.
  Clearly, for any $\nu\geq\tau$,
  \[
  \psi_1^\nu (s)=\psi_1(s-\nu )\geq\psi_1(s-\tau)>\psi_2(s), \quad s>T_2.
  \]
  Next, $\lim\limits_{\nu \to\infty}\psi_1^\nu (T_2)=\theta>\psi_2(-T_2)$ implies that there exists $\nu _1=\nu _1(T_2)=\nu _1(\delta)>\tau$, such that, for all $\nu >\nu _1$,
  \[
    \psi_1^\nu (s)\geq \psi_1^\nu (T_2)>\psi_2(-T_2)\geq \psi_2(s), \quad s\in[-T_2,T_2].
  \]
  Let such a $\nu >\nu _1$ be chosen and fixed.
  As a result,
  \begin{align}
  \psi_1^\nu (s)\geq \psi_2(s), \quad s\geq -T_2, \label{ineqright}\\
  \intertext{and, by \eqref{bigT2},}
  \psi_1^\nu (s)+\delta>\theta>\psi_2(s), \quad s<-T_2. \label{ineqleft}
  \end{align}
  For the $\nu >\nu _1$ chosen above, define
  \begin{equation}\label{phinu}
   \varphi_\nu (s):=\psi_1^\nu (s)-\psi_2(s), \quad s\in\R.
  \end{equation}
  To prove \eqref{ineqeverywhere}, it is enough to show that $\varphi_\nu (s)\geq0$, $s\in\R$.

  On the contrary, suppose that $\varphi_\nu $ takes negative values. By~\eqref{ineqright}, \eqref{ineqleft},
  \begin{equation}\label{varnubelow}
      \varphi_\nu (s)\geq-\delta, \quad s<-T_2; \qquad \varphi_\nu (s)\geq 0,\quad s\geq-T_2.
  \end{equation}
  Since $\lim\limits_{s\to-\infty} \varphi_\nu (s)=0$ and $\varphi_\nu \in C^1(\R)$, our assumption implies that there exists $s_0<-T_2$, such that
  \begin{equation}\label{minpointmy}
      \varphi_\nu (s_0)=\min\limits_{s\in\R}\varphi_\nu (s)\in [-\delta,0).
  \end{equation}
  We set also
  \begin{equation}\label{deltamy}
    \delta_*:=-\varphi_\nu (s_0)=\psi_2(s_0)-\psi_1^\nu(s_0)\in (0,\delta].
  \end{equation}

Next, both $\psi_1^\nu $ and $\psi_2$ solve \eqref{eq:trw}.
Let $\check{J}_\theta$ be given by \eqref{speckern}. Then, recall, $\int_\R \check{J}_\theta(s)\,ds=m$. Denote, cf.~\eqref{jump}, $\check{L}_\theta \varphi:=\check{J}_\theta *\varphi-m\varphi$. Then one can rewrite \eqref{eq:trw}, cf.~\eqref{eq:tr_w_ii},
\[
  c\psi'(s)+(\check{L}_\theta\psi)(s)+\ka^-(\theta-\psi(s))(\check{a}^-*\psi)(s)=0.
\]
Writing the latter equation for $\psi_1^\nu$ and $\psi_2$ and subtracting the results, one gets
\begin{equation}\label{neweqnpos}
  \begin{gathered}
  c\varphi_\nu'(s)+(\check{L}_\theta\varphi_\nu)(s)+\ka^-A(s)=0,\\
  A(s):=(\theta-\psi_1^\nu(s))(\check{a}^-*\psi_1^\nu)(s)-(\theta-\psi_2(s))(\check{a}^-*\psi_2)(s).
  \end{gathered}
\end{equation}
Consider \eqref{neweqnpos} at the point $s_0$. By~\eqref{minpointmy},
\begin{equation}\label{maxprinciple}
\varphi_\nu'(s_0)=0, \qquad (\check{L}_\theta\varphi_\nu)(s_0)\geq0.
\end{equation}
 Next, \eqref{deltamy} yields
\begin{align}
  A(s_0) & =
  (\theta-\psi_1^\nu(s_0))(\check{a}^-*\psi_1^\nu)(s_0)+(\delta_*-(\theta-\psi_1^\nu(s_0))(\check{a}^-*\psi_2)(s_0)\notag\\
  &=(\theta-\psi_1^\nu(s_0))(\check{a}^-*\varphi_\nu)(s_0)+\delta_*(\check{a}^-*\psi_2)(s_0)\notag\\
  &=(\theta-\psi_1^\nu(s_0))(\check{a}^-*(\varphi_\nu+\delta_*))(s_0)
  +\delta_*\bigl((\check{a}^-*\psi_2)(s_0)-(\theta-\psi_1^\nu(s_0))\bigr)\notag\\
  &>0,\label{ufffffffff}
\end{align}
because of \eqref{minpointmy}, \eqref{bigT2}, and \eqref{bigT2conv}. The strict inequality in \eqref{ufffffffff} together with \eqref{maxprinciple} contradict to~\eqref{neweqnpos}. Therefore, \eqref{ineqeverywhere} holds, for any $\nu>\nu_1$.

\smallskip

\textit{Step 3.} Prove that, cf.~\eqref{ineqeverywhere},
  \begin{equation}\label{infiszero}
    \vartheta_*:=\inf\{\vartheta>0\mid \psi^\vartheta _1(s) \geq \psi_2(s), s\in\R\}=0.
  \end{equation}

On the contrary, suppose that $\vartheta_*>0$. Let $\varphi_*:=\varphi_{\vartheta_*}$ be given by \eqref{phinu}. By the continuity of the profiles, $\varphi_*\geq0$.

First, assume that $\varphi_*(s_0)=0$, for some $s_0\in\R$, i.e. $\varphi_*$ attains its minimum at $s_0$. Then \eqref{maxprinciple} holds with $\vartheta$ replaced by $\vartheta_*$, and, moreover, cf.~\eqref{neweqnpos},
\[
A(s_0)=(\theta-\psi_1^\vartheta(s_0))(\check{a}^-*\varphi_*)(s_0)\geq0.
\]
Therefore, \eqref{neweqnpos} implies
\begin{equation}\label{maxprLtheta}
  (\check{L}_\theta\varphi_*)(s_0)=0.
\end{equation}
By the same arguments as in the proof of Proposition~\ref{prop:infGisreached}, one can show that \eqref{as:aplus-aminus-is-pos} implies that the function $\check{J}_\theta$ also satisfies \eqref{as:aplus-aminus-is-pos}, for $d=1$, with some another constants.
Then, arguing in the same way as in the proof of Proposition~\ref{prop:u_gr_0} (with $d=1$ and $a^+$ replaced by $\check{J}_\theta$), one gets that \eqref{maxprLtheta} implies
that $\varphi_*$ is a constant, and thus $\varphi_*\equiv0$, i.e. $\psi_1^{\vartheta_*}\equiv\psi_2$. The latter contradicts \eqref{ineqoutoftau}.

Therefore, $\varphi_*(s)>0$, i.e. $\psi_1^{\vartheta_*}(s)>\psi_2(s)$, $s\in\R$. By~\eqref{ineqoutoftau} and \eqref{convtotthetaconv},  there exists $T_3=T_3(\vartheta_*)>0$, such that
$\psi_1^{\frac{\vartheta_*}{2}}(s)>\psi_2(s)$, $s>T_3$, and also, for any $s<-T_3$,
\eqref{bigT2conv} holds and \eqref{ineqleft} holds with $\vartheta$ replaced by $\frac{\vartheta_*}{2}$ (for some fixed $\delta\in\bigl(0,\frac{\theta}{4}\bigr)$).
For any $\eps\in\bigl(0,\frac{\vartheta_*}{2}\bigr)$, $\psi_1^{\vartheta_*-\eps}\geq\psi_1^{\frac{\vartheta_*}{2}}$, therefore,
\[
\psi_1^{\vartheta_*-\eps}(s)>\psi_2(s), \quad s>T_3,
\]
and also \eqref{ineqleft} holds with $\vartheta$ replaced by $\vartheta_*-\eps$, for $s<-T_3$.
We set
\[
\alpha:=\inf\limits_{t\in[-T_3,T_3]}(\psi_1^{\vartheta_*}(s)-\psi_2(s))>0.
\]
Since the family $\bigl\{\psi_1^{\vartheta_*-\eps}\mid \eps\in\bigl(0,\frac{\vartheta_*}{2}\bigr)\bigr\}$ is monotone in $\eps$,
and $\lim\limits_{\eps\to0}\psi_1^{\vartheta_*-\eps}(t)=
\psi_1^{\vartheta_*}(t)$, $t\in\R$, we have, by Dini's theorem, that the latter convergence is uniform on $[-T_3,T_3]$. As~a~result, there exists $\eps=\eps(\alpha)\in \bigl(0,\frac{\vartheta_*}{2}\bigr)$, such that
\[
\psi_1^{\vartheta_*}(s)\geq\psi_1^{\vartheta_*-\eps}(s)\geq\psi_2(s), \quad s\in[-T_3,T_3].
\]
Then, the same arguments as in the Step~2 prove that $\psi_1^{\vartheta_*-\eps}(s)\geq\psi_2(s)$, for all $s\in\R$, that contradicts the definition \eqref{infiszero} of $\vartheta_*$.

As a result, $\vartheta_*=0$, and by the continuity of profiles, $\psi_1\geq\psi_2$. By the same arguments, $\psi_2\geq\psi_1$, that fulfills the statement.
\end{proof}

\section{Long-time behavior of solutions}\label{sec:long-time}

We will study here the behavior of $u(t x,t)$, where $u$ solves \eqref{eq:basic}, for big $t\geq0$. The results of Section~\ref{sec:tr-waves} together with the comparison principle imply that if an initial condition $u_0(x)$ to \eqref{eq:basic} has a minorant/majorant which has a form $\psi(x\cdot\xi)$, $\xi\in\S$, where $\psi\in\M$ is a traveling wave profile in the direction $\xi$ with a speed $c\geq c_*(\xi)$, then for the corresponding solution $u$ to \eqref{eq:basic}, the function $u(t x,t)$ will have the minorant/majorant $\psi(t(x\cdot\xi-c))$, correspondingly. In particular, if the initial condition is ``below'' of any traveling wave in a given direction, then one can estimate the corresponding value of $u(tx,t)$ (Theorem~\ref{thm:decayoutsidedirectional}). Considering such a behavior in different directions, one can obtain a (bounded, cf.~Proposition~\ref{prop:front_is_non-empty}) set, out of which the solution exponentially decays to $0$ (Theorem~\ref{thm:outoffront}). Inside of this set the solution will uniformly converge to $\theta$ (Theorem~\ref{thm:convtotheta}).
We will study stationary solutions (Proposition~\ref{uniqstationarysolutions}) and consider the case of slow decaying kernels $a^\pm$ (Subsection~\ref{subsec:fastpropag}) as well.

\subsection{Long-time behavior along a direction}
We will follow the abstract scheme proposed in \cite{Wei1982}. Note that all statements there were considered in the space $C_{b}(\X)$, however, it can be checked straightforward that they remain true in the space $\Buc$.
We will assume that  \eqref{as:chiplus_gr_m} and \eqref{as:aplus_gr_aminus} hold.
Recall that $\theta$, $\Utheta$, $\Ltheta$ are given by \eqref{theta_def}, \eqref{eq:B_theta}, and \eqref{eq:L_theta}, respectively.

Consider the set $N_\theta$ of all nonincreasing functions $\varphi\in C(\R)$, such that
$\varphi(s)=0$, $s\geq0$, and
\begin{equation*}
\varphi(-\infty):=\lim _{s\to-\infty}\varphi(s)\in(0,\theta).\label{def:fy_Weinberger}
\end{equation*}
It is easily seen that $N_\theta\subset U_\theta$.

For arbitrary $s\in\R$, $c\in\R$, $\xi\in\S $,
define the following  mapping $V_{s,c,\xi}:L^\infty(\R)\to L^\infty(\X)$
\begin{equation}\label{defofV}
  (V_{s,c,\xi}f)(x)=f(x\cdot \xi+s+c), \quad x\in\X.
\end{equation}
Fix an arbitrary $\varphi\in N_\theta$.
 For $T>0$, $c\in\R$, $\xi\in\S $, consider the mapping $R_{T,c,\xi}:\ L^{\infty}(\R)\to L^{\infty}(\R)$, given by
\begin{equation}
(R_{T,c,\xi}f)(s)=\max\bigl\{ \varphi(s),(Q_T (V_{s,c,\xi}f))(0)\bigr\},\quad s\in\R,\label{eq:iterop_by_Weinberger}
\end{equation}
where $Q_T$ is given by \eqref{def:Q_T}, cf. Proposition~\ref{prop:Q_def}.
Consider now the following sequence of functions
\begin{equation}\label{fiteration}
  f_{n+1}(s)=(R_{T,c,\xi}f_n)(s),\quad f_0(s)=\varphi(s),\qquad s\in\R, n\in\N\cup\{0\}.
\end{equation}
By Proposition~\ref{prop:Q_def} and \cite[Lemma~5.1]{Wei1982}, $\varphi\in U_\theta$ implies $f_n\in U_\theta$ and $f_{n+1}(s)\geq f_{n}(s)$, $s\in\R$, $n\in\N$; hence one can define the following limit
\begin{equation}
f_{T,c,\xi}(s):= \lim_{n\to\infty}f_n(s), \quad s\in\R.\label{eq:limit_func_Weinberger}
\end{equation}
Also, by \cite[Lemma~5.1]{Wei1982}, for fixed $\xi\in\S $, $T>0$, $n\in\N$, the functions $f_n(s)$ and $f_{T,c,\xi}(s)$ are nonincreasing in $s$ and in $c$; moreover, $f_{T,c,\xi}(s)$ is a lower semicontinuous function of $s,c,\xi$, as a result, this function is continuous from the right in $s$ and in $c$. Note also, that $0\leq f_{T,c,\xi}\leq\theta$.
Then, for any $c,\xi$, one can define the limiting value
\[
f_{T,c,\xi}(\infty):=\lim_{s\to\infty}f_{T,c,\xi}(s).
\]
Next, for any $T>0$, $\xi\in\S $, we define
\begin{equation*}
c_T^{*}(\xi)=\sup\{ c\mid f_{T,c,\xi}(\infty)=\theta\} \in\R\cup\{-\infty,\infty\},
\end{equation*}
where, as usual, $\sup\emptyset:=-\infty$. By \cite[Propositions~5.1, 5.2]{Wei1982}, one has
\begin{equation}\label{jumpfunc}
  f_{T,c,\xi}(\infty)=\begin{cases}
    \theta, & c<c_T^*(\xi),\\
    0, & c\geq c_T^*(\xi),
  \end{cases}
\end{equation}
cf. also \cite[Lemma~5.5]{Wei1982}; moreover, $c_T^*(\xi)$ is a lower semicontinuous function of~$\xi$. It is crucial that, by \cite[Lemma 5.4]{Wei1982}, neither $f_{T,c,\xi}(\infty)$ nor $c_T^{*}(\xi)$ depends on the choice of $\varphi\in N_\theta$. Note that the monotonicity of $f_{T,c,\xi}(s)$ in $s$ and \eqref{jumpfunc} imply that, for $c<c_T^*(\xi)$, $f_{T,c,\xi}(s)=\theta$, $s\in\R$.

\begin{proposition}\label{cstarsareequal}
 Let $\xi\in\S $ and suppose that \eqref{as:chiplus_gr_m}, \eqref{as:aplus_gr_aminus}, and \eqref{aplusexpint1} hold. Let $c_*(\xi)$ be as in Theorem~\ref{thm:trwexists}. Then
 \begin{equation}\label{ct=tc}
 c^*_T(\xi)=Tc_*(\xi), \quad T>0.
 \end{equation}
\end{proposition}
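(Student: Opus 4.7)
The plan is to prove both inequalities separately: $c_T^*(\xi) \le Tc_*(\xi)$ via a traveling-wave supersolution for the iteration \eqref{fiteration}, and $c_T^*(\xi) \ge Tc_*(\xi)$ by matching $c_T^*(\xi)$ to Weinberger's explicit linear-determinacy formula applied to the semigroup $Q_T$.

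For the upper bound, fix any $c > Tc_*(\xi)$, set $\tilde c := c/T > c_*(\xi)$, and let $\psi \in \M$ be a traveling wave profile of speed $\tilde c$ (Theorem~\ref{thm:trwexists}). Since $\psi(-\infty) = \theta > \varphi(-\infty)$ and $\psi(+\infty) = 0 = \varphi(+\infty)$, pick $k \in \R$ with $\psi(\cdot + k) \ge \varphi$ on $\R$. The claim is $f_n(s) \le \psi(s+k)$ for all $n \in \N \cup \{0\}$ and $s \in \R$; the base case holds by the choice of $k$. For the inductive step, $V_{s,c,\xi}f_n(x) = f_n(x\cdot\xi+s+c) \le \psi(x\cdot\xi+s+c+k)$; by translation invariance of \eqref{eq:basic}, the function $(x,t) \mapsto \psi(x\cdot\xi + s + c + k - \tilde c t)$ is the solution starting from the majorant $x \mapsto \psi(x\cdot\xi + s + c + k)$, so the order-preservation \ref{prop:Q_preserves_order} of $Q_T$ gives $(Q_T V_{s,c,\xi}f_n)(0) \le \psi(s + c + k - \tilde c T) = \psi(s+k)$, using $\tilde c T = c$. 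Combining with $\varphi(s) \le \psi(s+k)$ closes the induction, so in the limit $f_{T,c,\xi}(s) \le \psi(s+k)$ and $f_{T,c,\xi}(+\infty) \le \psi(+\infty) = 0$, which by \eqref{jumpfunc} forces $c \ge c_T^*(\xi)$.

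For the lower bound, I would invoke Weinberger's explicit formula for the spreading speed of the discrete system generated by $Q_T$: once the hypotheses of \cite[Theorem 6.4]{Wei1982} are checked via Propositions~\ref{prop:Q_def} and \ref{prop:Qtilde} (monotonicity, translation invariance \ref{prop:QTy=TyQ}, continuity \ref{prop:Q_cont}, and the fixed points $0, \theta$), one has $c_T^*(\xi) = \inf_{\mu > 0}\mu^{-1}\log\Lambda(\mu)$, where $\Lambda(\mu)$ is the eigenvalue of the linearization of $Q_T$ at $u \equiv 0$ on the exponential profile $e^{-\mu x\cdot\xi}$. The linearization of \eqref{eq:basic} at $0$ is $\partial_t u = \ka^+(a^+*u) - mu$, whose solution with initial datum $e^{-\mu x\cdot\xi}$ is $e^{t(\ka^+\A_\xi(\mu) - m)}\,e^{-\mu x\cdot\xi}$, so $\Lambda(\mu) = e^{T(\ka^+\A_\xi(\mu) - m)}$ and $\mu^{-1}\log\Lambda(\mu) = T(\ka^+\A_\xi(\mu) - m)/\mu$. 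Taking the infimum and comparing with the formula \eqref{minspeed} from Theorem~\ref{thm:speedandprofile} yields $c_T^*(\xi) = T\inf_{\mu>0}(\ka^+\A_\xi(\mu)-m)/\mu = Tc_*(\xi)$.

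The main obstacle is the lower bound: the verification that the abstract hypotheses of Weinberger's linear-determinacy theorem are satisfied by $Q_T$, and in particular the identification of the correct linearization eigenvalues $\Lambda(\mu)$. An alternative route — constructing a sub-solution for each $c < Tc_*(\xi)$ whose iterates remain bounded below by a positive constant far to the left — is in principle available, but genuinely delicate, since we cannot use a traveling wave of speed $c/T < c_*(\xi)$ as a template (none exists, by Theorem~\ref{thm:trwexists}) and one would have to build a sub-wave from exponentials $e^{-\mu(x\cdot\xi-ct/T)}$ cut off at $\theta$, relying on \ref{H-compare} to keep the nonlinear remainder controlled.
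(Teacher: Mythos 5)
Your upper-bound argument ($c_T^*(\xi)\leq Tc_*(\xi)$) is correct and matches the paper's in substance: both compare the Weinberger iterates against a shifted traveling-wave profile, using \ref{prop:Q_preserves_order} of Proposition~\ref{prop:Q_def} together with the exact identity $(Q_T V_{s,cT,\xi}\psi)(0)=\psi(s)$ that the wave property gives.

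The lower bound, however, has a genuine gap, and it is the harder half of the statement. You propose to get $c_T^*(\xi)=T\inf_{\mu>0}(\ka^+\A_\xi(\mu)-m)/\mu$ from a Weinberger linear-determinacy theorem and then match it against \eqref{minspeed}. Two problems. First, linear determinacy for $Q_T$ needs more than the properties listed in Proposition~\ref{prop:Q_def}: one must verify a sub-tangency condition near $u\equiv 0$, roughly ``for each $\rho\in(0,1)$ there is $\delta>0$ such that $Q_T[u]\geq\rho\,M_T[u]$ for $0\leq u\leq\delta$'', where $M_T$ is the linear semigroup you computed. Checking this for the \emph{implicitly defined} time-$T$ solution map of a nonlinear nonlocal equation is a separate, non-routine task that you have not carried out, and that the paper deliberately avoids. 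Second, using \eqref{minspeed} imports the hypotheses of Theorem~\ref{thm:speedandprofile}, namely $a^+\in\Uxi$, which includes \eqref{boundedkernels}, \eqref{firstmoment}, and \eqref{as:a+nodeg-mod}; Proposition~\ref{cstarsareequal} assumes only \eqref{as:chiplus_gr_m}, \eqref{as:aplus_gr_aminus}, \eqref{aplusexpint1}, so even granting linear determinacy you would have proved the identity under strictly stronger assumptions. The paper's route is entirely different and self-contained: from $cT\geq c^*_T(\xi)$ one has $f_{T,cT,\xi}\not\equiv\theta$ by \eqref{jumpfunc}; passing to the limit in the recursion \eqref{fiteration} (via Proposition~\ref{prop:stab_Linf_ae} and the monotone-in-direction reduction of Propositions~\ref{prop:monot_sol}, \ref{prop:Qtilde}) yields $f_{T,cT,\xi}(s)\geq(\tilde{Q}_T f_{T,cT,\xi})(s+cT)$, which is precisely the hypothesis of \cite[Theorem 5]{Yag2009} already used in Theorem~\ref{thm:trwexists}. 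Hence a wave of speed $c$ exists, so $c\geq c_*(\xi)$ by Theorem~\ref{thm:trwexists} --- no linearization, no explicit speed formula, and no extra hypotheses required.
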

\begin{proof}
Take any $c\in\R$ with $cT\geq c^*_T(\xi)$. Then, by \eqref{jumpfunc},
$f_{T,cT,\xi}\not\equiv\theta$.
By \eqref{eq:iterop_by_Weinberger}, \eqref{fiteration}, one has
\begin{equation}\label{qns1}
  f_{n+1}(s)\geq (Q_T (V_{s,cT,\xi}f_n))(0), \quad s\in\R.
\end{equation}
Since $f_n(s)$ is nonincreasing in $s$, one gets, by \eqref{defofV}, that, for a fixed $x\in\X$, the function $(V_{s,cT,\xi}f_n)(x)$ is also nonincreasing in $s$.
Next, by \eqref{defofV}, \eqref{eq:limit_func_Weinberger} and Propositions \ref{prop:stab_Linf_ae},
\begin{equation}\label{convspec}
  (Q_T (V_{s,cT,\xi}f_n))(x)\to (Q_T (V_{s,cT,\xi}f_{T,cT,\xi}))(x), \text{  a.a. } x\in\X.
\end{equation}
Note that, by \eqref{defofV} and Proposition~\ref{prop:monot_sol},
\begin{equation}\label{eqqqphi}
  (Q_T (V_{s,cT,\xi}f_{T,cT,\xi}))(x)=\phi (x\cdot\xi,T),
\end{equation}
where $\phi(\tau,t)$, $\tau\in\R$, $t\in\R_+$ solves \eqref{eq:basic_one_dim} with $\psi(\tau)=f_{T,cT,\xi}(\tau+s+cT)$ (note that $s$ is a parameter now, cf.~\eqref{eq:basic_one_dim}). On the other hand, the evident equality
$(V_{s,cT,\xi}f_{T,cT,\xi})(x+\tau\xi)=f_{T,cT,\xi}(x\cdot\xi+\tau+s+cT)$, $\tau\in\R$
shows that the function $V_{s,cT,\xi}f_{T,cT,\xi}$ is a decreasing function on $\X$ along the $\xi\in\S $, cf.~Definition~\ref{def:monotoneindirection}, as $f_{T,cT,\xi}$ is a decreasing function on $\R$. Then, by Proposition~\ref{prop:monot_along_vector_sol} and \eqref{eqqqphi}, the function $\X\ni x\mapsto\phi (x\cdot\xi,T)\in[0,\theta]$ is decreasing
along the $\xi$ as well, i.e. $\phi (x\cdot\xi+\tau,T)=\phi ((x+\tau\xi)\cdot\xi,T)\leq
\phi(x\cdot\xi,T)$, $\tau\geq0$. As a result, the function $\phi (s,T)$ is monotone (almost everywhere) in $s$. Since $f_{T,cT,\xi}(s)$ was continuous from the right in~$s$, one gets from \eqref{qns1}, \eqref{convspec}, that
\[
f_{T,cT,\xi}(s)\geq (\tilde{Q}_T f_{T,cT,\xi})(s+cT),
\]
where $\tilde{Q}_T$ is given as in Proposition~\ref{prop:Qtilde}. Since $f_{T,cT,\xi}\not\equiv\theta$, one has that, by  \cite[Theorem 5]{Yag2009} (cf. the proof of Theorem~\ref{thm:trwexists}), there exists a traveling wave profile with speed $c$. By Theorem~\ref{thm:trwexists}, we have that $c\geq c_*(\xi)$, and hence $c^*_T(\xi)\geq Tc_*(\xi)$.

Take now any $c\geq c_*(\xi)$ and consider, by Theorem~\ref{thm:trwexists}, a traveling wave in a direction $\xi\in\S $, with a profile $\psi\in\M$ and a speed $c$. Then, by \eqref{defofV} and \eqref{trwv},
\[
(Q_T(V_{s,cT,\xi}\psi))(x)=\psi((x\cdot\xi-cT)+s+cT)=\psi(x\cdot\xi+s).
\]
Choose $\varphi\in N_\theta$ such that $\varphi(s)\leq \psi(s)$, $s\in\R$ (recall that all constructions are independent on the choice of $\varphi$). Then, one gets from \eqref{eq:iterop_by_Weinberger} and \ref{prop:Q_preserves_order} of Proposition~\ref{prop:Q_def},
that
\[
(R_{T,cT,\xi}\varphi)(s)\leq(R_{T,cT,\xi}\psi)(s)=\psi(s), \quad s\in\R.
\]
Then, by \eqref{fiteration} and \eqref{eq:limit_func_Weinberger}, $f_{T,cT,\xi}(s)\leq\psi(s)$, $s\in\R$, and thus \eqref{jumpfunc} implies $cT\geq c_T^*(\xi)$; as a result, $Tc_*(\xi)\geq c^*_T(\xi)$, that fulfills the statement.
\end{proof}

We describe now how the solution to \eqref{eq:basic} behaves, for big times, along a direction $\xi\in\S $. We start with a result about an exponential decaying along such a direction. It is worth noting that we do not need to assume either \eqref{as:chiplus_gr_m} or \eqref{as:aplus_gr_aminus} to prove Proposition~\ref{prop:expdecay} below.

For any $\xi\in\S $ and $\la>0$, consider the following set of bounded functions on $\X$:
\begin{equation}\label{sefElambda}
E_{\la,\xi}(\X):=\bigl\{ f\in L^\infty(\X) \bigm| \|f\|_{\la,\xi}:=\esssup_{x\in\X} \lvert f(x)\rvert e^{\la  x\cdot\xi }<\infty \bigr\}.
\end{equation}
Evidently, for $f\in L^\infty(\X)$,
\[
\esssup\limits_{x\in\X} |f(x)| e^{\la  x\cdot\xi }<\infty \quad \text{if and only if}\quad
\esssup\limits_{x\cdot\xi\geq0} |f(x)| e^{\la  x\cdot\xi }<\infty,
\]
therefore,
\begin{equation*}
  E_{\la,\xi}(\X)\subset E_{\la',\xi}(\X), \quad \la>\la'>0, \xi\in\S .
\end{equation*}
\begin{proposition}\label{prop:expdecay}
Let $\xi\in\S $ and $\la>0$ be fixed and suppose that \eqref{aplusexpint1} holds with  $\mu=\la$. Let $0\leq u_0\in E_{\la,\xi}(\X)$ and let $u=u(x,t)$ be a solution to \eqref{eq:basic}. Then
\begin{equation}\label{expdecaying}
  \| u(\cdot,t)\|_{\la,\xi}\leq \|u_0\|_{\la,\xi}e^{ p  t}, \quad t\geq0,
\end{equation}
where
\begin{equation}\label{bignu}
 p = p (\xi,\la)=\ka^+ \int_\X a^+(x) e^{\la x\cdot\xi }\,dx-m\in\R.
\end{equation}
\end{proposition}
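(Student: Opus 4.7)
The plan is to combine Duhamel's formula with a weighted Gronwall inequality, exploiting the negativity of the quadratic term. By Theorem~\ref{thm:exist_uniq_BUC} (or rather Remark~\ref{rem:exist_uniq_Linf}), the solution $u$ exists in $\tilde{\x}_T$ for any $T>0$ and is non-negative. Rewriting \eqref{eq:basic} with the integrating factor $e^{mt}$, we obtain
\begin{equation*}
u(x,t)=e^{-mt}u_0(x)+\int_0^t e^{-m(t-s)}\bigl[\ka^+(a^+*u)(x,s)-\ka^- u(x,s)(a^-*u)(x,s)\bigr]\,ds.
\end{equation*}
Since $u_0, a^-, u \geq 0$, the nonlinear term is non-negative pointwise (a.e.), and dropping it yields
\begin{equation*}
0\leq u(x,t)\leq e^{-mt}u_0(x)+\ka^+\int_0^t e^{-m(t-s)}(a^+*u)(x,s)\,ds.
\end{equation*}

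Next, I would multiply through by the weight $e^{\la x\cdot\xi}$ and use the factorisation
\begin{equation*}
a^+(x-y)u(y,s)\,e^{\la x\cdot\xi}=a^+(x-y)\,e^{\la(x-y)\cdot\xi}\cdot u(y,s)\,e^{\la y\cdot\xi},
\end{equation*}
so that, setting $N(t):=\|u(\cdot,t)\|_{\la,\xi}$, the convolution is controlled by
\begin{equation*}
(a^+*u)(x,s)\,e^{\la x\cdot\xi}\leq N(s)\int_\X a^+(z)e^{\la z\cdot\xi}\,dz=\A_\xi(\la)N(s),
\end{equation*}
which is finite by the assumption \eqref{aplusexpint1} with $\mu=\la$. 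Taking the essential supremum in $x$ gives the scalar integral inequality
\begin{equation*}
N(t)\leq e^{-mt}N(0)+\ka^+\A_\xi(\la)\int_0^t e^{-m(t-s)}N(s)\,ds.
\end{equation*}

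Finally, multiplying by $e^{mt}$ and applying Gronwall's lemma to the function $t\mapsto e^{mt}N(t)$ produces
\begin{equation*}
e^{mt}N(t)\leq N(0)\exp\bigl(\ka^+\A_\xi(\la)\,t\bigr)=N(0)e^{(p+m)t},
\end{equation*}
which is exactly \eqref{expdecaying} with $p$ as in \eqref{bignu}. The only delicate point is that $N(t)$ needs to be finite (and locally bounded in $t$) before Gronwall can be applied; this follows because $\A_\xi(\la)<\infty$ and $N(0)<\infty$ imply, via the integral estimate on any finite interval together with the pointwise bound \eqref{expbdd}, that the iterates of the right-hand side remain finite, so $N(\cdot)$ is locally bounded and measurable. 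No further hypothesis on $a^-$ or the comparison-principle assumptions \eqref{as:chiplus_gr_m}, \eqref{as:aplus_gr_aminus} is needed, since we only used $u,a^-\geq0$ to discard the quadratic term.
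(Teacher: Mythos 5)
Your Duhamel--Gronwall route is a genuinely different organization from the paper's proof. The paper establishes \eqref{expdecaying} by showing that each Picard iterate $\Phi_\tau^n v$ from the existence proof (Theorem~\ref{thm:exist_uniq_BUC}) preserves the weighted bound $\|\cdot\|_{\la,\xi}\le\|u_0\|_{\la,\xi}e^{pt}$, using the exact weighted-convolution inequality you also derived, and then passes to the limit. You instead drop the negative quadratic term, obtain a scalar integral inequality for $N(t):=\|u(\cdot,t)\|_{\la,\xi}$, and close with Gronwall. Both proofs hinge on the same algebraic identity $a^+(x-y)e^{\la x\cdot\xi}=a^+(x-y)e^{\la(x-y)\cdot\xi}\,e^{\la y\cdot\xi}$ and on $u,a^-\ge0$; your version is shorter and avoids re-opening the contraction machinery, which is a real advantage when one already trusts existence.

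However, the step you flag as delicate is in fact a genuine gap as written. Gronwall requires knowing beforehand that $N(t)<\infty$ and is locally bounded; otherwise the integral inequality says nothing. Your proposed justification invokes \eqref{expbdd}, but that bound controls only the unweighted $L^\infty$-norm $\|u(\cdot,t)\|$, not $\|u(\cdot,t)\|_{\la,\xi}$; a bounded $u$ need not decay as $x\cdot\xi\to+\infty$, so \eqref{expbdd} gives no control on $N(t)$. Moreover one cannot simply ``iterate the right-hand side'' of the integral inequality: the inequality $N(t)\le e^{-mt}N(0)+\ka^+\A_\xi(\la)\int_0^te^{-m(t-s)}N(s)\,ds$ is vacuous when $N$ is unknown to be finite. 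The correct repair is precisely what the paper does: show inductively that the weighted bound is preserved by the map $\Phi_\tau$ from \eqref{eq:exist_uniq_BUC:Phi_v}--\eqref{eq:exist_uniq_BUC:B}, so every iterate lies in $E_{\la,\xi}(\X)$ with the desired estimate, and pass to the limit (e.g.\ by Fatou along an a.e.-convergent subsequence). Once that a priori finiteness is secured, your Gronwall computation is a clean way to finish.
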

\begin{proof}
First, we note that, for any $a\in L^1(\X)$,
\begin{align}\notag
\bigl\lvert(a*f)(x)e^{\la  x \cdot\xi }\bigr\rvert &\leq
\int_\X|a(x-y)|e^{\la  (x-y) \cdot\xi }|f(y)|e^{\la  y \cdot\xi }\,dy\\&\leq \|f\|_{\la,\xi} \int_\X |a(y)|e^{\la  y \cdot\xi }\,dy.\label{elaest}
\end{align}

We will follow the notations from the proof of Theorem~\ref{thm:exist_uniq_BUC}, cf.~Remark~\ref{rem:exist_uniq_Linf}. Let $p$ is given by \eqref{bignu} and suppose that, for some $\tau\in[0,T)$, $\|u_\tau\|_{\la,\xi}\leq \|u_0\|_{\la,\xi}\,e^{ p  \tau}$. Take any $v\in \x_{\tau,\Tau}^+(r)$ with $\Tau$, $r$ given by \eqref{settings}, \eqref{need111}, such that
\begin{equation}\label{assumptonv}
\|v(\cdot,t)\|_{\la,\xi} \leq \|u_0\|_{\la,\xi}\, e^{ p  t}, \quad t\in[\tau,\Tau].
\end{equation}
Then, by \eqref{eq:exist_uniq_BUC:Phi_v}, \eqref{eq:exist_uniq_BUC:B}, one gets, for any $t\in[\tau,\Tau]$,
\begin{align*}\notag
  0&\leq (\Phi_\tau v)(x,t)e^{\la  x \cdot\xi }\\&\leq e^{-(t-\tau)m}u_\tau(x)e^{\la  x \cdot\xi }
  +\int_\tau^t e^{-m(t-s)}\ka^+(a^+*v)(x,s) e^{\la  x \cdot\xi } \,ds\\
  &\leq \|u_0\|_{\la,\xi}\,e^{-m(t-\tau)}e^{ p  \tau}
  +\|u_0\|_{\la,\xi}\,\ka^+\int_\X a^+(y) e^{\la  y \cdot\xi }\,dy \int_\tau^t e^{-m(t-s)}e^{ p  s} \,ds,\\&=\|u_0\|_{\la,\xi}\,e^{-mt}e^{( p +m)\tau}+\|u_0\|_{\la,\xi}\,( p +m)e^{-mt}\int_\tau^t e^{( p +m)s}\,ds\\&=\|u_0\|_{\la,\xi}\,e^{ p  t},
\end{align*}
where we used \eqref{elaest} and \eqref{assumptonv}. Therefore,
$\|(\Phi_\tau v)(\cdot,t)\|_{\la,\xi} \leq \|u_0\|_{\la,\xi}\,e^{ p  t}$, $t\in[\tau,\Tau]$.
As a result,
\[
\|(\Phi_\tau^n v)(\cdot,t)\|_{\la,\xi}\leq \|u_0\|_{\la,\xi}\,e^{ p  t}, \quad n\in\N, \ t\in[\tau,\Tau].
\]
Then $\|u(\cdot,t)\|_{\la,\xi}$ satisfies the same inequality on $[\tau,\Tau]$; and, by the proof of Theorem~\ref{thm:exist_uniq_BUC}, we have the statement.
\end{proof}

\begin{remark}
  It follows from \ref{L-converges} of Lemma~\ref{lem:allaboutLapl} and the considerations thereafter, that the statement of Proposition~\ref{prop:expdecay} remains true if \eqref{aplusexpint1} holds for some $\mu>\la$, provided that we assume, additionally, \eqref{boundedkernels}.
\end{remark}

Define now the following set
\begin{equation}
    \Tau_{T,\xi}=\bigl\{ x\in\X \mid x\cdot\xi\leq c_T^{*}(\xi)\bigr\}, \quad \xi\in\S , T>0.\label{eq:TauTxi}
\end{equation}
Clearly, the set $\Tau_{T,\xi}$ is convex and closed. Moreover, by \eqref{ct=tc},
\begin{equation}\label{taut=ttau1-dir}
  \Tau_{T,\xi}=T\Tau_{1,\xi}
\end{equation}
Here and below, for any measurable $A\subset\X$, we define $tA:=\{tx\mid x\in A\}\subset\X$.
We are going to explain now how a solution $u(x,t)$ to \eqref{eq:basic} behaves outside of the set $t\Tau_{1,\xi}=\Tau_{t,\xi}$, $t>0$.

\begin{theorem}\label{thm:decayoutsidedirectional}
  Let $\xi\in\S $ and $a^+\in\Uxi$; i.e. all conditions of Definition~\ref{def:Uxi} hold. Let $\la_*=\la_*(\xi)\in I_\xi$ be the same as in Proposition~\ref{prop:infGisreached}. Suppose that $u_0\in E_{\la_*,\xi}(\X)\cap \Ltheta$ and let $u\in\tXinf$ be the corresponding solution to \eqref{eq:basic}. Let $\Tauout_\xi\subset\X$ be an open set, such that $\Tau_{1,\xi}\subset\Tauout_\xi$ and $\delta:=\dist (\Tau_{1,\xi},\X\setminus\Tauout_\xi )>0$. Then the following estimate holds
\begin{equation}\label{supconvto0xi}
  \esssup_{x\notin t\Tauout_\xi } u(x,t)\leq \lVert u_0\rVert_{\la_*,\xi} e^{-\la_*\delta t}, \quad t>0.
\end{equation}
\end{theorem}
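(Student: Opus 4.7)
The plan is to apply the exponential decay estimate of Proposition~\ref{prop:expdecay} at the critical exponent $\la=\la_*$, and then translate the resulting weighted-norm bound into a pointwise bound outside the expanding set $t\Tauout_\xi$.

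First I would verify that Proposition~\ref{prop:expdecay} is applicable with $\mu=\la=\la_*$. Since $a^+\in\Uxi=\Vxi\cup\Wxi$ and $\la_*\in I_\xi$, Definition~\ref{def:Uxi} guarantees $\A_\xi(\la_*)<\infty$, so \eqref{aplusexpint1} holds at $\mu=\la_*$. The hypothesis $u_0\in E_{\la_*,\xi}(\X)\cap\Ltheta$ makes $\|u_0\|_{\la_*,\xi}$ finite, so Proposition~\ref{prop:expdecay} yields, for a.a.\ $x\in\X$ and all $t\geq0$,
\begin{equation*}
|u(x,t)|\,e^{\la_* x\cdot\xi}\leq \|u_0\|_{\la_*,\xi}\, e^{p(\xi,\la_*)\,t},
\end{equation*}
with $p(\xi,\la_*)=\ka^+\A_\xi(\la_*)-m$.

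Next I would use the characterization of the minimal speed. By \eqref{minspeed} of Theorem~\ref{thm:speedandprofile},
\begin{equation*}
  p(\xi,\la_*)=\ka^+\A_\xi(\la_*)-m=\la_* c_*(\xi),
\end{equation*}
which gives the pointwise bound
\begin{equation*}
|u(x,t)|\leq \|u_0\|_{\la_*,\xi}\,\exp\bigl(-\la_*(x\cdot\xi-c_*(\xi)\,t)\bigr), \quad \text{a.a. } x\in\X,\ t\geq0.
\end{equation*}

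Finally I would convert ``$x\notin t\Tauout_\xi$'' into a linear lower bound on $x\cdot\xi-c_*(\xi)t$. By Proposition~\ref{cstarsareequal}, $c_1^*(\xi)=c_*(\xi)$, so \eqref{eq:TauTxi} and \eqref{taut=ttau1-dir} give $\Tau_{1,\xi}=\{y\in\X\mid y\cdot\xi\leq c_*(\xi)\}$, a closed half-space whose boundary hyperplane has unit normal $\xi$. Consequently, for any $z\in\X$ with $z\cdot\xi>c_*(\xi)$ one has $\dist(z,\Tau_{1,\xi})=z\cdot\xi-c_*(\xi)$. If $x\notin t\Tauout_\xi$, then $t^{-1}x\notin\Tauout_\xi$, and by the definition of $\delta$,
\begin{equation*}
  \dist(t^{-1}x,\Tau_{1,\xi})\geq\delta>0,
\end{equation*}
hence $t^{-1}x\cdot\xi-c_*(\xi)\geq\delta$, i.e.\ $x\cdot\xi-c_*(\xi)t\geq\delta t$. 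Substituting this into the pointwise bound yields \eqref{supconvto0xi} and completes the proof.

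There is no real obstacle here; the argument is essentially an algebraic combination of two ingredients already established (Proposition~\ref{prop:expdecay} and the variational characterization \eqref{minspeed} of $c_*(\xi)$). The only mildly delicate point worth pausing on is confirming that $\la_*$ is an admissible exponent when $a^+\in\Wxi$ (since then $\la_*=\la_0(\check{a}^+)$ lies at the boundary of $I_\xi$), but this is exactly what the third case in Definition~\ref{def:Uxi} guarantees.
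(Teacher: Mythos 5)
Your proof is correct and follows essentially the same route as the paper: apply Proposition~\ref{prop:expdecay} at $\la=\la_*$, identify $p(\xi,\la_*)=\la_* c_*(\xi)$ via Theorem~\ref{thm:speedandprofile} (the paper cites \eqref{speedviaabs} rather than \eqref{minspeed}, but these encode the same identity), and observe that $x\notin t\Tauout_\xi$ forces $x\cdot\xi\geq t c_*(\xi)+t\delta$. Your explicit check that $\A_\xi(\la_*)<\infty$ in both the $\Vxi$ and $\Wxi$ cases, and the short verification of the half-space distance formula, are both sound and match what the paper uses implicitly.
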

\begin{proof}
Let $p_*:=p(\xi,\la_*)$ be given by \eqref{bignu}. By \eqref{expdecaying}, \eqref{sefElambda}, one has
\begin{equation}\label{mainassasf}
  0\leq u(x,t)\leq \lVert u_0\rVert_{\la_*,\xi} \exp\bigl\{ p_* t - \la_* x\cdot\xi\bigr\}, \quad \text{a.a. } x\in\X.
\end{equation}
Next, by \eqref{eq:TauTxi} and Proposition~\ref{cstarsareequal},  for any $t>0$ and for all $x\in\X\setminus t\Tauout_\xi$, one has $x\cdot\xi\geq t c^*_1(\xi)+t\delta=t c_*(\xi)+t\delta$. Then, by~\eqref{speedviaabs},
\begin{multline*}
\inf_{x\notin t\Tauout_\xi } (\la_* x\cdot\xi)\geq t\la_* c_*(\xi)+t\la_* \delta\\=t\Bigl(\ka^+ \int_\X a^+(x) e^{\la_* x\cdot\xi }\,dx-m\Bigr)+t\la_* \delta=tp_*+t\la_* \delta.
\end{multline*}
Therefore, \eqref{mainassasf} implies the statement.
\end{proof}

\begin{remark}\label{rem:ads1}
 The assumption $u_0\in E_{\la_*,\xi}(\X)$ is close, in some sense, to the weakest possible assumption on an initial condition $u_0\in\Ltheta$ for the equation \eqref{eq:basic} to have
 \begin{equation}\label{convtp0general}
    \lim_{t\to\infty}\esssup_{x\notin t\Tauout_{\xi}} u(x,t)=0,
 \end{equation}
 for an arbitrary open set $\Tauout_{\xi}\supset \Tau_{1,\xi}$, where $\Tau_{1,\xi}$ is defined by \eqref{eq:TauTxi}.
 Indeed, take any $\la_1,\la$ with $0<\la_1<\la<\la_*=\la_*(\xi)$. By~Theorem~\ref{thm:speedandprofile}, there exists a traveling wave solution to \eqref{eq:basic} with a profile $\psi_1\in\M$ such that $\la_0(\psi_1)=\la_1$. By Proposition~\ref{asymptexact} (with $j=1$ as $\la_1<\la_*$) we have that $\psi_1(t)\sim De^{-\la_1 t}$, $t\to\infty$. It is easily seen that one can choose a function $\varphi\in\M\cap C(\R)$ such that there exist $p>0$, $T>0$, such that $\varphi(t)\geq\psi_1(t)$, $t\in\R$ and $\varphi(t)=pe^{-\la t}$, $t>T$. Take now $u_0(x)=\varphi(x\cdot\xi)$, $x\in\X$. We have $u_0\in E_{\la,\xi}(\X)\setminus E_{\la_*,\xi}(\X)$. Then, by Proposition~\ref{prop:monot_sol}, the corresponding solution has the form $u(x,t)=\phi(x\cdot\xi,t)$. By Proposition~\ref{prop:comp_pr_BUC} applied to the equation \eqref{eq:basic_one_dim}, $\phi(s,t)\geq\psi_1(s-c_1t)$, $s\in\R$, $t\geq0$, where $c_1=G_\xi(\la_1)>c_*(\xi)$, cf.~\eqref{dedGxi} and \eqref{speedviaabs}. Take $c\in (c_*(\xi),c_1)$ and consider an open set $\Tauout_\xi: =\{x\in\X\mid x\cdot\xi<c\}$, then $\Tau_{1,\xi}\subset \Tauout_\xi \subset \{x\in\X\mid x\cdot\xi\leq c_1\}=:A_1$. One has
 \begin{align*}
 \sup_{x\notin t\Tauout_{\xi}}u(x,t)&\geq \sup_{x\in tA_1\setminus t\Tauout_{\xi}}\phi(x\cdot\xi,t)\\
 &\geq \sup_{ct<s\leq c_1t}\psi_1(s-c_1t)=\psi_1(ct-c_1t)>\psi_1(0),
 \end{align*}
 as $c<c_1$ and $\psi_1$ is decreasing. As a result, \eqref{convtp0general} does not hold.

 On the other hand, if $\psi_*\in\M$ is a profile with the minimal speed $c_*(\xi)\neq0$ and if $j=2$, cf.~Proposition~\ref{prop:charfuncpropert}, then $u_0(x):=\psi_*(x\cdot\xi)$ does not belong to the space $E_{\la_*,\xi}(\X)$, and the arguments above do not contradict
\eqref{convtp0general} anymore. In the next remark, we consider this case in more details.
\end{remark}

\begin{remark}\label{rem:ads2}
  In connection with the previous remark, it is worth noting also that one can easily generalize Theorem~\ref{thm:decayoutsidedirectional} in the following way. Let $u_0\in E_{\la,\xi}(\X)\cap \Ltheta$, for some $\la\in (0,\la_*]$, and let $u\in\tXinf$ be the corresponding solution to \eqref{eq:basic}. Consider the set $A_{c,\xi}:=\bigl\{ x\in\X \mid x\cdot\xi\leq c\bigr\}$, where  $c=\la^{-1}(\ka^+\A_\xi(\la)-m)$
   cf.~\eqref{speedviaabs}. Then, for any open set $B_{c,\xi}\supset A_{c,\xi}$ with $\delta_c:=\dist(A_{c,\xi},\X\setminus B_{c,\xi})>0$, one gets
\begin{equation}\label{equivasdas}
  \esssup_{x\notin t B_{c,\xi}} u(x,t)\leq \lVert u_0\rVert_{\la,\xi} e^{-\la\delta_c t}.
\end{equation}
Therefore, if $u_0(x)=\psi_*(x\cdot \xi)$, where $\psi_*$ is as in Remark~\ref{rem:ads1} above, then, evidently, $u_0\in E_{\la,\xi}(\X)$, for any $\la\in(0,\la_*)$. Then, for any open $\Tauout_\xi \supset\Tau_{1,\xi}$ with $\delta:=\dist (\Tau_{1,\xi},\X\setminus\Tauout_\xi )>0$  one can choose, for any $\eps\in(0,1)$, $c_1=c_*(\xi)+\delta\eps$. By Theorem~\ref{thm:speedandprofile}, there exists a unique $\la_1=\la_1(\eps)\in(0,\la_*)$ such that $c_1=\la_1^{-1}(\ka^+\A_\xi(\la_1)-m)$. Then $u_0\in E_{\la_1,\xi}(\X)$ and
$A_{c_1,\xi}\subset \Tauout_\xi $, i.e. $\Tauout_\xi $ may be considered as a set $B_{c_1,\xi}$, cf.~above. As a result, \eqref{equivasdas} gives \eqref{supconvto0xi}, with the constant $\lVert u_0\rVert_{\la_1,\xi}<\lVert u_0\rVert_{\la_*,\xi}$, and with $\la_*\delta$ replaced by $\la_1 \delta(1-\eps)$. Note that, clearly, $\lVert u_0\rVert_{\la_1,\xi}\nearrow\lVert u_0\rVert_{\la_*,\xi}$, $\la_1\nearrow\la_*$, $\eps\to0$.
\end{remark}

\subsection{Global long-time behavior}
We are going to consider now the global long-time behavior along all possible directions $\xi\in\S $ simultaneously. Define, cf. \eqref{eq:TauTxi},
\begin{equation}
\Tau_{T}=\left\{ x\in\X |x\cdot\xi\leq c_T^{*}(\xi),\ \xi\in\S \right\}, \quad T>0.\label{eq:TauT}
\end{equation}
By \eqref{eq:TauTxi}, \eqref{ct=tc}, \eqref{taut=ttau1-dir},
\begin{equation}\label{TauT=TTau1}
  \Tau_{T}=\bigcap_{\xi\in\S }\Tau_{T,\xi}=\bigcap_{\xi\in\S }T\Tau_{1,\xi}=T\Tau_1, \quad T>0.
\end{equation}
Clearly, the set $\Tau_T$, $T>0$ is convex and closed. To have an analog of Theorem~\ref{thm:decayoutsidedirectional} for the set $\Tau_{T}$, one needs to have $a^+\in\Uxi$, for all $\xi\in\S $, cf.~Definition~\ref{def:Uxi}.

 Since $\int_{x\cdot \xi\leq0}a^+(x)e^{\la x\cdot\xi}\,dx\in[0,1]$, $\xi\in\S $, $\la>0$, we have the following observation. If, for some $\xi\in\S $, there exist $\mu^\pm>0$, such that, cf.~\eqref{aplusexpla}, $\A_{\pm\xi}(\mu^\pm)<\infty$, i.e. if \eqref{aplusexpint1} holds for both $\xi$ and $-\xi$, then, for $\mu=\min\{\mu^+,\mu^-\}$,
 \begin{align}
&\quad \int_\X a^+(x) e^{\mu |x\cdot\xi|}\,dx=
 \int_{x\cdot\xi\geq0} a^+(x) e^{\mu x\cdot\xi}\,dx
 +\int_{x\cdot\xi<0} a^+(x) e^{-\mu x\cdot\xi}\,dx\notag
 \\&\leq \int_{x\cdot\xi\geq0} a^+(x) e^{\mu^+ x\cdot\xi}\,dx
 +\int_{x\cdot(-\xi)> 0} a^+(x) e^{\mu^- x\cdot(-\xi)}\,dx<\infty.
 \label{expmoddir}
\end{align}

Let now $\{e_i\mid 1\leq i\leq d\}$ be an orthonormal basis in $\X$. Let \eqref{aplusexpint1} holds for $2d$ directions $\{\pm e_i\mid 1\leq i\leq d\}\subset\S $ and let $\mu_i=\min\{\mu(e_i),\mu(-e_i)\}$, $1\leq i\leq d$, cf.~\eqref{expmoddir}. Set $\mu=\frac{1}{d}\min\{\mu_i\mid 1\leq i\leq d\}$. Then, by the triangle and Jensen's inequalities and \eqref{expmoddir}, one has
\begin{align*}
      \int_\X a^+(x) e^{\mu|x|}\,dx
&\leq \int_\X a^+(x) \exp\biggl(\sum_{i=1}^d \frac{1}{d}\mu_i|x\cdot e_i|\biggr)\,dx\\
&\leq \sum_{i=1}^d \frac{1}{d} \int_\X a^+(x) e^{\mu_i|x\cdot e_i|}\,dx<\infty.
\end{align*}
As a result, the assumption that \eqref{aplusexpint1} holds, for all $\xi\in\S $, is equivalent to the following one
\begin{assum}\label{expradialmoment}
  \text{there exists $\mu_d>0$, such that} \ \int_\X a^+(x) e^{\mu_d|x|}\,dx<\infty.
\end{assum}
Clearly, \eqref{expradialmoment} implies
\begin{equation}\label{firstglobalmoment}
  \int_\X |x| a^+(x)\,dx<\infty,
\end{equation}
and thus \eqref{firstmoment} holds, for any $\xi\in\S $. Then, one can define the (global) first moment vector of $a^+$, cf.~\eqref{firstdirmoment},
\begin{equation}\label{firstfullmoment}
  \m:=\int_\X x a^+(x)\,dx\in\X.
\end{equation}
The most `anisotropic' assumption is \eqref{as:a+nodeg-mod}. We will assume, for simplicity, that \eqref{as:a+nodeg} holds; then \eqref{as:a+nodeg-mod} holds with $r(\xi)=0$, for all $\xi\in\S $.

\begin{proposition}\label{prop:front_is_non-empty}
Let \eqref{as:chiplus_gr_m}, \eqref{as:aplus_gr_aminus}, \eqref{as:a+nodeg}, \eqref{boundedkernels}, \eqref{expradialmoment} hold. Then, for any $T>0$, $T\ka^+\m\in\X$ is an interior point of $\Tau_T$, and $\Tau_T$ is a bounded set.
\end{proposition}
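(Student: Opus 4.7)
The plan is to combine the strict inequality $c_*(\xi)>\ka^+\m_\xi$ from \eqref{minspeed} with the lower semicontinuity of $\xi\mapsto c^*_T(\xi)$ recorded right after \eqref{jumpfunc}, thereby upgrading a pointwise separation into a uniform one on the compact sphere. First, I would check that $a^+\in\Uxi$ for every $\xi\in\S$, so that Theorem~\ref{thm:speedandprofile} is available in every direction: \eqref{expradialmoment} together with $x\cdot\xi\leq|x|$ gives
\[
\A_\xi(\mu_d)\leq C_0:=\int_\X a^+(x)e^{\mu_d|x|}\,dx<\infty
\]
uniformly in $\xi\in\S$, so \eqref{aplusexpint1} holds; \eqref{firstmoment} and the identity $\m_\xi=\m\cdot\xi$ follow from \eqref{firstglobalmoment}; and \eqref{as:a+nodeg-mod} with $r(\xi)=0$ is an immediate consequence of \eqref{as:a+nodeg}.

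For the boundedness of $\Tau_T$, I would use the uniform bound $\A_\xi(\mu_d)\leq C_0$ to get
\[
c_*(\xi)=\inf_{\la>0}G_\xi(\la)\leq G_\xi(\mu_d)\leq\frac{\ka^+C_0-m}{\mu_d}=:M
\]
for every $\xi\in\S$. By Proposition~\ref{cstarsareequal} this yields $c_T^*(\xi)\leq TM$, so any $x\in\Tau_T\setminus\{0\}$ satisfies $|x|=x\cdot(x/|x|)\leq c_T^*(x/|x|)\leq TM$, i.e.\ $\Tau_T\subset B_{TM}(0)$.

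To verify that $T\ka^+\m$ is an interior point, I would introduce
\[
f(\xi):=c_T^*(\xi)-T\ka^+\m\cdot\xi=T\bigl(c_*(\xi)-\ka^+\m_\xi\bigr),\qquad \xi\in\S,
\]
which is strictly positive pointwise by \eqref{minspeed}. The map $\xi\mapsto T\ka^+\m\cdot\xi$ is continuous, and $c_T^*$ is lower semicontinuous on $\S$ (as noted after \eqref{jumpfunc}), so $f$ is lower semicontinuous on the compact set $\S$ and therefore attains its infimum, which must be a strictly positive number $\eps>0$. Then, for every $y\in\X$ with $|y|<\eps$ and every $\xi\in\S$,
\[
(T\ka^+\m+y)\cdot\xi\leq T\ka^+\m\cdot\xi+|y|<T\ka^+\m\cdot\xi+\eps\leq c_T^*(\xi),
\]
so $B_\eps(T\ka^+\m)\subset\Tau_T$, as required.

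The main obstacle is the passage from the pointwise strict inequality $c_*(\xi)>\ka^+\m_\xi$ to a uniform positive gap over the sphere; this is exactly what the (already-established) lower semicontinuity of $c_T^*$ buys us on the compactum $\S$. Everything else reduces to elementary estimates built from \eqref{expradialmoment} and the explicit formula for the minimal speed in \eqref{minspeed}.
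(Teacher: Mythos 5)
Your proof is correct and follows essentially the same route as the paper: reduce to $T=1$, invoke the strict inequality $c_*(\xi)>\ka^+\m_\xi$ from \eqref{minspeed}, and use a uniform bound on $c_*(\xi)$ to control $\Tau_1$. Two welcome refinements in your version: for the interior-point claim, the paper asserts it directly from strictness of \eqref{dsasasa}, whereas you explicitly convert the pointwise strict gap into a uniform one by combining the lower semicontinuity of $c_T^*$ (noted after \eqref{jumpfunc}) with compactness of $\S$, which is exactly the missing step needed to justify that assertion; and for boundedness, instead of the paper's basis decomposition $|x|\leq\sum_i|x\cdot e_i|\leq\sum_i\max\{|c_*(e_i)|,|c_*(-e_i)|\}$, you bound $c_*(\xi)$ uniformly by evaluating $G_\xi$ at $\la=\mu_d$ and then test with $\xi=x/|x|$, which is more direct and produces an explicit radius $TM$.
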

\begin{proof}
By \eqref{TauT=TTau1}, it is enough to prove the statement, for $T=1$.
By~\eqref{firstdirmoment}, for any orthonormal basis $\{e_i\mid 1\leq i \leq d\}\subset\S $, $\m=\sum\limits_{i=1}^d \m_{e_i}$. As it was shown above, the assumptions of the statement imply that Theorem~\ref{thm:speedandprofile} holds, for any $\xi\in\S $. Therefore, by
\eqref{minspeed} and Proposition~\ref{cstarsareequal},
\begin{equation}\label{dsasasa}
  (\ka^+ \m)\cdot\xi =\ka^+ \int_\X x\cdot \xi a^+(x)\,dx=\ka^+\m_\xi<c_*(\xi)=c^*_1(\xi),
\end{equation}
for all $\xi\in\S $; thus $\ka^+ \m\in \Tau_1$. Since the inequality in \eqref{dsasasa} is strict, the point $\ka^+\m$ is an interior point of $\Tau_1$.

Next, by Proposition~\ref{cstarsareequal}, $x\in\Tau_1$ implies that, for any fixed $\xi\in\S $, $x\cdot \xi\leq c_*(\xi)$ and $x\cdot (-\xi)\leq c_*(-\xi)$, i.e.
\begin{equation}\label{dirbundforfront}
  -c_*(-\xi)\leq x\cdot\xi\leq c_*(\xi), \quad x\in\Tau_1,\xi\in\S .
\end{equation}
Then \eqref{dirbundforfront} implies
\[
\lvert x\cdot\xi\rvert\leq \max\bigl\{\lvert c_*(\xi)\rvert,\lvert c_*(-\xi)\rvert\bigr\}, \quad
x\in\Tau_1,\xi\in\S ;
\]
in particular, for an orthonormal basis $\{e_i\mid 1\leq i\leq d\}$ of $\X$, one gets
\[
|x|\leq \sum_{i=1}^d \lvert x\cdot e_i\rvert\leq \sum_{i=1}^d
\max\bigl\{\lvert c_*(e_i)\rvert,\lvert c_*(-e_i)\rvert\bigr\}=:R<\infty,\quad x\in\Tau_1,
\]
that fulfills the statement.
\end{proof}

\begin{remark}\label{rem:minspeedopospos}
  It is worth noting that, by \eqref{minspeed}, \eqref{firstdirmoment}, the following inequality holds, cf.~\eqref{dirbundforfront},
\begin{equation*}
  c_*(\xi)+c_*(-\xi)>\ka^+(\m_\xi+\m_{-\xi})=0.
\end{equation*}
\end{remark}

For any $T>0$, consider the set $\Mn(T)$ of all subsets from $\X$ of the following form:
\begin{equation}
M_{T}=M_{T,\eps,K,\xi_1,\ldots,\xi_K}=\bigl\{x\in\X \mid
x\cdot\xi_i\leq c^*_T(\xi_i)+\eps,\ i=1,\ldots,K\bigr\},\label{eq:eps_neighb_ofGamma}
\end{equation}
for some $\eps>0$, $K\in\N$, $\xi_1,\ldots,\xi_K\in\S $.

We are ready now to prove a result about the long-time behavior at infinity in space.
\begin{theorem}\label{thm:outoffront}
Let the conditions \eqref{as:chiplus_gr_m}, \eqref{as:aplus_gr_aminus}, \eqref{as:a+nodeg}, \eqref{boundedkernels}, \eqref{expradialmoment} hold. Let $u_0\in \Ltheta$ be such that
\begin{equation}\label{initcondexp}
  \lvert\lvert\lvert u_0\rvert\rvert\rvert :=\sup_{\la>0}\esssup_{x\in\X} u_0(x)e^{\la|x|}<\infty
\end{equation}
and let $u\in\tXinf$ be the corresponding solution to \eqref{eq:basic}. Then, for any open set $\Tauout\supset\Tau_1$, there exists $\nu=\nu(\Tauout)>0$, such that
\begin{equation*}
  \esssup_{x\notin t\Tauout} u(x,t)\leq \lvert\lvert\lvert u_0\rvert\rvert\rvert  e^{-\nu t}, \quad t>0.
\end{equation*}
\end{theorem}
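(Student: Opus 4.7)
The strategy is to reduce the multidirectional claim to finitely many applications of Theorem~\ref{thm:decayoutsidedirectional} via a polyhedral approximation of the compact convex set $\Tau_1$.

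First, I would observe that $\lvert\lvert\lvert u_0\rvert\rvert\rvert<\infty$ and $x\cdot\xi\leq|x|$ give $u_0\in E_{\la,\xi}(\X)$ with $\|u_0\|_{\la,\xi}\leq\lvert\lvert\lvert u_0\rvert\rvert\rvert$ for every $\la>0$ and every $\xi\in\S$. Under \eqref{as:chiplus_gr_m}--\eqref{expradialmoment}, the conditions \eqref{aplusexpint1}, \eqref{firstmoment} and \eqref{as:a+nodeg-mod} hold in every direction (e.g.\ with $r(\xi)=0$), so $a^+\in\Uxi$ for all $\xi\in\S$ and Theorem~\ref{thm:decayoutsidedirectional} is available in every direction. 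In particular, $\la_*(\xi)\in I_\xi\subset(0,\infty)$ for each $\xi$ by Proposition~\ref{prop:infGisreached}.

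Next, I would construct a polyhedron $M$ with $\Tau_1\subset M\subset\Tauout$ whose facets point in directions to which Theorem~\ref{thm:decayoutsidedirectional} will be applied. By Proposition~\ref{prop:front_is_non-empty}, $\Tau_1$ is compact convex with $\ka^+\m$ in its interior. Since $\Tauout$ is open and $\Tau_1$ is compact, there exists $\eta>0$ with $K_\eta:=\Tau_1+B_\eta(0)\subset\Tauout$; the set $\partial K_\eta=\{x\in\X\mid\dist(x,\Tau_1)=\eta\}$ is compact. For each $z\in\partial K_\eta$, $z\notin\Tau_1$, so by \eqref{eq:TauT} and Proposition~\ref{cstarsareequal} there exists $\xi_z\in\S$ with $z\cdot\xi_z>c_*(\xi_z)$; the continuity of the linear map $y\mapsto y\cdot\xi_z$ yields an open neighbourhood $V_z\ni z$ and $\delta_z>0$ such that $y\cdot\xi_z>c_*(\xi_z)+\delta_z$ for every $y\in V_z$. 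Covering $\partial K_\eta$ by finitely many $V_{z_1},\ldots,V_{z_K}$, setting $\xi_i:=\xi_{z_i}$ and $\delta:=\min_i\delta_{z_i}>0$, I define
\[
M:=\{x\in\X\mid x\cdot\xi_i\leq c_*(\xi_i)+\delta,\ 1\leq i\leq K\}.
\]
The inclusion $\Tau_1\subset M$ is immediate from \eqref{eq:TauT}. If there were $y\in M\setminus\Tauout$, then $y\notin K_\eta$, while $\ka^+\m\in\mathrm{int}(K_\eta)$, so the segment from $\ka^+\m$ to $y$ crosses $\partial K_\eta$ at some $z=(1-s)\ka^+\m+sy\in V_{z_i}$, $s\in(0,1)$. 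Then $z\cdot\xi_i>c_*(\xi_i)+\delta$ while $\ka^+\m\cdot\xi_i\leq c_*(\xi_i)$, and the identity $y\cdot\xi_i-z\cdot\xi_i=\frac{1-s}{s}(z\cdot\xi_i-\ka^+\m\cdot\xi_i)>0$ forces $y\cdot\xi_i>c_*(\xi_i)+\delta$, contradicting $y\in M$.

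Finally, for each $i$ the open half-space $\Tauout_i:=\{x\in\X\mid x\cdot\xi_i<c_*(\xi_i)+\delta\}$ satisfies $\Tau_{1,\xi_i}\subset\Tauout_i$ and $\dist(\Tau_{1,\xi_i},\X\setminus\Tauout_i)=\delta>0$, so Theorem~\ref{thm:decayoutsidedirectional} applied with $\la_*=\la_*(\xi_i)$ and the bound $\|u_0\|_{\la_*(\xi_i),\xi_i}\leq\lvert\lvert\lvert u_0\rvert\rvert\rvert$ yields
\[
\esssup_{x\notin t\Tauout_i}u(x,t)\leq\lvert\lvert\lvert u_0\rvert\rvert\rvert\,e^{-\la_*(\xi_i)\delta t},\quad t>0.
\]
Since $M\subset\Tauout$ gives $tM\subset t\Tauout$, one has $\X\setminus t\Tauout\subset\X\setminus tM=\bigcup_{i=1}^{K}\{x\mid x\cdot\xi_i>t(c_*(\xi_i)+\delta)\}\subset\bigcup_{i=1}^{K}(\X\setminus t\Tauout_i)$, and taking the maximum over $i$ yields the theorem with $\nu:=\delta\min_{1\leq i\leq K}\la_*(\xi_i)>0$. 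The principal obstacle will be the polyhedral step: because $c_*$ is only lower semicontinuous on $\S$, one cannot naively use $c_*$ as a support function; the argument succeeds because, once finitely many $\xi_i$ are fixed, only the continuity of the linear maps $y\mapsto y\cdot\xi_i$ is needed, and this together with the interior point from Proposition~\ref{prop:front_is_non-empty} and the convexity of $\Tau_1$ closes the covering.
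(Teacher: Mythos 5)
Your proof is correct and follows the same overall scheme as the paper: observe $\|u_0\|_{\la,\xi}\leq\lvert\lvert\lvert u_0\rvert\rvert\rvert$ via $x\cdot\xi\leq|x|$; sandwich $\Tau_1\subset M\subset\Tauout$ with a polyhedron $M$ given by finitely many half-spaces $\{x\cdot\xi_i\leq c_*(\xi_i)+\text{margin}\}$; cover $\X\setminus t\Tauout$ by the complements of the corresponding directional half-spaces $t\Tauout_{\xi_i}$; and apply Theorem~\ref{thm:decayoutsidedirectional} in each direction $\xi_i$, taking the worst exponent. The one place you depart from the paper is how you obtain the polyhedral sandwiching: the paper simply invokes \cite[Lemma~7.2]{Wei1982}, while you derive it yourself by covering the compact shell $\partial K_\eta$ of an $\eta$-thickening $K_\eta=\Tau_1+B_\eta(0)\subset\Tauout$ with finitely many open sets on which a fixed direction strictly overshoots $c_*$, and then use the interior point $\ka^+\m$ (Proposition~\ref{prop:front_is_non-empty}) and convexity of $K_\eta$ to show a point of $M\setminus\Tauout$ would have to pierce one of those patches and violate its own defining inequality. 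This is a clean, self-contained replacement for Weinberger's lemma, and you correctly flag the subtlety that $c_*$ is merely lower semicontinuous on $\S$, so the argument must not treat $c_*$ as a continuous support function; it needs only continuity of $y\mapsto y\cdot\xi_i$ for the finitely many chosen $\xi_i$, which is exactly how your covering argument is set up.
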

\begin{proof}
By Proposition~\ref{prop:front_is_non-empty}, the set $\Tau_1$ is bounded and nonempty. Then, by \cite[Lemma~7.2]{Wei1982}, there exists $\eps>0$, $K\in\N$, $\xi_1,\ldots,\xi_K\in\S $ and a set $M\in\Mn(1)$ of the form \eqref{eq:eps_neighb_ofGamma}, with $T=1$, such that
 \begin{equation}\label{maininclW}
   \Tau_1\subset M\subset\Tauout.
\end{equation}
Choose now
\[
\Tauout_{\xi_i}=\Bigl\{x\in\X\Bigm\vert x\cdot\xi_i <c_1^*(\xi_i)+\frac{\eps}{2}\Bigr\}\supset \Tau_{1,\xi_i}, \quad 1\leq i\leq K.
\]
Then, by \eqref{maininclW},
\[
\Tau_1=\bigcap_{\xi\in\S }\Tau_{1,\xi}\subset\bigcap_{i=1}^K \Tau_{1,\xi_i}\subset
\bigcap_{i=1}^K \Tauout_{\xi_i}\subset M\subset\Tauout,
\]
and, therefore,
\begin{equation}\label{complincl}
    \X\setminus \Tauout \subset \bigcup_{i=1}^K (\X\setminus\Tauout_{\xi_i}).
\end{equation}
By \eqref{sefElambda}, the assumption \eqref{initcondexp} implies,
\begin{align*}
  \|u_0\|_{\la,\xi}&\leq \max\Bigl\{\esssup_{x\cdot\xi\geq0} \lvert u_0(x)\rvert e^{\la  x\cdot\xi}, \esssup_{x\cdot\xi<0} \lvert u_0(x)\rvert\Bigr\}\\
  & \leq \max\Bigl\{\esssup_{x\cdot\xi\geq0} \lvert u_0(x)\rvert e^{\la  |x|}, \esssup_{x\cdot\xi<0} \lvert u_0(x)\rvert\Bigr\}\leq
  \esssup_{x\in\X} \lvert u_0(x)\rvert e^{\la  |x|}\leq \lvert\lvert\lvert u_0\rvert\rvert\rvert,
\end{align*}
for any $\la>0$, $\xi\in\S $.
Denote
 \[
  \nu_{i} :=\la_*(\xi_i) \dist (\Tau_{1,\xi_i},\X\setminus\Tauout_{\xi_i})=\la_*(\xi_i)\frac{\eps}{2}, \quad 1\leq i\leq K.
\]
Then, by Theorem~\ref{thm:decayoutsidedirectional} and \eqref{complincl}, one gets, for any $t>0$,
\[
  \esssup_{x\notin t\Tauout} u(x,t) \leq \max_{1\leq i\leq K}\esssup_{x\notin t\Tauout_{\xi_i}} u(x,t)\leq \lVert u_0\rVert_{\la_*(\xi_i),\xi_i} e^{-\nu_{i} t}\leq \lvert\lvert\lvert u_0\rvert\rvert\rvert e^{-\nu t},
\]
with $\nu:=\min\{\nu_{i}\mid 1\leq i\leq K\}$.
\end{proof}

Our second main result about the long-time behavior states that the solution $u\in\Xinf$ uniformly converges to $\theta$ inside the set $t\Tau_1=\Tau_t$. The proof of this result is quite technical. For the convenience of the reader, we present here the statement of Theorem~\ref{thm:convtotheta} only, and explain the proof in the next subsection.

For a closed set $A\subset\X$, we denote by $\inter(A)$ the interior of $A$.

\begin{theorem}\label{thm:convtotheta}
  Let the conditions \eqref{as:chiplus_gr_m}, \eqref{as:aplus_gr_aminus}, \eqref{as:aplus-aminus-is-pos}, \eqref{boundedkernels}, \eqref{expradialmoment} hold. Let $u_0\in \Utheta$, $u_0\not\equiv0$, and  $u\in\Xinf$ be the corresponding solution to \eqref{eq:basic}. Then, for any compact set $\Tauin\subset\inter(\Tau_1)$,
  \begin{equation}\label{convtotheta}
    \lim_{t\to\infty} \min_{x\in t\Tauin} u(x,t)=\theta.
  \end{equation}
\end{theorem}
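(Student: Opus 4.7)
The plan is to combine the abstract iteration scheme developed in Weinberger~\cite{Wei1982} (already set up here through the operators $Q_T$ and $R_{T,c,\xi}$ of Definition~\ref{def:def:Q_t} and \eqref{eq:iterop_by_Weinberger}) with the explicit sub-solution argument foreshadowed in the introduction. The two key inputs from the abstract theory are the identity $c^{*}_1(\xi)=c_*(\xi)$ for every $\xi\in\S$ (Proposition~\ref{cstarsareequal}) and the dichotomy \eqref{jumpfunc}: for $c<c_*(\xi)$, the limit function $f_{1,c,\xi}(s)$ is identically equal to $\theta$. Combined with the monotonicity of the iterates $f_n$ from \eqref{fiteration} and Dini's theorem, this gives $f_n\to\theta$ uniformly on every compact subset of $\R$.

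The next step is an abstract directional convergence result: if, for some $t_0\geq0$, $\xi\in\S$, $c<c_*(\xi)$ and $\varphi\in N_\theta$, one has $u(\cdot,t_0)\geq V_{s,c,\xi}\varphi$ pointwise (cf.\ \eqref{defofV}), then the order-preservation and translation equivariance of $Q_T$ (properties \ref{prop:QTy=TyQ} and \ref{prop:Q_preserves_order} in Proposition~\ref{prop:Q_def}), together with the directional monotonicity preserved by the flow (Proposition~\ref{prop:monot_along_vector_sol}), yield by induction $u(\cdot,t_0+nT)\geq V_{s,c,\xi}f_n$ up to the appropriate space-time shift. Letting $n\to\infty$ and using the uniform convergence above produces $u(tx,t)\to\theta$ uniformly in $x$ on compact subsets of the open half-space $\{x\cdot\xi<c_*(\xi)\}$. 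I would formalize this as Lemma~\ref{lem:conv_to_theta-1} and Proposition~\ref{prop:conv_to_theta_cont_time}, leaving as the only genuine hypothesis the existence of such a $V_{s,c,\xi}\varphi$-minorant. To produce such a minorant for arbitrary nontrivial $u_0\in\Utheta$, I would invoke the strong maximum principle (Proposition~\ref{prop:u_gr_0} via \eqref{as:aplus-aminus-is-pos}) to conclude that $u(\cdot,t_0)$ is strictly positive and uniformly continuous (Proposition~\ref{prop:u_in_BUC}) for each $t_0>0$, then use the explicit sub-solution from Proposition~\ref{prop:subsolution} together with the absorption statement of Proposition~\ref{useBrandle} to obtain, for any prescribed $R>0$, some $t_1=t_1(R)$ and $\beta=\beta(R)\in(0,\theta)$ with $u(x,t_1)\geq\beta\1_{B_R(0)}(x)$. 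By Corollary~\ref{cor:startwithconst} and the monotonicity properties above, this translates into the required $V_{s,c,\xi}\varphi$-minorant in every direction $\xi\in\S$ simultaneously.

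Finally, given a compact $\Tauin\subset\inter(\Tau_1)$, a Heine--Borel covering argument as in \cite[Lemma~7.2]{Wei1982} provides finitely many $\xi_1,\ldots,\xi_K\in\S$ and $c_i<c_*(\xi_i)$ such that $\Tauin\subset\bigcap_{i=1}^K\{x\cdot\xi_i\leq c_i\}$. Directional convergence in each $\xi_i$ with speed $c_i$, combined by taking the minimum over $i=1,\ldots,K$, then gives \eqref{convtotheta}. The main obstacle will be constructing the sub-solution of Proposition~\ref{prop:subsolution} and proving its eventual absorption by $u$ (Proposition~\ref{useBrandle}): these depend on a delicate balance between the nonlocal diffusion $L_{a^+}$ and the logistic loss within a compactly supported test function, and it is precisely this step that lets us dispense with the unspecified ``bounded-below'' set on which the abstract scheme would otherwise require the solution to be uniformly positive.
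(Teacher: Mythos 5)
Your high-level plan -- combine the Weinberger iteration with the explicit Gaussian sub-solution of Proposition~\ref{prop:subsolution} and the absorption result of Proposition~\ref{useBrandle}, using Proposition~\ref{prop:u_gr_0} to seed the positivity -- is exactly the strategy the paper uses, and your observation that this dispenses with the unspecified lower-bound set is correct. However, there are two genuine gaps in how you plug the Gaussian minorant into the Weinberger scheme.

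First, the claim that for any $R>0$ there are $t_1,\beta$ with $u(x,t_1)\geq\beta\1_{B_R(0)}(x)$ is false when $\m\neq 0$. Proposition~\ref{useBrandle} produces a Gaussian centred at $x_0$ (the centre of the initial bump) at a \emph{fixed} time, and the sub-solution of Proposition~\ref{prop:subsolution} has centre $t\m$ moving with the drift. Chaining them gives, after comparison, $u(x,t)\geq q_1\exp\bigl(-|x-(t+t_2)\m|^2/(\alpha(t+t_2))\bigr)$ whose level set above $\beta$ is a ball of radius $O(\sqrt{t})$ centred at the point $(t+t_2)\m$, which recedes linearly. So the large ball on which $u\geq\sigma$ is centred near $t\m$, not near the origin. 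This forces one to apply the Weinberger dichotomy to the spatially shifted solution $u(\cdot+(t_4+t_2)\m,\cdot)$ via property~\ref{prop:QTy=TyQ}, and then carry the finite shift back through the rescaling $x\mapsto tx$ with a separate geometric estimate (this is precisely the last portion of the paper's proof). You cannot omit that step.

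Second, the sentence ``By Corollary~\ref{cor:startwithconst} and the monotonicity properties above, this translates into the required $V_{s,c,\xi}\varphi$-minorant'' does not hold. A function $V_{s,c,\xi}\varphi$ with $\varphi\in N_\theta$ is strictly positive on an entire half-space $\{x\cdot\xi<-(s+c)\}$, so no pointwise comparison can dominate it by a function supported on a ball $B_R$ no matter how large $R$ is; thus $\beta\1_{B_R(0)}\not\geq V_{s,c,\xi}\varphi$ for any admissible choice of parameters. Moreover Corollary~\ref{cor:startwithconst} concerns spatially constant initial data, which is not what you have. The correct way to feed a ball-minorant into the Weinberger machinery is to use \cite[Theorem~6.2]{Wei1982} in its original form, whose hypothesis is exactly $u_0\geq\sigma$ on $B_{r_\sigma}(0)$ with $r_\sigma$ produced by the theorem itself -- this is what Lemma~\ref{lem:conv_to_theta-1} states. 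The ``directional'' variant with a plane-wave minorant that you propose is not obtainable from the Gaussian lower bound, so this reformulation should be abandoned in favour of the ball-based statement, followed by the discrete-to-continuous-time passage of Proposition~\ref{prop:conv_to_theta_cont_time}.
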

\begin{corollary}\label{cor_essinf}
  Let the conditions \eqref{as:chiplus_gr_m}, \eqref{as:aplus_gr_aminus}, \eqref{as:aplus-aminus-is-pos}, \eqref{boundedkernels}, \eqref{expradialmoment} hold. Let $u_0\in \Ltheta$ be such that there exist $x_0\in\X$, $\eta>0$, $r>0$, with $u_0\geq\eta$, for a.a. $x\in B_r(x_0)$. Let $u\in\tXinf$ be the corresponding solution to \eqref{eq:basic}. Then, for any compact set $\Tauin\subset\inter(\Tau_1)$,
  \begin{equation*}
    \lim_{t\to\infty} \essinf_{x\in t\Tauin} u(x,t)=\theta.
  \end{equation*}
\end{corollary}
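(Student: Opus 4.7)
The plan is to deduce Corollary~\ref{cor_essinf} from Theorem~\ref{thm:convtotheta} by constructing a uniformly continuous pointwise minorant for (a translate of) $u_0$ and invoking the $L^\infty$-comparison principle (Proposition~\ref{prop:comp_pr_BUC} together with Remark~\ref{rem:comp_pr_Linf}). First I would reduce to the case $x_0=0$ using the translation invariance~\ref{prop:QTy=TyQ} of Proposition~\ref{prop:Q_def}: setting $\tilde u_0:=T_{-x_0}u_0$, one has $\tilde u_0(x)=u_0(x+x_0)$, so $\tilde u_0\in\Ltheta$ and $\tilde u_0\geq\eta$ for a.a.\! $x\in B_r(0)$; and if $\tilde u\in\tXinf$ denotes the corresponding solution, then \ref{prop:QTy=TyQ} gives $\tilde u(x,t)=u(x+x_0,t)$ for a.a.\! $x\in\X$ and every $t\geq 0$.

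Next I would build a sub-datum in $\Utheta$. Pick any continuous cut-off $\chi:\X\to[0,1]$ with $\chi\equiv 1$ on $B_{r/2}(0)$ and $\supp\chi\subset B_r(0)$, and set $v_0:=\min\{\eta,\theta\}\,\chi$. Then $v_0\in\Utheta$, $v_0\not\equiv 0$, and $v_0\leq\tilde u_0$ a.e. Let $v\in\Xinf$ be the classical solution of \eqref{eq:basic} with initial datum $v_0$. By Proposition~\ref{prop:comp_pr_BUC} combined with Remark~\ref{rem:comp_pr_Linf}, one obtains $v(x,t)\leq\tilde u(x,t)\leq\theta$ for a.a.\! $x\in\X$ and every $t\geq 0$; undoing the translation,
\[
v(y-x_0,t)\leq u(y,t)\leq\theta \qquad \text{for a.a.\! } y\in\X,\ t\geq 0.
\]

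Now fix a compact $\Tauin\subset\inter(\Tau_1)$. Since $\inter(\Tau_1)$ is open and nonempty (Proposition~\ref{prop:front_is_non-empty}), there exists $\delta>0$ such that $\Tauin':=\Tauin+B_\delta(0)$ is still a compact subset of $\inter(\Tau_1)$. For every $t\geq t_0:=|x_0|/\delta$ one has $-x_0/t\in B_\delta(0)$, hence $t\Tauin-x_0=t(\Tauin-x_0/t)\subset t\Tauin'$. Combining this inclusion with the pointwise lower bound above and using the continuity of $v(\cdot,t)\in\Buc$, for $t\geq t_0$ we get
\[
\essinf_{y\in t\Tauin}u(y,t)\;\geq\;\essinf_{z\in t\Tauin'}v(z,t)\;=\;\min_{z\in t\Tauin'}v(z,t).
\]
Applying Theorem~\ref{thm:convtotheta} to $v$ with the compact set $\Tauin'\subset\inter(\Tau_1)$ gives $\min_{z\in t\Tauin'}v(z,t)\to\theta$ as $t\to\infty$; together with the uniform upper bound $u\leq\theta$ this yields the desired limit.

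The only mildly delicate point is the translation bookkeeping in the last paragraph: one must absorb the fixed offset $x_0$ into a compact set that still sits strictly inside $\inter(\Tau_1)$. This works because $\Tau_1$ has nonempty interior (Proposition~\ref{prop:front_is_non-empty}), so a small $\delta$-thickening of any compact $\Tauin\subset\inter(\Tau_1)$ remains inside $\inter(\Tau_1)$, while the fixed shift $x_0/t$ becomes arbitrarily small as $t\to\infty$.
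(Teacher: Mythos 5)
Your proof is correct, but it takes a longer route than the paper needs. The paper's own argument is three short steps: pick any nonzero continuous $v_0\in\Utheta$ supported in $B_r(x_0)$ with $v_0\leq u_0$ a.e.\ (e.g.\ a continuous bump of height $\min\{\eta,\theta\}$ sitting inside $B_r(x_0)$); invoke Remark~\ref{rem:comp_pr_Linf}/Proposition~\ref{prop:comp_pr_BUC} to get $v\leq u\leq\theta$ a.e., with $v\in\Xinf$; and apply Theorem~\ref{thm:convtotheta} to $v$ directly. Your translation-and-$\delta$-thickening machinery (reduce to $x_0=0$, build $v_0$ there, undo the translation at the level of the sets $t\Tauin$) is correct, and the $|x_0|/\delta$ cutoff is handled carefully, but it is superfluous: Theorem~\ref{thm:convtotheta} requires only $v_0\in\Utheta$, $v_0\not\equiv0$, with no hypothesis on where $\supp v_0$ sits, because its proof produces a lower positive bound for $v(\cdot,t_2)$ on a ball around $t_2\m$ via Proposition~\ref{prop:u_gr_0} for \emph{any} nonzero $v_0\in\Utheta$. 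Moreover the front set $\Tau_1$ depends only on the kernels and coefficients, not on the initial datum, so there is no need to re-derive translation invariance of the conclusion by re-centering. In short, what your shift argument buys has already been absorbed into the statement of Theorem~\ref{thm:convtotheta}; the paper's route skips it by taking $v_0$ supported at $x_0$ from the outset.
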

\begin{proof}
  The assumption on $u_0$ implies that there exists a function $v_0\in\Utheta\subset\Ltheta$, $v_0\not\equiv0$, such that $u_0(x)\geq v_0(x)$, for a.a. $x\in\X$. Then, by
    Remark~\ref{rem:comp_pr_Linf}, $u(x,t)\geq v(x,t)$, for a.a. $x\in\X$, and for all $t\geq0$, where $v\in\tXinf$ is the corresponding to $v_0$ solution to \eqref{eq:basic}. By~Proposition~\ref{prop:comp_pr_BUC}, $v\in\Xinf$, and one has \eqref{convtotheta} for $v$, with the same $\Tau_1$, cf.~\ref{eq:QBtheta_subset_Btheta} of Proposition~\ref{prop:Q_def}. The statement follows then from the evident inequality
  \[
  \min\limits_{x\in t\Tauin}v(x,t)=\essinf\limits_{x\in t\Tauin}v(x,t)\leq
  \essinf\limits_{x\in t\Tauin}u(x,t)\leq\theta. \qedhere
  \]
\end{proof}

As an important application of Theorem~\ref{thm:convtotheta},
we will prove that there are not stationary solutions $u\geq0$  to \eqref{eq:basic} (i.e. solutions with $\frac{\partial}{\partial t}u=0$), except $u\equiv0$ and $u\equiv\theta$, provided that the origin belongs to $\inter(\Tau_1)$.

\begin{proposition}\label{uniqstationarysolutions}
Let the conditions \eqref{as:chiplus_gr_m}, \eqref{as:aplus_gr_aminus}, \eqref{as:aplus-aminus-is-pos}, \eqref{boundedkernels}, \eqref{expradialmoment}, and \eqref{infpos} hold.
Let the origin belongs to $\inter(\Tau_1)$.
Then there exist only two non-negative stationary solutions to \eqref{eq:basic}
in $L^\infty(\X)$, namely, $u=0$ and $u=\theta$.
\end{proposition}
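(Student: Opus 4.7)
The strategy is to show that any nonzero non-negative stationary solution $u\in L^\infty(\X)$ satisfies both $u\geq\theta$ and $u\leq\theta$ on $\X$, hence $u\equiv\theta$.

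First, rewriting the stationary equation as $u=\ka^{+}(a^{+}*u)/[m+\ka^{-}(a^{-}*u)]$ and invoking Lemma~\ref{le:simple} automatically promotes $u$ from $L^\infty(\X)$ to $\Buc$, since the numerator is in $\Buc$ and the denominator is in $\Buc$ and bounded below by $m>0$. As $u\not\equiv 0$, continuity yields a ball $B_r(x_0)$ on which $u\geq\eta$ for some $\eta>0$; set $\eta_0:=\min\{\eta,\theta\}$ and pick a continuous bump $\bar u_0\in\Utheta$ with $0\le\bar u_0\le\eta_0$, $\bar u_0(x_0)=\eta_0>0$, and $\supp\bar u_0\subset B_r(x_0)$. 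Then $\bar u_0\in\Utheta$, $\bar u_0\not\equiv 0$, and $\bar u_0\leq u$ pointwise.

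For the lower bound $u\geq\theta$ I exploit the nonlinear flow $Q_t$ from Definition~\ref{def:def:Q_t}. Since $u$ is stationary, $Q_t u = u$; by \ref{prop:Q_preserves_order} of Proposition~\ref{prop:Q_def}, $Q_t\bar u_0\leq Q_t u=u$; by \ref{eq:QBtheta_subset_Btheta}, $Q_t\bar u_0\leq\theta$. Hence $Q_t\bar u_0\leq u\wedge\theta$ for every $t\geq 0$. Now Theorem~\ref{thm:convtotheta} applied with initial datum $\bar u_0$ yields
\[
\lim_{t\to\infty}\min_{x\in t\Tauin}(Q_t\bar u_0)(x)=\theta
\]
for every compact $\Tauin\subset\inter(\Tau_1)$. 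Taking $\Tauin$ to be a closed ball about $0$ (possible because $0\in\inter(\Tau_1)$ by hypothesis), the sets $t\Tauin$ exhaust $\X$ as $t\to\infty$, which forces $(u\wedge\theta)(x)\geq\theta$ for every $x\in\X$, i.e., $u\geq\theta$ on~$\X$.

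For the upper bound I argue by contradiction. Suppose $M:=\esssup u>\theta$ and choose $x_n$ with $u(x_n)\to M$. The translates $u(\cdot+x_n)$ are uniformly bounded and equicontinuous (as $u\in\Buc$), so by Arzel\`a--Ascoli a subsequence converges locally uniformly to some $u_\infty\in C(\X)$ which again satisfies the stationary equation, with $u_\infty(0)=M$ and $\theta\leq u_\infty\leq M$. Evaluating at $x=0$, the bounds $(a^{-}*u_\infty)(0)\geq\theta$ (from $u_\infty\geq\theta$) and $(a^{+}*u_\infty)(0)\leq M$ (from $u_\infty\leq M$) together with the identity $m+\ka^{-}\theta=\ka^{+}$ force both inequalities to become equalities. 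From $(a^{+}*u_\infty)(0)=M$ combined with $u_\infty\leq M$ and the positivity of $a^{+}$ on $B_\delta(0)$ (which follows from \eqref{as:aplus-aminus-is-pos}, as $\ka^{+}a^{+}\geq J_\theta\geq\rho$ there), I conclude $u_\infty\equiv M$ on $B_\delta(0)$. Iterating this propagation at each point of $B_\delta(0)$ (and then $B_{2\delta}(0)$, etc.) extends $u_\infty\equiv M$ to all of $\X$. Substitution into the stationary equation gives $M\ka^{-}(\theta-M)=0$, contradicting $M>\theta$. Combined with $u\geq\theta$, this proves $u\equiv\theta$.

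The main obstacle is precisely the upper bound: since Theorem~\ref{thm:compar_pr} requires the majorant to be \emph{a priori} bounded by a constant $c$ satisfying $\ka^{+}a^{+}\geq c\ka^{-}a^{-}$, and \eqref{as:aplus_gr_aminus} only furnishes $c=\theta$, there is no direct way to compare $u$ with the constant $\theta$ before one knows $u\leq\theta$ (cf.~Remark~\ref{rem:necineq}). The shift-and-compactness argument circumvents this by extracting the equality conditions at an asymptotic maximum, crucially using that the previously established lower bound $u\geq\theta$ saturates both sides of the stationary equation at once; this is the technical heart of the proof.
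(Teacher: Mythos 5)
Your proposal contains a genuine logical gap: the argument is circular. To establish the lower bound $u\geq\theta$ you invoke property~\ref{prop:Q_preserves_order} of Proposition~\ref{prop:Q_def} (equivalently, Theorem~\ref{thm:compar_pr} with $c=\theta$) to conclude $Q_t\bar u_0\leq Q_t u=u$. But Theorem~\ref{thm:compar_pr} requires the majorant $u_2$ to satisfy $0\leq u_2\leq c$ \emph{a priori}, and since \eqref{as:aplus_gr_aminus} furnishes only $c=\theta$, this step presupposes $u\leq\theta$. You then establish $u\leq\theta$ in the \emph{next} step, and that step explicitly uses $u_\infty\geq\theta$ --- i.e.\ the lower bound. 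So each half of your argument depends on the other; neither can go first as written. You actually flag the issue yourself in your closing paragraph (``there is no direct way to compare $u$ with the constant $\theta$ before one knows $u\leq\theta$''), but you place the obstacle in the upper-bound step and claim the shift-and-compactness trick circumvents it; in fact the obstacle already bites in your lower-bound step, and the circumvention inherits it.

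A related symptom: you never use the hypothesis \eqref{infpos}, yet it is listed in the statement precisely because it is what makes the upper bound $u\leq\theta$ provable \emph{without} the lower bound. The paper's proof establishes $u\leq\theta$ first, purely from $u\in\Buc$, \eqref{infpos}, and uniform continuity: writing the stationary equation as $mu(x)+\ka^-(a^-*u)(x)\bigl(u(x)-\theta\bigr)=(J_\theta*u)(x)\leq Mm$ and choosing $x_n$ with $u(x_n)\to M>\theta$ forces $(a^-*u)(x_n)\to 0$; by \eqref{infpos} this produces nearby points $y_n$ with $u(y_n)\to 0$, contradicting uniform continuity since $u(x_n)\to M>0$ and $|x_n-y_n|\to0$. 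No comparison principle is needed. Once $u\in\Utheta$ is known, one applies Theorem~\ref{thm:convtotheta} directly to $u$ (no bump $\bar u_0$ required), and time-independence of $u$ together with $0\in\inter(\Tau_1)$ forces $u\equiv\theta$.

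For what it is worth, your Arzel\`a--Ascoli idea can be made to work as a self-contained proof of the upper bound if you drop the reliance on $u_\infty\geq\theta$ and use \eqref{infpos} instead of \eqref{as:aplus-aminus-is-pos}: the equation at $0$ gives $\ka^-(a^-*u_\infty)(0)(M-\theta)\leq 0$, hence $(a^-*u_\infty)(0)=0$, and \eqref{infpos} then forces $u_\infty\equiv0$ on a ball around $0$, contradicting $u_\infty(0)=M>0$. Doing that first would break the circularity; afterwards the comparison step in your lower-bound argument becomes legitimate (though at that point it is simpler to apply Theorem~\ref{thm:convtotheta} to $u$ itself, as the paper does).
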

\begin{proof}
Since $\frac{\partial}{\partial t}u=0$, one gets from \eqref{eq:basic} that
\begin{equation}\label{stateq1}
  u(x)=\frac{\ka^+ (a^+*u)(x)}{m+\ka^- (a^-*u)(x)}, \quad x\in\X.
\end{equation}
Then, by Lemma~\ref{le:simple}, one easily gets that $u\in\Buc$.

Denote $M:=\|u\|=\sup\limits_{x\in\X}u(x)$. We are going to prove now that $M\leq\theta$. On the contrary, suppose that $M>\theta$.
One can rewrite \eqref{stateq1} as follows:
\begin{equation}
mu(x)+\ka^-(a^-*u)(x)(u(x)-\theta)=(J_\theta*u)(x)
\leq Mm, \label{stateq2}
\end{equation}
where $J_\theta\geq0$ is given by \eqref{Jq} and hence $\int_\X J_\theta(x)\,dx=m$.

Choose a sequence $x_n\in\X$, $n\in\N$, such that $u(x_n)\to M$, $n\to\infty$. Substitute $x_n$ to the inequality \eqref{stateq2} and pass $n\to\infty$. Since $M>\theta$ and $u\geq0$, one gets then that $(a^-*u)(x_n)\to0$, $n\to\infty$. Passing to a subsequence of $\{x_n\}$ and keeping the same notation, for simplicity, one gets that
\[
(a^-*u)(x_n)\leq\dfrac{1}{n},\ n\geq 1.
\]

For all $n\geq r_0^{-2d}$, set $r_n:=n^{-\frac{1}{2d}}\leq r_0$; then the inequality \eqref{infpos} holds, for any $x\in B_{r_n}(0)$, and hence
\begin{equation}\label{eq:st_sol_inf_est}
\dfrac{1}{n}\geq(a^-*u)(x_n)\geq\alpha(\1_{B_{r_n}(0)}*u)(x_n)\geq \alpha V_d(r_n)\min\limits_{x\in B_{r_n}(x_n)}u(x),
\end{equation}
where $V_d(R)$ is a volume of a sphere with the radius $R>0$ in $\X$. Since $V(r_n)=r_n^d V_d(1)=n^{-\frac{1}{2}}V_d(1)$, we have from \eqref{eq:st_sol_inf_est}, that, for any $n\geq r_0^{-2d}$, there exists $y_n\in B_{r_n}(x_n)$, such that
\[
u(y_n)\leq\dfrac{1}{\alpha\sqrt{n}V_d(1)}.
\]
Thus $u(y_n)\to0$, $n\to\infty$. Recall that $u(x_n)\to M>0$, $n\to\infty$, however, $|x_n-y_n|\leq r_n =n^{-\frac{1}{2d}}$, that may be arbitrary small. This contradicts the fact that $u\in\Buc$.

As a result, $0\leq u(x)\leq \theta=M$, $x\in\X$. Let $u\not\equiv0$. By Theorem~\ref{thm:convtotheta}, for  any compact set $\Tauin\subset\inter(\Tau_1)$,
  $\min\limits_{x\in t\Tauin} u(x)\to\theta$, $t\to\infty$, as $u(x,t)=u(x)$ now. Since $0\in\inter(\Tau_1)$, the latter convergence is obviously possible for $u\equiv\theta$ only.
\end{proof}

\begin{remark}\label{rem:zero_is_in_front}
It is worth noting that, by \eqref{eq:TauTxi}, \eqref{taut=ttau1-dir}, and \eqref{ct=tc}, the assumption $0\in\inter(\Tau_1)$ implies that $c_*(\xi)\geq0$, for all $\xi\in\S$. It means that all traveling waves in all directions have nonnegative speeds only.
\end{remark}

\subsection{Proof of Theorem~\ref{thm:convtotheta}}

We will do as follows. At first, in Proposition~\ref{prop:conv_to_theta_cont_time}, we apply results of \cite{Wei1982} for discrete time, to prove \eqref{convtotheta} for continuous time, provided that $u_0$ is separated from $0$ on a big enough set. Next, in Proposition~\ref{prop:subsolution}, we show that there exists a proper subsolution to \eqref{eq:basic}, which will reach (as we explain thereafter) any needed level after a finite time. Finally, we properly use in Proposition~\ref{useBrandle} the results of \cite{BCF2011}, to prove that the solution to \eqref{eq:basic} will dominate the subsolution after a finite time.

We start with the following Weinberger's result (rephrased in our settings).
Note that \eqref{as:aplus-aminus-is-pos} implies \eqref{as:a+nodeg}, hence, under conditions of Theorem~\ref{thm:convtotheta}, we have by Proposition~\ref{prop:front_is_non-empty}, that $\Tau_T\neq\emptyset$, $T>0$.

\begin{lemma}[{cf. \cite[Theorem~6.2]{Wei1982}}] \label{lem:conv_to_theta-1}
Let \eqref{as:chiplus_gr_m}, \eqref{as:aplus_gr_aminus}, \eqref{as:aplus-aminus-is-pos}, \eqref{boundedkernels}, \eqref{expradialmoment} hold.
Let $u_0\in \Utheta$ and $T>0$ be arbitrary, and $Q_T$ be given by \eqref{def:Q_T}. Define
\begin{equation}\label{uniterarion}
u_{n+1}(x):=(Q_Tu_n)(x), \quad n\geq0.
\end{equation}
Then, for any compact set $\Tauin_T\subset\inter(\Tau_T)$ and for any $\sigma\in(0,\theta)$, one can choose a radius $r_\sigma=r_\sigma(Q_T,\Tauin_T)$, such that
\begin{equation}\label{initcondissepfrom0}
  u_0(x)\geq\sigma, \quad x\in B_{r_\sigma}(0),
\end{equation}
implies
\begin{equation}\label{liminfconv}
  \lim_{n\rightarrow\infty}\min\limits _{x\in n\Tauin_T}u_{n}(x)=\theta.
\end{equation}
\end{lemma}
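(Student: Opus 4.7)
The strategy is to verify that the nonlinear mapping $Q_T$ satisfies all the structural hypotheses of Weinberger's discrete-time spreading theorem \cite[Theorem~6.2]{Wei1982} in our setting, and then to deduce the statement from the directional convergence results already encoded in the construction of $c_T^*(\xi)$ and $\Tau_T$. Most of this verification has been done: Proposition~\ref{prop:Q_def} shows that $Q_T$ preserves $\Utheta$ (property \ref{eq:QBtheta_subset_Btheta}), commutes with translations \ref{prop:QTy=TyQ}, satisfies $Q_T 0 = 0$, $Q_T \theta = \theta$ and $Q_T r > r$ for $r\in(0,\theta)$ (property \ref{prop:Ql_gr_l}), preserves order \ref{prop:Q_preserves_order}, and is continuous with respect to locally uniform convergence \ref{prop:Q_cont}. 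Under \eqref{expradialmoment}, \eqref{boundedkernels} and \eqref{as:aplus-aminus-is-pos}, Proposition~\ref{prop:front_is_non-empty} guarantees that $\Tau_T$ is a nonempty, bounded, convex, closed set with nonempty interior, so $\inter(\Tau_T)$ contains an open neighborhood of $T\ka^+\m$.

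First I would fix an auxiliary profile $\varphi \in N_\theta$ with $\varphi(-\infty) = \sigma_0 \in (0,\theta)$; by \cite[Lemma 5.4]{Wei1982} the limiting function $f_{T,c,\xi}$ and the threshold $c_T^*(\xi)$ do not depend on the particular choice of $\varphi$. For any $\xi \in \S $ and any $c < c_T^*(\xi)$, the definition of $f_{T,c,\xi}$ together with \eqref{jumpfunc} yields $f_{T,c,\xi}(s) \equiv \theta$ on $\R$, and this convergence is monotone. Next I would use Proposition~\ref{prop:u_gr_0}, which requires \eqref{as:a+nodeg} (implied by \eqref{as:aplus-aminus-is-pos}), to show that if $u_0 \not\equiv 0$ and $u_0 \geq \sigma \1_{B_{r_\sigma}(0)}$ for $r_\sigma$ large enough, then $u_1 = Q_T u_0$ is strictly positive on $\X$. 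By translation invariance \ref{prop:QTy=TyQ}, choosing $r_\sigma$ larger if necessary, one can in fact dominate from below a finite family of translates $\{\varphi(\,\cdot\, x\cdot\xi_k + a_k)\}_{k=1}^K$ obtained by discretizing the sphere $\S $ at a finite set $\{\xi_1,\ldots,\xi_K\}$, where the $a_k$'s are chosen so that each translate is zero outside a half-space passing through the origin.

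The heart of the argument, which follows Weinberger's induction, is then the following: by comparing $u_n = Q_T^n u_0$ with the iterates $R_{T,c,\xi_k}^n$ applied to the one-dimensional profile $\varphi$ (using the order preservation \ref{prop:Q_preserves_order} and the identification from Proposition~\ref{prop:monot_sol} between the multi-dimensional iteration in the direction $\xi_k$ and the one-dimensional iteration driven by $\tilde Q_T$ from Proposition~\ref{prop:Qtilde}), one obtains, for any $c_k < c_T^*(\xi_k)$,
\begin{equation*}
\liminf_{n\to\infty}\ \min_{x \cdot \xi_k \leq n c_k}\, u_n(x) \geq f_{T,c_k,\xi_k}(-\infty) = \theta, \qquad k=1,\ldots, K.
\end{equation*}
Given a compact $\Tauin_T \subset \inter(\Tau_T)$, by the definition \eqref{eq:TauT} of $\Tau_T$ and a standard separating-hyperplane/compactness argument (as in \cite[Lemma~7.2]{Wei1982} used already in the proof of Theorem~\ref{thm:outoffront}), one can choose finitely many directions $\xi_1,\ldots,\xi_K$ and numbers $c_k < c_T^*(\xi_k)$ so that
\begin{equation*}
\Tauin_T \subset \bigcap_{k=1}^K \{ x \in \X \mid x \cdot \xi_k \leq c_k\},
\end{equation*}
and then $n\Tauin_T$ is contained in the intersection of the half-spaces $\{x\cdot\xi_k \leq nc_k\}$. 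Combining the $K$ directional estimates above (together with the uniform upper bound $u_n \leq \theta$ from \ref{eq:QBtheta_subset_Btheta}) then gives \eqref{liminfconv}.

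The main obstacle will be the step where one passes from the strict positivity of $u_1$ (which is only pointwise) to a quantitative lower bound that dominates a translate of the one-dimensional test profile $\varphi$ in each of the finitely many directions $\xi_k$ simultaneously; this is where the choice of $r_\sigma$ must be made large enough depending on $\Tauin_T$ (equivalently, on the finite collection of directions and the separation of $\Tauin_T$ from $\partial\Tau_T$). Once this quantitative domination is established, the rest is a direct iteration of the order preservation combined with the monotone convergence $f_{T,c,\xi} \to \theta$ for $c < c_T^*(\xi)$ guaranteed by the Weinberger construction.
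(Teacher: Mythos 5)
Your overall strategy---verify Weinberger's structural hypotheses via Proposition~\ref{prop:Q_def} and Proposition~\ref{prop:front_is_non-empty}, then invoke his spreading result---matches what the paper does: the paper simply cites \cite[Theorem~6.2]{Wei1982} and does not re-derive it. But your attempted re-derivation contains a genuine error in the directional estimate. You claim that for each fixed $\xi_k$ and $c_k<c_T^*(\xi_k)$,
\begin{equation*}
\liminf_{n\to\infty}\ \min_{x\cdot\xi_k\leq nc_k} u_n(x)\ \geq\ \theta,
\end{equation*}
and then want to combine these $K$ half-space estimates on their intersection. This single-direction estimate is false. The minimum is taken over the entire half-space $\{x\cdot\xi_k\leq nc_k\}$, which is unbounded in every direction orthogonal to $\xi_k$ and in the $-\xi_k$ direction; for $u_0$ supported (or essentially supported) on a bounded region, $u_n$ is small at points that are far away from $\supp u_0$ in those unconstrained directions, yet such points lie in the half-space. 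So the left side is $0$, not $\theta$. In Weinberger's actual argument the convergence is proved directly on the (bounded) set $n\Tauin_T$ by a global comparison with the fixed-point profiles $f_{T,c,\xi}$; it is \emph{not} obtained by combining unconditional half-space estimates. Note also that your identity $f_{T,c_k,\xi_k}(-\infty)=\theta$ holds for every $c$, so it cannot by itself distinguish the regime $c<c_T^*(\xi)$; the relevant dichotomy is \eqref{jumpfunc}, i.e.\ the behavior at $+\infty$, and that is what $c_T^*(\xi)$ captures.

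If you want a self-contained argument rather than a citation, the way to fix this is to follow Weinberger's proof more closely: the decomposition into finitely many directions $\xi_1,\dots,\xi_K$ and the compactness of $\Tauin_T$ are used \emph{after} one has shown, for a single but carefully chosen translated profile $\varphi$, that the iterates $u_n$ dominate a sequence that converges to $\theta$ uniformly on $n\Tauin_T$; the radius $r_\sigma$ is then chosen so that $u_0$ dominates the finitely many required translates of $\varphi$ simultaneously. Alternatively, simply leave the lemma as a citation of \cite[Theorem~6.2]{Wei1982} once the hypotheses \ref{eq:QBtheta_subset_Btheta}--\ref{prop:Q_cont} of Proposition~\ref{prop:Q_def} and the nonemptiness and boundedness of $\Tau_T$ from Proposition~\ref{prop:front_is_non-empty} are verified, which is exactly what the paper does.
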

\begin{remark}\label{justAradius}
By the proof of \cite[Theorem~6.2]{Wei1982}, the radius $r_\sigma(Q_T,\Tauin_T)$ is not defined uniquely. In the sequel, $r_\sigma(Q_T,\Tauin_T)$ means just a~radius which fulfills the assertion of Lemma~\ref{lem:conv_to_theta-1} for the chosen $Q_T$ and $\Tauin_T$, rather than a~function of~$Q_T$~and $\Tauin_T$.
\end{remark}
\begin{remark}
It is worth noting, that, by \eqref{def:Q_T} and the uniqueness of the solution to \eqref{eq:basic}, the iteration \eqref{uniterarion} is just given by
\begin{equation}\label{realiteration}
  u_{n}(x)=u(x,nT), \quad x\in\X, n\in\N\cup\{0\}.
\end{equation}
Therefore, \eqref{liminfconv} with $T=1$ yields \eqref{convtotheta}, for $\N\ni t\to\infty$, namely,
\begin{equation}\label{convtothetanatural}
  \lim_{n\to\infty}\min_{x\in n\Tauin}u(x,n)=\theta,
\end{equation}
provided that \eqref{initcondissepfrom0} holds with $r_\sigma=r_\sigma(Q_1,\Tauin)$, $\Tauin\subset \inter(\Tau_1)$.
\end{remark}

\begin{lemma}
Let the conditions of Theorem~\ref{thm:convtotheta} hold. Fix a $\sigma\in(0,\theta)$ and a compact set $\Tauin\subset \inter(\Tau_1)$. Let $u_0\in\Utheta$ be such that $u_0(x)\geq\sigma$, $x\in B_{r_{\sigma}(Q_{1},\Tauin)}(0)$. Then, for any $k\in\N$,
\begin{equation}
\lim_{n\to\infty}\min_{x\in \frac{n}{k}\Tauin}u\Bigl(x,\dfrac{n}{k}\Bigr)=\theta.
\label{eq:rsigma1_geq_rsigma05:i}
\end{equation}
\end{lemma}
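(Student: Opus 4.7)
Plan: The natural approach is to apply Lemma~\ref{lem:conv_to_theta-1} directly with step $T=1/k$ and compact set $\Tauin':=\frac{1}{k}\Tauin$. By \eqref{TauT=TTau1}, $\Tau_{1/k}=\frac{1}{k}\Tau_1$, so $\Tauin'\subset\inter(\Tau_{1/k})$, and by \eqref{realiteration} the iterates $u_n$ produced by $Q_{1/k}$ coincide with $u(\cdot,n/k)$. Since $n\Tauin'=\frac{n}{k}\Tauin$, the conclusion of Lemma~\ref{lem:conv_to_theta-1} applied to $(Q_{1/k},\Tauin')$ is exactly \eqref{eq:rsigma1_geq_rsigma05:i}. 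What remains is to verify the hypothesis $u_0\geq\sigma$ on $B_{r_\sigma(Q_{1/k},\Tauin')}(0)$. By Remark~\ref{justAradius}, the radius $r_\sigma(Q_{1/k},\Tauin')$ is not unique but may be taken to be any admissible radius, so it suffices to show that the radius $r_\sigma(Q_1,\Tauin)$ provided by our hypothesis is itself admissible for the pair $(Q_{1/k},\Tauin')$.

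To establish this admissibility, I would reduce to the integer-time statement \eqref{convtothetanatural} via a residue-class splitting. Writing $n=mk+j$ with $j\in\{0,\dots,k-1\}$ and $m\in\N\cup\{0\}$, one has $n/k=m+j/k$, so the claim becomes
\begin{equation*}
\lim_{m\to\infty}\min_{x\in (m+j/k)\Tauin}u(x,m+j/k)=\theta\quad\text{for each fixed } j.
\end{equation*}
For $j=0$ this is precisely \eqref{convtothetanatural} applied to the hypothesis, since $(m+0/k)\Tauin=m\Tauin$. For $j\in\{1,\dots,k-1\}$, I would introduce the time-shifted solution $v^{(j)}(x,t):=u(x,t+j/k)$, which solves \eqref{eq:basic} with initial datum $v^{(j)}_0:=u(\cdot,j/k)\in\Utheta$ (by Proposition~\ref{prop:comp_pr_BUC}), and choose an auxiliary compact set $\Tauin^{(j)}$ with $\Tauin\subset\inter(\Tauin^{(j)})\subset\Tauin^{(j)}\subset\inter(\Tau_1)$. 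Since $(1+j/(km))\Tauin$ converges to $\Tauin$ and $\Tauin\subset\inter(\Tauin^{(j)})$, a compactness argument yields $(m+j/k)\Tauin\subset m\Tauin^{(j)}$ for all $m$ large enough; hence applying \eqref{convtothetanatural} to $v^{(j)}$ with set $\Tauin^{(j)}$ gives $\min_{x\in m\Tauin^{(j)}}v^{(j)}(x,m)\to\theta$, which transfers to $\min_{x\in(m+j/k)\Tauin}u(x,m+j/k)\to\theta$.

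The main obstacle is to verify the separation hypothesis required to apply \eqref{convtothetanatural} to $v^{(j)}$, namely to exhibit $\sigma^{(j)}\in(0,\theta)$ with $v^{(j)}_0\geq\sigma^{(j)}$ on the ball $B_{r_{\sigma^{(j)}}(Q_1,\Tauin^{(j)})}(0)$. The strong maximum principle of Proposition~\ref{prop:u_gr_0} (whose hypothesis \eqref{as:a+nodeg} is a consequence of \eqref{as:aplus-aminus-is-pos}) ensures $v^{(j)}_0(x)=u(x,j/k)>0$ for all $x\in\X$, since $u_0\not\equiv0$; Proposition~\ref{prop:u_in_BUC} yields $v^{(j)}_0\in\Buc$, hence $v^{(j)}_0$ attains a strictly positive minimum on any closed ball. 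The delicate step is to resolve the coupling between $\sigma^{(j)}$ and the radius $r_{\sigma^{(j)}}(Q_1,\Tauin^{(j)})$ it determines: I would fix $R^{(j)}$ first as any admissible radius (in the sense of Remark~\ref{justAradius}) for an auxiliary pilot separation constant, then set $\sigma^{(j)}:=\min_{\overline{B_{R^{(j)}}(0)}}v^{(j)}_0>0$, and finally use once more Remark~\ref{justAradius} to check that $R^{(j)}$ is admissible for this possibly smaller $\sigma^{(j)}$ as well, which closes the argument.
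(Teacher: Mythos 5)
Your residue-class splitting $n=mk+j$ and the use of time-shifted solutions $v^{(j)}(x,t)=u(x,t+j/k)$ is a genuinely different route from the paper, but it fails at the step where you try to verify the separation hypothesis of Lemma~\ref{lem:conv_to_theta-1} for $v^{(j)}_0=u(\cdot,j/k)$. The procedure you propose — fix a pilot constant $\sigma_0$ with admissible radius $R^{(j)}=r_{\sigma_0}(Q_1,\Tauin^{(j)})$, set $\sigma^{(j)}:=\min_{\overline{B_{R^{(j)}}(0)}}v^{(j)}_0$, and invoke Remark~\ref{justAradius} to conclude $R^{(j)}$ is also admissible for the possibly smaller $\sigma^{(j)}$ — is not licensed by that remark, which only says the radius is nonunique, not that admissibility is preserved as $\sigma$ decreases. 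In fact the monotonicity goes the \emph{other} way: if $R$ is admissible for $\sigma'$ and $\sigma>\sigma'$, then $R$ is admissible for $\sigma$ (because $u_0\geq\sigma$ on $B_R(0)$ is a stronger hypothesis), so the set of admissible radii \emph{shrinks} as $\sigma$ decreases. If $\sigma^{(j)}<\sigma_0$, which you have no way to exclude (the positive minimum of $v^{(j)}_0$ on a ball is not controllable a~priori), you cannot conclude that $R^{(j)}$ works for $\sigma^{(j)}$, and the coupled choice of $(\sigma^{(j)},R^{(j)})$ does not close.

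The paper sidesteps this coupling entirely, and it is worth noting how. Having fixed $k$, $p=1/k$, and a companion compact set $\tilde\Tauin$ with $\Tauin\subset\inter(\tilde\Tauin)\subset\Tau_1$, it \emph{first} freezes the radius $r_\sigma(Q_p,p\tilde\Tauin)$ with the original $\sigma$, and \emph{then} uses the already-established integer-time convergence \eqref{convtothetanatural} to pick an integer $N$ so large that $u(\cdot,N)\geq\sigma$ on $N\Tauin$ and simultaneously $B_{r_\sigma(Q_p,p\tilde\Tauin)}(0)\subset N\Tauin$. Because $N\Tauin$ grows linearly, both conditions are eventually satisfied with the same unchanged $\sigma$, so Lemma~\ref{lem:conv_to_theta-1} applies to the restarted solution $u(\cdot,N+\cdot)$ with $T=p$ and no fixed-point issue ever arises; a final inclusion $(\tfrac{N}{n}+p)\Tauin\subset p\tilde\Tauin$ for large $n$ converts the resulting limit into \eqref{eq:rsigma1_geq_rsigma05:i}. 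Your auxiliary set $\Tauin^{(j)}$ and the compactness containment $(m+j/k)\Tauin\subset m\Tauin^{(j)}$ are fine and parallel to what the paper does with $\tilde\Tauin$; the missing idea is to restart at a \emph{large integer} time chosen via \eqref{convtothetanatural}, keeping the original $\sigma$, rather than restarting at the fractional offset $j/k$ with an uncontrollable new constant.
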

\begin{proof}
Since $\Tauin\subset\inter(\Tau_1)$, one can choose a compact
set $\tilde{\Tauin}\subset\inter(\Tau_1)$ such that
\begin{equation}\label{eq:rsigma1_geq_rsigma05:ii}
\Tauin\subset\inter(\tilde{\Tauin}).
\end{equation}
By \eqref{realiteration} and Lemma \ref{lem:conv_to_theta-1} (with $T=1$), the assumption
$u_0(x)\geq\sigma$, $x\in B_{r_{\sigma}(Q_{1},\Tauin)}(0)$ implies \eqref{convtothetanatural}. Fix $k\in\N$, take $p=\frac{1}{k}$; then choose and fix the radius $r_{\sigma}\bigl(Q_{p},p\tilde{\Tauin}\bigr)$. By~\eqref{convtothetanatural}, there exists an~$N=N(k)\in\N$, such that
\begin{gather*}
u(x,N)\geq\sigma,\quad x\in N\Tauin, \\
 B_{r_{\sigma}(Q_{p},p\tilde{\Tauin})}(0)\subset N\Tauin.
\end{gather*}
Apply now Lemma~\ref{lem:conv_to_theta-1}, with $u_0(x)=u(x,N)$, $x\in\X$, $T=p$, and
\[
\Tauin_T=\Tauin_p:=p\tilde{\Tauin}\subset p\,\inter(\Tau_1)=\inter(\Tau_p),
\]
as, by \eqref{TauT=TTau1}, $p\Tau_1=\Tau_p$. We will get then
\begin{equation}\label{eq:rsigma1_geq_rsigma05:iii}
\lim_{n\to\infty}\min_{x\in np\tilde{\Tauin}}u(x,N+np)=\theta.
\end{equation}
By (\ref{eq:rsigma1_geq_rsigma05:ii}), there exists $M\in\N$ such
that one has
\begin{equation}\label{inclwithn}
  \Bigl(\frac{N}{n}+p\Bigr)\Tauin\subset p\tilde{\Tauin},\quad n\geq M.
\end{equation}
Therefore, by \eqref{inclwithn}, one gets, for $n\geq M$,
\begin{align}\notag
\min_{x\in np\tilde{\Tauin}}u(x,N+np)&\leq
\min_{x\in n(\frac{N}{n}+p)\Tauin}u(x,N+np)\\&=
\min_{x\in(Nk+n)\frac{1}{k}\Tauin}u\Bigl(x,(Nk+n)\frac{1}{k}\Bigr)\leq\theta.\label{eq:rsigma1_geq_rsigma05:iv}
\end{align}
By \eqref{eq:rsigma1_geq_rsigma05:iii} and \eqref{eq:rsigma1_geq_rsigma05:iv}, one gets the statement.
\end{proof}

Now, one can prove Theorem~\ref{thm:convtotheta}, under assumption on the initial condition.
\begin{proposition}\label{prop:conv_to_theta_cont_time}
Let the conditions of Theorem~\ref{thm:convtotheta} hold. Fix a $\sigma\in(0,\theta)$ and a compact set $\Tauin\subset \inter(\Tau_1)$. Let $u_0\in\Utheta$ be such that $u_0(x)\geq\sigma$, $x\in B_{r_{\sigma}(Q_{1},\Tauin)}(0)$, and $u\in\Xinf$ be the corresponding solution to \eqref{eq:basic}. Then
\begin{equation}\label{convtothetaunderassumtion}
  \lim_{t\to\infty}\min_{x\in t\Tauin}u(x,t)=\theta.
\end{equation}
\end{proposition}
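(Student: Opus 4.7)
The plan is to reduce the continuous-time limit to the discrete-time convergence along the subsequences $\{n/k\mid n\in\N\}$ supplied by the preceding lemma, by exploiting the joint uniform continuity of $u$ on $\X\times\R_+$.

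First I would observe that $u\in\Xinf$ with $0\leq u(x,t)\leq\theta$ by Proposition~\ref{prop:comp_pr_BUC}, so that Proposition~\ref{prop:u_in_BUC} applies and yields $u\in\BUC$; in particular, $u$ is uniformly continuous on $\X\times\R_+$. Given an arbitrary $\eps\in(0,\theta)$, I would fix $\delta>0$ such that $|u(x_1,t_1)-u(x_2,t_2)|<\eps/3$ whenever $|x_1-x_2|+|t_1-t_2|<\delta$.

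Next I would fix the discretisation scale. Set $R:=\max\{|y|\mid y\in\Tauin\}<\infty$ (finite since $\Tauin$ is compact), and choose $k\in\N$ with $(1+R)/k<\delta$. The preceding lemma, applied with this particular $k$ to the prescribed $\sigma$ and $\Tauin$ (whose hypothesis on $u_0$ coincides verbatim with ours), yields $N=N(\eps,k)\in\N$ such that
\[
\min_{x\in(n/k)\Tauin}u(x,n/k)\ \geq\ \theta-\tfrac{\eps}{3},\qquad n\geq N.
\]

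Finally, I would interpolate between adjacent discrete times. For any $t\geq N/k$, set $n:=\lfloor tk\rfloor\geq N$, so that $0\leq t-n/k\leq 1/k<\delta$. Any $x\in t\Tauin$ is of the form $x=ty$ with $y\in\Tauin$; the point $z:=(n/k)y\in(n/k)\Tauin$ then satisfies $|x-z|=(t-n/k)|y|\leq R/k$, whence $|x-z|+|t-n/k|\leq(R+1)/k<\delta$. Joint uniform continuity consequently delivers
\[
u(x,t)\ \geq\ u(z,n/k)-\tfrac{\eps}{3}\ \geq\ \bigl(\theta-\tfrac{\eps}{3}\bigr)-\tfrac{\eps}{3}\ =\ \theta-\tfrac{2\eps}{3}.
\]
Taking the minimum over $x\in t\Tauin$ and combining with the upper bound $u\leq\theta$ gives \eqref{convtothetaunderassumtion}. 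The only delicate step is choosing $k$ large enough to absorb both the temporal increment $1/k$ and the maximal spatial displacement $R/k$ within a single modulus of continuity of $u$; beyond this, nothing is needed besides the discrete convergence of the preceding lemma and the joint uniform continuity furnished by Proposition~\ref{prop:u_in_BUC}.
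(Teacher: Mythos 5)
Your proof is correct and follows essentially the same approach as the paper: both reduce the continuous-time limit to the discrete-time convergence of the preceding lemma along $\{n/k\}$ by invoking the joint uniform continuity of $u$ on $\X\times\R_+$ furnished by Proposition~\ref{prop:u_in_BUC}, and both bound the combined space-time displacement $|x-z|+|t-n/k|\leq(1+\max_{y\in\Tauin}|y|)/k$ to apply a single modulus of continuity. The only cosmetic difference is that you argue directly while the paper argues by contradiction.
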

\begin{proof}
Suppose \eqref{convtothetaunderassumtion} were false.
Then, there exist $\eps>0$ and a sequence $t_N\to\infty$, such that
$\min\limits_{x\in t_N\Tauin}u(x,t_N)<\theta-\eps$, $n\in\N$.
Since $t_N\Tauin$ is a compact set and $u(\cdot,t)\in\Utheta$, $t\geq0$, there exists $x_N\in t_N\Tauin$, such that
\begin{equation}\label{eq:conv_to_theta:i}
u(x_N,t_N)<\theta-\eps, \quad n\in\N.
\end{equation}
Next, by~Proposition~\ref{prop:u_in_BUC}, there exists a $\delta=\delta(\eps)>0$
such that, for all $x',x''\in\X$ and for all $t', t''>0$, with $|x'-x''|+|t'-t''|<\delta$,
one has
\begin{equation}\label{eq:conv_to_theta:ii}
|u(x',t')-u(x'',t'')|<\dfrac{\eps}{2}.
\end{equation}
Since $\Tauin$ is a compact, $p(\Tauin):=\sup\limits_{x\in\Tauin}\lVert x\rVert<\infty$. Choose $k\in\N$, such that $\frac{1}{k}<\frac{\delta}{1+p(\Tauin)}$.
By~\eqref{eq:rsigma1_geq_rsigma05:i}, there exists $M(k)\in\N$, such that, for all $n\geq M(k)$, \begin{equation}\label{conseqofle}
  u\Bigl(x,\frac{n}{k}\Bigr)>\theta-\frac{\eps}{2}, \quad x\in \frac{n}{k}\Tauin.
\end{equation}
Choose $N>N_0$ big enough to ensure $t_N>\frac{M(k)}{k}$.
Then, there exists $n\geq M(k)$, such that $t_N\in\bigl[\frac{n}{k},\frac{n+1}{k}\bigr)$. Hence
\begin{equation}\label{sadas}
  \Bigl\lvert t_N-\frac{n}{k}\Bigr\rvert <\frac{1}{k}<\frac{\delta}{1+p(\Tauin)}.
\end{equation}
Next, for the chosen $N$, there exists $y_N\in\Tauin$, such that $x_N=t_N y_N$.
Set $t'=t_N$, $t''=\frac{n}{k}$, $x'=x_N=t_N y_N$, and $x''=\frac{n}{k}y_N$.
Then, by \eqref{sadas},
\[
|t'-t''|+|x'-x''|=\Bigl\lvert t_N-\frac{n}{k}\Bigr\rvert\bigl(1+|y_N|\bigr)<\delta.
\]
Therefore, one can apply \eqref{eq:conv_to_theta:ii}. Combining this with \eqref{eq:conv_to_theta:i}, one gets
\[
u\Bigl(\frac{n}{k}y_N,\frac{n}{k}\Bigr)=
u\Bigl(\frac{n}{k}y_N,\frac{n}{k}\Bigr)-u(t_N y_N,t_N)+
u(x_N,t_N)<\frac{\eps}{2}+\theta-\eps=\theta-\frac{\eps}{2},
\]
that contradicts \eqref{conseqofle}, as $\frac{n}{k} y_N\in\frac{n}{k}\Tauin$.
Hence the statement is proved.
\end{proof}

Next two statements will allow us to get rid the restriction on $u_0$ in Proposition~\ref{prop:conv_to_theta_cont_time}.

\begin{proposition}\label{prop:subsolution}
  Let \eqref{as:chiplus_gr_m}, \eqref{as:aplus_gr_aminus}, and \eqref{as:aplus-aminus-is-pos} hold; assume also that \eqref{firstglobalmoment} holds, and $\m$ is given by \eqref{firstfullmoment}.
  Then there exists $\alpha_0>0$, such that, for all $\alpha\in(0,\alpha_0)$, there exists
  $q_0=q_0(\alpha)\in(0,\theta)$, such that there exists $T=T(\alpha,q_0)>0$, such that, for all $q\in(0,q_0)$, the function
   \begin{equation}\label{greatsubsol}
     w(x,t)=q\exp\biggl(-\frac{|x- t \m   |^2}{\alpha t}\biggr), \quad x\in\X, t>T,
\end{equation}
is a subsolution to \eqref{eq:basic} on $t>T$; i.e. $\mathcal{F} w(x,t)\leq 0$, $x\in\X$, $t>T$, where $\mathcal{F}$ is given by \eqref{Foper}.
\end{proposition}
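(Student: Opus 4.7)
I would begin by substituting $y := x - t\m$ and $\phi := 1/(\alpha t)$, so $w(x,t) = q\,e^{-\phi|y|^2}$. A direct differentiation yields $\partial_t w/w = 2\phi\,y\cdot\m + \alpha\phi^2|y|^2$, and
\[
\frac{w(x-z,t)}{w(x,t)} \;=\; e^{-\phi|y-z|^2+\phi|y|^2} \;=\; e^{2\phi y\cdot z - \phi|z|^2}.
\]
Since $w\le q$ pointwise gives $(a^-*w)(x,t)\le q$, dividing $\mathcal{F}w \le 0$ by $w>0$ reduces the task to establishing, for all $y\in\X$ and all $t>T$,
\[
\ka^+ \int_\X a^+(z)\,e^{2\phi y\cdot z - \phi|z|^2}\,dz \;\ge\; m + \ka^- q + 2\phi y\cdot\m + \alpha\phi^2|y|^2.
\]

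The central step is to prove a uniform-in-$\phi$ lower bound
\[
\int_\X a^+(z)\,e^{2\phi y\cdot z - \phi|z|^2}\,dz \;\ge\; M_\phi + 2\phi\,y\cdot\m_\phi + C_0\,\phi^2|y|^2 \quad (\phi\in(0,1]),
\]
with $M_\phi := \int a^+(z)\,e^{-\phi|z|^2}\,dz$, $\m_\phi := \int a^+(z)\,z\,e^{-\phi|z|^2}\,dz$ and a constant $C_0>0$ depending only on $\rho,\delta,d,\ka^+$. The linear-in-$y$ part will come from $e^u\ge 1+u$ applied under the integral; the quadratic part will be extracted from $e^u-1-u \ge \tfrac{1}{2}u^2\,\1_{\{u\ge 0\}}$ restricted to the half-ball $B_\delta(0)\cap\{y\cdot z\ge 0\}$, on which $a^+\ge \rho/\ka^+$ by \eqref{as:aplus-aminus-is-pos} combined with the inequality $J_\theta \le \ka^+ a^+$, together with the elementary symmetry identity $\int_{B_\delta(0)}(y\cdot z)^2\,dz = |y|^2|B_\delta|\delta^2/(d+2)$. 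By \eqref{firstglobalmoment} and dominated convergence, $M_\phi\to 1$ and $\m_\phi\to\m$ as $\phi\to 0$.

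Inserting this bound, the desired inequality rearranges to
\[
(\ka^+ M_\phi - m - \ka^- q) + 2\phi\,y\cdot(\ka^+\m_\phi-\m) + (\ka^+ C_0 - \alpha)\phi^2|y|^2 \;\ge\; 0.
\]
I would then choose $\alpha_0\in(0,\ka^+ C_0)$ so small that $\ka^-\theta(\ka^+ C_0-\alpha)>(\ka^+-1)^2|\m|^2$ for every $\alpha\in(0,\alpha_0)$; the coefficient of $\phi^2|y|^2$ is then strictly positive, so completing the square in $y$ reduces matters to the scalar condition
\[
\ka^+ M_\phi - m - \ka^- q \;\ge\; |\ka^+\m_\phi-\m|^2/(\ka^+ C_0-\alpha),
\]
whose $\phi\to 0$ limit equals $\ka^-(\theta-q) - (\ka^+-1)^2|\m|^2/(\ka^+ C_0-\alpha)>0$ for $q=q_0(\alpha)\in(0,\theta)$ chosen small enough. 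Taking $T=T(\alpha,q_0)$ large enough that $M_\phi$ and $\m_\phi$ remain sufficiently close to $1$ and $\m$ throughout $t>T$ then yields $\mathcal{F}w\le 0$ on $\X\times(T,\infty)$ for every $q\in(0,q_0)$.

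The hardest step will be producing the positive $\phi^2|y|^2$ contribution in the lower bound on $(a^+*w)/w$ strong enough to dominate the $\alpha\phi^2|y|^2$ arising from $\partial_t w/w$: without it the quadratic in $|y|$ in the assembled inequality tilts negative in the Gaussian tail $|y|\gg\sqrt{\alpha t}$ and the pointwise subsolution inequality fails. The nondegeneracy \eqref{as:aplus-aminus-is-pos} near the origin, exploited via the symmetry of $B_\delta(0)$ to bypass the possibly infinite second moment of $a^+$, is precisely what supplies this term and dictates the smallness conditions on $\alpha_0$ and $q_0$.
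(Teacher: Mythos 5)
Your overall strategy --- pass to the ratio $(a^+*w)/w$, extract the linear-in-$y$ contribution via $e^u\geq 1+u$, squeeze a positive $\phi^2|y|^2$ term out of the nondegeneracy of $a^+$ near the origin on a half-ball, and use dominated convergence ($M_\phi\to1$, $\m_\phi\to\m$) as $T\to\infty$ --- closely mirrors the paper's proof. The paper absorbs $q\ka^-a^-$ into the kernel $J_{q_0}=\ka^+a^+-q_0\ka^-a^-$ rather than pulling out $\ka^- q$ as a constant, uses the same two elementary estimates $e^s-1\geq s$ and $e^s-1\geq s+s^2/2$ (on $s\geq0$), and extracts the quadratic gain on a cone of apex angle $2\pi/3$ inside $B_\delta(0)$ rather than a half-ball; these are cosmetic differences. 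Your completing-the-square closing is in fact cleaner than the paper's explicit $\eps$-balance.

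However, the closing step has a genuine gap. You assert one can ``choose $\alpha_0\in(0,\ka^+ C_0)$ so small that $\ka^-\theta(\ka^+ C_0-\alpha)>(\ka^+-1)^2|\m|^2$ for every $\alpha\in(0,\alpha_0)$.'' But $C_0$, $\ka^\pm$, $m$ and $|\m|$ are all fixed by the data; decreasing $\alpha$ only raises $\ka^+C_0-\alpha$ toward the ceiling $\ka^+C_0$, so the best achievable statement is the fixed inequality $\ka^-\theta\,\ka^+C_0>(\ka^+-1)^2|\m|^2$, which is \emph{not} implied by the hypotheses (it fails for $|\m|$ large). The root of the difficulty is in the linear coefficient: $\ka^+\m_\phi-\m\to(\ka^+-1)\m$ as $\phi\to0$, which is not small. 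The barycenter of a solution to the linearisation $\partial_t u=\ka^+a^+{*}u-mu$ drifts with velocity $\ka^+\m$, not $\m$; the Gaussian subsolution must therefore be centred at $t\ka^+\m$ rather than $t\m$. With that replacement your linear coefficient becomes $\ka^+\m_\phi-\ka^+\m\to0$, the completed-square scalar condition reduces for $T$ large to $\ka^-(\theta-q)>0$, and no smallness of $\alpha_0$ beyond $\alpha_0<\ka^+C_0$ is needed. (Note that the paper's own proof runs into the same point: it asserts $\mu(q_0)=\int J_{q_0}(y)\,y\,dy\to\m$ as $q_0\to0$, whereas in fact $\mu(q_0)\to\ka^+\m$, so the bound $|I_1(t)-\m|<\eps$ there also presupposes the drift $\ka^+\m$.)
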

\begin{proof}
Let $J_q$, $q\in(0,\theta)$ be given by \eqref{Jq}, and consider the function \eqref{greatsubsol}. Since $w(x,t)\leq q$, we have from \eqref{Foper}, that
\begin{align}\notag
    (\mathcal{F}w)(x,t)&=
    w(x,t)\biggl(\frac{|x|^2}{\alpha t^2}-\frac{|\m   |^2}{\alpha}\biggr) - \ka^+(a^+*w)(x,t)
    \\&\quad+\ka^-w(x,t) (a^-*w)(x,t)+mw(x,t) \notag
    \\&\leq w(x,t)\biggl(\frac{|x|^2}{\alpha t^2}-\frac{|\m   |^2}{\alpha}\biggr) - (J_{q}*w)(x,t)+mw(x,t).\label{suffineq}
  \end{align}
Since, for any $q_0\in(0,\theta)$ and for any $q\in(0,q_0)$, $J_q(x)\geq J_{q_0}(x)$, $x\in\X$, one gets from \eqref{suffineq}, that, to have $\mathcal{F}w\leq 0$, it is enough to claim that, for all $x\in\X$,
\[
m+\frac{|x|^2}{\alpha t^2}-\frac{|\m   |^2}{\alpha}\leq
\exp\biggl(\frac{|x-t \m   |^2}{\alpha t}\biggr)
\int_\X J_{q_0}(y)\exp\biggl(-\frac{|x-y-t \m   |^2}{\alpha t}\biggr)\,dy.
\]
By changing $x$ onto $x+t \m   $ and a simplification, one gets an equivalent inequality
\begin{equation}\label{ineqtoprove}
 m+ \frac{|x|^2}{\alpha t^2}+\frac{2\, x\cdot\m   }{\alpha t}\leq
\int_\X J_{q_0}(y)\exp\biggl(\frac{2x\cdot y}{\alpha t}\biggr)\exp\biggl(-\frac{|y|^2}{\alpha t}\biggr)\,dy=:I(t).
\end{equation}
One can rewrite $I(t)=I_0(t)+I^+(t)+I^-(t)$, where
\begin{gather*}
I_0(t):=\int_\X J_{q_0}(y)e^{-\frac{|y|^2}{\alpha t}}dy;\qquad\quad
I^+(t):=\int_{x\cdot y\geq0} J_{q_0}(y)e^{-\frac{|y|^2}{\alpha t}}\Bigl(e^{\frac{2x\cdot y}{\alpha t}}-1\Bigr)dy;\\
I^-(t):=\int_{x\cdot y<0} J_{q_0}(y)e^{-\frac{|y|^2}{\alpha t}}\Bigl(e^{\frac{2x\cdot y}{\alpha t}}-1\Bigr)dy.
\end{gather*}
Using that $e^s-1\geq s$, for all $s\in\R$, and $e^s-1\geq s+\frac{s^2}{2}$, for all $s\geq0$, one gets the following estimates
\[
I^+(t)\geq \frac{2}{\alpha t}\int_{x\cdot y\geq0} J_{q_0}(y)e^{-\frac{|y|^2}{\alpha t}}(x\cdot y)dy+\frac{2}{\alpha^2 t^2}\int_{x\cdot y\geq0} J_{q_0}(y)e^{-\frac{|y|^2}{\alpha t}}(x\cdot y)^2dy
\]
and
\[
I^-(t)\geq\frac{2}{\alpha t}\int_{x\cdot y<0} J_{q_0}(y)e^{-\frac{|y|^2}{\alpha t}}(x\cdot y)dy.
\]
Therefore,
\begin{multline}
I(t)\geq I_0(t) + \frac{2}{\alpha t}\Bigl(x\cdot \int_{\X} J_{q_0}(y)e^{-\frac{|y|^2}{\alpha t}} ydy\Bigr)\\+\frac{2}{\alpha^2 t^2}\int_{x\cdot y\geq0} J_{q_0}(y)e^{-\frac{|y|^2}{\alpha t}}(x\cdot y)^2dy.\label{estIfrombelow}
\end{multline}
By the dominated convergence theorem,
\begin{equation}\label{con1eret}
  I_0(t)\nearrow \int_\X J_{q_0}(x)\,dx=\ka^+-q_0\ka^->m, \quad t\to\infty,
\end{equation}
for any $q_0\in(0,\theta)$. Set also
\[
I_1(t):=\int_{\X} J_{q_0}(y)e^{-\frac{|y|^2}{\alpha t}} y \,dy.
\]
By \eqref{firstglobalmoment} and \eqref{as:aplus_gr_aminus}, one has $\int_\X a^-(x)|x|\,dx<\infty$ and hence $\int_\X J_{q_0}(x)|x|\,dx<\infty$.
Then, by the dominated convergence theorem,
\begin{equation}\label{con2eret}
I_1(t)\to\int_{\X} J_{q_0}(y) ydy=:\mu(q_0) \in\X, \quad t\to\infty.
\end{equation}
Since $0\leq J_{q_0}(x)\leq \ka^+ a^+(x)$, $x\in\X$, we have, by \eqref{firstfullmoment} and
the dominated convergence theorem, that $m(q_0)\to\m$, $q_0\to0$.

For any $\eps>0$ with $m+2\eps<\ka^+$, one can choose $q_0=q_0(\eps)\in(0,\theta)$, such that
\begin{equation}\label{qwrrwsfasa}
  \ka^+>\ka^+-q_0\ka^->m+2\eps, \qquad |\m-\mu(q_0)|<\frac{\eps}{2}.
\end{equation}
By \eqref{con1eret}, \eqref{con2eret}, there exists $T_1=T_1(\eps,q_0)>0$, such that, for all $\alpha>0$ and $t>0$ with $\alpha t>T_1$, one has, cf.~\eqref{qwrrwsfasa},
\begin{equation}\label{estlim}
\ka^+\geq I_0(t)>m+\eps, \qquad \lvert I_1(t)-\mu(q_0)   \rvert<\frac{\eps}{2}.
\end{equation}

Let $T>\frac{T_1}{\alpha}$ be chosen later. The function
\[
I_2(t):=\int_{x\cdot y\geq0} J_{q_0}(y)e^{-\frac{|y|^2}{\alpha t}}(x\cdot y)^2dy
\]
is also increasing in $t>0$.
Clearly, from \eqref{qwrrwsfasa} and \eqref{estlim}, one has $|I_1(t)-\m|<\eps$.
Therefore, by \eqref{estIfrombelow} and \eqref{estlim}, one gets, for $t>T>\frac{T_1}{\alpha}$,
\begin{align}
I(t)&>m+\eps +\frac{2}{\alpha t}x\cdot(I_1(t)-\m   )+\frac{2}{\alpha t}x\cdot\m
+\frac{2}{\alpha^2 t^2}I_2(t) \notag\\
& \geq m+\eps -\frac{2\eps}{\alpha t}|x|+\frac{2}{\alpha t}x\cdot\m
+\frac{2}{\alpha^2 t^2}I_2(T).\label{ineqenoughtoprove2}
\end{align}

Next, by \eqref{as:aplus_gr_aminus}, \eqref{as:aplus-aminus-is-pos}, and \eqref{Jq},
$J_{q_0}(y)\geq\rho$, for a.a. $y\in B_\delta(0)$. For an arbitrary $x\in\X$, consider the set
\[
B_x=\Bigl\{y\in\X \Bigm\vert |y|\leq \delta, \frac{1}{2}\leq\frac{x\cdot y}{|x||y|}\leq 1\Bigr\}.
\]
Then
\begin{equation}\label{weneedit}
I_2(T)\geq \frac{\rho}{4}|x|^2 \int_{B_x} |y|^2 e^{-\frac{|y|^2}{\alpha T}} dy.
\end{equation}
The set $B_x$ is a cone inside the ball $B_\delta(0)$, with the apex at the origin, the height which lies along $x$, and the apex angle $2\pi/3$. Since the function inside the integral in the r.h.s. of \eqref{weneedit} is radially symmetric, the integral does not depend on $x$. Fix an arbitrary $\bar{x}\in\X$ and denote
\begin{equation}\label{Alimit}
  A(\tau)=A(\tau,\delta)=\int_{B_{\bar{x}}} |y|^2 e^{-\frac{|y|^2}{\tau}} dy\nearrow\int_{B_{\bar{x}}} |y|^2 dy=:\bar B_\delta, \quad \tau\to\infty.
\end{equation}
Then, by \eqref{ineqenoughtoprove2} and \eqref{weneedit}, one has, for $t>T$,
\begin{equation}
I(t)> m+\eps -\frac{2\eps}{\alpha t}|x|+\frac{2}{\alpha t}x\cdot\m
+\frac{\rho A(\alpha T)}{2\alpha^2 t^2}|x|^2 .\label{ineqenoughtoprove3}
\end{equation}
By \eqref{ineqenoughtoprove3}, to prove \eqref{ineqtoprove}, it is enough to show that
\begin{equation*}
  \eps -\frac{2\eps}{\alpha t}|x|
+\frac{\rho A(\alpha T)}{2\alpha^2 t^2}|x|^2\geq  \frac{|x|^2}{\alpha t^2}, \qquad t>T, \ x\in\X,
\end{equation*}
or, equivalently, for $2\alpha<\rho  A(\alpha T)$,
\begin{multline}\label{ineqenoughtoprove}
\biggl(\sqrt{\frac{\rho A(\alpha T)-2\alpha}{2 }}\frac{|x|}{\alpha t}-
 \eps\sqrt{\frac{2}{\rho A(\alpha T)-2\alpha}}\biggr)^2\\+\eps-\eps^2\frac{2}{\rho A(\alpha T)-2\alpha}\geq0.
\end{multline}
To get \eqref{ineqenoughtoprove}, we proceed as follows. For a given $\rho>0$, $\delta>0$ which provide \eqref{as:aplus-aminus-is-pos}, we set $\alpha_0:=\frac{1}{2}\rho\bar{B}_\delta$, cf.~\eqref{Alimit}.
Then, for any $\alpha\in(0,\alpha_0)$, there exists $T_2=T_2(\alpha)>0$, such that
\[
2\alpha<\rho A(\alpha T_2)<\rho\bar{B}_\delta.
\]
Choose now $\eps=\eps(\alpha)>0$, such that $m+2\eps<\ka^+$ and
\begin{equation}\label{smalleps}
  \eps<\frac{1}{2}(\rho A(\alpha T_2)-2\alpha)<
\frac{1}{2}(\rho A(\alpha T)-2\alpha), \quad T>T_2.
\end{equation}
For the chosen $\eps$, find $q_0=q_0(\alpha)\in(0,\theta)$ which ensures
 \eqref{qwrrwsfasa}. Then, find $T_1=T_1(\alpha,q_0)>0$ which gives \eqref{estlim}; and, finally, take $T=T(\alpha,q)>T_2$ such that $\alpha T> T_1$. As a result, for $t>T$, one has $\alpha t>\alpha T>T_1$, thus
\eqref{estlim} holds, whereas \eqref{smalleps} yields \eqref{ineqenoughtoprove}. The latter inequality gives \eqref{ineqtoprove}, and hence, for all $q\in(0,q_0)$, $\mathcal{F}w\leq 0$, for $w$ given by \eqref{greatsubsol}. The statement is proved.
\end{proof}

\begin{proposition}\label{useBrandle}
Let \eqref{as:chiplus_gr_m}, \eqref{as:aplus_gr_aminus}, and \eqref{as:aplus-aminus-is-pos} hold. Then, there exists $t_1>0$, such that, for any $t>t_1$ and for any $\tau>0$, there exists $q_1=q_1(t,\tau)>0$, such that the following holds. If $u_0\in\Ltheta$ is such that there exist $\eta>0$, $r >0$, $x_0\in\X$ with
$u_0(x)\geq\eta$, $x\in B_r (x_0)$ and $u\in\tXinf$ is the corresponding solution to \eqref{eq:basic}, then
\begin{equation}
u(x,t)\geq q_1e^{-\frac{|x-x_0|^2}{\tau}},\quad x\in\X.\label{onegetsfromBr}
\end{equation}
\end{proposition}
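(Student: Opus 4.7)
My plan is to reduce Proposition~\ref{useBrandle} to a Gaussian heat-kernel-type lower bound for the linear nonlocal diffusion generated by $L_{a^+}$, and to import that bound from~\cite{BCF2011}.

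The first step strips off the nonlinear reaction term. Since $u_0\in\Ltheta$, Remark~\ref{rem:comp_pr_Linf} and Proposition~\ref{prop:comp_pr_BUC} give $u(x,t)\in[0,\theta]$ a.e., hence $(a^-*u)(x,t)\leq\theta$. Using $\ka^-\theta=\ka^+-m$, the equation~\eqref{eq:basic} yields
\[\frac{\partial u}{\partial t}=\ka^+(a^+*u)-mu-\ka^-u(a^-*u)\geq\ka^+(a^+*u-u)=L_{a^+}u.\]
Letting $v\in\tXinf$ denote the unique solution of the linear problem $\partial_t v=L_{a^+}v$ with $v(\cdot,0)=u_0$, Proposition~\ref{compprabscont} applied to the pair $(v,u)$ with $c=\theta$ gives $u(x,t)\geq v(x,t)$ a.e. Since $\int a^+\,dx=1$, the Markov semigroup $e^{tL_{a^+}}$ admits the explicit series representation $v(x,t)=e^{-\ka^+t}\sum_{n\geq 0}(\ka^+t)^n(a^{+,*n}*u_0)(x)/n!$, where $a^{+,*n}$ denotes the $n$-fold self-convolution of $a^+$, so it suffices to bound $v$ from below.

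The second and central step is the Gaussian lower bound on $v$. Assumption~\eqref{as:aplus-aminus-is-pos} together with $J_\theta\leq\ka^+a^+$ yields $\rho_0,\delta_0>0$ with $a^+\geq\rho_0\mathbf{1}_{B_{\delta_0}(0)}$ a.e.; combined with the hypothesis $u_0\geq\eta\mathbf{1}_{B_r(x_0)}$, the series is minorized by
\[v(x,t)\geq\eta e^{-\ka^+t}\sum_{n=0}^{\infty}\frac{(\ka^+\rho_0 t)^n}{n!}\bigl(\mathbf{1}_{B_{\delta_0}(0)}^{*n}*\mathbf{1}_{B_r(x_0)}\bigr)(x).\]
A local central limit theorem for the uniform distribution on $B_{\delta_0}(0)$ provides a Gaussian lower bound for $\mathbf{1}_{B_{\delta_0}(0)}^{*n}(y)$ of variance proportional to $n\delta_0^2$ on $|y|\lesssim n$. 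Weighting by the Poisson factors (which concentrate around $n\sim\ka^+\rho_0 t$ by Stirling's formula) and optimizing over $n$, one obtains, following \cite{BCF2011}, the existence of $t_1>0$ and constants $A,B>0$, depending only on $\eta,r,d,\rho_0,\delta_0,\ka^+$, such that
\[v(x,t)\geq A\exp\left(-B\frac{|x-x_0|^2}{t}\right),\qquad t\geq t_1,\ x\in\X.\]

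Given $t>t_1$ and $\tau>0$, the desired inequality \eqref{onegetsfromBr} is equivalent to $q_1\leq A\exp\bigl(|x-x_0|^2(B/t-1/\tau)\bigr)$ pointwise in $x$. When $\tau\leq t/B$ the exponent is nonpositive, and one may take $q_1:=A$. When $\tau>t/B$, one argues separately on a ball $B_R(x_0)$, where the strict positivity and continuity of $v(\cdot,t)$ (inherited from the convolution structure of the series and from Lemma~\ref{le:simple}) furnish a uniform positive lower bound absorbing the target Gaussian, and on $\X\setminus B_R(x_0)$, where the Gaussian estimate above, together with a sufficiently small choice of $q_1$, delivers the bound. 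The main obstacle is the Gaussian lower bound of Step~2: even granting \cite{BCF2011}, casting it in a form uniform in the bump parameters $\eta,r$ requires careful bookkeeping of constants through the local central limit theorem, while the possible asymmetry of $a^+$ induces a drift that must be absorbed into the constants $A$, $B$ and the threshold $t_1$.
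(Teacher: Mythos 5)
Your overall strategy — reduce to a linear problem by the comparison principle and import a lower bound on the nonlocal heat kernel from \cite{BCF2011} — is essentially the paper's strategy, and your first step is sound. But the central estimate you invoke is wrong, and the error is fatal to the conclusion rather than cosmetic.

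The bound in \cite[Proposition 5.1]{BCF2011} is \emph{not} Gaussian: it is of the Poisson / large-deviation type
\[
w(x,t)\ \geq\ c_1\, t\, \exp\Bigl(-t-\tfrac{1}{\omega}|x|\log|x|+(\log t-c_2)\bigl[\tfrac{|x|}{\omega}\bigr]\Bigr),
\]
i.e.\ the tail decays like $e^{-c\,|x|\log|x|}$, which is \emph{slower} than any Gaussian as $|x|\to\infty$. This sub-Gaussian heavy tail is precisely what makes the proposition provable for \emph{every} $\tau>0$: since $|x|\log|x|=o(|x|^2/\tau)$, one can absorb the quadratic exponent by choosing $q_1$ small. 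Replacing this by a Gaussian lower bound $v(x,t)\geq A e^{-B|x-x_0|^2/t}$ loses exactly this feature. Indeed, with $B$ a fixed constant, whenever $\tau>t/B$ the inequality $Ae^{-B|x-x_0|^2/t}\geq q_1 e^{-|x-x_0|^2/\tau}$ forces $q_1\leq A\,e^{|x-x_0|^2(1/\tau-B/t)}\to 0$ as $|x-x_0|\to\infty$, so no positive $q_1$ exists. Your split ``on a ball / off a ball'' does not rescue this, because the comparison fails precisely in the exterior region. So even granting the Gaussian bound, the last paragraph of your proposal does not close.

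The derivation of the Gaussian bound is itself shaky. The local CLT gives a Gaussian approximation to $\mathbf 1_{B_{\delta_0}(0)}^{*n}$ only in the diffusive window $|y|=O(\sqrt n\,\delta_0)$, not ``on $|y|\lesssim n$''; beyond that scale the density of an $n$-fold convolution of a compactly supported kernel is governed by large deviations and decays much faster than the Gaussian on its way to vanishing at $|y|=n\delta_0$. When this is weighted by Poisson factors concentrating at $n\sim\ka^+\rho_0 t$, the resulting tail in $|x|$ is of $|x|\log|x|$ type, not Gaussian. Finally, the claimed uniformity in $t$ of the prefactor $A$ also cannot hold: for the mass-preserving linear semigroup you consider, $v(x_0,t)\to 0$ as $t\to\infty$, so any valid prefactor must decay in $t$ (as it indeed does in the paper, through the factor $e^{-mt}$); this is a secondary issue since $q_1$ may depend on $t$, but it signals the same misreading of the kernel bound. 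The fix is to keep the $|x|\log|x|$ lower bound as stated in \cite{BCF2011} and then compare $\frac1\omega|x|\log|x|$ to $\frac{|x|^2}{\tau}$, as the paper does.
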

\begin{proof}
At first, we note that \eqref{onegetsfromBr} may be rewritten as follows:
\[
q_1e^{-\frac{|x|^2}{\tau}}\leq u(x+x_0,t_0)=T_{-x_0}Q_{t_0}u_0(x)=Q_{t_0}T_{-x_0}u_0(x),
\]
cf.~\eqref{def:Q_T}, \eqref{shiftoper}, \eqref{eq:QTy=TyQ}, and one has
\[
T_{-x_0}u_0(x)=u_0(x+x_0)\geq \eta, \quad \lvert (x+x_0)-x_0\rvert=\lvert x\rvert\leq r.
\]
Therefore, it is enough to prove the statement for $x_0=0$.

Consider now arbitrary functions $b,v_0\in C^\infty(\X)$, such that
 \begin{align*}
 \supp b =B_\delta(0), &\qquad 0 < b(x)=b(|x|) \leq \rho, && x\in \inter(B_\delta(0));\\
 \supp v_0=B_r(0), &\qquad 0 < v_0(x) \leq \eta, && x\in \inter(B_r(0));\\
 \exists \, 0<p<\min\{r,1\},\, 0<\nu<\eta, &\qquad \text{such that} \ v_0(x)\geq \nu, && x\in B_{p }(0),
 \end{align*}
where $\rho$ and $\delta$ are the same as in \eqref{as:aplus-aminus-is-pos}.
Set $\langle b\rangle:=\int_\X b(x)\,dx>0$. Define two bounded operators in the space $L^\infty(\X)$, cf.~\eqref{jump}: $Bu=b*u$, $L_bu=Bu-\langle b\rangle u$.
One can rewrite \eqref{eq:basic} as follows
\begin{align*}
\frac{\partial}{\partial t} u(x,t)&=(J_\theta*u)(x,t)-mu(x,t)+\ka^-(\theta-u(x,t))(a^-*u)(x,t)
\\&=(b*u)(x,t)-mu(x,t)+f(x,t),
\end{align*}
where, for any $x\in\X$, $t\geq0$,
\[
f(x,t):=((J_\theta-b)*u)(x,t)+\ka^-(\theta-u(x,t))(a^-*u)(x,t).
\]
By \eqref{as:aplus-aminus-is-pos} and the choice of $b$, $J_\theta(x)\geq b(x)$, $x\in\X$. In particular, $m=\int_\X J_\theta(x)\,dx\geq\langle b\rangle$, and $f(x,t)\geq0$, $x\in\X$, $t\geq0$. Next, for any $t\geq0$,
$\lVert f(\cdot,t)\rVert_\infty\leq\theta (m-\langle b\rangle)+\ka^-\theta^2<\infty$. Since $b\geq0$ and $Bu=b*u$ defines a bounded operator on $L^\infty(\X)$, one has that $e^{t B}f(x,s)\geq0$, for all $t,s\geq0$, $x\in\X$. By the same argument, $u_0(x)\geq \eta \1_{B_r(0)}(x)\geq v_0(x)\geq0$ implies
$(e^{tB}u_0)(x)\geq (e^{tB}v_0)(x)$.
Therefore,
\begin{align}\notag
u(x,t)&=e^{-tm}(e^{tB}u_0)(x)+\int_0^t e^{-(t-s)m}(e^{(t-s)B}f)(x,s)ds\\
&\geq e^{-tm}(e^{tB}u_0)(x)
\geq e^{-(m-\langle b\rangle)t}(e^{tL_b}v_0)(x),\quad x\in\X.\label{estbelow1}
\end{align}

We are going to apply now the results of \cite{BCF2011}. To do this, set $\beta:=\langle b\rangle ^{-1}$. Then
\begin{equation}\label{asfsaffsa}
(e^{tL_b}v_0)(x)=(e^{\langle b\rangle t(\beta L_b)}v_0)(x)=v(x,\langle b\rangle t),
\end{equation}
where $v$ solves the differential equation $\frac{d}{dt}v=\beta L_b$.
Since $\int_\X \beta b(x)\,dx=1$, then, by \cite[Theorem~2.1, Lemma~2.2]{CCR2006},
\begin{equation}\label{soltolinear}
  v(x,t)=e^{-t}v_0(x)+(w*v_0)(x,t),
\end{equation}
where $w(x,t)$ is a smooth function. Moreover, by \cite[Proposition 5.1]{BCF2011}, for any $\omega\in (0,\delta)$ there exist $c_1=c_1(\omega)>0$ and $c_2=c_2(\omega)\in\R$, such that
\begin{equation}\label{lowbound}
\begin{split}
w(x,t)&\geq h(x,t), \quad x\in\X, t\geq0,\\
h(x,t)&:=c_1t\exp\Bigl(-t-\frac{1}{\omega}|x|\log|x|+(\log t-c_2)\Bigl[\frac{|x|}{\omega}\Bigr]\Bigr).
\end{split}
\end{equation}
Here $[\alpha]$ means the entire part of an $\alpha\in\R$, and $0\log 0:=1$, $\log 0:=-\infty$.

Set $t_1=e^{c_2}>0$. Since $[\alpha]>\alpha-1$, $\alpha\in\R$, one has, for $t>t_1$,
\[
h(x,t)\geq c_1e^{c_2}\exp\Bigl(-t-\frac{1}{\omega}|x|\log|x|+(\log t-c_2)\frac{|x|}{\omega}\Bigr)\geq c_3g(x,t),
\]
where $c_3=c_1e^{c_2}>0$ and
\[
g(x,t):=\exp\Bigl(-t-\frac{1}{\omega}|x|\log|x|\Bigr), \quad x\in\X, t>t_1.
\]
Since $v_0\geq\nu\1_{B_{p }(0)}$, one gets from \eqref{soltolinear} and \eqref{lowbound}, that
\begin{equation}\label{aswgddsye}
   v(x,t)\geq \nu e^{-t}\1_{B_{p }(0)}(x)+\nu c_3\int_{B_{p }(x)}g(y,t)\,dy
\end{equation}
Set $V_p :=\int_{B_p(0)}\,dx$. For any fixed $t>t_1$, since $g(\cdot,t)\in C(B_p(x))$, there exists $y_0,y_1\in B_p(x)$, such that $g(y,t)$ attains its minimal and maximal values on $B_p(x)$ at these points, respectively. Since $B_p(x)$ is a convex set, one gets that, for any $\gamma\in(0,1)$, $y_\gamma:=\gamma y_1+(1-\gamma)y_0\in B_p(x)$. Then
\[
V_p g(y_0,t)\leq \int_{B_{p }(x)}g(y_\gamma,t)\,dy\leq V_p g(y_1,t).
\]
Therefore, by the intermediate value theorem there exists, $\tilde{y}_t=\tilde{y}(x,t)\in B_p (x)$, $t>t_1$, $x\in\X$, such that $\int_{B_p(x)}g(y,t)\,dy=V_p  g(\tilde{y}_t,t)$. Hence one gets from \eqref{estbelow1}, \eqref{asfsaffsa}, \eqref{aswgddsye}, that
\begin{equation}
  u(x,t)\geq c_4 e^{-(m-\langle b\rangle)t}g\bigl(\tilde{y}_t,\langle b\rangle t\bigr)
  =c_4
  \exp\Bigl(-mt-\frac{1}{\omega}|\tilde{y}_t|\log|\tilde{y}_t|\Bigr),\label{estbelow2}
\end{equation}
for $\tilde{y}_t=\tilde{y}(x,t)\in B_p (x)$, $t>t_1$; here $c_4=c_3 \nu V_p>0$.

As a result, to get the statement, it is enough to show that, for any $t>t_1$ and for any $\tau>0$, there exists $q_1=q_1(t,\tau)>0$, such that the r.h.s. of \eqref{estbelow2} is estimated from below by $q_1 e^{-\frac{|x|^2}{\tau}}$, i.e. that
\begin{equation}\label{needtoproveineq2}
mt+\frac{1}{\omega}|\tilde{y}_t|\log|\tilde{y}_t|-\log c_4\leq \frac{|x|^2}{\tau}-\log q_1, \quad x\in\X,
\end{equation}
Note that $\tilde{y}_t\in B_p (x)$ implies $|\tilde{y}_t|\leq p+|x|$, $x\in\X$.

Let $p+|x|\leq 1$. Then $\log |\tilde{y}_t|\leq 0$, and the l.h.s. of \eqref{needtoproveineq2} is majorized by $mt-\log c_4$. Therefore, to get \eqref{needtoproveineq2}, it is enough to have $q_1< c_4e^{-mt}$, regardless~of~$\tau$.

Let now $|x|+p >1$. Recall that we chose $p<1$. The function $s\log s$ is increasing on $s>1$. Hence to get \eqref{needtoproveineq2}, we claim
\begin{equation}\label{sasfaasftewtew}
  (|x|+1)\log(|x|+1)\leq \frac{\omega}{\tau}|x|^2-\omega mt+\omega\log c_4-\omega\log q_1.
\end{equation}
Consider now the function $f(s)=as^2-(s+1)\log(s+1)$, $s\geq0$, $a=\frac{\omega}{\tau}>0$. Then $f(0)=0$, $f'(s)=2as-\log(s+1)-1$, $f'(0)=-1$, $f''(s)=2a-\frac{1}{s+1}$. Since $f''(s)\nearrow 2a>0$, $s\to\infty$, there exists $s_0>0$, such that $f''(s)>0$, for all $s>s_0$, i.e. $f'(s)$ increases on $s>s_0$. Since $f'(s)\to\infty$, $s\to\infty$, there exists $s_1>s_0$, such that $f'(s)>0$, for all $s>s_1$, i.e. $f$ is increasing on $s>s_1$. Finally, for any $t>t_1$, one can choose $q_1=q_1(t,\tau)>0$ small enough, to get
\[
\min\limits_{s\in[0,s_1]}f(s)-\omega mt+\omega\log c_4-\omega\log q_1>0
\]
and to fulfill \eqref{sasfaasftewtew}, for all $x\in\X$.
The statement is proved.
\end{proof}

Now, we are ready to prove the main Theorem~\ref{thm:convtotheta}.

\begin{proof}[Proof of Theorem~\ref{thm:convtotheta}]
For $u_0\equiv\theta$, the statement is trivial. Hence let $u_0\not\equiv\theta$, $u_0\not\equiv0$. Next, recall that, \eqref{as:aplus-aminus-is-pos} implies
  \eqref{as:a+nodeg} and \eqref{expradialmoment} implies \eqref{firstglobalmoment}. Therefore, one may use the statements of Propositions~\ref{prop:front_is_non-empty},
  \ref{prop:subsolution}, \ref{useBrandle}.

  According to Proposition~\ref{prop:subsolution}, choose any $\alpha\in(0,\alpha_0)$ and take the corresponding $q_0=q_0(\alpha)\in(0,\theta)$ and $T=T(\alpha,q_0)>0$. Choose then arbitrary $t_2>T$. Let $\m$ be given by \eqref{firstfullmoment}. Set $x_0=t_2\m\in\X$. By~Proposition~\ref{prop:u_gr_0}, there exist $\eta=\eta(t_2)>0$ and $r=r(t_2)>0$, such that $u(x,t_2)\geq\eta$, $|x-x_0|=|x-t_2\m|\leq r$. Apply now Proposition~\ref{useBrandle}, with $u_0(x)=u(x,t_2)$; let $t_1$ be the moment of time stated there. Take, for the $\alpha$ chosen above, $\tau=\alpha t_2>0$. Take any $t_3>\max\{t_1,t_2\}$ and the corresponding $q_1=q_1(t_3,\tau)>0$. We will get then, by~\eqref{onegetsfromBr}, that
\begin{equation}
u(x,t_3+t_2)\geq q_1\exp\Bigl(-\frac{|x-t_2\m|^2}{\alpha t_2}\Bigr),\quad x\in\X.\label{onegetsfromBrforproof}
\end{equation}
Of course, one can assume that $q_1<q_0$ (otherwise, we just pass to a weaker inequality in \eqref{onegetsfromBrforproof}).

We are going to apply now Theorem~\ref{thm:compar_pr}, with $c=\theta$ and, for $t\geq0$,
\[
   \begin{split}
    u_1(x,t)&=q_1\exp\Bigl(-\frac{|x-(t+t_2)\m|^2}{\alpha (t+t_2)}\Bigr)\geq0,  \\
    u_2(x,t)&=u(x,t+t_3+t_2)\in[0,\theta].
    \end{split}
\]
By \eqref{onegetsfromBrforproof}, $u_1(x,0)\leq u_2(x,0)$, $x\in\X$. Since $u$ solve \eqref{eq:basic}, $\mathcal{F}u_2\equiv0$. Next, by Proposition~\ref{prop:subsolution},
if we set $q=q_1$, we will have $\mathcal{F}u_1\leq 0$, as $t+t_2\geq t_2>T$. Therefore, by Theorem~\ref{thm:compar_pr},
\begin{equation*}
  u(x,t+t_3+t_2)\geq q_1\exp\Bigl(-\frac{|x-(t+t_2)\m|^2}{\alpha (t+t_2)}\Bigr), \quad t\geq0, x\in\X,
\end{equation*}
or, equivalently,
\begin{equation*}
  u(x+(t+t_2)\m,t+t_3+t_2)\geq q_1\exp\Bigl(-\frac{|x|^2}{\alpha (t+t_2)}\Bigr), \quad t\geq0,\ x\in\X,
\end{equation*}

Let now $\K\subset\inter(\Tau_1)$ be a compact set. Choose any $\sigma\in(0,q_1)$ and consider a radius $r_\sigma=r_\sigma(Q_1,\K)$ which fulfills Proposition~\ref{prop:conv_to_theta_cont_time}, cf.~Remark~\ref{justAradius}. Then $|x|\leq r_\sigma$ implies that there exists $t_4=t_4(\sigma,\K)>0$, such that, for all $t\geq t_4$,
\[
q_1\exp\Bigl(-\frac{|x|^2}{\alpha (t+t_2)}\Bigr)\geq
q_1\exp\Bigl(-\frac{r_\sigma^2}{\alpha (t+t_2)}\Bigr)>\sigma.
\]
Then, one can apply Proposition~\ref{prop:conv_to_theta_cont_time} with
$u_0(x)=u(x+(t_4+t_2)\m,t_4+t_3+t_2)$, $x\in\X$; by~\eqref{convtothetaunderassumtion}, we have
\begin{equation}\label{convtothetaunderassumtion231}
  \lim_{t\to\infty}\min_{x\in t\K}u(x+(t_2+t_4)\m,t+t_2+t_3+t_4)=\theta.
\end{equation}

Let, finally, $\Tauin\subset\inter(\Tau_1)$ be an arbitrary compact set from the statement of Theorem~\ref{thm:convtotheta}. It is well-known, that the distance between disjoint compact and closed sets is positive; in particular, one can consider the compact $\Tauin$ and the closure of $\X\setminus\Tau_1$. Therefore, there exists a compact set $\K\subset\inter(\Tau_1)$, such that $\Tauin\subset\inter(\K)$. Let $\delta_0>0$ be the distance between $\Tauin$ and the closure of $\X\setminus\K$.
One has then that \eqref{convtothetaunderassumtion231} does hold with $t_4=t_4(\sigma,\K)>0$.

By \eqref{convtothetaunderassumtion231}, for any $\eps>0$, there exists $t_5>0$ such that, for all $t>t_2+t_3+t_4+t_5=:t_6>0$ and for all $y\in\K$,
\begin{equation}\label{sasasfsfwr}
 u\bigl((t-t_2-t_3-t_4)y+(t_2+t_4)\m,t\bigr)>\theta-\eps
\end{equation}
Without loss of generality we can assume that $t_5$ is big enough to ensure
\begin{equation}\label{bigt5}
  (t_2+t_3+t_4)\max\limits_{x\in\Tauin}|x|+(t_2+t_4)|\m|<\delta_0 t_5.
\end{equation}
Then, for any $x\in\Tauin$ and for any $t>t_6$, the vector
\[
y(x,t):=\frac{tx-(t_2+t_4)\m}{t-t_2-t_3-t_4}
\]
is such that
\[
\lvert y(x,t)-x\rvert
=\frac{\bigl\lvert(t_2+t_3+t_4)x-(t_2+t_4)\m\bigr\rvert}{t-t_2-t_3-t_4}
<\delta_0,
\]
where we used \eqref{bigt5}.
Therefore, $y(x,t)\in\K$, for all $x\in\Tauin$ and $t>t_6$, and hence \eqref{sasasfsfwr}, being applied for any such $y(x,t)$, yields
$u(tx,t)>\theta-\eps$, $x\in\Tauin$, $t>t_6$, that fulfils the proof.
\end{proof}

\subsection{Fast propagation for slow decaying dispersal kernels}
\label{subsec:fastpropag}

All result above about traveling waves and long-time behavior of the solutions were obtained under exponential integrability assumptions, cf. \eqref{aplusexpint1} or \eqref{expradialmoment}. In \cite{Gar2011}, it was proved, for the equation \eqref{nl+l} on $\R$ with local nonlinear term, that the case with $a^+$ which does not satisfy such conditions leads to `accelerating' solutions, i.e. in this case the equality like \eqref{convtotheta} holds for arbitrary big compact $\Tauin\subset\X$. The aim of this Subsection is to show an analogous result for the equation \eqref{eq:basic}. The detailed analysis of the propagation for the slow decaying $a^+$ will be done in a forthcoming paper.

We will prove an analog of the first statement in \cite[Theorem~1]{Gar2011}.

\begin{theorem}\label{thm:infinitespeed}
Let the conditions \eqref{as:chiplus_gr_m}, \eqref{as:aplus_gr_aminus}, \eqref{as:aplus-aminus-is-pos}, \eqref{boundedkernels}, and \eqref{firstglobalmoment} hold.
Suppose also there exists a function $0\leq b\in L^1(\R_+)\cap L^\infty(\R_+)$, such that $a^+(x)\geq b(|x|)$, for a.a. $x\in\X$, and that, cf.~\eqref{expradialmoment}, for any $\la>0$ and for any $\xi\in\S $,
\begin{equation}\label{slowdecaying}
  \int_{\X}b(|x|)e^{\la \, x\cdot\xi}dx=\infty.
\end{equation}
Let $u_0\in \Ltheta$ be such that there exist $x_0\in\X$, $\eta>0$, $r>0$, with $u_0\geq\eta$, for a.a. $x\in B_r(x_0)$. Let $u\in\tXinf$ be the corresponding solution to \eqref{eq:basic}.
Then, for any compact set $\K\subset\X$,
  \begin{equation}\label{convtothetaaccel}
    \lim_{t\to\infty} \essinf_{x\in t\K} u(x,t)=\theta.
  \end{equation}
\end{theorem}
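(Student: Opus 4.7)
The plan is to compare $u$ from below with the solution of a truncated equation whose kernel has compact support---hence automatically satisfies the Mollison condition and the hypotheses of Corollary~\ref{cor_essinf}---and then to let the truncation radius tend to infinity. Fix a compact $\K\subset\X$ and $\eps\in(0,\theta)$. Choose $R>0$ so large that $\theta_R>\theta-\eps$ (possible by~\eqref{defofthetaR}), set $a_R^\pm:=\1_{B_R(0)}a^\pm$, and let $w\in\tXinf$ solve the truncated equation~\eqref{eq:basic_R} with initial condition $w_0:=\min\{u_0,\theta_R\}$. Then $0\leq w_0\leq\theta_R$ a.e., $w_0\geq\min\{\eta,\theta_R\}>0$ on $B_r(x_0)$, and Proposition~\ref{lowestsuppCub} (in its $L^\infty$ version) yields $u(x,t)\geq w(x,t)$ for a.a.~$x$ and all $t\geq0$. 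Since $a_R^\pm$ are supported in $B_R(0)$, the analogues of \eqref{as:chiplus_gr_m}--\eqref{expradialmoment} hold for the truncated equation with $\theta$ replaced by $\theta_R$ (using $\theta_R\leq\theta$ from~\eqref{thetaRlesstheta}), provided $R$ is large enough that $\ka^+A_R^+>m$.

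Let $c_*^R(\xi)$ denote the minimal wave speed in the direction $\xi\in\S$ for the truncated equation; by Theorem~\ref{thm:speedandprofile} (applied to normalised kernels) it equals
\begin{equation*}
c_*^R(\xi)=\inf_{\la>0}\frac{\ka^+\A_\xi^R(\la)-m}{\la},\qquad \A_\xi^R(\la):=\int_{B_R(0)}a^+(x)e^{\la\,x\cdot\xi}\,dx,
\end{equation*}
and the corresponding set~\eqref{eq:TauT} is $\Tau_1^R=\{x\in\X:x\cdot\xi\leq c_*^R(\xi)\ \forall\xi\in\S\}$. The crux of the proof is to show that, for $R$ large, $\K\subset\inter(\Tau_1^R)$, i.e.\ $\min_{\xi\in\S}c_*^R(\xi)>r_\K:=\max_{x\in\K}|x|$. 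I first show pointwise divergence $c_*^R(\xi)\to\infty$ as $R\to\infty$ for each fixed $\xi\in\S$, by contradiction: assume $c_*^{R_k}(\xi)\leq M$ along some subsequence and let $\la_k^*\in(0,\infty)$ be the (unique, interior) minimiser of $G_\xi^{R_k}$. The lower bound $(\ka^+A_{R_k}^+-m)/\la_k^*\leq c_*^{R_k}(\xi)\leq M$ gives $\la_k^*\geq\delta_0>0$ for large $k$. Proposition~\ref{prop:infGisreached} yields $c_*^{R_k}(\xi)=\ka^+(\A_\xi^{R_k})'(\la_k^*)\leq M$. Splitting this integral by the sign of $x\cdot\xi$, the $\{x\cdot\xi<0\}$ part is bounded by $\ka^+\int a^+(x)|x|\,dx<\infty$ (by~\eqref{firstglobalmoment}), while the $\{x\cdot\xi>0\}$ part is forced to $+\infty$ by the slow-decay hypothesis~\eqref{slowdecaying}. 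If $\la_k^*\to\la^*\in(0,\infty)$, Fatou's lemma and $a^+\geq b(|\cdot|)$ give
\begin{equation*}
\liminf_k\int_{\substack{|x|\leq R_k\\ x\cdot\xi>0}}a^+(x)(x\cdot\xi)e^{\la_k^*x\cdot\xi}\,dx\geq\int_{x\cdot\xi\geq 1}b(|x|)e^{\la^*x\cdot\xi}\,dx=+\infty;
\end{equation*}
if $\la_k^*\to\infty$, the estimate $e^{\la_k^*x\cdot\xi}\geq e^{\la_k^*}$ on $\{x\cdot\xi\geq1\}$ combined with the positivity of $\int_{|x|\leq R_0,x\cdot\xi\geq1}b(|x|)\,dx$ for some fixed $R_0$ (which follows from \eqref{slowdecaying} and is $\xi$-independent by rotational symmetry of $b(|\cdot|)$) produces the same blow-up. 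Either alternative contradicts the bound $M$.

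To upgrade pointwise convergence to uniform in $\xi$, note that for each fixed $R$ the map $\xi\mapsto c_*^R(\xi)$ is continuous on the compact sphere $\S$ (the unique minimiser $\la_R^*(\xi)$ of the smooth strictly convex $G_\xi^R$ depends continuously on $\xi$ by the implicit function theorem), and $c_*^R$ is increasing in $R$. Consequently the open sets $\{\xi\in\S:c_*^R(\xi)>r_\K\}$ are nested and exhaust~$\S$, so by compactness some $R_0$ has $c_*^R(\xi)>r_\K$ for all $\xi\in\S$ and $R\geq R_0$. For such $R$ we have $\K\subset\inter(\Tau_1^R)$, and Corollary~\ref{cor_essinf} applied to $w$ gives $\lim_{t\to\infty}\essinf_{x\in t\K}w(x,t)=\theta_R>\theta-\eps$. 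Since $u\geq w$ a.e., $\liminf_{t\to\infty}\essinf_{x\in t\K}u(x,t)\geq\theta-\eps$. Letting $\eps\to0$ and using $u\leq\theta$ a.e.\ (Proposition~\ref{prop:comp_pr_BUC}, Remark~\ref{rem:comp_pr_Linf}) gives~\eqref{convtothetaaccel}.

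The main obstacle is precisely to force $c_*^R(\xi)\to\infty$ uniformly in $\xi$ without any exponential-moment estimate on $a^+$: the first-order identity $c_*^R(\xi)=\ka^+(\A_\xi^R)'(\la_R^*)$ of Proposition~\ref{prop:infGisreached} is what makes the slow-decay bound $a^+\geq b(|\cdot|)$ directly exploitable, whether the minimiser $\la_R^*(\xi)$ stays bounded or escapes to infinity, and the radial symmetry of $b(|\cdot|)$ is the ingredient providing the uniformity in~$\xi$.
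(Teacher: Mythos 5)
Your overall strategy is the same as the paper's: truncate $a^\pm$ to compactly supported kernels $a_R^\pm=\1_{B_R(0)}a^\pm$, compare $u$ from below with the solution $w$ of the truncated equation via Proposition~\ref{lowestsuppCub}, show that the minimal wave speed $c_*^R(\xi)$ for the truncated equation diverges uniformly in $\xi$ as $R\to\infty$, apply the interior-convergence result, and let $R\to\infty$. Where you differ is in \emph{how} you establish the uniform divergence $\inf_{\xi\in\S}c_*^R(\xi)\to\infty$. The paper proves the needed lower bound $G_\xi^{(n)}(\la)\geq C$ directly, uniformly in $\xi$ and $\la$, by splitting $\la$ into three regimes (small $\la$, large $\la$, and intermediate $\la$, each handled by a separate estimate). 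You instead argue pointwise in $\xi$ by contradiction via the first-order identity $c_*^R(\xi)=\ka^+(\A_\xi^R)'(\la_R^*(\xi))$ and Fatou, and then upgrade to uniformity via continuity of $\xi\mapsto c_*^R(\xi)$ (IFT applied to the strictly convex $G_\xi^R$), monotonicity of $c_*^R$ in $R$, and a Dini-type compactness argument on $\S$. This is a genuinely different and somewhat more elegant route to the uniformity, which bypasses the explicit splitting.

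However, there is a concrete gap in the step where you keep the minimiser $\la_k^*$ away from zero. You write that $(\ka^+A_{R_k}^+-m)/\la_k^*\leq c_*^{R_k}(\xi)$. Since $c_*^{R_k}(\xi)=\bigl(\ka^+\A_\xi^{R_k}(\la_k^*)-m\bigr)/\la_k^*$, this would require $\A_\xi^{R_k}(\la_k^*)\geq A_{R_k}^+$, which is false in general: the kernels are anisotropic, and if $a^+$ has significant mass in $\{x\cdot\xi<0\}$ then $\A_\xi^R(\la)<A_R^+$ for $\la>0$ is entirely possible. Without the conclusion $\la_k^*\geq\delta_0>0$, the rest of your contradiction argument collapses, because as $\la_k^*\to0$ the ``positive part'' of $(\A_\xi^{R_k})'(\la_k^*)$ converges to $\int_{x\cdot\xi\geq1}a^+(x)(x\cdot\xi)\,dx\leq\int_\X a^+(x)|x|\,dx<\infty$ by~\eqref{firstglobalmoment}, so no blow-up is forced. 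The correct lower bound on $\la_k^*$ does hold, but one must first subtract the contribution of $\{x\cdot\xi<0\}$, as the paper does: from $1-e^{-s}\leq s$, $s\geq0$, one has
\begin{equation*}
\la G_\xi^{R}(\la)=\ka^+\int_{B_R(0)}a^+(x)\bigl(e^{\la\,x\cdot\xi}-1\bigr)\,dx+\ka^+A_R^+-m
\geq -\la\,\ka^+\!\int_\X a^+(x)|x|\,dx+\ka^+A_{R_1}^+-m,
\end{equation*}
whence $\la_k^*\geq\bigl(\ka^+A_{R_1}^+-m\bigr)\bigl(M+\ka^+\int_\X a^+(x)|x|\,dx\bigr)^{-1}>0$ uniformly. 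Once this is fixed your argument closes.
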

\begin{proof}
By the same arguments as in the proof of Corollary~\ref{cor_essinf}, there exists $v_0\in\Utheta$, $v_0\not\equiv0$, such that $u_0(x)\geq v_0(x)$, for a.a. $x\in\X$, and $u(x,t)\geq v(x,t)$, for a.a. $x\in\X$ and for all $t\geq0$, where $v\in\tXinf$ is the corresponding to $v_0$ solution to \eqref{eq:basic}, moreover, $v\in\Xinf$.

Let $\bar{\theta}\in(0,\theta)$ be chosen and fixed. We are going to apply now Proposition~\ref{lowestsuppCub} to \eqref{trkern}--\eqref{ARdef} with $\Delta_R:=B_R(0)\nearrow\X$, $R\to\infty$.
Consider an increasing sequence $\{R_n\mid n\in\N\}$, such that

(i)~$\delta<R_n\to\infty$, $n\to\infty$, where $\delta$ is the same as in~\eqref{as:aplus-aminus-is-pos};

(ii)~$A^+_{R_n}>\frac{m}{\ka^+}$, $n\in\N$, cf. \eqref{bigR};

(iii)~$\bar{\theta}<\theta_{R_n}\leq\theta$, cf.~\eqref{defofthetaR}, \eqref{thetaRlesstheta}.

Let $w_0\in\Buc$, $w_0\not\equiv0$ be such that $0\leq w_0(x)\leq v_0(x)$, $x\in\X$ and $\lVert w_0\rVert\leq \bar{\theta}$.
Let, for any $n\in\N$, $w^{(n)}\in\Xinf$ be the corresponding solution to the equation \eqref{eq:basic_R} with $R$ replaced by $R_n$. Then, by \eqref{ineqtrunc}, $w^{(n)}(x,t)\leq v(x,t)$, for all $x\in\X$, $t\geq0$, $n\in\N$. As a result,
\begin{equation}\label{ineqtruncseq}
w^{(n)}(x,t)\leq v(x,t)\leq \theta, \quad \text{a.a. } x\in\X, \ t\geq0, \ n\in\N.
\end{equation}

For an arbitrary $\xi\in\S$, consider the corresponding $\check{a}_{R_n}^+$, cf.~\eqref{apm1dim}. Clearly, $\la_0(\check{a}_{R_n}^+)=\infty$, i.e. $a_{R_n}^+\in \Vxi$, $n\in\N$, cf.~Definition~\ref{def:VWxi}. Let $\A_\xi^{(n)}(\la)$, $n\in\N$, $\la>0$ be defined by \eqref{aplusexpla}, with $a^+$ replaced by $a^+_{R_n}$. Finally, let $c^{(n)}_*(\xi)$ be the corresponding minimal traveling wave's speed for the equation \eqref{eq:basic_R} (with $R$ replaced by $R_n$).
Prove that
\begin{equation}\label{mainaimacc}
  \lim_{n\to\infty}\inf\limits_{\xi\in\S}c^{(n)}_*(\xi)=\infty.
\end{equation}

By \eqref{minspeed}, it is enough to show that, cf. \eqref{firstglobalmoment}, for any
\begin{equation}\label{Cbelow}
  C>\ka^+\int_\X a^+(x)|x|dx,
\end{equation}
there exists $N=N(C)\in\N$, such that, for all $\la>0$,
\begin{equation}\label{needtoproveC}
  \frac{1}{\la}\bigl(\ka^+\A_\xi^{(n)}(\la)-m\bigr)\geq C, \qquad  \xi\in\S, \ n\geq N.
\end{equation}

Denote $\Xi^\pm_\xi:=\{x\in\X\mid \pm x\cdot\xi\geq0\}$; i.e. $\Xi^+_\xi\cup\Xi^-_\xi=\X$.
Then, by (ii) above,
\begin{align*}
  \ka^+\A_\xi^{(n)}(\la)-m &=\ka^+ \int_\X a^+_{R_n}(x) (e^{\la x\cdot\xi}-1)dx+\ka^+ A^+_{R_n}-m\\
  & \geq \ka^+ \int_{\Xi^-_\xi} a^+_{R_n}(x) (e^{\la x\cdot\xi}-1)dx+\ka^+ A^+_{R_1}-m,
\end{align*}
as $\int_{\Xi^+_\xi} a^+_{R_n}(x) (e^{\la x\cdot\xi}-1)dx\geq0$. By the inequality $1-e^{-s}\leq s$, $s\geq 0$, one has that
\[
\biggl\lvert \int_{\Xi^-_\xi} a^+_{R_n}(x) (e^{\la x\cdot\xi}-1)dx\biggr\rvert\leq
\la \int_{\Xi^-_\xi} a^+_{R_n}(x) |x\cdot\xi|dx\leq \la \int_\X a^+(x)|x|dx.
\]
Hence, cf. (ii), \eqref{firstglobalmoment}, and \eqref{Cbelow}, if we set
\[
\la_1:=\frac{\ka^+ A^+_{R_1}-m}{2C}>0,
\]
then, for any $\la\in (0,\la_1)$ and for any $\xi\in\S$,
\[
  \ka^+\A_\xi^{(n)}(\la)-m\geq \ka^+ A^+_{R_1}-m-\la_1\ka^+\int_\X a^+(x)|x|dx\geq\frac{\ka^+ A^+_{R_1}-m}{2}>C\la,
\]
i.e. \eqref{needtoproveC} holds.

On the other hand, \eqref{as:aplus-aminus-is-pos} and the condition (i) imply that, for any $n\in\N$, the assumption \eqref{as:a+nodeg-mod} holds with $a^+$ replaced by $a^+_{R_n}$, where $r=0$ and $\rho,\delta$ are the same as in \eqref{as:aplus-aminus-is-pos}, and thus are independent on $n$. Hence, by \eqref{toinfasltoinf},
\[
  \frac{1}{\la}\A^{(n)}_\xi(\la)\geq \rho'\frac{1}{\la^2}(e^{\la \delta'}-1)\to\infty,
\]
for all $n\in\N$, and here $\rho',\delta'$ are independent on $n$ and on $\xi$. Therefore, there exists $\la_2>0$, such that, for all $\la>\la_2$, $\xi\in\S$, $n\in\N$, \eqref{needtoproveC} holds.

Let, finally, $\la\in[\la_1,\la_2]$. Since $a^+_{R_n}$ are compactly supported, one has
\begin{equation}\label{sparing}
  \frac{d}{d\la}\A_\xi^{(n)}(\la)=
  \int_{\Xi_\xi^+} a^+_{R_n}(x) (x\cdot\xi) e^{\la x\cdot\xi}dx+
  \int_{\Xi_\xi^-} a^+_{R_n}(x) (x\cdot\xi) e^{\la x\cdot\xi}dx.
\end{equation}
The inequality $se^{-s}\leq \frac{1}{e}$, $s\geq0$ implies
\begin{equation}\label{sparing2}
  \biggl\lvert \int_{\Xi_\xi^-} a^+_{R_n}(x) (x\cdot\xi) e^{\la x\cdot\xi}dx
\biggr\rvert\leq \frac{1}{e}\int_{\Xi_\xi^-} a^+_{R_n}(x)dx\leq \frac{1}{e}.
\end{equation}
Since
\[
\int_{x\cdot\xi\leq 1}b(|x|)e^{\la \, x\cdot\xi}dx\leq e^{\la}<\infty, \quad \la>0,
\]
one has, by \eqref{slowdecaying}, that
\begin{equation}\label{sparing3}
    \int_{x\cdot\xi\geq1}b(|x|)e^{\la\, x\cdot\xi}dx=\infty, \quad \la>0.
\end{equation}
Then, by \eqref{sparing}, \eqref{sparing2}, \eqref{sparing3}, for all $\la\geq \la_1$,
\begin{align*}
&\frac{d}{d\la} \int_\X a^+_{R_n}(x) e^{\la x\cdot\xi}dx
\geq \int_{\Xi_\xi^+} a^+_{R_n}(x) (x\cdot\xi) e^{\la x\cdot\xi}dx-\frac{1}{e}\\
&\qquad\geq \int_{x\cdot\xi\geq1}a_{R_n}^+(x)e^{\la_1\, x\cdot\xi}dx-\frac{1}{e}\\
&\qquad\geq\int_{x\cdot\xi\geq1}b(|x|)\1_{B_{R_n}(0)}(x)e^{\la_1\, x\cdot\xi}dx-\frac{1}{e}
\to\infty, \quad n\to\infty,
\end{align*}
and the latter integral, evidently, does not depend on $\xi\in\S$.
Therefore, there exists $N_1=N_1(\la_1)\in\N$, such that, for all $n\geq N_1$ and for all $\xi\in\S$, the function $\A_\xi^{(n)}(\la)$ is increasing on $[\la_1,\la_2]$. As a result, for $\la\in[\la_1,\la_2]$, $n\geq N_1$, $\xi\in\S$,
\begin{multline*}
  \frac{1}{\la}\bigl(\ka^+\A_\xi^{(n)}(\la)-m\bigr)\geq
  \frac{1}{\la_2}\bigl(\ka^+\A_\xi^{(n)}(\la_1)-m\bigr)\\
  \geq \frac{1}{\la_2}\biggl(\ka^+\int_\X b(|x|)\1_{B_{R_n}(0)}(x)e^{\la_1\, x\cdot\xi}dx-m\biggr)
  \to\infty, \quad n\to\infty,
\end{multline*}
and, again, the latter expression does not depend on $\xi\in\S$, thus the convergence is uniform in $\xi$. Therefore, one gets \eqref{needtoproveC}, for a big enough $N>N_1$ and all $\la\in[\la_1,\la_2]$, $\xi\in\S$.

As a result, we have \eqref{mainaimacc}.
Take an arbitrary compact $\K\in\X$. Choose $n\in\N$ big enough to ensure that
\[
\max\limits_{x\in\K,\xi\in\S} x\cdot \xi < \min\limits_{\xi\in\S}c_*^{(n)}(\xi).
\]
As a result, $\K\in\inter(\Tau_1^{(n)})$, where $\Tau_1^{(n)}$ is defined according to \eqref{eq:TauT}, but for the kernels $a^\pm_{R_n}$. Then \eqref{convtotheta}, with $\Tauin=\K$, yields $\min\limits_{x\in t\K} w^{(n)}(x,t)=\theta$, $t\to\infty$. Hence the inequality \eqref{ineqtruncseq} fulfills the statement.
\end{proof}

\begin{corollary}\label{cor:nowaves}
  Let conditions of Theorem~\ref{thm:infinitespeed} hold. Then there does not exist  a traveling wave solution, in the sense of Definition~\ref{def:trw}, to the equation~\eqref{eq:basic}.
\end{corollary}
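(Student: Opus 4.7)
The plan is to argue by contradiction using the comparison principle: if a traveling wave existed, it would confine the solution, forcing a finite propagation speed in its direction of travel, contradicting the infinite speed of propagation established in Theorem~\ref{thm:infinitespeed}.

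First, suppose a traveling wave solution to \eqref{eq:basic} exists in the sense of Definition~\ref{def:trw}, with some profile $\psi\in\M$, speed $c\in\R$, and direction $\xi\in\S$. Fix $\eta\in(0,\theta)$, $x_0\in\X$, $r>0$, and take as initial condition $u_0\in\Ltheta$ any nonnegative bounded function with $u_0\le \eta\,\1_{B_r(x_0)}$ (in particular $u_0\leq\eta<\theta$ everywhere) such that $u_0\ge\frac{\eta}{2}$ on $B_{r/2}(x_0)$; the latter condition ensures that $u_0$ satisfies the hypothesis of Theorem~\ref{thm:infinitespeed}. Since $\psi(-\infty)=\theta>\eta$, we can choose $s_0\in\R$ large enough so that
\begin{equation*}
\psi(x\cdot\xi-s_0)\ge\eta\ge u_0(x), \qquad x\in B_r(x_0),
\end{equation*}
and since $\psi\geq0$ while $u_0$ vanishes outside $B_r(x_0)$, the inequality $u_0(x)\leq \psi(x\cdot\xi-s_0)$ holds for all $x\in\X$.

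Next, the function $(x,t)\mapsto \psi(x\cdot\xi-s_0-ct)$ is itself a solution to \eqref{eq:basic} taking values in $[0,\theta]$, so by the comparison principle (Proposition~\ref{prop:comp_pr_BUC} / Remark~\ref{rem:comp_pr_Linf}, which applies under \eqref{as:chiplus_gr_m}, \eqref{as:aplus_gr_aminus}), the corresponding solution $u\in\tXinf$ to \eqref{eq:basic} satisfies
\begin{equation*}
u(x,t)\le \psi(x\cdot\xi-s_0-ct), \qquad \text{a.a.}\ x\in\X,\ t\ge0.
\end{equation*}

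Now choose $x^*\in\X$ with $x^*\cdot\xi>c$, which is possible since $\xi\neq0$, and pick $\delta\in(0,x^*\cdot\xi-c)$. Let $\K:=\overline{B_\delta(x^*)}$, a compact subset of $\X$. For $x\in t\K$, write $x=tx^*+h$ with $|h|\leq t\delta$; then $x\cdot\xi\ge t(x^*\cdot\xi-\delta)$, and since $\psi$ is nonincreasing,
\begin{equation*}
u(x,t)\le \psi\bigl(t(x^*\cdot\xi-c-\delta)-s_0\bigr),\qquad \text{a.a.}\ x\in t\K,\ t\ge0.
\end{equation*}
As $t\to\infty$, the argument of $\psi$ tends to $+\infty$, and $\psi(+\infty)=0$, so $\esssup_{x\in t\K} u(x,t)\to 0$. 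On the other hand, Theorem~\ref{thm:infinitespeed} (whose hypotheses are in force) gives $\essinf_{x\in t\K} u(x,t)\to\theta>0$, a contradiction. Hence no traveling wave solution exists.

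The construction is largely mechanical; the only point requiring care is ensuring that the chosen $u_0$ simultaneously lies beneath a shift of $\psi$ (for the comparison step) and satisfies the lower bound hypothesis of Theorem~\ref{thm:infinitespeed}, which is why we took $u_0$ sandwiched between $\frac{\eta}{2}\1_{B_{r/2}(x_0)}$ and $\eta\1_{B_r(x_0)}$.
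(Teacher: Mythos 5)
Your proof is correct, and it uses the same underlying mechanism as the paper's: the traveling wave moves at a finite speed $c$, while Theorem~\ref{thm:infinitespeed} forces any solution whose initial datum has a bump to spread faster than every constant speed. The difference is one of implementation. You introduce an auxiliary compactly supported initial condition $u_0$ sandwiched between $\frac{\eta}{2}\1_{B_{r/2}(x_0)}$ and $\eta\1_{B_r(x_0)}$, push it below a shifted profile $\psi(x\cdot\xi-s_0)$, and then invoke the comparison principle (Proposition~\ref{prop:comp_pr_BUC}/Remark~\ref{rem:comp_pr_Linf}) to transfer the decay of the wave to the solution $u$. The paper's proof skips the comparison step entirely: it takes $u_0(x)=\psi(x\cdot\xi)$ itself as the initial condition. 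Since $\psi(-\infty)=\theta>0$, this $u_0$ already satisfies the ``bump'' hypothesis of Theorem~\ref{thm:infinitespeed}, and the corresponding solution is exactly the traveling wave $\psi(x\cdot\xi-ct)$. Choosing a compact $\K$ with $c_1:=\max_{y\in\K}y\cdot\xi>c$, \eqref{convtothetaaccel} then reads $\theta=\lim_{t\to\infty}\psi(t(c_1-c))=0$, an immediate contradiction. So the paper's route is a little shorter — one fewer auxiliary object, no appeal to the comparison principle — but your detour is logically sound and reaches the same contradiction by the same route through infinite-speed spreading versus the decay $\psi(+\infty)=0$.
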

\begin{proof}
  Suppose that, for some $\xi\in\S$, $c\in\R$, and $\psi\in\M$, \eqref{trwv} holds. Then $u_0(x)=\psi(x\cdot\xi)$ satisfies the assumptions of Theorem~\ref{thm:infinitespeed}. Take a compact set $\K\subset\X$, such that $c_1:=\max\limits_{y\in\K}y\cdot\xi >c$. Then \eqref{convtothetaaccel} implies
  \begin{align*}
    \theta&=\lim_{t\to\infty} \essinf_{x\in t\K} \psi(x\cdot\xi -ct)=\lim_{t\to\infty} \essinf_{y\in \K} \psi\bigl(t(y\cdot\xi -c)\bigr)\\
    &=
    \lim_{t\to\infty} \psi\bigl(t(c_1 -c)\bigr)=0,
  \end{align*}
  where we used that $\psi$ is decreasing. One gets a contradiction which proves the statement.
\end{proof}

\enlargethispage{-2\baselineskip}

\section{Concluding remarks}\label{sec:concl}
\subsection{Historical comments}\label{subsec:history}
The solution $u=u(x,t)$ to \eqref{log} describes approximately a density (at the moment
of time $t$\ and at the position $x$ of the space $\X$) for a particle system
evolving in the continuum. In course of the evolution, particles might reproduce
themselves, die, and compete (say, for resources). Namely, a particle located
at a point $y\in\X$ may produce a `child' at a point $x\in\X$ with the intensity $\ka^+$ and according to the dispersion kernel $a^+(x-y)$. Next, any particle
may die with the constant intensity $m$. And additionally, a particle located
at $x$ may die according to the competition with the rest of particles; the
intensity of the death because of a competitive particle located at $y$ is equal to $\ka^-$ and the distribution of the competition  is described by $a^-(x-y)$.

This model was originally proposed in mathematical ecology, see \cite{BP1997},
and subsequent papers \cite{BP1999,DL2000,LMD2003,OC2006}; for further biological
references see e.g. \cite{OFKCBK2014} and the recent review \cite{PL2014}.
Rigorous mathematical constructions were done in \cite{FKK2009}, see also
\cite{FKK2011a, FKK2014}. The mathematical approach
was realized using the theory of Markov statistical dynamics on the so-called
configuration spaces expressed in terms of evolution of time-dependent correlation functions of the system, see e.g. \cite{FKO2009, KKM2008, KK2002}.

The particle density of such a system can not be described by a single evolution
equation in a closed form (except the case $\ka^-=0,$ which is out of our
considerations, see for it \cite{KKP2008}). Namely, the evolutional equation for
the density includes time-dependent correlations between pairs of particles,
whereas the evolutional equation  for that pair correlations includes correlations
between triples of particles, and so on. This situation is quite common
in statistical physics, see e.g. \cite{FKO2009} and the references therein;
in particular, cf. BBGKY-hierarchy for the Hamiltonian dynamics \cite{DSS1989}.

On the other hand, for purposes of applications, one needs to find though
approximate numeric values of the density. To do this, the so-called moment
closure procedure, which goes back at least to \cite{Whi1957}, was realized in \cite{BP1997, BP1999, DL2000, LMD2003}. The idea was to rewrite higher
order correlations as (nonlinear) combinations of low-order ones that yields
a closed system of (nonlinear) evolutional equations. Unfortunately, this
procedure had not any rigorous mathematical background and may be considered
informally only. Another problem was that different `closings' gave different
answers (even numerically). On the contrary, in \cite{OC2006}, a~mesoscopic-type
scaling, cf. \cite{Pre2009}, for the considered model was proposed. It~was
realized there (heuristically) for the case of homogeneous in space initial
density, that gave the homogeneous version \eqref{Ver} of \eqref{log}, for $u=u(t)$.

The nonhomogeneous equation \eqref{log} was rigorously derived in \cite{FKK2011a,
FKK2010c} from the dynamics of infinite particle systems described above, using the
so-called Vlasov scaling technique for continuous particle systems developed in \cite{FKK2010a}.
The infiniteness of the systems reflected in the fact that solutions to \eqref{log}
should be bounded but, in general, non integrable on the whole $\X$. The derivation was realized for all times, however, under conditions $m\gg \ka^+$ and
$Ca^-(x)\geq a^+(x)$, $x\in\X$,
with some $C>0$; and under the assumption that
the functions $a^\pm$ are symmetric: $a^\pm(-x)=a^\pm(x)$.
In the recent paper \cite{FKKozK2014}, the condition on $m$ was dropped, however, the equation \eqref{log} was derived
on a finite time-interval only. A rigorous derivation of \eqref{log} from
an infinite particle system under
the opposite assumption
$a^+(x)\geq C a^-(x)$, $x\in\X$,
which is crucial for the present paper (see, in particular, the discussion in Subsection~\ref{subsec:comparison_pr}), is still an open problem. Note also that the same question for finite systems, that leads to an integrable in space function $u$, should not require any comparison between kernels $a^+$ and $a^-$, cf.~\cite{FM2004}.

The relations between the particle density of the considered system and the solutions
to \eqref{Ver} and \eqref{log} were studied in \cite{OC2006} and \cite{OFKCBK2014},
correspondingly.

\subsection{Remarks and open problems}\label{subsec:remarks}

\begin{enumerate}[leftmargin=*]

  \item The assumption \eqref{infpos} in Theorem~\ref{thm:unibdd} does not need to be postulated at the origin only. It is enough to have the kernel $a^-$ separated from zero elsewhere. Clearly, if $a^-$ is (piecewise) continuous, then the condition \eqref{normed} yields such a property. Note also that the similar assumptions \eqref{as:a+nodeg} and \eqref{as:aplus-aminus-is-pos} cannot be weakened by choosing a neighborhood of an arbitrary point.

  \item We did not try to find an optimal upper bound for $u$ in \eqref{globalbound}. By~\eqref{Mchoice}, one can see that the upper bound we found is inversely proportional to the level $\alpha$ of the separation of $a^-$ from zero (and directly proportional to the initial value upper bound $\Vert u_0\rVert$). According to the previous remark, the highest level which $a^-$ achieves on $\X$ (uniformly on a ball) will give a better upper bound for $u$. Note that the kernel $a^-$ described a competition in a particle system mentioned in Subsection~\ref{subsec:history}; hence it is quite natural that a higher level of competition leads to a lower level of the system's density.

  \item The comparison principle shows, in particular, that any subsolution $\underline{u}$ to \eqref{eq:basic} (i.e. $\mathcal{F} \underline{u}\leq0$, where $\mathcal{F}$ is given by \eqref{Foper}) does not exceed a supersolution $\overline{u}$ (i.e.  $\mathcal{F} \overline{u}\geq0$). The sufficient condition for this, is the inequality \eqref{compare}, where we have a parameter $c>0$. For the case $c\geq\theta$, \eqref{compare} is the necessary condition to have a comparison (Remark~\ref{rem:necineq}). Namely, we have proved that a solution to \eqref{eq:basic} with an initial condition, which did not exceed $\theta$, becomes bigger than $\theta$ (that is a stationary solution to\eqref{eq:basic}), provided that \eqref{compare} fails somewhere. It is an open and interesting problem to study whether the solution will `return' (say, asymptotically) to $\theta$ in such a case (if, for example, the initial condition was compactly supported).

Stress that one has a majorant for a solution to \eqref{eq:basic} regardless of the condition \eqref{compare}. Indeed, neither Theorem~\ref{thm:unibdd} nor Proposition~\ref{prop:expdecay} required \eqref{compare} (note that Proposition~\ref{prop:expdecay} required though \eqref{aplusexpint1}). Therefore, the solution will be bounded uniformly in time by a constant inside a ball and exponentially (in time) decaying at infinity (in space).

\item The most part of the present paper requires that \eqref{compare} does hold with $c=\theta$. We have covered in Subsection~\ref{subsec:comparison_pr} the case \eqref{compare} with $c>\theta$ and the initial condition $u_0$ with $\lVert u_0\rVert\in (\theta,c]$. In particular, if, additionally, $\inf_\X u_0 >0$, then the solution to \eqref{eq:basic} will converge in $E$ to $\theta$ exponentially fast. The same result may be obtained if \eqref{compare} holds with $c=\theta$, however, $u_0$ may exceed $\theta$ (even everywhere) and separated from zero. The are going to present this in a forthcoming paper.

\item The case of \eqref{compare} with $c<\theta$ is less clear. As it was mentioned in a previous remark, one can get then a majorant for the solution. It is worth noting that one can reformulate Theorem~\ref{thm:compar_pr} by replacing \eqref{eq:init_for_compar} on $0\leq u_1(x,t)\leq c$, $0\leq u_2(x,t)\leq d$, $u_1(x,0)\leq u_2(x,0)$, $0<c\leq d$. Then it can be shown that $u_1(x,t)\leq u_2(x,t)$, in particular, one can expect to have a `useful' subsolution $u_1$ for a solution $u=u_2$.

\item Proposition~\ref{prop:u_gr_0} and Corollary~\ref{cor:lesstheta} constitute the maximum principle, whereas Theorem~\ref{thm:strongmaxprinciple} is usually addressed to the strong maximum principle, see e.g. \cite{CDM2008,CDM2008a}. There is an open problem whether a strong comparison principle does hold for the equation \eqref{eq:basic}, i.e. whether the strict inequality in \eqref{eq:max_pr_BUC:ineq} implies $u_1(x,t)<u_2(x,t)$; cf. e.g. \cite[Section 6 of Chapter 2]{Fri1964}.

\item It should be emphasized that though the assumption \eqref{as:a+nodeg-mod} is weaker than \eqref{as:a+nodeg}, it requires $r\geq0$, i.e. one excludes the situation in which $a^+(x)=0$, for all $x\in\X$ with $x\cdot\xi\geq 0$. Note that we needed \eqref{as:a+nodeg-mod} to get \eqref{toinfasltoinf} in the case of quickly decaying kernel $a^+$, i.e. when $\la_0(\check{a}^+)=\infty$. Next, for $a^+\in\Vxi$, \eqref{minspeed-spec} implies that, to have a traveling wave moving to the left, i.e. whose speed $c<0$, one can be looking for a kernel $a^+$ such that $\check{a}^+$ is `concentrated' on $\R_-$. However, \eqref{as:a+nodeg} with $r\geq0$ still has to hold.
Let, for example, $a^+(x)=(1+p)^{-1}\1_{x\cdot\xi\in[-p,1]}(x)$, where $p>0$. Then, clearly, $\la_0(\check{a}^+)=\infty$ hence $a^+\in\Vxi$. By \eqref{minspeed}, to show that $c_*(\xi)<0$ it is enough to prove that $\ka^+\A_\xi(\la)<m$, for some $\la>0$. One has
\[
\ka^+\A_\xi(\la) = \frac{\ka^+e^\la}{\la(1+p)}(1-e^{-(1+p)\la})<m,
\]
if we fix $\la>0$ and choose $p>0$ big enough, since the function $f(x)=x^{-1}(1-e^{-x})$ decreases monotonically to zero as $x\to\infty$.

\item The original Ikehara's Theorem dealt with increasing to infinity functions which allow a representation like \eqref{eq:sing_repres}, where $j=1$, $F$ is a constant and $H\equiv 0$, see e.g. \cite[Theorem~V.17]{Wid1941} (and the Laplace transform in \eqref{eq:sing_repres} was with $z$ replaced by $-z$, as $\varphi$ was increasing to infinity). A generalization for the case $j=2$ was obtained by Delange \cite{Del1954}; the general conditions were slightly similar to \eqref{eq:eta}, and more simple sufficient conditions were formulated. The latter required that the function $F$ in \eqref{eq:sing_repres} has to be analytic on the right closed strip $0<\Re z\leq\mu$. It was not appropriate for us, taking into account \ref{L-singular} and the `critical' case with $\la_*=\la_0(\check{a}^+)$, i.e. $a^+\in\Wxi$, see e.g. Example~\ref{ex:spefunc}. However, the most important point is that we needed to have an analog for this theorem for decaying to zero functions. The corresponding result was postulated in \cite[Proposition~2.3]{CC2004} (see also \cite{GW2008} and the proof of \cite[Theorem~1.6]{CDM2008}). However, the proof in \cite{CC2004} was proposed to be realized by a modification of \cite[Theorem~2.12]{EE1985}; the latter was just a formulation (without a proof) of Delange's results. Of course, such a modification might exist, however, it does not seem to be straightforward. In~particular, a decaying to zero function $\varphi$ may change the order of decay: for example, on some decreasing in length but placed arbitrary far intervals $\Delta_k$ it may decay as $e^{-a_k t^2}$, whereas on the complement to their union it decays as $e^{-\mu t}$. This might be enough to preserve the same singularity as in \eqref{eq:sing_repres}, however, the asymptotic \eqref{eq:trw_asympt} will not hold. To exclude such a case, it is enough to assume that $e^{\nu t}\varphi(t)$ is increasing, for a big enough $\nu$ (see the exact formulation in Proposition~\ref{prop:tauber}). Surely, for a traveling wave profile, it may be obtain from the equation \eqref{eq:trw} only (see Proposition~\ref{beincreas}).

\item The results of \cite[Theorems 1--3]{YY2013} partially cover the statements of Theorem~\ref{thm:trwexists}, Proposition~\ref{asymptexact}, and Theorem~\ref{thm:tr_w_uniq}, in the particular case, where $a^+=a^-$ and \eqref{aplusexpint1} holds for all $\mu>0$, i.e. $\la_0(\check{a}^+)=\infty$. Unfortunately, the proofs in \cite{YY2013} contain a lot of misprints and several gaps; in particular, the proof of the crucial for the uniqueness \cite[Lemma~4.2]{YY2013} is insufficient, cf. the Step~3 in the proof of Theorem~\ref{thm:tr_w_uniq} of the present paper.

\item It is worth noting that the usage of Proposition~\ref{prop:expdecay} (which, we recall, does not require any comparison like \eqref{as:aplus_gr_aminus} between kernels) allowed us to describe the decaying of a solution $u$ to \eqref{eq:basic} `outside' of the set $\Tau_{1,\xi}$ (Theorem~\ref{thm:decayoutsidedirectional}). In particular, the initial condition $u_0$ did not need to be compactly supported. This yielded the corresponding result for the set $\Tau_1$ (Theorem~\ref{thm:outoffront}). On Figure~\ref{fig:f1}, we sketched a relation between
$\Tau_{1,\xi}$ and $\Tau_1$. The arrows describe the motion of these sets to `become' $\Tau_{t,\xi}$ and $\Tau_t$. Recall that, by Proposition~\ref{prop:front_is_non-empty}, $\ka^+\m\in\Tau_1$, however, the origin may be out of $\Tau_1$ or even $\Tau_{1,\xi}$, for some $\xi\in\S$. For the latter, recall, however, that, by Remark~\ref{rem:minspeedopospos}, the origin must belong at least to one of the sets $\Tau_{1,\xi}$, $\Tau_{1,-\xi}$, for any $\xi\in\S$. Recall also that $0\in\inter{\Tau_1}$ (which does hold, if e.g. $a^+(-x)=a^+(x)$, $x\in\X$, then $\m=0$) imply that all traveling waves move to their `right directions', cf. Remarks~\ref{rem:evenimplypos}, \ref{rem:zero_is_in_front}.

\begin{figure}[hb!]
  \centering
   \begin{tikzpicture}[xscale=0.33,yscale=0.33,line width=1pt,>=stealth]
\draw[pattern=north west lines, pattern color=blue!40!white] plot [smooth cycle,tension=0.4] coordinates {(2,2) (1,4) (2,7) (3,8) (5,9) (9,9) (12,7) (13,6) (14,4) (13,2) (12,1) (11,0.7) (10,0.6) (9,0.5) (8,0.6) (7,0.7) (6,0.8) (5,0.9) (4,1) (3,1.2)};
\node (T) at (9,6) {$\Tau_1$};
\fill[black] (5,3) circle (1.5ex) node[right] {$\ka^+\m$};
\draw (19,0) -- (5,16);
\fill[pattern=north east lines, pattern color=green!20!white] (19,0) -- (5,16) -- (0,16) -- (0,-2) -- (19,-2) -- cycle;
\node (T1) at (7,11) {$\Tau_{1,\xi}$};
\draw[->] (12.5,-2) node[below] {$O$} -- (17.1,2.2) node[right] {$c_*(\xi)\xi$};
\fill[black] (12.5,-2)  circle (1.5ex);
\draw[->] (12.5,-2) -- (14.5,{4.2*2/4.6-2}) node[below] {$\xi$};
\foreach \x in {6,...,16}
    \draw[->] (\x,{-8*\x/7+8*19/7}) -- ({\x+1}, {7*(\x+1)/8+(-8*\x/7+8*19/7)-7*\x/8});
\node[right,align=center] (F) at ({12+1.5}, {7*(12+1)/8+(-8*12/7+8*19/7)-7*12/8}) {front  propagation\\ in a direction $\xi$};
\draw[->] (9,9)--(9,10);
\draw[->] (5,9)--(5,10);
\draw[->] (11,0.7)--(11,-0.3);
\draw[->] (8,0.6)--(8,-0.4);
\draw[->] (5,0.9)--(5,-0.1);
\draw[->] (1,4)--(0,4);
\draw[->] (13,2)--(14,1.2);
\draw[->] (2,2)--(1,1);
\draw[->] (2,7)--(0.8,7.6);
\node[below,align=center] (H) at (7,-0.1) {front  propagation};
\end{tikzpicture}
  \caption{Relationship between the sets $\Tau_{1,\xi}$ and $\Tau_1$}
  \label{fig:f1}
\end{figure}
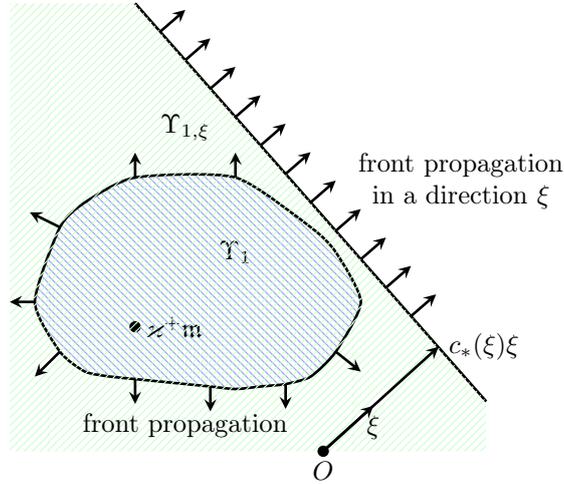

\item The notion of `front' has several slightly different definitions, see e.g. \cite{PS2005,Xin2009}. Informally, front for \eqref{eq:basic} has to be a set which separates a part $\Tauin$ of $\X$, where $u(tx,t)\to\theta$, $x\in\Tauin$, $t\to\infty$ and a part $\Tauout$ of $\X$, where $u(tx,t)\to0$, $x\in\Tauout$, $t\to\infty$. Thus one is interested to have the front as `thin' as possible (one expects to have a surface). Roughly speaking, the front itself is a set, where we do not know the corresponding long-time behavior. The results of Theorems~\ref{thm:outoffront} and \ref{thm:convtotheta} show that any $\eps$-neighborhood of the boundary of $\Tau_1$ can be considered as a front set in the meaning above.
Figures~\ref{fig:f2}, \ref{fig:f3} describe two `projections' of the three-dimensional graph for $u=u(x,t)$.

\begin{figure}[ht!]
  \centering
  \begin{tikzpicture}[xscale=0.5,yscale=0.5,line width=1pt,every node/.style={font=\small},declare function={u(\x)=(8-0.4)*exp(-0.025*\x*\x);},>=stealth]
  \pgfmathsetmacro{\bb}{10};
	\pgfmathsetmacro{\th}{8};
  \draw[dashed, line width=0.7pt,->] ({-(\bb+0.5)},0)--({\bb+1},0) node[below] {$\xi$};
  \draw[dashed, line width=0.7pt,->] (-1,-1)--(-1,{\th+1.5}) node[left] {$u$};
  \draw[domain=-10:10,smooth,variable=\x,blue] plot (\x,{u(\x)});
  \draw({-(\bb+0.5)},\th)--({\bb+0.5},\th);
  \node[above left] (T) at (-1,\th) {$\theta$};
  \pgfmathsetmacro{\cxi}{\bb-3};
  \pgfmathsetmacro{\bbm}{-\bb+1};
  \pgfmathsetmacro{\cxim}{-\bb+4};
  \pgfmathsetmacro{\cximmm}{-\bb+4};
  \foreach \x in {\cxi,...,\bb}{
    \draw[->,line width=0.5pt] (\x,{u(\x)})--(\x,0.1);
    \draw[->,line width=0.5pt] ({\x-0.5},{u(\x-0.5)})--({\x-0.5},0.1);}
  \foreach \x in {\bbm,...,\cximmm}{
    \draw[->,line width=0.5pt] ({\x},{u(\x)})--({\x},0.1);
    \draw[->,line width=0.5pt] ({\x-0.5},{u(\x-0.5)})--({\x-0.5},0.1);}
    \pgfmathsetmacro{\cximm}{\cxim+2.5};
    \pgfmathsetmacro{\cxin}{\cxi-3}
  \foreach \x in {\cximm,...,\cxin}{
    \draw[->,line width=0.5pt] (\x,{u(\x)})--(\x,{\th-0.1});
    \draw[->,line width=0.5pt] ({\x+0.5},{u(\x+0.5)})--({\x+0.5},{\th-0.1});
  }
  \fill[black] ({\cxi-1.5},0) circle(0.7ex) node[below] {$t c_*(\xi)\xi$};
  \fill[black] ({\cxim+1.3},0) circle(0.7ex) node[below] {$-t c_*(-\xi)\xi$};
  \draw[dashed,line width=0.7pt] ({\cxi-1.5},0)--({\cxi-1.5},\th);
  \draw[dashed,line width=0.7pt] ({\cxim+1.3},0)--({\cxim+1.3},\th);
  \draw[dashed,line width=0.7pt,->] ({\cxim+1.3},{u(\cxim+1.3)}) --({\cxim+2.1},{u(\cxim+1.3)}) node[right] {$\eps t\xi$};
  \draw[dashed,line width=0.7pt,->] ({\cxim+1.3},{u(\cxim+1.3)}) --({\cxim+0.6},{u(\cxim+1.3)}) node[left] {$-\eps t\xi$};
  \draw[dashed,line width=0.7pt,->] ({\cxi-1.5},{u(\cxi-1.5)}) --({\cxi-0.8},{u(\cxi-1.5)}) node[right] {$\eps t\xi$};
  \draw[dashed,line width=0.7pt,->] ({\cxi-1.5},{u(\cxi-1.5)}) --({\cxi-2.2},{u(\cxi-1.5)}) node[left] {$-\eps t\xi$};
  \draw[dashed,line width=0.7pt] ({\cxim+2.1},0)--({\cxim+2.1},\th);
  \draw[dashed,line width=0.7pt] ({\cxim+0.6},0)--({\cxim+0.6},\th);
  \draw[dashed,line width=0.7pt] ({\cxi-0.8},0)--({\cxi-0.8},\th);
  \draw[dashed,line width=0.7pt] ({\cxi-2.2},0)--({\cxi-2.2},\th);
      \end{tikzpicture}
 \caption{Space-value diagram}
  \label{fig:f2}
\end{figure}
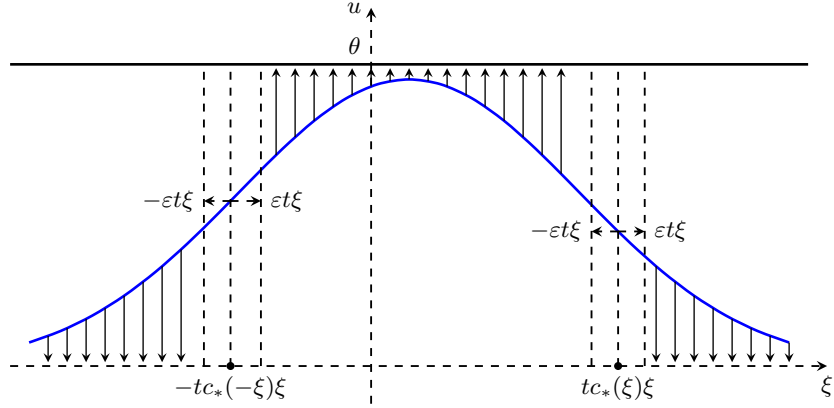

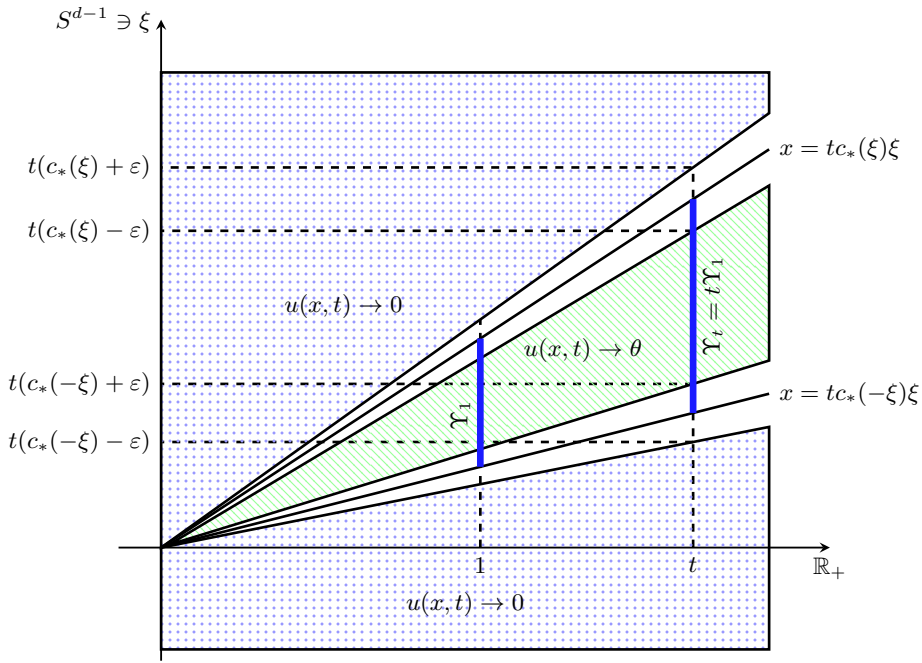
\begin{figure}[ht!]
  \centering
  \begin{tikzpicture}[xscale=1,yscale=1,line width=1pt,every node/.style={font=\small},>=stealth]
	\pgfmathsetmacro{\r}{8};
	\pgfmathsetmacro{\p}{7};
	\pgfmathsetmacro{\pp}{-1.5};
	\pgfmathsetmacro{\ra}{0.2};
	\pgfmathsetmacro{\rb}{0.31};
	\pgfmathsetmacro{\rc}{0.6};
	\pgfmathsetmacro{\rd}{0.72};
	\coordinate (O) at (0,0);
	\coordinate (T) at ({\r*1.1},0);
	\coordinate (X) at (0,\p);
	\draw[->,line width=0.7pt] (0,\pp) -- (X);
	\draw[->,line width=0.7pt] ({-\r*0.07},0) -- (T);
      \coordinate (A) at (\r,{\r*\ra});
      \coordinate (B) at (\r,{\r*\rb});
      \coordinate (C) at (\r,{\r*\rc});
      \coordinate (D) at (\r,{\r*\rd});
      \coordinate (E1) at (\r,{0.9*\p});
      \coordinate (E2) at (0,{0.9*\p});
      \coordinate (F1) at (\r,{0.9*\pp});
      \coordinate (F2) at (0,{0.9*\pp});
      \pgfmathsetmacro{\q}{7};
      \coordinate (G) at (\q,0);
      \coordinate (GA) at (\q,{\q*\ra});
      \coordinate (GB) at (\q,{\q*\rb});
      \coordinate (GC) at (\q,{\q*\rc});
      \coordinate (GD) at (\q,{\q*\rd});
      \coordinate (GAO) at (0,{\q*\ra});
      \coordinate (GBO) at (0,{\q*\rb});
      \coordinate (GCO) at (0,{\q*\rc});
      \coordinate (GDO) at (0,{\q*\rd});
      \draw [dashed] (G) -- (GD) -- (GDO);
      \draw [dashed] (GA) -- (GAO);
      \draw [dashed] (GB) -- (GBO);
      \draw [dashed] (GC) -- (GCO);
      \draw[pattern=north west lines, pattern color=green!40!white] (0.1,{0.04*(\rb+\rc)})--(B)--(C)--cycle;
      \draw[pattern=dots, pattern color=blue!40!white] (D)--(E1)--(E2)--(O)--cycle;
      \draw[pattern=dots, pattern color=blue!40!white] (A)--(F1)--(F2)--(O)--cycle;
      \node [left] at (GAO) {$t(c_*(-\xi)-\varepsilon)$};
      \node [left] at (GBO) {$t(c_*(-\xi)+\varepsilon)$};
      \node [left] at (GCO) {$t(c_*(\xi)-\varepsilon)$};
      \node [left] at (GDO) {$t(c_*(\xi)+\varepsilon)$};
      \node [left] at (X) {$S^{d-1}\ni\xi$};
       \node [below] at (T) {$\mathbb{R}_+$};
        \node [below] at (G) {$t$};
        \draw (O) -- (\r,{\r*(\ra + \rb)/2}) node [right] {$x=t c_*(-\xi)\xi$};
        \draw (O) -- (\r,{\r*(\rc + \rd)/2}) node [right] {$x=t c_*(\xi)\xi$};
      \coordinate (K1) at ({0.3*\r},{0.48*\r*(\rb+\rc)});
      \node [below] at (K1) {$u(x,t)\to 0$};
      \coordinate (K2) at ({0.5*\r},{0.5*\pp});
      \node at (K2) {$u(x,t)\to 0$};
      \coordinate (K3) at ({0.58*\r},{0.36*\r*(\rb+\rc)});
      \node[right] at (K3) {$u(x,t)\to \theta$};
      \coordinate (t1) at (\q,{\q*(\ra + \rb)/2});
      \coordinate (t2) at (\q,{\q*(\rc + \rd)/2});
    \coordinate (t3) at (\q,{\q*(\ra + \rb+\rc + \rd)/4});
      \draw[line width=2.5pt,color=blue!90!white] (t1) -- (t2);
    \node[below,rotate=90] at (t3) {$\Tau_t=t\Tau_1$};
      \pgfmathsetmacro{\u}{{0.6*\q}};
     \node[below] (U) at (\u,0) {$1$};
    \draw[dashed] (U) -- (\u,{\u*\rd});
    \draw[line width=2.5pt,color=blue!90!white] (\u,{\u*(\ra+\rb)/2}) -- (\u,{\u*(\rc+\rd)/2});
    \node[above,rotate=90] at (\u,{0.46*\u*(\rb+\rc)}) {$\Tau_1$};
      \end{tikzpicture}
 \caption{Space-time diagram}
  \label{fig:f3}
\end{figure}

\item Theorem~\ref{thm:infinitespeed} states that, for a heavy-tailed kernel $a^+$ (when the Mollison condition \eqref{aplusexpint1} fails, for all $\mu>0$), the front becomes unbounded. In other words, the speed of propagation is not constant anymore. We are going to realise detail analysis of this topic in a forthcoming paper.

\item It was shown  recently in \cite{HR2010}, that a fast propagation in the classical Fisher--KPP equation \eqref{kpp} may be obtained provided that the initial condition decays slowly. Taking into account Remark~\ref{rem:ads1}, one can postulate the same question, for the equation \eqref{eq:basic}, as an open problem.

\item In the proof of Proposition~\ref{uniqstationarysolutions}, we did not use the assumption $0\in\inter(\Tau_1)$ to show that a nonnegative stationary solution to \eqref{eq:basic} has to be bounded by $\theta$. However, if $0\notin\inter(\Tau_1)$, then, for some $\xi\in\S$, $c_*(\xi)<0$ and, therefore, there exists a traveling wave in the direction $\xi$ with the zero speed; it evidently generates a stationary solution. It is unknown whether another stationary solutions do exist in such a case.

\item Finally, we would like to mention that an exact behavior of $u(tx,t)$ in a neighborhood of the boundary of $\Tau_1$ seems to be very delicate open problem. It has to be related to the more easy (but still open) questions about the stability of traveling waves and the convergence of solutions to the equation \eqref{eq:basic} to its traveling waves, cf. \cite{Bra1983}; the latter convergence was originally, in \cite{KPP1937}, the reason to introduce the notion of traveling waves at all.
\end{enumerate}

\section*{Acknowledgements}
Financial support of DFG through CRC 701, Research Group ``Stochastic
Dynamics: Mathematical Theory and Applications'' is gratefully acknowledged.

\end{document}